\theoremstyle{plain}
\newtheorem{thm}{Theorem}[section]
\newtheorem{prop}[thm]{Proposition}
\newtheorem{hypo}[thm]{Hypothesis}
\newtheorem{lem}[thm]{Lemma}
\newtheorem{cor}[thm]{Corollary}
\newtheorem{dfn}[thm]{Definition}
\newtheorem{rem}[thm]{Remark}
\numberwithin{equation}{section}
\newcommand{\R}{\mathbb{R}}
\newcommand{\C}{\mathbb{C}}
\newcommand{\Z}{\mathbb{Z}}
\newcommand{\M}{\mathcal{M}}
\newcommand{\g}{\mathfrak{g}}
\newcommand{\s}{\mathfrak{s}}
\newcommand{\G}{\mathcal{G}}
\newcommand{\A}{A^{\text{t}}}
\newcommand{\B}{B^{\text{t}}}
\newcommand{\Y}{\mathcal{Y}_{\mathcal{T}}}
\DeclareMathOperator{\im}{im}
\DeclareMathOperator{\Ima}{Im}
\DeclareMathOperator{\supp}{supp}
\DeclareMathOperator{\Hom}{Hom}
\DeclareMathOperator{\Map}{Map}
\DeclareMathOperator{\Met}{Met}
\DeclareMathOperator{\id}{id}
\DeclareMathOperator{\Spinc}{Spin^c}
\DeclareMathOperator{\spin}{spin}
\DeclareMathOperator{\ind}{ind}
\DeclareMathOperator{\sign}{sign}
\DeclareMathOperator{\tr}{tr}
\DeclareMathOperator{\Span}{Span}
\DeclareMathOperator{\End}{End}
\DeclareMathOperator{\Ad}{Ad}
\DeclareMathOperator{\Hess}{Hess}
\DeclareMathOperator{\grad}{grad}
\DeclareMathOperator{\vol}{vol}
\DeclareMathOperator{\cl}{cl}
\DeclareMathOperator{\Red}{red}
\DeclareMathOperator{\PDL}{PD}
\DeclareMathOperator{\Stab}{Stab}
\DeclareMathOperator{\SW}{SW}
\DeclareMathOperator{\Sf}{\widetilde{Sf}}
\mathchardef\mhyphen="2D
\begin{document}
\title[A Surgery Formula of $\lambda_{SW}$]{A Surgery Formula for the Casson-Seiberg-Witten Invariant of Integral Homology $S^1 \times S^3$}
\author{Langte Ma}
\address{MS 050 Department of Mathematics, Brandeis University, 415 South St., Waltham MA 02453}
\email{ltmafixer@brandeis.edu}

\begin{abstract}
We prove a surgery formula of the Casson-Seiberg-Witten invariant of integral homology $S^1 \times S^3$ along an embedded torus, which could either be regarded as an extension of the product formula for Seiberg-Witten invariants or a manifestation of the surgery exact triangle in $4$-dimensional Seiberg-Witten theory of homology $S^1 \times S^3$. As an application, we compute this invariant for mapping tori of $3$-manifolds under diffeomorphisms of finite order and fixed-point set being a simple closed curve. This computation generalizes the result of Lin-Ruberman-Saveliev in \cite{LRS1}. 
\end{abstract}

\maketitle

\section{Introduction}
In \cite{MRS} Mrowka-Ruberman-Saveliev defined the Casson-Seiberg-Witten invariant $\lambda_{SW}(X)$ for a $4$-manifold $X$ with $H_*(X; \Z) \cong H_*(S^1 \times S^3; \Z)$ by introducing an index-theoretical correction term compensating for the jump of the count of irreducible monopoles along a generic path of metrics and perturbations. A key feature of $\lambda_{SW}(X)$ is that when $X=S^1 \times Y$ is given by the product of a circle with an integral homology sphere $Y$, one has 
\begin{equation}
\lambda_{SW}(S^1 \times Y) = - \lambda(Y),
\end{equation}
where $\lambda(Y)$ is the Casson invariant. Many applications of the Casson invariant in low-dimensional topology are derived from the surgery formula. Thus one natural question to ask is that whether there are some surgery formula for $\lambda_{SW}(X)$. The main purpose of this paper is to provide an answer for this question. 

Here we give a brief description, and a detailed one later in Section \ref{setup}. Let $\mathcal{T} \hookrightarrow X$ be an embedded torus with a primitive class in $H_1(\mathcal{T}; \Z)$ generating $H_1(X; \Z)$. Denote by $\nu(\mathcal{T})$ a regular neighborhood of $\mathcal{T}$ in $X$, and $M=X \backslash \nu(\mathcal{T})$ its complement. After fixing a framing $\nu(\mathcal{T}) \cong D^2 \times T^2$, we get a preferred choice of basis for $H_1(\partial \nu(\mathcal{T}); \Z)$ which we denote by $\{ \mu, \lambda, \gamma\}$. Given a relatively prime pair $(p, q)$ the analogue of $4$-dimensional Dehn surgery with coefficients $(p, q)$ along $\mathcal{T}$ results in a manifold 
\begin{equation}
X_{p, q}= M \cup_{\varphi_{p, q}} D^2 \times T^2,
\end{equation}
where the diffeomorphism on the boundary is 
\begin{equation}
\varphi_{p, q}=
\begin{pmatrix}
p & r & 0 \\
q & s & 0 \\
0 & 0 & 1
\end{pmatrix}
\in SL(3, \Z)
\end{equation}
under the preferred basis. Note that $H_*(X_{0, 1}; \Z) =H_*(T^2 \times S^2; \Z)$, thus $b^+(X_{0,1})=1$. We write 
\begin{equation}
\mathcal{SW}(X_{0, 1})=\sum_{\s_0 \in \Spinc(X_{0, 1})} \SW(X_{0, 1}, \s_0),
\end{equation}
where for each spin$^c$ structure $\s_0 \in \Spinc(X_{0, 1})$ the Seiberg-Witten invariant $\SW(X_{0,1}, \s_0)$ is computed using the chamber specified by small perturbations.

\begin{thm}\label{Main1}
After fixing appropriate homology orientations, for any $q \in \Z$ one has
\begin{equation}\label{LAT}
\lambda_{SW}(X_{1, q+1})  - \lambda_{SW}(X_{1, q}) = \mathcal{SW}(X_{0, 1}).
\end{equation}
\end{thm}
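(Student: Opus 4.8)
The plan is to compute both sides of (\ref{LAT}) by stretching the neck along the common $3$-torus $T^3=\partial\nu(\mathcal{T})$, exploiting that $X_{1,q}$, $X_{1,q+1}$ and $X_{0,1}$ are all obtained from the same piece $M=X\setminus\nu(\mathcal{T})$ by re-attaching a copy of $D^2\times T^2$ through $\varphi_{1,q}$, $\varphi_{1,q+1}$ and $\varphi_{0,1}$. First I would fix a metric and perturbation on $M$ with a cylindrical end modelled on $\R_+\times T^3$ carrying the flat product metric, and for $L\gg 0$ form the stretched closed manifolds $X_{1,q}(L)=M\cup_L Z_q$, and likewise $X_{1,q+1}(L)$ and $X_{0,1}(L)$, where $Z_q$ denotes $D^2\times T^2$ attached by the relevant matrix and given a compatible flat metric. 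Since $\lambda_{SW}$ and $\mathcal{SW}(X_{0,1})$ (in the small-perturbation chamber) are independent of the metric and perturbation, it suffices to verify (\ref{LAT}) for these configurations as $L\to\infty$, keeping the data on $M$ fixed throughout. Following \cite{MRS} I would decompose each Casson-Seiberg-Witten invariant as the signed count $\#\mathcal{M}(\,\cdot\,,g_L,\beta_L)$ of irreducible monopoles together with the index-theoretic correction term $w(\,\cdot\,,g_L,\beta_L)$ built from the end-periodic Dirac operator of the reducible.

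Next I would analyze the monopole count. On $T^3$ with the flat product metric and a small perturbation supported along the reducibles, the Seiberg-Witten equations carry no irreducible solutions and are Morse-Bott along the Jacobian torus $\mathbb{J}=H^1(T^3;\R)/H^1(T^3;\Z)$ of flat $U(1)$-connections. As $L\to\infty$, finite-energy monopoles on $M$ assemble into a moduli space $\mathcal{M}(M)$ which, for generic data and once the reducibles on $M$ are pushed off, is a closed $1$-manifold whose asymptotic-value map sweeps out a $1$-cycle $V$ in $\mathbb{J}$; finite-energy monopoles on $Z_q$ are the flat connections on $D^2\times T^2$ and restrict to the end as the sub-$2$-torus $\mathcal{R}_q\subset\mathbb{J}$ of flat connections with trivial holonomy around $\varphi_{1,q}(\mu)=\mu+q\lambda$. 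A gluing analysis over the Morse-Bott locus $\mathbb{J}$ then identifies $\mathcal{M}(X_{1,q}(L),g_L,\beta_L)$, for $L$ large, with the transverse intersection $V\pitchfork\mathcal{R}_q$, so that $\#\mathcal{M}(X_{1,q}(L),g_L,\beta_L)=[V]\cdot[\mathcal{R}_q]$, and the same applies to $X_{0,1}$. In holonomy coordinates $(\theta_\mu,\theta_\lambda,\theta_\gamma)$ the subtorus $\mathcal{R}_q=\{\theta_\mu+q\theta_\lambda=0\}$ is Poincar\'e dual in $\mathbb{J}\cong T^3$ to $d\theta_\mu+q\,d\theta_\lambda$, so $\PD[\mathcal{R}_{q+1}]-\PD[\mathcal{R}_q]=d\theta_\lambda=\PD[\mathcal{R}_{0,1}]$ and hence
\begin{equation*}
\#\mathcal{M}(X_{1,q+1}(L))-\#\mathcal{M}(X_{1,q}(L))=[V]\cdot\big([\mathcal{R}_{q+1}]-[\mathcal{R}_q]\big)=[V]\cdot[\mathcal{R}_{0,1}]=\#\mathcal{M}(X_{0,1}(L)),
\end{equation*}
independently of $q$; summing the gluing correspondence over the contributing spin$^c$ structures identifies $[V]\cdot[\mathcal{R}_{0,1}]$ with the value of $\sum_{\mathfrak{s}_0}\SW(X_{0,1},\mathfrak{s}_0)$ in the chamber determined by the long neck.

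It then remains to control the correction terms and to pass from that long-neck chamber on $X_{0,1}$ to the small-perturbation chamber, and this is where the periodic spectral flow over manifolds with cylindrical end is needed. Under neck-stretching the term $w$ should split, up to error decaying in $L$, as a contribution from the infinite cyclic cover of $M$ — the same for all three manifolds, hence cancelling in the difference — plus an $\eta$-type term on $T^3$ depending only on the limiting flat connection, which lies in $\mathcal{R}_q$ for $X_{1,q}$, plus a contribution from $Z_q$. The change in the last two pieces from $q$ to $q+1$ is a periodic spectral flow along a path of flat connections joining $\mathcal{R}_q$ to $\mathcal{R}_{q+1}$; the new machinery should evaluate this integer, and it should be identified, via the wall-crossing formula for Seiberg-Witten invariants of $4$-manifolds with $b^+=1$, with exactly the term converting the long-neck-chamber count on $X_{0,1}$ into $\mathcal{SW}(X_{0,1})$. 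Feeding this back into the decomposition of $\lambda_{SW}(X_{1,q+1})-\lambda_{SW}(X_{1,q})$ and letting $L\to\infty$ then produces (\ref{LAT}), the residual overall sign being absorbed into the stipulated homology orientations.

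I expect the main obstacle to be exactly this last step: setting up the periodic spectral flow for cylindrical-end manifolds, proving the excision/splitting formula that decomposes the end-periodic index of $X_{1,q}(L)$ along the stretched $T^3$, pinning down the $L\to\infty$ asymptotics of the $\eta$-invariants of the flat connections on $T^3$, and matching the resulting spectral-flow integer with the $b^+=1$ wall-crossing contribution. Secondary but still substantial are showing that $\mathcal{M}(M)$ compactifies to a closed $1$-manifold, so that $V$ defines a genuine class in $H_1(\mathbb{J};\Z)$ — which needs the standard compactness and transversality for monopoles on a manifold with a $T^3$-end together with a treatment of the reducible locus on $M$ — and checking that the Morse-Bott gluing along $\mathbb{J}$ reproduces $\mathcal{M}(X_{1,q}(L))$ with correct orientations so that the intersection numbers carry the right signs.
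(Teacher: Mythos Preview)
Your overall strategy---stretch along $T^3$, compute each monopole count as an intersection in $\chi(T^3)$, and handle the index correction separately---is the paper's strategy. But there is a genuine gap at the point you flag as ``secondary'': you cannot push the reducibles off $M$. Since $b^+(M)=0$, the reducible locus $\M^{\Red}(M_\infty)\cong T^2$ is unavoidable for every perturbation, and Theorem~\ref{KUR} shows that $\M^*(M_\infty)$ is a $1$-manifold with \emph{open} ends on this $T^2$ (and closed ends at $\bar\partial_+^{-1}(\theta)$). Consequently the image $V=\bar\partial_+(\M^*(M_\infty))$ is a union of arcs, not a cycle, and the identity $[V]\cdot([\mathcal{R}_{q+1}]-[\mathcal{R}_q])=[V]\cdot[\mathcal{R}_{0,1}]$ has no meaning in $H_*(\chi(T^3))$.

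What actually happens (Proposition~\ref{IM}) is that when you isotope $P_1$ to $P\cup P_0$ inside the cube $C(\Y)$, the moving plane sweeps across finitely many of the open endpoints of $V$ sitting on the reducible image $P_M=\{y=y_\beta\}$. Each such endpoint is a reducible $[A,0]$ with $\ker D^+_A=\C$, and the signed count of these crossings is precisely the periodic spectral flow $\Sf D^+_{A_s}(M_\infty,\beta)$ along the obvious path in $P_M$. This produces
\[
\#\M^*(X_1)-\#\M^*(X)=\sum_{\s_0}\#\M^*(X_0,\s_0)+\Sf D^+_{A_s}(M_\infty,\beta),
\]
with an extra term, not the clean equality your cycle argument predicts. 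Proposition~\ref{IC} then shows, via the periodic excision principle and the splitting formula for periodic spectral flow, that $\omega(X_1)-\omega(X)$ equals the \emph{same} spectral flow, so the two extra terms cancel in $\lambda_{SW}$. There is no wall-crossing on $X_0$ in this argument: the count $\sum_{\s_0}\#\M^*(X_0,\s_0)$ is already $\mathcal{SW}(X_0)$ in the small-perturbation chamber, and your proposed mechanism of absorbing the discrepancy into a chamber change on $X_0$ is not what occurs. The cancellation is between the boundary contribution from the non-closed $V$ and the index-correction difference, both identified with the same spectral flow on $M_\infty$.
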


From (\ref{LAT}) we deduce that for any $q \in \Z$
\begin{equation}
\lambda_{SW}(X_{1, q}) = \lambda_{SW}(X)+ q \mathcal{SW}(X_{0, 1}).
\end{equation}
When $X=S^1 \times Y$ with $Y$ an integral homology sphere, we can take the embedded torus to be $S^1 \times K$ with $K \subset Y$ an embedded knot. Then (\ref{LAT}) recovers the surgery formula for the Casson invariants: 
\begin{equation}
\lambda(Y_{1 \over {q+1}}(K)) - \lambda(Y_{1 \over q}(K)) = {1 \over 2}\Delta''_{K \subset Y}(1),
\end{equation}
where $\Delta_{K \subset Y}(t)$ is the symmetric Alexander polynomial of $K$ in $Y$, $Y_{1 \over q}(K)$ is obtained by performing ${1\over q}$ Dehn surgery along $K$. On the other hand $(\ref{Main1})$ can be regarded as a generalization of the product formula of the Seiberg-Witten invariant in \cite{MMS} in the case when $b^+(X)=0$. 

As an application, the surgery formula (\ref{LAT}) can be used to deduce formulae for $\lambda_{SW}$ of homology $S^1 \times S^3$'s arisen from mapping tori of $3$-manifolds under finite order maps whose fixed-point set is a simple closed curve. 

To be more precise, let $Y$ be an integral homology sphere, $K \subset Y$ an embedded knot. Denote by $Y^n$ the $n$-fold branched cover of $Y$ along $K$, and $\tau_n: Y^n \to Y^n$ the covering transformation of order $n$. Let $X^n$ be the mapping torus of $Y^n$ under $\tau_n$. It's straightforward to check that $X^n$ is an integral homology $S^1 \times S^3$, see for instance \cite[Chapter 8]{BZ}. 

\begin{prop}\label{lmp}
Let $Y, K, X^n$ be as above. Then
\begin{equation}\label{lmp1}
\lambda_{SW}(X^n)=-n \lambda(Y) - {1 \over 8}\sum_{m=0}^{n-1} \sign^{m/n}(K),
\end{equation}
where $\sign^{m/n}(K)$ is the Tristram–Levine signature of $K$, $\lambda(Y)$ is the Casson invariant of $Y$.
\end{prop}

The formula (\ref{lmp1}) is conjectured by Lin-Ruberman-Saveliev \cite{LRS1} in the case when $Y^n$ is a rational homology sphere, and proved in the case when $n=2$. Combining with their splitting formula \cite{LRS}, one is able to obtain the Lefschetz number on the reduced monopole Floer homolgy $HM^{\Red}(Y^n)$ when $Y^n$ is a rational homology spehre. On the other hand, when $Y^n$ is not a rational homology sphere, (\ref{lmp1}) provides us with the computation for $\lambda_{SW}$ with the generating hypersurface having nonvanishing first betti number, which are the first known ones with nonvanishing $\lambda_{SW}$. 

\begin{cor}\label{6bc}
Let $K \subset S^3$ be either the right-handed, left-handed trefoil, or the figure-eight knot, $\Sigma^6(K)$ the $6$-fold branched cover of $S^3$ along $K$, and $X^6(K)$ the corresponding mapping torus of $\Sigma^6(K)$. Then 

\[
\lambda_{SW}(X^6(K)) =\left \{ \begin{array}{ll}
1 & \mbox{$K$ is the right-handed trefoil} \\
0 & \mbox{$K$ is the figure-eight knot} \\
-1 & \mbox{$K$ is the left-handed trefoil}
  	\end{array}
\right.
\]
\end{cor}

\begin{rem}
In \cite{MRS}, Mrowka-Ruberman-Saveliev proved that $\lambda_{SW}(X) \mod 2$ is the Rohlin invariant of the generating hypersurface $Y$ of $H^1(X; \Z)$ corresponding to the spin structure induced from that on $X$. The computation in Corollary \ref{6bc} is also consistent with this fact. 
\end{rem}

The proof of Proposition \ref{lmp} and Corollary \ref{6bc} is given in Section \ref{app}. The knots considered in Corollary \ref{6bc} are the three genus-$1$ fibered knots in $S^3$, whose monodromies all have order $6$ as a self-diffeomorphism on a punctured torus. It follows that $\Sigma^6(K)$ has the same homology as that of $S^1 \times S^2 \# S^1 \times S^2$.  In general one can consider a fibered knot $K \subset S^3$ of genus $g>0$ whose monodromy has order $n$. Then $\Sigma^n(K)$ has the same homology as that of $\#^{2g} S^1 \times S^2$. One then computes the Tristram-Levine signature using the Seifert matrix of $K$, thus $\lambda_{SW}(X^n(K))$. 

\subsection{Outline} Here we outline the contents of this paper. Section \ref{Pre} briefly reviews  Seiberg-Witten theory, definition of the Casson-Seiberg-Witten invariant $\lambda_{SW}(X)$, and gives a detailed exposition of the surgery construction along a torus. Section \ref{2} derives the structure of Seiberg-Witten moduli space for manifolds with cylindrical ends with $b^+=0$. The main result is Theorem \ref{KUR}, which we believe is essentially known to experts, but we couldn't find results documented in the literature applicable to our situation. The main issue in our case is that all manifolds of our interests have $b^+=0$ which means that the appearance of reducible locus is inevitable. We need to make sure one can choose nice and generic perturbations so that the structure of moduli space and gluing achieve one's expectation. Section \ref{Peri} develops the technique of periodic spectral flow, which was originally defined in \cite{MRS} to compute the difference of indices of two Dirac operators over a manifold with periodic end. We extend the notion to manifolds with cylindrical end, and prove a splitting formula in Theorem \ref{sf4.14}. Section \ref{Exci} proves the excision principle of elliptic operator over manifolds with periodic ends. The main point is to stretch the excision region, and use Taubes' Fredholmness criterion \cite[Lemma 4.3]{T1} for operators over end-periodic manifolds to generalize the standard argument. Section \ref{SurFor} incorporates all techniques developed in the previous sections to prove the surgery formula. Section \ref{app} applies the surgery formula to prove Proposition \ref{lmp}. In the end we also include an appendix on the gluing theorem on our set-up of moduli spaces for integral homology $S^1 \times S^3$.

\subsection*{Acknowledgement} The author would like to express gratitude to his advisor Daniel Ruberman for suggestion on this work and constant support during the preparation for the draft. He also wants to thank Jianfeng Lin for a lot of enlightening discussions and joyful moments.

\section{Preliminaries}\label{Pre}

\subsection{The Seiberg-Witten Moduli Space}

Let $(M, g)$ be a smooth oriented Riemannian $4$-manifold with with boundary $\partial M=Y$, where $(Y, h)$ is a smooth oriented closed Riemannian $3$-manifold with metric $h$. We assume that  in a collar neighborhood $(-1, 0] \times Y$ of $Y$, the metric $g$ has the form $g|_{(-1, 0] \times Y} =dt^2 +h$, where $t$ is the coordinate on $(-1, 0]$. We form a Riemannian manifold $(M_{\infty}, g_{\infty})$ with cylindrical end by attaching a cylindrical end $(-\infty, 0] \times Y$ to the manifold with boundary $M$, i.e. $M_{\infty}= M \cup [0, \infty) \times Y$ with 
\[
g_{\infty}|_M=g, \; \; g_{\infty}|_{(-\infty, 0] \times Y} = dt^2+h.
\]
By an abuse of notation we write $g_{\infty}=g$. We write $\pi: (-\infty, 0] \times Y \to Y$ for the projection map. The coordinate of the first factor $(-\infty, 0]$ is denoted by $t$. We write $Y_t=\{t \} \times Y$ for the $t$-slice of the cylindrical end. 

Let $\mathfrak{s}=(W, \rho_M)$ be a spin$^c$ structure over $M$, where $W=W^+\oplus W^-$ is a $U(4)$-bundle over $M$, $\rho_M: T^*M \to \End(W)$ is the Clifford multiplication. Write $\mathfrak{t}=\mathfrak{s}_M|_Y$ for the restricted spin$^c$ structure over $Y$. Explicitly the restricted spin$^c$ structure over $Y$ can be expressed in terms of a pair $\mathfrak{t}=(S, \rho)$ by identifying $S=W^+|_{\{0\}\times Y}$ and $\rho=-\rho_M(dt) \cdot \rho_M$. Then the spin$^c$ structure $\s$ extends to a spin$^c$ structure, still denoted by $\s$, over $M_{\infty}$ of the form 
\[
W^+|_{[0, \infty) \times Y} = W^-|_{[0,\infty) \times Y} =\pi^*S,
\]
and 
\[
\rho_M(dt)=
\begin{pmatrix}
0 & -1 \\
1 & 0 
\end{pmatrix}, 
\rho_M(e_t)=
\begin{pmatrix}
0 & -\rho^*(e_t) \\
\rho(e_t) & 0
\end{pmatrix},
\]
where $e_t \in T^*Y_t$. 

Fix a positive integer $k \geq 2$. We denote by $\mathcal{A}_k(\s)$ the space of  $L^2_{k, loc}$ spin$^c$-connections on $W^+$, $\Gamma_k(W^+)$ the space of $L^2_{k, loc}$ sections which we refer as spinors. We write $\mathcal{C}_k(\s)=\mathcal{A}_k(\s) \times \Gamma_k(W^+)$ for the configuration space. When there is no ambiguity we omit $\s$ in the notations. The perturbation that we shall use in this paper comes from the space $\mathcal{P}:=L^2_{k, c}(T^*M_{\infty} \otimes i\R)$ consisting of purely imaginary-valued $L^2_k$ $1$-forms on $M_{\infty}$ with compact support. Given $\beta \in \mathcal{P}$, $d^+\beta \in  L^2_{k-1}(\Lambda^+M_{\infty}\otimes i\R)$. Consider the perturbed Seiberg-Witten map 
\begin{align*}
\mathfrak{F}_{\beta}: \mathcal{A}_k \times \Gamma_k(W^+) & \to L^2_{k-1, loc}(\Lambda^+M_{\infty}\otimes i\R) \oplus \Gamma_{k-1} (W^-) \\
(A ,\Phi) & \mapsto ({1 \over 2} (F^+_{\A}-4d^+\beta)- \rho_M^{-1} (\Phi \Phi^*)_0, D^+_A \Phi), 
\end{align*}
where $(\Phi\Phi^*)_0= \Phi \otimes \Phi^* - {1 \over 2}\tr(\Phi \otimes \Phi^*) \in i\mathfrak{su}(W^+)$, and $\A$ is the connection on $\det W^+$ induced by $A$. 
The perturbed Seiberg-Witten equation is
\begin{equation}\label{4SW}
\mathfrak{F}_{\beta}(A,\Phi)=0.
\end{equation}
The gauge group for $(M_{\infty}, \s)$ is denoted by $\G_{k+1}(\s)$, which consists of $L^2_{k+1, loc}$ maps $M_{\infty} \to S^1$ with the gauge action given by 
\[
u\cdot (A, \Phi)= ( A - u^{-1} du \otimes 1_{W^+}, u \cdot \Phi).
\]
Note that on the determinant line bundle $\det W^+$ the action is even: $ (u\cdot A)^{\text{t}}=\A - 2u^{-1}du$. Given a configuration $(A, \Phi) \in \mathcal{A}_k \times \Gamma_k(W^+)$, following \cite[(4.16)]{KM1} we define its energy to be 
\[
\mathcal{E}_{\beta}(A, \Phi)={1 \over 4}\int_{M_{\infty}} |F_{\A}-4d\beta|^2 +\int_{M_{\infty}} |\nabla_A \Phi|^2 + {1 \over 4} \int_{M_{\infty}} (|\Phi|^4 + s^2 |\Phi|^2), 
\]
where $s$ stands for the scalar curvature of $({M_{\infty}}, g)$. Now we define the perturbed finite energy Seiberg-Witten moduli space to be 
\begin{equation}
\M_{g, \beta}(M_{\infty}, \s)=\{(A , \Phi) \in \mathcal{C}_k: \mathfrak{F}_{\beta}(A, \Phi)=0, \mathcal{E}_{\beta}(A, \Phi) < \infty \}/\G_{k+1}.
\end{equation}
Since gauge transformations preserve the energy $\mathcal{E}(A, \Phi)$, the moduli space is well-defined. It is well-known that the homeomorphism type of the moduli space is independent of $k$ once $k$ is sufficiently large, say $k \geq 2$. Thus we have omitted $k$ in the notation. Any element in the moduli space is referred as a monopole. We call a monopole of the form $[A, 0]$ reducible, otherwise irreducible. Then the moduli space decomposes into two parts accordingly 
\[
\M_{g, \beta}(M_{\infty}, \s)=\M^*_{g, \beta}(M_{\infty}, \s) \cup \M^{\Red}_{g, \beta}(M_{\infty}, \s).
\]

Now let $l=k-{1 \over 2}$. Over the end $[0, \infty) \times Y$, a configuration $(A, \Phi)$ has the form $A=B(t)+ c(t) dt$, where $B(t)$ is a time-dependent spin$^c$ connection on $S$, and $c(t) \in L^2_{l}(Y, i\R)$ is a time-dependent imaginary-valued function on $Y$. The spinor $\Phi$ gives rise to a time-dependent section on $Y$ which we denote by $\Psi(t)=\Phi|_{Y_t} \in \Gamma_l(S)$. Under these identifications we have 
\begin{equation}\label{eq1.2}
\begin{split}
F^+_{\A} & ={1 \over 2}(F_{\B}+*(\dot{B}^{\text{t}}-2dc) +dt \wedge (*F_{\B} + \dot{B}^{\text{t}} -2dc)) \\
D^+_A & = D_B  + {d \over {dt}} +c.
\end{split}
\end{equation}
Moreover
\[
\rho_M(F_{\A}^+)= -\rho(*F_{\B} + \dot{B}^{\text{t}}-2dc).
\]
Thus the unperturbed 4-dimensional Seiberg-Witten equation on the cylindrical end reads as
\begin{equation}\label{CSW}
\begin{split}
\dot{B} & = -({1 \over 2} *F_{\B} + \rho^{-1}(\Psi\Psi^*)_0) \otimes 1_{S} + dc \otimes 1_{S} \\
\dot{\Psi} & = -D_B \Psi - c\Psi.
\end{split}
\end{equation}

Now let's shift attention to $(Y, \mathfrak{t})$. Suppose $\mathfrak{t}$ is torsion, i.e. $c_1(S) \in H^2(Y ;\Z)$ is torsion. Fix a flat connection $B_0$ on $S$. Denote by $\mathcal{C}_l(\mathfrak{t}):= \mathcal{A}_l(\mathfrak{t}) \times \Gamma_l(S)$ the configuration space. We follow \cite{KM1} to define the Chern-Simons-Dirac functional on $\mathcal{C}_l(\mathfrak{t})$ as 
\begin{equation}
\mathcal{L}(B, \Psi)=-{1 \over 4}\int_{Y} b \wedge F_{B^{\text{t}}}+ {1 \over 2} \int_{Y} \langle D_B \Psi, \Psi \rangle d \vol, 
\end{equation}
where $B-B_0=b \otimes 1_{S}$, $\Psi \in \Gamma_l(S)$. It's straightforward to compute that the gradient of the Chern-Simons-Dirac functional $\grad \mathcal{L}: \mathcal{C}_l(\mathfrak{t}) \to T\mathcal{C}_l( \mathfrak{t})$ is given by 
\[
\grad \mathcal{L}|_{(B, \Psi)}=(({1 \over 2}*F_{B^{\text {t}}}+ \rho^{-1}(\Psi \Psi^*)_0) \otimes 1_{S}, D_B \Psi). 
\] Thus the critical points of the Chern-Simons-Dirac functional are given by solutions of the 3-dimensional Seiberg-Witten equations:
\begin{equation}\label{3SW}
\begin{split}
{1 \over 2} F_{B^\text{t}}-\rho^{-1}( \Psi \Psi^*)_0 & =0, \\
D_B \Psi & =0.
\end{split}
\end{equation}
For simplicity we write this equation as $\mathcal{F}(B, \Psi)=0$, where 
\[
\mathcal{F}: \mathcal{A}_l(\mathfrak{t}) \times \Gamma_{l}(S) \to L^2_{l-1}(i\mathfrak{su}(S)) \oplus \Gamma_{l-1}(S)
\]
is defined by the left hand side of (\ref{3SW}). Using parallel transport of $A$ to trivialize $W^+$ in the $t$-direction, we get a path $\gamma: [0, \infty) \to \mathcal{C}_l(\mathfrak{t})$ written as $\gamma(t)=(B(t), \Psi(t))$, given by $A=B(t), \Psi(t)=\Phi|_{Y_t}$.  We refer to such a connection $A$ as in temporal gauge. From (\ref{CSW}) the $4$-dimensional unperturbed Seiberg-Witten equation on the cylinder now is equivalent to the downward gradient equation 
\[
{d \gamma \over {dt}}=-\grad \mathcal{L}(\gamma(t)).
\]
The energy of a monopole $(A, \Phi)$ interacts nicely with the Chern-Simons-Dirac functional in the following sense. Over a finite part of the cylinder $([t_0, t_1] \times Y, \s)$, from \cite[(4.18)]{KM1} the drop of Chern-Simons-Dirac functional is proportional to the energy of $(A, \Phi)$: 
\begin{equation}
\mathcal{L}(\gamma(t_0)) - \mathcal{L}(\gamma(t_1)) = {1 \over 2} \mathcal{E}(A, \Phi). 
\end{equation}
Let $\mathcal{G}_{l+1}(\mathfrak{t})=\Map(Y, S^1)$ be the gauge group with action
\[
u\cdot (B, \Psi)  = (B- u^{-1}du \otimes 1_S, u \cdot \Psi),
\]
and $\mathcal{B}_l(\mathfrak{t})=\mathcal{C}_l(\mathfrak{t}) / \G_{l+1}(Y)$ the quotient configuration space. Note that since $c_1(S)$ is torsion, we have 
\[
\mathcal{L}(u \cdot (B, \Psi)) - \mathcal{L}(B, \Psi)=2\pi^2 ([u] \smallsmile c_1(S))[Y]=0.
\]
Thus the Chern-Simons-Dirac functional descends to the quotient space 
\[\mathcal{L}: \mathcal{B}(Y, \mathfrak{t}) \to \R.\]
The Seiberg-Witten moduli space of $(Y, \mathfrak{t})$ is 
\[
\mathcal{M}_h(Y, \mathfrak{t})=\{ (B, \Psi) \in \mathcal{C}_l : \mathcal{F}(B, \Psi)=0 \} / \mathcal{G}_{l+1}.
\]
Analogous to the $4$-dimensional case, we can decompose the moduli space into two parts:
\[
\mathcal{M}_h(Y, \mathfrak{t})=\mathcal{M}^*_h(Y, \mathfrak{t}) \cup \mathcal{M}^{\Red}_h(Y, \mathfrak{t})
\]
Recall that the spin$^c$ structure $\mathfrak{t}$ is torsion, thus the reducible part $\M^{\Red}_h(Y, \mathfrak{t})$ consists of gauge equivalence classes of  flat connections on $\det S$. Note that the gauge action is even: $u \cdot B^{\text{t}}= B^{\text{t}}-2u^{-1}du$. Then $\mathcal{M}_h(Y, \mathfrak{t})$ is identified with the character variety $\chi(Y):= \Hom(\pi_1(Y), U(1))/ \Ad$. 

\subsection{Casson-Seiberg-Witten Invariant $\lambda_{SW}$}
We briefly review the definition of the Casson-Seiberg-Witten invariant $\lambda_{SW}$ which was originally defined in \cite{MRS}. Let $(X, g)$ be a closed Riemannian oriented smooth $4$-manifold with $H_*(X; \Z) \cong H_*(S^1 \times S^3; \Z)$. Equip $X$ with one of the spin structures $\s$, whose induced spin$^c$ structure is denoted by $\s$ as well. Let's fix a generator $1_X \in H^1(X; \Z)$ given by $1_X=[df]$ for some smooth function $f: X \to S^1$. If $d\theta \in H^1(S^1; \Z)$ is the fundamental class, then $[df]=[f^*d\theta]$  We define the Seiberg-Witten moduli space $\M_{g, \beta}(X, \s)$ the same as in the case of manifolds with cylindrical end as above. 

\begin{dfn}\label{R1}
 Choosing a branch to represent the value of $\ln z$, we call a pair $(g, \beta)$ regular if the family of Dirac operators 
\begin{equation}\label{1.6.1}
D^+_{z, \beta}(X, \s)=D^+(X, \s) +\rho_X(\beta- \ln z \cdot df), \; |z|=1,
\end{equation}
have trivial kernel. 
\end{dfn}

For different choice of branches, the above operators in (\ref{1.6.1}) differ by the conjugation of $e^{2\pi k f}$. Thus $(g, \beta)$ being regular is well-defined. It is proved in \cite[Proposition 2.2]{MRS} that a generic pair $(g, \beta)$ is regular. Here being generic means that such choices of $(g, \beta)$ are form a set of countable intersections of open dense subsets in the space of $\Met(X) \times \Omega^1(X; i\R)$. The first part of $\lambda_{SW}$ is given by  counting irreducible monopoles $\# \M^*_{g, \beta}(X, \s)$ for a generic pair $(g, \beta)$. 

The second part of $\lambda_{SW}$ involves a correction term $\omega(X, g, \beta)$ defined as follows. Let $Y_X \subset X$ be a embedded hypersurface given by a regular value of $f$. Thus as a homology class $[Y_X]=\PDL 1_X$. We refer to $Y_X$ as a generating hypersurface of $X$. Since $Y_X$ is primitive, we can choose $Y_X$ to be connected and nonseparating. $Y_X$ inherits a spin$^c$ structure $\mathfrak{t}$ and metric $h$ from those of $X$. Note that $\s$ comes from a $\spin$ structure over $X$, thus $\mathfrak{t}$ also comes from a spin structure on $Y_X$. Cutting $X$ along $Y_X$ results in a spin cobordism $W: Y_X \to Y_X$ with $\partial W=-Y_X \cup Y_X$. Taking an arbitrary spin $4$-manifold $Z$ with spin boundary $\partial Z =Y_X$, we form the end-periodic spin manifold 
\[
Z_+(X)=Z \cup W_0 \cup W_1 \cup ...,
\]
where $W_i$ is a copy of $W$. Lifting the metric and perturbation pair $(g, \beta)$ to the periodic end $W_+=\cup_{i \geq 0} W_i$ and extending arbitrarily over $Z_+(X)$, we define the twisted Dirac operator to be 
\[
D^+_{\beta}(Z_+, g) = D^+(Z_+, \s_Z, g) + \rho_Z(\beta) :L^2_1(Z_+, W^+) \to L^2(Z_+, W^-). 
\]
\cite[Theorem 3.1]{MRS} says that for any regular pair $(g, \beta)$ the twisted Dirac operator $D_{\beta}^+(Z_+, g)$ is Fredholm. Thus we are able to take its index and define the correction term to be
\[
\omega(X, g ,\beta) = \ind_{\C} D^+_{\beta}(Z_+, g) + {\sigma(Z) \over 8},
\]
where $\sigma(Z)$ is the signature of $Z$. The Casson-Seiberg-Witten invariant is 
\[
\lambda_{SW}(X) : = \# \M_{g,\beta}(X, \s) - \omega(X, g, \beta), \; (g, \beta) \text{ a regular pair}.
\]
It is proved in \cite{MRS} that $\lambda_{SW}(X)$ is independent of the choice of the regular pair $(g, \beta)$. 

\subsection{Surgery Description}\label{setup}
Now we give a detailed exposition of the surgery construction we are interested in. Let $X$ be a homology $S^1 \times S^3$ as above. Let $\iota: T^2 \hookrightarrow X$ be an embedding of an oriented torus, which induces a surjection on the first homology, i.e. $\iota_*: H_1(T^2; \Z) \to H_1(X; \Z)$ is surjective. We denote its image by $\mathcal{T}:= \im \iota$, and $\nu(\mathcal{T})$ a tubular neighborhood of $\mathcal{T}$ in $X$.Due to the vanishing of the intersection form on $X$, $\nu(\mathcal{T})$ is the trivial disk bundle on $T^2$. Write $M=\cl(X \backslash \nu(\mathcal{T}))$ to be the closure of the complement of $\nu(\mathcal{T})$. The assumption on the homology of $X$ gives us that $H_*(M; \Z) = H_*(D^2 \times T^2; \Z)$. Let's pick a framing $\nu(\mathcal{T}) \cong D^2 \times T^2$. Under this identification we pick up representatives of $H_1(\partial \nu(\mathcal{T});\Z)$ as 
\[
\mu=\partial D^2 \times \{pt. \} \times \{ pt.\}, \lambda=\{pt. \} \times S^1 \times \{pt.\}, \gamma=\{pt\} \times \{pt.\} \times S^1.
\] 
We call $\mu$ the meridian, $\lambda$ the longitude, and $\gamma$ the latitude. Moreover we require the framing is chosen so that the longitude $[\lambda]$ generates the kernel of the inclusion map $i_*: H_1(\partial M; \Z) \to H_1(M; \Z)$. However there are still $\Z$-worth  ambiguities for the choices of the framing, since one can always add to $[\gamma]$ by a multiple of $[\lambda]$. Note that the latitude $\gamma$ generates $H_1(X; \Z)$ under the inclusion $\nu(\mathcal{T}) \hookrightarrow X$. Let $n$ be the outward normal vector field along $\nu(\mathcal{T})$. Since the torus $T$ is oriented, we get orientations on $\mu$ and $\gamma$ automatically. We orient $\lambda$ so that the ordered basis $\langle n,  \mu,  \lambda ,\gamma \rangle$ is the orientation of $\nu(\mathcal{T})$ induced from that of $X$. The boundary orientation of $\partial M$ is given by an ordered basis $\langle \mu, -\lambda, \gamma\rangle$. 

Now take another copy of $D^2 \times T^2$ with boundary orientation given by the basis as above $\{\mu_0, \lambda_0, \gamma_0\}$. 
\begin{dfn}\label{RST}
Given a relatively prime pair $(p ,q) \in \Z \oplus \Z$, the $(p,q)$ Dehn surgery along an embedded $T^2$ in $X$ as above results in a manifold $X_{p, q}$ given by
\begin{equation}
X_{p, q} = M \cup_{\varphi_{p, q}} D^2 \times T^2,
\end{equation}
where $\varphi_{p, q}: \partial D^2 \times T^2 \to \partial M$ is an orientation-reversing diffeomorphism whose isotopy class in $-SL(3; \Z)$ is given by the matrix 
\begin{equation}
\varphi_{p, q}=
\begin{pmatrix}
p & r & 0 \\
q & s & 0 \\
0 & 0 & 1
\end{pmatrix}
\end{equation}
under the basis $\{\mu_0, \lambda_0, \gamma_0\}$ of $\partial D^2 \times T^2$ and $\{\mu, -\lambda, \gamma\}$ of $\partial M$ respectively. 
\end{dfn}

\begin{rem}\hfill
\begin{enumerate}
\item[\upshape (i)] Note that $H_*(X_{1, q}; \Z) \cong H_*( S^1 \times S^3; \Z)$ and $H_*(X_{0, 1}; \Z) \cong H_*(T^2 \times S^2; \Z)$. We write $X_q=X_{1, q}$ for $q \neq 0$, and $X_0 =X_{0,1 }$. It's clear that $X=X_{1, 0}$.
\item[\upshape (ii)] Since the manifold $M \cup_{\varphi} D^2 \times T^2$ is determined by the image of $[\varphi({\mu})] \in H_1(\partial M; \Z)$ and the choices for $[\mu]$ and $[\lambda]$ are canonical, we see that $X_{p, q}$ is independent of the choice of the framing caused by the ambiguity from $\gamma$. 
\end{enumerate}
\end{rem}

\begin{lem}There exists a generating hypersurface $Y_X \subset X$ so that $Y_X$ intersects $\mathcal{T}$ transversely into a knot $K$.
\end{lem}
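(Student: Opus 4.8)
The plan is to build $Y$ by a general-position argument for the map $f : X \to S^1$ together with the embedded torus. First I would start with the nonseparating connected hypersurface $Y_0 = f^{-1}(\mathrm{pt})$ obtained from a regular value of $f$, which represents $\PD 1_X$. The problem is that $Y_0$ need not meet $\mathcal{T}$ transversely, nor in a connected curve. Since the latitude $\gamma$ generates $H_1(X;\Z)$ and $[\gamma]$ pairs to $1$ with $1_X = [f^*d\theta]$, the restriction $f|_{\mathcal{T}} : \mathcal{T} \to S^1$ is homotopically the projection onto the $\gamma$-factor; in particular it has no critical points after a small homotopy, so a generic value $\theta_0 \in S^1$ gives $f^{-1}(\theta_0) \cap \mathcal{T} = (f|_{\mathcal{T}})^{-1}(\theta_0)$, a disjoint union of circles each isotopic in $\mathcal{T}$ to $\gamma$ (hence each homologically nonzero and primitive in $\mathcal{T}$).

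Next I would arrange connectedness and transversality. By a standard transversality argument (Thom, applied to $f$ restricted near $\mathcal{T}$ after perturbing $f$ rel a neighborhood of $\mathcal{T}$ or perturbing $Y_0$ by an ambient isotopy supported near $\mathcal{T}$), one may assume $Y_0 \pitchfork \mathcal{T}$, so $Y_0 \cap \mathcal{T}$ is a closed $1$-manifold in $X$, i.e. a finite disjoint union of circles $C_1, \dots, C_m$, each parallel to $\gamma$ on $\mathcal{T}$. Two adjacent such circles on $\mathcal{T} \cong T^2$ cobound an annulus $A \subset \mathcal{T}$; I would push $Y_0$ across a slightly thickened copy of $A$ inside $\nu(\mathcal{T})$ — this is an ambient isotopy of $X$ supported in $\nu(\mathcal{T})$, so it does not change $[Y_0] = \PD 1_X$ nor the property of being embedded, but it removes the pair $C_i, C_{i+1}$ from the intersection (or, choosing orientations, cancels two oppositely-oriented intersection circles). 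Since the algebraic intersection number $[Y_0]\cdot[\mathcal{T}]$ equals $[\gamma]\cdot 1_X$-type pairing which is $\pm 1$ (as $\gamma$ generates $H_1(X;\Z)$), after iterating these cancellations we reach a hypersurface $Y$ with $Y \cap \mathcal{T}$ a single circle $K$, isotopic to $\gamma$ on $\mathcal{T}$, and $Y \pitchfork \mathcal{T}$.

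Finally I would check the remaining bookkeeping: the pushes can be done keeping $Y$ connected (a finger move along $A$ either leaves $Y$ connected or, in the worst case, tubing restores connectedness without changing the homology class or introducing intersections with $\mathcal{T}$), and throughout $[Y] = \PD 1_X$ is preserved because all modifications are ambient isotopies or compressions supported in $\nu(\mathcal{T})$ away from the generator. The knot $K = Y \cap \mathcal{T}$ is then embedded in $X$, lies on $\mathcal{T}$, and represents $\gamma$, as desired.

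I expect the main obstacle to be the homological cancellation step: one must be careful that the intersection circles, although all isotopic to $\gamma$ as \emph{unoriented} curves on $\mathcal{T}$, carry signs governed by how $Y_0$ crosses $\mathcal{T}$, and that pushing across the annulus between two \emph{oppositely} oriented circles genuinely cancels them in the $1$-manifold $Y_0 \cap \mathcal{T}$ rather than merging them into something more complicated; keeping track of orientations and ensuring the net count is $\pm 1$ (so exactly one circle survives) is the delicate part, while transversality and connectedness are routine.
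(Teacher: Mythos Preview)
Your approach is sound in spirit but differs from the paper's, and there are a couple of slips worth flagging.

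First, the direction is reversed: if $f|_{\mathcal{T}}$ is homotopic to projection onto the $\gamma$-factor, then a regular fiber is a curve in the class $[\lambda]$ (the Poincar\'e dual of $\iota^*1_X$ in $\mathcal{T}$), not $[\gamma]$. More importantly, once you homotope $f|_{\mathcal{T}}$ to a genuine submersion and extend the homotopy to $X$, the map $f|_{\mathcal{T}}$ becomes a fiber bundle $T^2\to S^1$; since it is $\pi_1$-surjective (it sends $[\gamma]$ to a generator), the fiber is \emph{connected}, a single circle. Transversality of $Y_0=f^{-1}(\theta_0)$ with $\mathcal{T}$ is then automatic, and you are already done --- the entire cancellation argument in your second paragraph is unnecessary. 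Also, your reference to ``$[Y_0]\cdot[\mathcal{T}]=\pm1$'' does not quite make sense as stated, since $H_2(X;\Z)=0$; the correct invariant is the class of $Y_0\cap\mathcal{T}$ in $H_1(\mathcal{T};\Z)$. Finally, pushing $Y_0$ across an annulus is not an ambient isotopy of $X$ but a surgery on $Y_0$; it does preserve $[Y_0]$, but for a different reason than you state.

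For comparison, the paper takes a more hands-on route: it starts with an arbitrary transverse $Y$, attaches $1$-handles to $Y$ at alternating intersection points with $\{pt\}\times S^1\subset\mathcal{T}$ to reduce the geometric intersection with $\gamma$ to $1$, and then caps off the remaining null-homotopic loops in $Y\cap\mathcal{T}$ with $2$-handles. Your approach, once cleaned up, is arguably slicker --- it front-loads the work into arranging $f|_{\mathcal{T}}$ nicely and avoids any surgery on $Y$ at all --- while the paper's handle-attachment argument is more explicit about how $Y$ is being modified and makes no use of the map $f$ beyond producing the initial hypersurface.
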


\begin{proof}
The choice of the basis above identifies $\mathcal{T} \cong S^1 \times S^1$ so that the algebraic intersection between $Y_X$ and $\{pt.\} \times S^1$ is $1$. By attaching $1$-handles on the alternating intersection points to $Y_X$ we can arrange that their geometric intersection is also $1$. Due to the choice of the longitude $\lambda$, it follows that $Y_X \cap \mathcal{T}$ is the union of a copy of $S^1 \times \{ pt,\} \subset \mathcal{T}$ and several null-homotopic loops. We then attach $2$-handles on $Y_X$ along those loops to reduce the number of components of $Y_X \cap \mathcal{T}$ to $1$. 
\end{proof}
Chosen $Y_X$ as above, we see that performing an $n$-surgery along $\mathcal{T} \subset X$ amounts to performing an $n$-surgery along $K \subset Y_X$ with respect to the framing induced from that of $\mathcal{T}$.

\subsection{The Neck-Stretching Set-Up}\label{nssu}
In the proof of our main result Theorem \ref{Main1}, a neck-stretching construction  will be applied in several places, i.e. the splitting formula Theorem \ref{sf4.14} for the periodic spectral flow, the periodic excision principle Theorem \ref{EP}, and the compactness result Theorem \ref{G1}. So we would like to fix the notations that will be used in the neck-stretching process. 

Let $X$ be a homology $S^1 \times S^3$ decomposed along a hypersurface $Y$ (a generic seperating embedded $3$-manifold, not the generating hypersurface $Y_X$) as 
\[
X=M \cup N
\] 
with $\partial M=-\partial N= \cap N=Y$. We identify a neighborhood of $Y$ in $X$ as $(-1, 1) \times Y$. In this way neighborhoods of $Y$ in $M$ and $N$ are identified as $(-1, 0] \times Y$ and $[0, 1) \times Y$ respectively. Given a metric $h$ on $Y$, let's denote by $\Met(X, h)$ the space of smooth metrics $g$ on $X$  whose restriction to the neighborhood of $Y$ have the form
\[
g|_{(-1, 1) \times Y} = dt^2 +h. 
\]
Given $T >0$ we write 
\[
\begin{split}
&M_T=M \cup [0, T] \times Y,  \; N_T=[-T, 0] \times Y \cup N, \\
&X_T=M \cup [-T, T] \times Y \cup N, \; I_T=[-T, T] \times Y.
\end{split}
\]
For the corresponding geometric limits we write 
\[
\begin{split}
& M_{\infty}=M \cup [0, \infty) \times Y, \; N_{\infty}=(- \infty, 0] \times Y \cup N,  \\
&X_{\infty}=M_{\infty} \sqcup N_{\infty}, \; I_{\infty}=[0, \infty) \times Y \cup (-\infty, 0] \times Y.
\end{split}
\]
We embed $M_T \hookrightarrow X_T$ by identifying $[0, T] \times Y \subset M_T$ with $[-T, 0] \times Y \subset X_T$. Similarly we have $N_T \hookrightarrow X_T$. Any metric $g \in \Met(X, h)$ naturally extends to metrics on all the neck-stretched manifolds above, which we still denote by $g$ when there is no confusion.  As for perturbations, we let $\beta_T=\beta_{M, T} +\beta_{N, T} \in \Omega^1(X_T; i\R)$ with $\supp \beta_{M, T} \subset M$ and $\supp \beta_{N, T} \subset N$ be a family of perturbations for $T \in [0, \infty)$ such that $\beta_{M, T}$ converges to some $\beta_M \in \Omega_c^1(M_{\infty}; i\R)$ in $L^2_{k}$-topology. Similarly $\beta_{N, T}$ converges to $\beta_N \in \Omega_c^1(N_{\infty}; i\R)$ in $L^2_{k}$-topology. Moreover the fixed generator $1_X \in H^1(X; \Z)$ gives rise to generators $1_{X_T} \in H^1(X_T; \Z)$. We choose functions $f_T: X_T \to S^1$ so that $[df_T]=1_{X_T}$. 

Let $\s$ be a spin$^c$ structure over $X$ coming from a fixed spin structure, whose restriction on $Y$ is denoted by $\s|_{\{0 \} \times Y}=\mathfrak{t}$. The spin$^c$ structure also extends to various manifolds with stretched neck. By an abuse of notation, we still write $\s$ for the extended spin$^c$ structures on $X_T, X_{\infty}$, $\s_M$ on $M_T, M_{\infty}$, and $\s_N$ on $N_T, N_{\infty}$. The following result from \cite{LRS} will be useful in several places in this paper. 

\begin{lem}\label{3.8.8}
(\cite[Proposition 7.3]{LRS})
Suppose the spin Dirac operators \[
D^+(M_{\infty}, \s_M, g): L^2_1(M_{\infty}, W^+) \to L^2(M_{\infty}, W^-)\] 
and 
\[
D^+(N_{\infty}, \s_M, g): L^2_1(N_{\infty}, W^+) \to L^2(N_{\infty}, W^-)
\]
are both invertible. Then there exists $T_1 >0$ and $\epsilon_1 >0$ such that for all $T \geq T_1$ the family of operators
\[
D^-_z(X_T) D^+_z(X_T): L^2_2(X_T, W^+) \to L^2(X_T, W^+), \; |z|=1
\]
has no eigenvalues in $[0, \epsilon_1^2)$. 
\end{lem}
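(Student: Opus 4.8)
The plan is to argue by contradiction, using the localization (cut-off) technique that is standard in gluing theory for Dirac operators. Suppose the conclusion fails for every choice of $T_1$ and $\epsilon_1$. Then there are sequences $T_i\to\infty$, $z_i$ with $|z_i|=1$, numbers $\mu_i\in[0,\epsilon_i^2)$ with $\epsilon_i\to 0$, and unit-norm eigenvectors $\phi_i\in L^2_2(X_{T_i},W^+)$ with $D^-_{z_i}(X_{T_i})D^+_{z_i}(X_{T_i})\phi_i=\mu_i\phi_i$. Passing to a subsequence, $z_i\to z_\infty$ with $|z_\infty|=1$. Writing $\psi_i=D^+_{z_i}(X_{T_i})\phi_i$, integration by parts gives $\|\psi_i\|_{L^2}^2=\langle D^-_{z_i}D^+_{z_i}\phi_i,\phi_i\rangle=\mu_i\to 0$, so each $\phi_i$ is an approximate element of $\ker D^+_{z_i}(X_{T_i})$. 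I would then show that, after a further subsequence, the mass of $\phi_i$ concentrates on one of the cylindrical-end pieces $M_\infty$ or $N_\infty$, producing an approximate kernel element for the corresponding limiting Dirac operator there and contradicting its invertibility.

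The technical heart is a uniform exponential decay estimate for $\phi_i$ along the neck $I_{T_i}=[-T_i,T_i]\times Y$. For $i$ large the perturbation $\beta_{T_i}$ vanishes on the neck, and after a gauge transformation supported there the operator $D^+_{z_i}$ takes the cylindrical form $\rho(dt)\big(\tfrac{d}{dt}+D_{Y,z_i}\big)$, where $D_{Y,z_i}$ is the self-adjoint three-dimensional Dirac operator of $(Y,\mathfrak{t},h)$ twisted by the flat connection determined by $z_i$ and the restriction $df_{T_i}|_Y$ (which may be taken independent of $i$). The hypothesis that $D^+(M_\infty,\s_M,g)$ is invertible forces it to be Fredholm on unweighted $L^2$, hence, by the standard Fredholmness criterion for elliptic operators on cylindrical ends, forces the indicial operators $D_{Y,z}$ to be invertible; by compactness of $\{|z|=1\}$ there is a single $\delta_0>0$ with $\mathrm{spec}(D_{Y,z})\cap(-\delta_0,\delta_0)=\emptyset$ for all $|z|=1$. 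Decomposing $\phi_i|_{I_{T_i}}$ along the positive and negative spectral subspaces of $D_{Y,z_i}$ and running the usual Gronwall-type estimate for $\big(\tfrac{d}{dt}+D_{Y,z_i}\big)\phi_i=\rho(dt)^{-1}\psi_i$ yields, with the right-hand side tending to $0$ as $i\to\infty$,
\[
\big\|\phi_i\big\|_{L^2([-T_i/3,\,T_i/3]\times Y)}\ \le\ C\,e^{-\delta_0 T_i/3}\,\|\phi_i\|_{L^2(X_{T_i})}+C\,\|\psi_i\|_{L^2}.
\]

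Consequently, for $i$ large a definite fraction of the unit mass of $\phi_i$ lies either in $M_{T_i}\cup\big([-T_i,-\tfrac{T_i}{3}]\times Y\big)$ or in $\big([\tfrac{T_i}{3},T_i]\times Y\big)\cup N_{T_i}$; passing to a subsequence we may assume the former set carries $L^2$-mass at least $\tfrac12$. Choose cut-off functions $\chi_i$ on $X_{T_i}$ equal to $1$ on that set, supported in the copy of $M_{T_i}\subset M_\infty$ built from the left half of the neck, and with $|d\chi_i|\le C/T_i$, and regard $\chi_i\phi_i\in L^2_1(M_\infty,W^+)$. Then
\[
D^+_{z_\infty}(M_\infty,\s_M,g)(\chi_i\phi_i)=\chi_i\psi_i+\rho(d\chi_i)\,\phi_i+\rho\big((\ln z_i-\ln z_\infty)\,df_M+\beta_M-\beta_{M,T_i}\big)\chi_i\phi_i,
\]
and the right-hand side tends to $0$ in $L^2$ term by term: the first term since $\|\psi_i\|_{L^2}\to 0$; the second since $|d\chi_i|\le C/T_i\to 0$ while $\|\phi_i\|_{L^2}=1$; the third since $z_i\to z_\infty$ and $\beta_{M,T_i}\to\beta_M$ in $L^2_k\hookrightarrow L^\infty$. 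Since $D^+_{z_\infty}(M_\infty,\s_M,g)$ is invertible it satisfies an a priori estimate $\|u\|_{L^2_1}\le C\|D^+_{z_\infty}(M_\infty,\s_M,g)u\|_{L^2}$, whence $\|\chi_i\phi_i\|_{L^2}\to 0$, contradicting $\|\chi_i\phi_i\|_{L^2}\ge\tfrac12$. The case in which the mass concentrates toward $N$ is handled identically using $N_\infty$, and the resulting contradiction proves the lemma.

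The step I expect to be the main obstacle is the neck decay estimate, and in particular making the decay rate $\delta_0$ uniform simultaneously in the neck length $T$ and in the parameter $z$ ranging over the circle: this is exactly where the spectral gap of the three-dimensional operator $D_{Y,z}$ — supplied by the Fredholmness that is built into the invertibility hypothesis — together with compactness of $\{|z|=1\}$ is essential. The remaining ingredients (the cut-off bookkeeping, the convergence of the perturbation data, and the coercivity estimate coming from invertibility) are routine.
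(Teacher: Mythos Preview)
The paper does not prove this lemma itself; it is quoted from \cite[Proposition~7.3]{LRS}, so there is no in-paper argument to compare against directly. Your contradiction-and-cutoff strategy is the standard one and mirrors what the paper sketches for the closely related Lemma~\ref{4.15.1}.

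There is, however, a genuine gap where the $z$-twisting enters. You assert that invertibility of $D^+(M_\infty,\s_M,g)$ forces the indicial family $D_{Y,z}$ to be invertible for \emph{all} $|z|=1$, and later that $D^+_{z_\infty}(M_\infty,\s_M,g)$ is invertible. Neither follows from the stated hypothesis: Fredholmness of $D^+(M_\infty)$ on unweighted $L^2$ yields invertibility only of the single indicial operator $D_Y=D_{Y,1}$, and says nothing about $D_{Y,z}=D_Y-\ln z\cdot\rho(df_Y)$ for $z\neq 1$. In the paper's applications $Y=T^3$ and $[df|_Y]\neq 0$ (it is the latitude class), so $D_{Y,z}$ genuinely varies with $z$. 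The same objection applies to your final step: the hypothesis gives invertibility of $D^+(M_\infty)$, not of $D^+_{z_\infty}(M_\infty)=D^+(M_\infty)-\ln z_\infty\cdot\rho(df_M)$, and even Fredholmness of the latter would not by itself rule out a kernel. Your argument becomes correct under the stronger hypothesis that $D^+_z(M_\infty)$ and $D^+_z(N_\infty)$ are invertible for every $|z|=1$ --- compare the hypothesis of Lemma~\ref{4.15.1}, which assumes invertibility at the specific $z_0$ in play. Either \cite{LRS} states the result that way or supplies an additional argument bridging the gap; as your proof stands, the passage from $z=1$ to uniform control over the whole circle is unjustified.
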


This result motivates us to introduce the following notion. 
\begin{dfn}\label{dfad}
We say a metric $g \in \Met(X, h)$ admissible with respect to the decomposition $X=M \cup N$ if the spin Dirac operator
\begin{equation}
D^+(X_{\infty}, \s, g): L^2_1(X_{\infty}, W^+) \longrightarrow L^2(X_{\infty}, W^-)
\end{equation}
is an isomorphism. 
\end{dfn}
 
\begin{rem}\label{read}\hfill
\begin{enumerate}
\item[\upshape (i)] If we choose the perturbation $1$-form $\beta_T$ with small $L^2_k$-norm, then $D^+_{z, \beta}(X_T)$ is also invertible for all $z$ with $|z|=1$ when $g$ is admissible.
\item[\upshape (ii)] From Theorem 10.3 in \cite{LRS}, one can find admissible metrics $g \in \Met(X, h)$ if $X$ is spin cobordant to the empty set, and the associated spin Dirac operator $D_B(Y, h)$ is invertible.
\end{enumerate}
\end{rem}

\section{The Moduli Space over Manifolds with Cylindrical End}\label{2}
In this section we analyze the structure of moduli space over $4$-manifolds with cylindrical end in the following case. Let $(M, g, \s)$ be a Riemannian spin$^c$ manifold with compatible boundary $(T^3, h, \mathfrak{t})$, where $h$ is a flat metric, and $\mathfrak{t}$ the spin$^c$ structure on $T^3$ with $c_1(\mathfrak{t})=0$. We also assume that 
\begin{equation}
H_*(M; \Z) \cong H_*(D^2 \times T^2; \Z).
\end{equation}
To avoid redundancy of notations, in this section we denote by $Z=M \cup [0, \infty) \times Y$ the end-cylindrical manifold instead of $M_{\infty}$ in this section. The same as before we have an induced metric $g$ and a spin$^c$ structure $\s$ on $Z$. 

When analyzing the structure of moduli spaces of $(Z, g, \s)$, two issues arise naturally. First the perturbations $\beta$ we are using are compactly supported, which means the Seiberg-Witten equation on the end is not perturbed. Thus the unperturbed moduli space of $(Y, \mathfrak{t})$ forms a manifold of critical points for the Chern-Simons-Dirac functional instead of isolated points. Moreover there is a point where the Morse-Bott condition does not hold, i.e. the Hessian $\Hess \mathcal{L}$ is degenerate in the normal direction. Thus the local picture around monopoles in $\M_{g, \beta}(Z, \s)$ asymptotic to this singular point needs understanding. Second the condition $b^+(Z)=0$ makes the appearance of the reducible locus $\M^{\Red}_{g, \beta}(Z, \s)$ inevitable no matter how we  choose the perturbation $\beta \in \mathcal{P}$. Thus when there is a sequence of irreducibles approaching a reducible monopole, we need to analyze the local picture around that point. We resolve the first issue via examining the asymptotic map, and the second via examining the Kuranishi obstruction map. 

\subsection{The Asymptotic Map} 

We begin by reviewing the asymptotic map constructed in \cite[Chapter 4]{MMR}, which relates the moduli spaces $\M_{g, \beta}(Z, \s)$ and $\M_h(T^3, \mathfrak{t})$. Originally the asymptotic map is derived in the Yang-Mills set-up (c.f. \cite[Chapter 4]{MMR}). In Seiberg-Witten case it's derived in Nicolaescu's book \cite[Chapter 4]{N}, and the case for cylindrical end modeled on $[0,\infty) \times T^3$ by Morgan-Mrowka-Szabó in \cite{MMS}. The keypoint for the existence of the asymptotic map relies on that the monopoles have finite energy, and the energy of a gradient flowline is proportional to the drop of the Chern-Simons-Dirac functional. 

\begin{dfn}
We say a reducible monopole $\mathfrak{b}=[B, 0] \in \M_h(T^3, \mathfrak{t})$ is a singular point if $\ker D_B \neq 0$, otherwise a smooth point. 
\end{dfn}

The singular set of $\M_h(T^3, \mathfrak{t})$ is well-known. 
\begin{lem}\label{ST0}
There is a unique flat spin$^c$ connection $\Theta$ of $\mathfrak{t}$, up to gauge transformation, such that $\ker D_{\Theta} \neq 0$. Moreover $\ker D_{\Theta}=\C^2$.
\end{lem}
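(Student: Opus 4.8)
The statement is about $3$-dimensional Seiberg–Witten theory on $T^3$ with the torsion spin$^c$ structure $\mathfrak{t}$ (with $c_1(\mathfrak{t})=0$), so the reducible locus is the character variety $\chi(T^3)=\Hom(\pi_1(T^3),U(1))\cong T^3$, consisting of flat $U(1)$-connections on $\det S$. Every such class is represented by a flat spin$^c$ connection $B$ on $S$. The claim has two halves: (i) there is a unique flat connection $\Theta$ (up to gauge) with $\ker D_\Theta\neq 0$, and (ii) for that $\Theta$, $\dim_{\mathbb C}\ker D_\Theta=2$. I would prove this by an explicit harmonic-analysis computation on the flat torus, using the product flat metric $h$.

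\emph{Step 1: set up the Dirac operator on the flat torus.} Write $T^3=\mathbb R^3/\Lambda$ for a lattice $\Lambda$ (take the standard one; the flat metric is translation-invariant). The spinor bundle $S$ is trivial of rank $2$, and for the \emph{trivial} flat connection the Dirac operator is the constant-coefficient operator $D_0=\sum_{j=1}^3 \rho(dx_j)\,\partial_j$, i.e.\ $-i$ times a triple of anticommuting $2\times 2$ Pauli-type matrices. A general flat spin$^c$ connection differs from $B_0$ by a closed (hence, since flat $U(1)$-connections on $T^3$ are harmonic, \emph{harmonic}, i.e.\ constant) imaginary $1$-form $\tfrac12 a\otimes 1_S$ with $a=\sum a_j\,dx_j$, $a_j\in i\mathbb R$; so $D_B = D_0 + \tfrac12\rho(a)$. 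Thus on Fourier modes $e^{2\pi i\langle k,x\rangle}\otimes v$, $k\in\Lambda^*$, $v\in\mathbb C^2$, the operator acts as the matrix $\rho\big(2\pi i\sum k_j\,dx_j + \tfrac12 a\big)$, whose square is $-\big(\big|2\pi i k + \tfrac12 a_{\mathrm{vec}}\big|^2\big)\cdot\mathrm{Id}$ by the Clifford relation $\rho(\xi)^2=-|\xi|^2$. Hence $D_B$ has a kernel in the mode $k$ precisely when $2\pi i k + \tfrac12 a_{\mathrm{vec}}=0$ as an element of $i\mathbb R^3$ (equivalently $(\mathbb R^3)^*$).

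\emph{Step 2: translate to the character variety and extract uniqueness.} The connection $B$ with harmonic part $a$ represents the holonomy character $\chi_a\in\Hom(\pi_1 T^3,U(1))$, where $\chi_a(\gamma)=\exp(\oint_\gamma \tfrac12 a)$ on the determinant line — I should be careful with the factor of $2$ coming from $B^{\mathrm t}$ versus $B$ and from the even gauge action $u\cdot B^{\mathrm t}=B^{\mathrm t}-2u^{-1}du$, but this only shifts which lattice point is distinguished. Gauge transformations $u:T^3\to S^1$ change $a$ by $\Lambda^*$-periods, so the condition ``$2\pi i k+\tfrac12 a_{\mathrm{vec}}=0$ for some $k\in\Lambda^*$'' picks out exactly one point of $\chi(T^3)=i\mathbb R^3/\,(2\pi\Lambda^*)$ — up to the bookkeeping, the one corresponding to the half-lattice-point/``theta'' character, which is the class of the trivial connection in the determinant (the point where the two square roots of the trivial determinant flat connection coincide). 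Call its representative $\Theta$. For every other flat $B$, the minimum of $\big|2\pi i k+\tfrac12 a_{\mathrm{vec}}\big|^2$ over $k\in\Lambda^*$ is strictly positive, so $D_B$ is invertible. This gives half (i), uniqueness, and also the genericity of smooth points.

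\emph{Step 3: compute the dimension at $\Theta$.} At $\Theta$ the distinguished mode is a \emph{single} $k_0\in\Lambda^*$ (no other lattice point satisfies the equation, since the difference of two such would be a nonzero lattice point equal to $0$), and in that one mode $D_\Theta$ acts by the zero matrix on all of the $2$-dimensional fiber $\mathbb C^2$; in every other mode it acts invertibly. Therefore $\ker D_\Theta\cong\mathbb C^2$, proving (ii). I should also note $\ker D_\Theta=\ker D_\Theta^*$ here since on a closed manifold $D_B$ is self-adjoint (for the standard metric/Clifford conventions), consistent with the index being zero.

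\emph{Main obstacle.} Nothing here is deep; the only real care needed is the \emph{normalization bookkeeping}: getting the factors of $2$ and $2\pi$ right so that the distinguished connection is correctly identified (it is the flat connection whose holonomy on $\det S$ is trivial, equivalently the spin connection associated to the nontrivial spin structure on $T^3$ that bounds), and confirming that the half-period ambiguity $B$ vs.\ $B^{\mathrm t}$ does not accidentally produce several solutions. I would also want to double-check that the statement is for the product flat metric $h$ as assumed at the start of Section~\ref{2}; for that metric the harmonic $1$-forms are the constant ones and the argument above is complete, and the genericity/transversality statements ($\ker D_B=0$ for $B\neq\Theta$) follow immediately from the strict positivity in Step~2.
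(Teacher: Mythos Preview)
Your Fourier-analysis argument is correct and is the standard way to establish this fact. The paper does not actually prove Lemma~\ref{ST0}; it introduces it with the phrase ``the singular set of $\M_h(T^3,\mathfrak{t})$ is well-known'' and then moves on, so there is nothing to compare against beyond noting that you have supplied what the paper omits.

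One small correction to your closing remark: the distinguished connection $\Theta$ is the spin connection of the \emph{Lie group} spin structure on $T^3$ (the product of the non-bounding spin structures on each $S^1$ factor). This is the spin structure with Rohlin invariant $\mu(T^3,\tilde{\mathfrak{t}}_1)=8\bmod 16$, exactly as used in the paper's proof of Lemma~\ref{2.5}. Your parenthetical ``the nontrivial spin structure on $T^3$ that bounds'' is at best ambiguous: every spin $3$-manifold bounds since $\Omega_3^{\mathrm{Spin}}=0$, but the relevant distinguishing feature here is that $\tilde{\mathfrak{t}}_1$ is the spin structure whose restriction to each circle factor does \emph{not} extend over the disk. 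This does not affect your proof of the lemma itself, but it matters for how $\Theta$ is used downstream (in particular in Lemma~\ref{2.5}).
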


The group of components of $\mathcal{G}(\mathfrak{t})$ is identified as 
\[\pi_0(\mathcal{G}(\mathfrak{t}), 1) = H^1(T^3; \Z).\]
 There is a subgroup $\mathcal{G}_M(\mathfrak{t}) \subset \mathcal{G}(\mathfrak{t})$ consisting of maps $u: T^3 \to S^1$ that extend to $\tilde{u}: M \to S^1$. Let $d\theta \in H^1(S^1; \Z)$ be the fundamental class. The primary obstruction to extending $u$ over $M$ is given by $\delta ( u^* d\theta) \in H^2(M, T^3; \Z)$, where $\delta: H^1(T^3;\Z) \to H^2(M, T^3;\Z)$ is the connecting map. Thus $u \in \mathcal{G}_M(\mathfrak{t})$ if and only if $\delta ( u^* d\theta)=0$. Thus the quotient $\mathcal{G}(\mathfrak{t}) / \mathcal{G}_M(\mathfrak{t})=H^1(T^3;\Z) / \im i^*$ with $i^*: H^1(M;\Z) \to H^1(T^3;\Z)$ the restriction map. Denote by $\mathcal{M}_{M, h}(T^3, \mathfrak{t})=\mathcal{C}_l(\mathfrak{t})/ \mathcal{G}_M(\mathfrak{t})$, which we identify as a copy of $\chi_M(Y)$ given as the $\Z$-cover $p:\chi_M(T^3) \to \chi(T^3)$. 
\begin{prop}\label{2.1}(\cite[Theorem 2.2]{MMS})
The moduli space $\M_{g, \beta}(Z, \s)$ is compact, moreover there exists a continuous map 
\[
\partial_+: \M_{g, \beta}(Z, \s) \longrightarrow \M_{M, h}(T^3, \mathfrak{t}).
\]
\end{prop}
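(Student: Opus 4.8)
The plan is to prove compactness and construct $\partial_+$ in parallel, adapting the asymptotic analysis of \cite[Chapter 4]{MMR}, \cite[Chapter 4]{N} and \cite[Theorem 2.2]{MMS}, and exploiting the special feature of the present situation: $(T^3,\mathfrak{t})$ carries a flat metric and torsion spin$^c$ structure, so $\M_h(T^3,\mathfrak{t})$ consists of the gauge classes of flat connections on $\det S$ — no irreducible solutions occur — and $\mathcal{L}$ vanishes identically on this critical set. The first step is to record the standard a priori estimates for a monopole $[A,\Phi]\in\M_{g,\beta}(Z,\s)$: the Weitzenb\"{o}ck formula for $D^+_A\Phi=0$ together with the maximum principle gives a pointwise bound $\|\Phi\|_{L^\infty}^2\le C_0$, where $C_0$ depends only on $\inf_Z s$ and $\|\beta\|_{C^1}$ (and $C_0=\max(0,-\inf_Z s)$ on the unperturbed end), and the curvature equation then bounds $F^+_{\A}$ pointwise. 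Feeding these into the energy identity $\mathcal{L}(\gamma(t_0))-\mathcal{L}(\gamma(t_1))=\tfrac{1}{2}\mathcal{E}(A,\Phi)|_{[t_0,t_1]\times T^3}$ on the end, together with interior elliptic estimates on the compact piece $M$ (passing to a Coulomb-type slice on the collar to control the gauge-dependent term in $\mathcal{L}(\gamma(0))$), yields a bound on $\mathcal{E}_\beta(A,\Phi)$ uniform over the whole moduli space.

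\textbf{Compactness.} Given a sequence $[A_n,\Phi_n]\in\M_{g,\beta}(Z,\s)$, the uniform bounds let me run the usual Uhlenbeck-type patching and bootstrapping argument for the Seiberg--Witten equations — no bubbling occurs, since the spinor is pointwise bounded and Seiberg--Witten curvature does not concentrate conformally — and after gauge transformations a subsequence converges in $C^\infty_{loc}$ to a finite-energy monopole $[A_\infty,\Phi_\infty]$ on $Z$. The only way compactness could fail is loss of energy into the end, which, after recentering in the $t$-variable, would produce a nonconstant finite-energy gradient trajectory of $\mathcal{L}$ joining two critical points; but $\mathcal{L}\equiv 0$ on $\Crit(\mathcal{L})$ and is strictly decreasing along nonconstant trajectories, so no such trajectory exists. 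Hence $\mathcal{E}(A_n,\Phi_n)\to\mathcal{E}(A_\infty,\Phi_\infty)$, the convergence is strong in the appropriate (weighted) topology, and $\M_{g,\beta}(Z,\s)$ is compact.

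\textbf{The asymptotic map.} To build $\partial_+$, I would restrict a finite-energy monopole to $[0,\infty)\times T^3$ and pass to temporal gauge, obtaining a path $\gamma(t)=(B(t),\Psi(t))$ with $\dot\gamma=-\grad\mathcal{L}(\gamma)$ and $\int_0^\infty|\dot\gamma|^2<\infty$. By the \L{}ojasiewicz--Simon inequality for the real-analytic functional $\mathcal{L}$ — which reduces to the ordinary Morse--Bott estimate along the smooth part of $\M_h(T^3,\mathfrak{t})$, where $\Hess\mathcal{L}$ is non-degenerate transverse to the critical set — the path $\gamma(t)$ converges, at an exponential rate, to a critical point $\gamma_\infty\in\M_h(T^3,\mathfrak{t})$. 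A priori $\gamma_\infty$ is only defined up to $\mathcal{G}(\mathfrak{t})$, but the residual gauge freedom relating two temporal-gauge, finite-energy representatives is $t$-independent on the end and hence restricts at $t=\infty$ to the subgroup $\mathcal{G}_M(\mathfrak{t})$ of transformations extending over $M$; so the class of $\gamma_\infty$ is canonically defined in $\M_{M,h}(T^3,\mathfrak{t})=\mathcal{C}_l(\mathfrak{t})/\mathcal{G}_M(\mathfrak{t})=\chi_M(T^3)$, and this is $\partial_+$. Its continuity follows because, by the compactness just established, the exponential decay rate and constant near $\gamma_\infty$ can be chosen uniformly over $\M_{g,\beta}(Z,\s)$, so $C^\infty_{loc}$-nearby monopoles have nearby asymptotic limits.

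\textbf{Main obstacle.} The crux is the convergence in the third step at the singular flat connection $\Theta$ of Lemma \ref{ST0}, where $\Hess\mathcal{L}$ is degenerate in a direction normal to the character variety, so the naive Morse--Bott argument fails; there one must instead use a center-manifold reduction (or invoke the \L{}ojasiewicz inequality directly) to guarantee that the trajectory limits onto a single point rather than merely onto the critical set, and to extract the uniform exponential decay needed for continuity of $\partial_+$. A secondary technical point is the uniform energy bound of the first step, where one has to check that the compactly supported perturbation $\beta$ does not disturb the maximum-principle and integration-by-parts estimates.
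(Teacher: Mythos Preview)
The paper does not supply its own proof of this proposition; it is quoted directly from \cite[Theorem~2.2]{MMS}, with the surrounding text only flagging the key input (the energy identity $\mathcal{L}(\gamma(t_0))-\mathcal{L}(\gamma(t_1))=\tfrac12\mathcal{E}$) and then, in the paragraphs that follow, developing the center-manifold machinery of \cite[Chapter~5]{MMR} to handle the singular point~$\theta$. Your sketch is correct and recovers essentially the same argument: uniform energy bounds, no broken trajectories on the cylinder because $\mathcal{L}\equiv 0$ on $\chi(T^3)$, and convergence of the downward flowline to a unique critical point.

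The one genuine difference in emphasis is your use of the \L{}ojasiewicz--Simon inequality as the default tool for asymptotic convergence, falling back on center manifolds only at~$\theta$. The paper (following \cite{MMS}) commits to center manifolds throughout, and this buys something concrete later: the refined map $\tilde{\partial}_+$ into the center manifold $H^1_\theta$ is what drives the transversality statement of Proposition~\ref{2.12} and the boundary description in Proposition~\ref{4.8}. Your route is perfectly valid for the bare statement here, but would need to be supplemented by the center-manifold picture anyway once you reach those later results.

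One small imprecision: your justification that $\partial_+$ lands in $\M_{M,h}(T^3,\mathfrak{t})$ rather than $\M_h(T^3,\mathfrak{t})$ is not quite that the residual gauge is ``$t$-independent on the end''. A gauge transformation $u\in\mathcal{G}_{k+1}(Z)$ need not have a limit as $t\to\infty$; what matters is that its homotopy class $[u]\in H^1(Z;\Z)\cong H^1(M;\Z)$ restricts to $\im\bigl(i^*:H^1(M;\Z)\to H^1(T^3;\Z)\bigr)$, which is exactly the condition defining $\mathcal{G}_M(\mathfrak{t})$.
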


Following the notation in \cite{MMS} we denote $\bar{\partial}_{+}=p \circ \partial_{+}$. We write $\theta=[\Theta, 0]$, and $U_{\theta}$ a small open neighborhood of $\theta$ in $\M_h(Y, \mathfrak{t})$. The structure theorem \cite[Theorem 2.4]{MMS} tells us the deleted irreducible moduli space $\M^*_{g, \beta}(Z, \s) \backslash \bar{\partial}^{-1}_{+}(U_{\theta})$ is an oriented smooth manifold of dimension 
\begin{equation}\label{Dimfor}
d(\s)={1 \over 4} (c_1(\mathfrak{s})^2[M, \partial M]-2 e(M) -3\sigma(M)) +{b_1(T^3)-b_0(T^3) \over 2} +{\rho_B(T^3) \over 2},
\end{equation}
where $\rho_B(T^3)$ is the invariant given by the odd signature operator twisted by the flat connection $B$ defined in \cite{APS2}. Since both $T^3$ admits orientation-reversing diffeomorphisms, we know that $\rho_B(T^3)=0$. 

Injectivity of $i^*: H^2(M;\Z) \to H^2(T^3;\Z)$ is injective implies that $\s$ is torsion. Then $c_1(\s)^2[M, \partial M]=0$. Note that $b^{\pm}(M)=0$, thus $\sigma(M)=0$. The dimension formula (\ref{Dimfor}) now reads as 
\begin{equation}
d(\s)={1 \over 2}( b_1(M)-b_2(M)+1)=1.
\end{equation}
F
Taking into account of perturbations on $Z$, the reducible locus $\M^{\Red}_{g, \beta}(Z, \s)$ of the moduli space is given by 
\[
\{
A \in \mathcal{A}_k(\s): F^+_{\A}=2d^+ \beta, \int_Z |F_{\A}-4d\beta|^2 < \infty
\}/ \G_{k+1}(\s)
\]
The following lemma identifies the reducible locus with the Picard torus consisting of gauge equivalence classes of flat connections on $(Z, \s)$. 

\begin{lem}\label{2.4}
The reducible locus $\M^{\Red}_{g, \beta}(Z, \s)$ is identified as the Picard torus $\mathbb{T}(\s)$ consisting of gauge classes of flat connections on $\det W^+$. 
\end{lem}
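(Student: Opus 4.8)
The plan is to set up the standard bijection between the reducible locus and flat connections on $\det W^+$, then verify finiteness of energy on both sides. First I would observe that the reducible locus, as written, consists of $\G_{k+1}$-equivalence classes of $A \in \mathcal{A}_k(\s)$ with $F^+_{\A} = 2d^+\beta$ and $\int_Z |F_{\A} - 4d\beta|^2 < \infty$. Since $\beta$ is compactly supported, on the cylindrical end $[0,\infty) \times T^3$ the equation becomes $F^+_{\A} = 0$, and the finite-energy condition forces $F_{\A} \in L^2$. I would first argue that any such $A$ is gauge-equivalent to a genuinely flat connection: the $1$-form $\alpha = \A - 2d\beta$ (a correction that vanishes on the end) satisfies $d^+\alpha = 0$, and by the vanishing of $b^+(M)$ together with Hodge theory on the end-cylindrical manifold $Z$ — using that $H^2_{L^2}(Z) \cong H^2(M,\partial M;\R)$ has no self-dual part since $b^+(M)=0$ — one concludes $\alpha$ is gauge-equivalent to a harmonic, hence closed, representative; combined with the compactly supported $d\beta$ term this produces a flat connection in the gauge orbit. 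The upshot is that $\M^{\Red}_{g,\beta}(Z,\s)$ is identified with the space of flat connections on $\det W^+$ modulo gauge, which is the Picard torus $\mathbb{T}(\s) = H^1(Z;i\R)/H^1(Z;2\pi i\Z)$.

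The key steps, in order, are: (1) restrict the reducible equation to the end and deduce $F_{\A}$ is $L^2$ and asymptotically flat; (2) use $b^+(M) = 0$ and $L^2$-Hodge theory on $Z$ to show every reducible is gauge-equivalent to a flat connection, i.e. solve $d^+\alpha = 2d^+\beta$ with $\alpha - 2\beta$ representing a class modulo the gauge lattice; (3) check that two flat connections are gauge-equivalent on $Z$ iff they differ by an element of $H^1(Z;2\pi i\Z)$, giving the torus structure, where I use $H^1(Z;\R) \cong H^1(M;\R) \cong \R$ so $\mathbb{T}(\s)$ is indeed a circle (consistent with $b_1(M) = 1$); and (4) verify that flat connections automatically have finite energy, since $F_{\A} = 0$ off a compact set makes the energy integral $\mathcal{E}_\beta$ manifestly finite. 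I would also note the identification is a homeomorphism by continuity of the correspondence in $L^2_k$-topology.

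The main obstacle I expect is step (2): carefully justifying the $L^2$-Hodge-theoretic statement on the noncompact manifold $Z$ with a cylindrical end modeled on $T^3$. One must be careful that on such ends the relevant space of $L^2$ harmonic forms matches $\im(H^2(M,\partial M) \to H^2(M))$ — a point where the zero eigenvalue of the Laplacian on $T^3$ (flat metric) interacts with the cokernel — and that the reducible monopole's curvature, a priori only $L^2$, genuinely lies in the closure of exact forms plus harmonic forms with no self-dual component. Since $H^2(M,\partial M;\R) \to H^2(M;\R)$ and the condition $b^\pm(M) = 0$ were already recorded in the excerpt (giving $\sigma(M) = 0$), the cohomological input is available; the analytic input is essentially the Atiyah–Patodi–Singer framework for the signature operator on $Z$, or equivalently the decay estimates for finite-energy Seiberg-Witten reducibles already used to construct the asymptotic map $\partial_+$ in Proposition \ref{2.1}. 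I would lean on those decay estimates rather than redevelop the Hodge theory from scratch.
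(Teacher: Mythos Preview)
Your proposal has a genuine conceptual gap. You write that ``any such $A$ is gauge-equivalent to a genuinely flat connection,'' but gauge transformations preserve curvature: if $F^+_{\A} = 2d^+\beta$ with $d^+\beta \neq 0$, then $\A$ has nonzero curvature and no gauge transformation can make it flat. The lemma is asserting an identification of \emph{moduli spaces}, not that individual reducibles are flat. Relatedly, the expression ``$\alpha = \A - 2d\beta$'' does not typecheck ($\A$ is a connection, $d\beta$ is a $2$-form), and your step (2) as written cannot succeed because you are trying to gauge away curvature that is gauge-invariant.

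The paper's argument is both simpler and avoids this pitfall. It fixes a single reference reducible $A_0$ (chosen flat outside $\supp\beta$, so finite energy), writes any other reducible as $A = A_0 + a \otimes 1_{W^+}$, and observes $d^+a = 0$. The key step is the elementary identity
\[
\int_Z da \wedge da = \|d^+a\|_{L^2}^2 - \|d^-a\|_{L^2}^2,
\]
whose left side vanishes because $b^+(M)=b^-(M)=0$; hence $da = 0$. Quotienting the space of closed $1$-forms by the gauge action (identity component acting by $a \mapsto a - d\xi$, components by $H^1(Z;\Z)$) yields $H^1(Z;i\R)/H^1(Z;i\Z) = \mathbb{T}(\s)$ directly, with no $L^2$-Hodge theory needed. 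Your instinct to invoke APS-type harmonic form theory is not wrong in principle, but it is heavier machinery aimed at the wrong target.

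Two smaller corrections: first, $H_*(M;\Z) \cong H_*(D^2 \times T^2;\Z)$ gives $b_1(M) = 2$, not $1$, so $\mathbb{T}(\s)$ is a $2$-torus (this is confirmed later in the paper, Theorem~\ref{KUR}(ii)). Second, your step (4) is fine but superfluous in the paper's setup, since the reference $A_0$ is chosen with finite energy from the outset and every other reducible differs from it by a closed form.
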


\begin{proof}
Let $A_0$ be a spin$^c$ connection satisfying $F^+_{\A_0}=2d^+\beta$. Such a connection always exists and can be chosen to be flat outside $\supp \beta$, thus of finite energy. Let $A$ be any other spin$^c$ connection satisfying 
\[
F^+_{\A}=2d^+\beta, \text{ and } \int_Z |F_{\A}-4d\beta|^2 < \infty.
\]
Write $a \otimes 1_{W^+} =A- A_0$ with $a \in L^2(T^*Z \otimes i\R)$. Noting that $\A-\A_0=2a$, we get $d^+a=0$. Since $b^+(M)=b^-(M)=0$, we conclude 
\[
0=\int_{Z} da \wedge da =\int_Z |d^+a|^2 - \int_Z |d^-a|^2 \Longrightarrow da=0.
\]
Recall that $u \cdot \A -\A_0=2(a- u^{-1}du)$ for $u \in \G_{k+1}(\s)$. Since the component group of $\G_{k+1}(\s)$ is $H^1(Z; \Z)$, and any element in the identify component has the form $u=e^{\xi}$, $\xi \in L^2_{k+1}(i\R)$, acting on $a$ as $u \cdot a=a-d\xi$, thus the equivalence classes of the difference $[a]$ are parametrized by 
\[
H^1(Z; i\R) / H^1(Z; i\Z) = \mathbb{T}(\s). 
\]
\end{proof}

It turns out the image of the reducible locus $\M^{\Red}(Z)$ under the map $\bar{\partial}_+$ misses the singular point $\theta \in \chi(T^3)$. 
\begin{lem}\label{2.5} 
Given $(Z, \s, g)$ as above, then for generic small perturbation $\beta$, 
\[
\theta \notin \bar{\partial}_+(\M^{\Red}_{g, \beta}(Z, \s)).
\]
\end{lem}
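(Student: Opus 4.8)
\medskip

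The plan is to show that the obstruction to pushing the reducible locus away from $\theta$ under $\bar{\partial}_+$ is cut out transversally by a generic compactly-supported perturbation $\beta$, so that for a generic choice the ``bad'' set is empty on dimension grounds. First I would unwind what it means for a reducible $[A,0]\in\M^{\Red}_{g,\beta}(Z,\s)$ to have $\bar{\partial}_+([A,0])=\theta$. By Lemma \ref{2.4}, $\M^{\Red}_{g,\beta}(Z,\s)\cong\mathbb{T}(\s)$, the Picard torus of flat connections on $\det W^+$ (after subtracting a fixed reference connection $A_0$ with $F^+_{\A_0}=2d^+\beta$), so it is a torus of dimension $b_1(Z)=b_1(M)=1$. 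The asymptotic map $\partial_+$ restricted to a reducible records the asymptotic limit of the flat connection on the end $[0,\infty)\times T^3$, i.e. the restriction $A|_{T^3}$ viewed in $\chi_M(T^3)$, and $\bar{\partial}_+=p\circ\partial_+$ projects this to $\chi(T^3)$. So the condition $\bar{\partial}_+([A,0])=\theta$ says the boundary restriction of the flat connection is the distinguished flat connection $\Theta$ of Lemma \ref{ST0} (the one with $\ker D_\Theta\neq 0$), up to gauge.

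\medskip

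Next I would analyze the restriction map $r\colon\mathbb{T}(\s)=H^1(Z;i\R)/H^1(Z;i\Z)\to\chi(T^3)=H^1(T^3;i\R)/H^1(T^3;i\Z)$ induced by $i^*\colon H^1(Z;\Z)\to H^1(T^3;\Z)$. The point $\theta$ corresponds to a single point $[\Theta]$ in the $3$-torus $\chi(T^3)$. Since $b_1(Z)=1<3=b_1(T^3)$, the image $r(\mathbb{T}(\s))$ is (at most) a $1$-dimensional subtorus, so for a \emph{fixed} geometry it is already generically true that $[\Theta]\notin r(\mathbb{T}(\s))$ — but one must be careful that $\theta$ is a \emph{specific} point determined by $\mathfrak t$, not a free parameter, and that changing $\beta$ changes the reference connection $A_0$ and hence translates the image torus $r(\mathbb{T}(\s))$ inside $\chi(T^3)$. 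The key computation is therefore: as $\beta$ ranges over $\mathcal P=L^2_{k,c}(T^*Z\otimes i\R)$, the relative class (roughly $[F_{A_0}-4d\beta]\in H^2(Z,T^3;i\R)$, equivalently the asymptotic value $A_0|_{T^3}$ modulo the image of $i^*$) varies, and I would show this variation is \emph{surjective} onto the relevant quotient. Concretely, $d^+\beta$ determines the harmonic part of the curvature and hence the de Rham class of the limiting flat connection on $T^3$ up to the ambiguity coming from $H^1(Z)$; a form $\beta$ supported near $T^3$ in a collar can be chosen so that $d^+\beta$ shifts this asymptotic value by an arbitrary element transverse to $\im i^*$. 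Hence the map $\beta\mapsto \bar{\partial}_+(\M^{\Red}_{g,\beta})\subset\chi(T^3)$, which is a coset of the fixed subtorus $r(\mathbb{T}(\s))$, sweeps out all cosets; avoiding the single point $\theta$ is then a codimension-$\geq 2$ (in fact the complement of a measure-zero set in the parameter space of cosets) condition, so it holds for generic $\beta$.

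\medskip

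I would organize the genericity argument via a parametrized moduli space / Sard–Smale setup: consider the ``universal'' space $\{(\beta,[A,0]) : [A,0]\in\M^{\Red}_{g,\beta}(Z,\s),\ \bar{\partial}_+([A,0])=\theta\}$ inside $\mathcal P\times\big(\bigcup_\beta\M^{\Red}_{g,\beta}\big)$, show the defining equation $\bar{\partial}_+=\theta$ is cut out transversally in the $\beta$-direction using the surjectivity statement above (this replaces the usual ``irreducible part'' transversality argument, which fails here since these are reducibles), and conclude that the projection to $\mathcal P$ has image of the first category; any $\beta$ in the complement works, and small such $\beta$ exist since the condition is open and the complement is dense. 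The main obstacle — and the step I would spend the most care on — is the curvature/asymptotic-value bookkeeping: verifying precisely that perturbing by a compactly supported $\beta$ genuinely moves the asymptotic holonomy of the (forced) reducible in a direction complementary to $\im i^*\subset H^1(T^3;i\R)$, i.e. that the derivative of $\beta\mapsto \partial_+(\M^{\Red}_{g,\beta})$ in $H^1(T^3;i\R)/\im i^*$ is onto. This requires unwinding how $\partial_+$ is built in \cite{MMS} (it is essentially $\lim_{t\to\infty}A|_{Y_t}$ for the flat limit) together with the Hodge-theoretic identification of finite-energy flat connections on $Z$ modulo the effect of $d^+\beta$; once that linear-algebraic/Hodge-theoretic surjectivity is in hand, the genericity conclusion is routine.
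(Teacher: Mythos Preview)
Your approach has a genuine gap at the crucial step. You claim that varying the compactly supported perturbation $\beta$ moves the asymptotic coset $\bar\partial_+(\M^{\Red}_{g,\beta})\subset\chi(T^3)$, and you plan to use this freedom to generically avoid $\theta$. But this is false: if $A_0$ is any flat spin$^c$ connection (which exists since $c_1(\s)$ is torsion), then $A_\beta:=A_0+\beta$ satisfies $F^+_{\A_\beta}=2d^+\beta$, and since $\supp\beta\subset M$ is compact, $A_\beta$ agrees with $A_0$ on the cylindrical end. Thus the asymptotic value of every reducible for perturbation $\beta$ is exactly the same as for $\beta=0$; the image $\bar\partial_+(\M^{\Red}_{g,\beta})$ is the \emph{fixed} coset $[A_0|_{T^3}]+\im i^*$, independent of $\beta$. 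The ``derivative in the $\beta$-direction'' you want to be surjective onto $H^1(T^3;i\R)/\im i^*$ is identically zero, so the Sard--Smale argument has no traction. (Incidentally, $b_1(M)=b_1(D^2\times T^2)=2$, not $1$; the image is a $2$-torus in the $3$-torus $\chi(T^3)$, so even if you could translate it, avoiding a point would only be a codimension-$1$ condition.)

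The paper's proof is entirely different and does not use genericity at all: it shows $\theta\notin\bar\partial_+(\M^{\Red}_{g,0})$ for $\beta=0$ (hence for all small $\beta$ by continuity, and in fact for all $\beta$ by the observation above) via a Rohlin invariant obstruction. The singular connection $\Theta$ is the spin connection for the product spin structure $\tilde{\mathfrak t}_1$ on $T^3$, which has Rohlin invariant $\mu(T^3,\tilde{\mathfrak t}_1)=8\bmod 16$. One checks $M$ is spin (since $w_2$ vanishes by Wu's formula and $b^\pm(M)=0$), and if some flat spin$^c$ connection on $M$ restricted to $\Theta$, then analyzing $2$-torsion in $H^1$ forces it to be a spin connection, making $(T^3,\tilde{\mathfrak t}_1)$ a spin boundary of $M$; but $\sigma(M)=0$ contradicts $\mu=8$. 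This is a rigid topological obstruction, not a transversality statement.
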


\begin{proof}
Since $\bar{\partial}_+$ is continuous, from Lemma \ref{2.4} it suffices to prove the result in the case when the perturbation $\omega=0$. Let $\tilde{\mathfrak{t}}_1$ be the product spin structure on $T^3$ inducing $\Theta$ as the spin connection. It's a well-known fact that the Rohlin invariant corresponding to this spin structure $\mu(T^3, \tilde{\mathfrak{t}}_1)=8 \mod 16$. One way to see it is to run Kaplan's argument for the characteristic sublink given by Borromean rings with coefficient 0 on all three components (c.f. \cite[Exercise 5.7.17. (c)]{GS}). \\
Since $H_1(M; \Z)$ has no $2$-torsion, the universal coefficient theorem tells us that the second Stiefel-Whitney class $w_2(M) \in H^2(M; \Z/2)$ is determined by its evaluation on $H_2(M; \Z)$. Wu's formula says that $w_2 (x) = x \cdot x =0 \mod 2$ due to the fact that $b^{\pm}(M)=0$. We conclude that $w_2(M)=0$, thus $M$ is spin. \\
The spin$^c$ structure $\s$ over $M$ comes from a spin structure twisted by some line bundle. Fix a flat spin connection $A_0$ on $M$ for some spin structure $\tilde{\s}$ with $\tilde{\s}|_{T^3}=\tilde{\mathfrak{t}}_0$. Suppose there is a flat spin$^c$-connection $A_{\theta}$ on $M$ such that $A_{\theta}|_{T^3}=\Theta$. Write $B_0=A_0|_{T^3}$. Regarding both $\Theta$ and $B_0$ as spin$^c$-connections on $\mathfrak{t}$, $[B_0]-[\Theta] \in H^1(M; i\R) / H^1(M; i\Z)$ is $2$-torsion. Since the restriction map $i^*: H^1(M; \Z/2) \to H^1(T^3; \Z/2)$ is injective, we get that $[A_0]-[A_{\theta}]$ is $2$-torsion as well. Since $A_0$ is a $\spin$ connection, we conclude $A_{\theta}$ is a spin connection for some spin structure on $X$. Thus $(T^3, \tilde{\mathfrak{t}}_1)$ is the spin boundary of $(M, \tilde{\s}')$. However the signature $\sigma(M)=0$, which means $\mu(T^3, \tilde{\mathfrak{t}}_1) =0 \mod 16$. We get a contradiction.
\end{proof}

In general one can only expect continuity of the asymptotic map $\bar{\partial}_+$. However in our case, one can get the smoothness and transversality of $\bar{\partial}_+$ away from the singular points in $\chi(T^3)$ using the technique of center manifolds developed in \cite[Chapter 5]{MMR}. 

Recall that the deformation complex of the 3-dimensional Seiberg-Witten equations at a reducible monopole $\mathfrak{b}:=(B, 0) \in \mathcal{C}_l(T^3)$ is a $U(1)$-equivariant complex
\begin{equation}\label{3cx}
L^2_{l+1}(i\R) \xrightarrow{\delta_1} L^2_l(T^*T^3 \otimes i \R) \oplus L^2_l(S) \xrightarrow{\delta_2} L_{l-1}^2(T^*T^3 \otimes i \R) \oplus L_{l-1}^2(S),
\end{equation}
where 
\[
\delta_1(\xi)=(-d\xi, 0), \delta_2(b, \psi)=(*db, D_B \psi).
\]
Their formal $L^2$-adjoints are given by 
\[
\delta_1^*(b, \psi)=-d^*b, \delta_2^*(b, \psi)=(*db, D_B\psi).
\]
We write $\mathcal{K}_{l, \mathfrak{b}}=\ker \delta^*_1$, and $\mathcal{S}_{l, \mathfrak{b}}=\mathfrak{b} + \mathcal{K}_{l, \mathfrak{b}}$ for the slice at $\mathfrak{b}$. Any other monopole $(B', 0)$ can be gauge transformed into this slice by $e^{\xi}$ solving the equation
\begin{equation}
d^*d\xi=d^*(B' -B).
\end{equation}
Due to Hodge decomposition, such a solution always exists. The gradient of the Chern-Simons-Dirac functional restricted to this slice is 
\begin{equation}
\grad \mathcal{L}|_{\mathcal{S}_{l, \mathfrak{b}}}(b, \psi) = (*db+\Pi_{\mathcal{K}} \rho^{-1}(\psi \psi^*)_0, D_B \psi + \rho(b) \psi),
\end{equation}
where $\Pi_{\mathcal{K}}: L^2_l(T^*T^3\otimes i \R) \to \mathcal{N}_{l,\mathfrak{b}}$ is the $L^2$-orthogonal projection. We further compute its Hessian at $\mathfrak{b}'= \mathfrak{b} +(b, \psi)$ to be 
\begin{equation}\label{eq2.4}
\Hess \mathcal{L}|_{\mathcal{S}_{l, \mathfrak{b}'}} (b_1, \psi_1) = (*db_1+\Pi_{\mathcal{K}} \rho^{-1}(\psi \psi_1^*+ \psi_1\psi^*)_0, D_{B+b} \psi_1+ \rho(b_1) \psi). 
\end{equation}
Note that when $\mathfrak{b}'=(B' ,0)$ is a reducible monopole, the second component of $\Hess \mathcal{L}|_{\mathcal{S}_{l, \mathfrak{b}'}}$ is just $D_{B'}$, which has trivial kernel when $B' \neq \Theta$, the singular point. Since the restricted Hessian is self-adjoint, we conclude that it's invertible at reducible monopoles. We denote the smallest positive eigenvalue of $\Hess \mathcal{L}|_{\mathcal{S}_{l, \mathfrak{b}}}$ by $\mu_{\mathfrak{b}}$.  \\

Let's write $H^1_{\mathfrak{b}}$ for the first homology of the deformation complex (\ref{3cx}), and $H^{\perp}_{\mathfrak{b}}$ for the $L^2$-orthogonal complement of $H^1_{\mathfrak{b}}$ in $\mathcal{K}_{l, \mathfrak{b}}$. Let $\tilde{U}_{\mathfrak{b}} \subset \mathcal{K}_{l, \mathfrak{b}}$ be a $\Stab(\mathfrak{b})$-invariant neighborhood of $0$, $U_{\mathfrak{b}} = \tilde{U}_{\mathfrak{b}} \cap H^1_{\mathfrak{b}}$, and $V_{\mathfrak{b}}=\mathfrak{b}+\tilde{U}_{\mathfrak{b}} \subset \mathcal{S}_{l, \mathfrak{b}}$. In short a center manifold $\mathcal{C}_{\mathfrak{b}}$ for the pair $(U_{\mathfrak{b}}, \grad \mathcal{L}|_{V_{\mathfrak{b}}})$ is the graph of a smooth map from $U_{\mathfrak{b}}$ to $H^{\perp}_{\mathfrak{b}}$, which is preserved by the flow of $\grad \mathcal{L}|_{V_{\mathfrak{b}}}$, and contains all critical points of $\grad \mathcal{L}|_{V_{\mathfrak{b}}}$. For more details one may consult \cite[Definition 5.1.2]{MMR}. 

The advantage of the center manifolds is that any gradient flowline of the Chern-Simons-Dirac functional is exponentially close to a flowline on the center manifold which has finite dimension. The Yang-Mills case was proved in \cite[Theorem 5.2.2]{MMR}. For the Seiberg-Witten case, compare with \cite[Theorem 4.2.33]{N}. Due to this reason we will study the gradient flowline on the center manifold. Let's write $W^s_{\mathfrak{b}} \subset \mathcal{C}_{\mathfrak{b}}$ for the stable set in the center manifold, i.e. those points with limit existing in $\mathcal{C}_{\mathfrak{b}}$, and $W^{s, 0}_{\mathfrak{b}}$ for the stable set consisting of points that converge to $\mathfrak{b}$. For a smooth point $[\mathfrak{b}]$, one can find a trivial center manifold. 

\begin{lem}
Let $[\mathfrak{b}] \in \chi(T^3)$ be a smooth point with a neighborhood $V_{\mathfrak{b}}=\mathfrak{b}+\tilde{U}_{\mathfrak{b}}$ away from singular points. Then $U_{\mathfrak{b}}$ is a smooth center manifold for the pair $(U_{\mathfrak{b}}, \grad \mathcal{L}|_{V_{\mathfrak{b}}})$, i.e. the center manifold given by the graph of the zero map. 
\end{lem}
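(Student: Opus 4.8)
The plan is to unwind the definitions of the center manifold and of a smooth point and to check directly that the zero map works. Recall that a center manifold for the pair $(U_{\mathfrak{b}}, \grad \mathcal{L}|_{V_{\mathfrak{b}}})$ is, by \cite[Definition 5.1.2]{MMR}, the graph of a smooth $\Stab(\mathfrak{b})$-equivariant map $f: U_{\mathfrak{b}} \to H^{\perp}_{\mathfrak{b}}$ with $f(0)=0$, $df(0)=0$, which is preserved by the flow of $\grad \mathcal{L}|_{V_{\mathfrak{b}}}$ and contains all critical points. So to prove the lemma it suffices to verify three things for the graph of the zero map, i.e. for $U_{\mathfrak{b}} \subset H^1_{\mathfrak{b}}$ itself: (a) it contains all critical points of $\grad \mathcal{L}|_{V_{\mathfrak{b}}}$ lying in $V_{\mathfrak{b}}$; (b) it is invariant under the flow; (c) it has the correct tangency at $\mathfrak{b}$. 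The last point is immediate since the zero map has zero derivative, and equivariance of the zero map is trivial.

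First I would use the hypothesis that $[\mathfrak{b}]$ is a \emph{smooth} point, i.e. $\ker D_B = 0$. This forces the whole first homology $H^1_{\mathfrak{b}}$ of the deformation complex (\ref{3cx}) to sit inside the forms summand: since $\delta_2(b,\psi)=(*db, D_B\psi)$ and $D_B$ is injective, any element of $\ker \delta_2$ has $\psi=0$, so $H^1_{\mathfrak{b}} \subset \ker \delta_1^* \cap \ker d \subset L^2_l(T^*T^3 \otimes i\R)$ consists of (the gauge-slice representatives of) closed-coclosed imaginary $1$-forms; in particular the spinorial component of every point of $U_{\mathfrak{b}}$ is zero. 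Then on $U_{\mathfrak{b}}$ the formula for $\grad \mathcal{L}|_{\mathcal{S}_{l,\mathfrak{b}}}$ quoted above collapses: with $\psi = 0$, the quadratic term $\Pi_{\mathcal{K}}\rho^{-1}(\psi\psi^*)_0$ vanishes and $\grad \mathcal{L}|_{\mathcal{S}_{l,\mathfrak{b}}}(b,0) = (*db, 0)$. On the harmonic slice $H^1_{\mathfrak{b}}$ we have $db=0$, so in fact $\grad \mathcal{L}$ vanishes identically on $U_{\mathfrak{b}}$. This simultaneously gives (b) — the flow on $U_{\mathfrak{b}}$ is stationary, so $U_{\mathfrak{b}}$ is trivially flow-invariant — and shows every point of $U_{\mathfrak{b}}$ is a critical point.

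It remains to check (a), that $U_{\mathfrak{b}}$ captures \emph{all} critical points of $\grad \mathcal{L}|_{V_{\mathfrak{b}}}$, not just the reducible ones. A critical point $\mathfrak{b}+(b,\psi) \in V_{\mathfrak{b}}$ satisfies, from the slice gradient formula, $D_B\psi + \rho(b)\psi = 0$. Here I would invoke that $V_{\mathfrak{b}}$ is a sufficiently small neighborhood: shrinking $\tilde U_{\mathfrak{b}}$ so that $\|\rho(b)\|$ is small in the relevant operator norm, invertibility of $D_B$ (with a uniform lower bound on $|\mu_{\mathfrak{b}}|$, the smallest nonzero eigenvalue of the restricted Hessian, as noted in the paragraph preceding the lemma) forces $D_B + \rho(b)$ to remain invertible, hence $\psi = 0$; then the first component of the equation reads $*db = 0$, i.e. $db=0$, so $(b,0) \in H^1_{\mathfrak{b}}$ and the critical point lies in $U_{\mathfrak{b}}$. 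Thus $U_{\mathfrak{b}}$ is a center manifold.

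The only real subtlety — and the step I expect to be the main obstacle — is the smallness argument in (a): one must choose the neighborhood $V_{\mathfrak{b}}$ so that the perturbation term $\rho(b)\psi$ cannot conspire with $D_B$ to produce a spurious zero, and one must make sure this is compatible with the (fixed) choice of neighborhood used to define the slice and the center manifold. This is a routine elliptic-estimate argument (a Neumann-series perturbation of the invertible operator $D_B$, using the uniform gap $\mu_{\mathfrak{b}}$ and a Sobolev multiplication bound $\|\rho(b)\psi\|_{L^2_{l-1}} \lesssim \|b\|_{L^2_l}\|\psi\|_{L^2_l}$), so I would state it as such and refer to the corresponding statements in \cite[Chapter 4]{N} rather than reproduce the estimates. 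Everything else is bookkeeping once one observes that $\ker D_B = 0$ kills the spinor and decouples the equations.
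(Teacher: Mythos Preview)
Your proof is correct and follows the same approach as the paper: directly verify that $U_{\mathfrak{b}}$ satisfies the defining properties of a center manifold by using $\ker D_B = 0$ to kill the spinor component of $H^1_{\mathfrak{b}}$ and then computing $\grad \mathcal{L}$ on the form part. The only difference is in your step (a): where you run a Neumann-series perturbation argument to show $D_{B+b}$ stays invertible for small $b$, the paper simply invokes the already-established global fact that on flat $T^3$ all critical points of $\mathcal{L}$ are reducible (the moduli space is $\chi(T^3)$), so any critical point in $V_{\mathfrak{b}}$ is automatically of the form $(b,0)$ with $b$ harmonic and hence lies in $U_{\mathfrak{b}}$ --- your local argument is valid but unnecessary given that classification.
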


\begin{proof}
We regard $U_{\mathfrak{b}}$ as the graph of the zero map. For any $\mathfrak{b}'=(b', 0) \in U_{\mathfrak{b}}$, $\grad (\mathcal{L}|_{V_{\mathfrak{b}}})|_{{\mathfrak{b}'}}= (*db', 0) \in U_{\mathfrak{b}}$. Away from $\theta$ all critical points of $\grad \mathcal{L}$ has the form $(b, 0) \in U_{\mathfrak{b}}$. This verifies that $U_{\mathfrak{b}}$ is a center manifold. 
\end{proof}

Note that every point in $V_{\mathfrak{b}}$ is a critical point of the $\grad \mathcal{L}|_{V_{\mathfrak{b}}}$, we conclude that $W^s_{\mathfrak{b}}=V_{\mathfrak{b}}$, and $W^{s, 0}_{\mathfrak{b}}=\mathfrak{b}$. 

One can also construct a center manifold for the singular point $\theta$. 

\begin{lem}(\cite[Lemma 2.5]{MMS})
$H^1_{\theta} \cong \mathcal{H}^1(T^3; i\R) \oplus \C^2$ is a smooth center manifold around the singular point $\theta$ for the pair $(\mathcal{K}_{l, \theta}, \grad \mathcal{L}|_{\mathcal{S}_{l, \theta}})$, where $\mathcal{H}^1(T^3; i\R)$ consists of imaginary-valued harmonic $1$-forms on $T^3$, and $\C^2=\ker D_{\Theta}$. 
\end{lem}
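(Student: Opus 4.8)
The plan is to identify, at the singular reducible $\theta=[\Theta,0]$, an explicit finite-dimensional submanifold of the slice $\mathcal{S}_{l,\theta}$ that contains all nearby critical points and is preserved by the downward gradient flow, and then invoke the uniqueness/existence result for center manifolds (\cite[Chapter 5]{MMR}, or \cite[Theorem 4.2.33]{N}) to conclude that this submanifold \emph{is} a center manifold. Recall that the deformation complex (\ref{3cx}) at $\theta$ has $H^1_\theta\cong\mathcal{H}^1(T^3;i\R)\oplus\C^2$, where the first summand consists of imaginary-valued harmonic $1$-forms (since $b_1(T^3)=3$ and the flat connection is the spin connection, $H^1(T^3;i\R)$ contributes) and $\C^2=\ker D_\Theta$ by Lemma \ref{ST0}. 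The complementary summand $H^\perp_\theta$ in $\mathcal{K}_{l,\theta}=\ker\delta_1^*$ consists of the positive-codimension directions on which $\Hess\mathcal{L}|_{\mathcal{S}_{l,\theta}}$ is invertible (by the eigenvalue analysis following (\ref{eq2.4}), the Hessian restricted to the reducible part is $D_\Theta$, whose only kernel is the $\C^2$, and the $1$-form part $*d$ has kernel exactly the harmonics). So $H^1_\theta$ is precisely the kernel of the Hessian at $\theta$, i.e. the natural candidate for a center manifold.

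The key steps, in order: (1) Write down $\grad\mathcal{L}|_{\mathcal{S}_{l,\theta}}$ explicitly using the formula $\grad\mathcal{L}|_{\mathcal{S}_{l,\mathfrak{b}}}(b,\psi)=(*db+\Pi_{\mathcal{K}}\rho^{-1}(\psi\psi^*)_0,\,D_B\psi+\rho(b)\psi)$ with $B=\Theta$, restricted to $(b,\psi)\in\mathcal{H}^1\oplus\C^2$. (2) Check \emph{tangency}: show that for $(b,\psi)\in H^1_\theta$ the vector $\grad\mathcal{L}|_{\mathcal{S}_{l,\theta}}(b,\psi)$ again lies in $H^1_\theta$; concretely, $b$ harmonic gives $*db=0$, so the $1$-form component is $\Pi_{\mathcal{K}}\rho^{-1}(\psi\psi^*)_0$, which one must verify is again harmonic (here one uses that on the flat $T^3$ the quadratic form $\rho^{-1}(\psi\psi^*)_0$ built from covariantly constant spinors $\psi\in\ker D_\Theta$ is itself a constant, hence harmonic, $1$-form — this is the computation special to the flat torus), and the spinor component $D_\Theta\psi+\rho(b)\psi$ equals $\rho(b)\psi$ which for $b$ constant and $\psi$ constant is again a covariantly constant spinor, hence in $\C^2$. (3) Conclude $H^1_\theta$ is flow-invariant, contains $\theta$, and contains every critical point of $\grad\mathcal{L}|_{V_\theta}$ (any nearby critical point projects into $H^1_\theta$ by the usual argument that the $H^\perp$-component is killed by invertibility of the Hessian there, via a contraction-mapping/implicit-function argument). (4) Appeal to the center-manifold theorem of \cite[Chapter 5]{MMR} (Seiberg-Witten version \cite[Theorem 4.2.33]{N}) to upgrade "flow-invariant submanifold carrying all critical points and tangent to $\ker\Hess$ at $\theta$" to "$H^1_\theta$ is a (the) center manifold", which is exactly the assertion of \cite[Lemma 2.5]{MMS} cited here.

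The main obstacle will be step (2), the tangency/invariance check — specifically verifying that the nonlinear term $\Pi_{\mathcal{K}}\rho^{-1}(\psi\psi^*)_0$ maps $\C^2$ into the harmonic $1$-forms $\mathcal{H}^1(T^3;i\R)$ rather than producing genuinely nonconstant $1$-form components that would force the center manifold to be a nontrivial graph over $H^1_\theta$. This rests on the explicit structure of the flat $T^3$: parallel spinors and the Clifford module structure make $\rho^{-1}(\psi\psi^*)_0$ translation-invariant, hence harmonic, so that the projection $\Pi_{\mathcal{K}}$ acts trivially and the graph map is genuinely zero (or at least that $H^1_\theta$ itself, not merely its tangent space at $\theta$, is exactly invariant). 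Once this flatness computation is in hand the rest is a direct citation of the general center-manifold machinery; since this is \cite[Lemma 2.5]{MMS}, I would keep the argument brief and refer the reader to \cite{MMS} and \cite[Chapter 5]{MMR} for the analytic details, recording here only the flat-torus computation that makes $H^1_\theta$ itself (and not just a graph over it) the center manifold.
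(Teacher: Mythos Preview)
Your proposal is correct and matches the approach. The paper does not supply its own proof of this lemma---it simply cites \cite[Lemma 2.5]{MMS}---but your outline is exactly the natural argument, parallel to the proof the paper gives for the preceding smooth-point lemma: check that $H^1_\theta$ is the kernel of the Hessian, that the gradient vector field is tangent to $H^1_\theta$, and that all nearby critical points lie there. Indeed, immediately after stating the lemma the paper carries out your step (2), recording that $\grad\mathcal{L}|_{\mathcal{S}_{l,\theta}}(b,\psi)=(\rho^{-1}(\psi\psi^*)_0,\,\rho(b)\psi)$ for $(b,\psi)\in H^1_\theta$ and then identifying this via the quaternion model as an element of $\Ima\mathbb{H}\oplus\mathbb{H}\cong H^1_\theta$---precisely the flat-torus constancy computation you correctly isolate as the crux.
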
 

In this case the stable sets are explicitly written out under some identification in \cite[Section 2]{MMS}. We rephrase their identification below. For $(b, \psi) \in H^1_{\theta}$ the gradient of the Chern-Simons-Dirac functional is 
\begin{equation}
\grad \mathcal{L}|_{\mathcal{S}_{l, \theta}}(b, \psi) = (\rho^{-1}(\psi \psi^*)_0,  \rho(b) \psi).
\end{equation}
We identify the center manifold $H^1_{\theta}$ with $\Ima \mathbb{H} \oplus \mathbb{H}$ as follows. Note that $H^1_{\theta}= \mathcal{H}^1(Y; i\R) \oplus \ker D_{\theta}$, where $\ker D_{\theta}$ consists of constant spinors, which is identified with $\C^2$. We identify $\C^2 \cong \mathbb{H}$ via $(\alpha, \beta) \mapsto \alpha+ j \beta$. Let ${e_1, e_2, e_3}$ be an oriented orthonormal framing of $T^*T^3$ such that $\{e_1 \otimes i, e_2 \otimes i, e_3 \otimes i\}$ spans the harmonic $1$-forms $\mathcal{H}^1(T^3; i\R)$. Identify $\mathcal{H}^1(T^3; i\R) \cong \Ima \mathbb{H}$ via 
\[
e_1 \otimes i \mapsto i, e_2 \otimes i \mapsto j, -e_3 \otimes i \mapsto k.
\]
Let $\psi=(\alpha, \beta)$. We have 
\[
(\psi \psi^*)_0= 
\begin{pmatrix}
{1 \over 2}(|\alpha|^2 - |\beta|^2) & \alpha \overline{\beta} \\
\overline{\alpha} \beta & {1 \over 2}(|\beta|^2 - |\alpha|^2)
\end{pmatrix}.
\]
Note that the Clifford multiplication on $T^*T^3$ is given by Pauli matrices:
\[
\rho(e_1)=
\begin{pmatrix}
i & 0 \\
0 & -i
\end{pmatrix},
\rho(e_2)=
\begin{pmatrix}
0 & -1 \\
1 & 0
\end{pmatrix},
\rho(e_3)=
\begin{pmatrix}
0 & i \\
i & 0
\end{pmatrix}.
\]
Under this identification
\[
\rho(b)\psi \mapsto b\psi i \in \mathbb{H}, \; \; \rho^{-1}(\psi \psi^*)_0 \mapsto -{1 \over 2} \psi i \overline{\psi} \in \Ima \mathbb{H}, 
\]
where the latter part is given by quaternion multiplication. Thus the downward gradient flow equation in the center manifold $H^1_{\theta}$ is 
\begin{equation}\label{dgr}
\begin{split}
\dot{b}(t) & = {1 \over 2}\psi(t) i \overline{\psi}(t), \\
\dot{\psi}(t) & = -b(t)\psi(t)i .
\end{split}
\end{equation}

Denote by $\langle b_1, b_2 \rangle$ the real inner product, $b_1 \times b_2 =\Ima (b_1 b_2)$ the real cross product for $b_1, b_2 \in \Ima \mathbb{H}$. Lemma 2.7 in \cite{MMS} asserts that inside $H^1_{\theta}$ 
\begin{equation}
W^{s}_{\theta}=\{ (b, \psi) : 2|b|^2 \geq |\psi|^2, b \times \psi i\overline{\psi}=0, \langle b, \psi i \overline{\psi} \rangle \leq 0\},
\end{equation}
and 
\begin{equation}
W^{s, 0}_{\theta}=\{ (b, \psi) : 2|b|^2 = |\psi|^2, b \times \psi i\overline{\psi}=0, \langle b, \psi i \overline{\psi} \rangle \leq 0\}.
\end{equation}
Thus $W^{s}_{\theta} \backslash (0,0)$ is a smooth $6$-dimensional manifold with boundary $W^{s, 0}_{\theta} \backslash (0,0)$. The singular point $(0,0)$ has codimension $6$ in $W^{s}_{\theta}$. 

In order to get the transversality of the asymptotic map $\bar{\partial}_+$, we consider the based moduli spaces. Now fix a point $x_0=(0, y_0) \in \{0 \} \times T^3 \subset Z$, there are two ways to think of the based moduli space. One way is to consider the based gauge group $\tilde{\G}(\s)=\{ u \in \G(\s) : u(x_0)= \id\}$ and then define the based moduli space consisting of monopoles up to based gauge transformation. Another view point is to enlarge the configuration space to $\mathcal{C}_k(\s) \times \mathbb{S}(W^+|_{x_0})$, where $\mathbb{S}(W^+|_{x_0})$ is the unit circle in the fiber of $W^+$ over $x_0$ of the determinant line bundle. Then the based moduli space consists of framed monopoles $(A, \Phi, v)$ up to full gauge transformation. A similar construction can be carried out over the $3$-manifold $T^3$. 

Denote by $\tilde{\M}^*_{g, \beta}(Z, \s)$ the based irreducible moduli space of $(Z, \s, g, \beta)$, which is a $U(1)$-bundle over $\M^*_{g, \beta}(Z, \s)$. Recall that $U_{\mathfrak{b}} \subset H^1_{\mathfrak{b}}$ is $\Stab(\mathfrak{b})$-invariant, thus can be identified as a neighborhood of $[\mathfrak{b}]$ in $\chi(T^3)$. Denote by $\tilde{\M}^*(Z, U_{\mathfrak{b}})$ the part of the bundle over $\bar{\partial}_+^{-1}(U_{\mathfrak{b}})$. Now we prove the transversality for the asymptotic map.

\begin{prop} \label{2.12}
Denote $\M^*(Z, \theta^c)=\M^*_{g, \beta}(Z, \s) \backslash \bar{\partial}_+^{-1}(\theta)$. Then 
\item[\upshape (i)] the asymptotic map $\bar{\partial}_+: \M^*(Z, \theta^c) \to \M_h(T^3, \mathfrak{t})$ is smooth;
\item[\upshape (ii)] given any finite subcomplex $C \subset \M_h(T^3, \mathfrak{t})$, there is a Baire set of second category of small perturbations $\omega$ so that $\bar{\partial}_+|_{\M^*(Z, \theta^c) }$ is transverse to $C$. 
\end{prop}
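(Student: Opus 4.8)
The plan is to reduce both claims to standard Fredholm/Sard--Smale machinery for the asymptotic map, carried out on the finite-dimensional center manifolds so that the degeneracy of $\Hess \mathcal{L}$ away from $\theta$ causes no trouble. For part (i), I would first recall that by \cite[Theorem 5.2.2]{MMR} (Seiberg--Witten version \cite[Theorem 4.2.33]{N}) any finite-energy monopole on the cylindrical end is, after gauge fixing into the slice $\mathcal{S}_{l, \mathfrak{b}}$, exponentially asymptotic to a gradient flowline lying on the center manifold $\mathcal{C}_{\mathfrak{b}}$; the rate of exponential decay is governed by the smallest positive eigenvalue $\mu_{\mathfrak{b}}$ of the restricted Hessian, which is \emph{positive} precisely because we have deleted $\bar{\partial}_+^{-1}(\theta)$ (at every reducible $\mathfrak{b}' \neq \theta$ the Hessian computed in (\ref{eq2.4}) is invertible, and smooth points have trivial center manifold by the Lemma above). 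Thus for $[A, \Phi] \in \M^*(Z, \theta^c)$ the limit $\bar{\partial}_+[A, \Phi]$ lands in the open set of smooth points, where $W^s_{\mathfrak{b}} = V_{\mathfrak{b}}$ and $W^{s,0}_{\mathfrak{b}} = \mathfrak{b}$. Then I would set up, on a weighted Sobolev completion $L^2_{k, \delta}$ with $0 < \delta < \min \mu_{\mathfrak{b}}$ over $Z$, the extended moduli problem whose linearization is Fredholm by the invertibility of the asymptotic operator $\Hess \mathcal{L}|_{\mathcal{S}_{l, \mathfrak{b}}} - \delta$; the asymptotic evaluation $[A, \Phi] \mapsto \lim_t \gamma(t) \in \mathcal{C}_{\mathfrak{b}}$ followed by projection to $\chi(T^3)$ is a composition of the smooth restriction-to-the-end map with the (smooth, finite-dimensional) center-manifold flow map, hence smooth. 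This is exactly the argument of \cite[Chapter 5]{MMR} adapted to the $T^3$-end; I would cite it and only indicate the weight choice and Fredholmness.

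For part (ii), I would run a parametrized transversality (Sard--Smale) argument. Enlarge the picture to based moduli spaces: consider the universal moduli space
\begin{equation}
\tilde{\mathcal{M}}^{univ} = \{ (A, \Phi, v, \omega) : \mathfrak{F}_\omega(A, \Phi) = 0,\ \mathcal{E}_\omega(A, \Phi) < \infty \} / \tilde{\G}_{k+1}(\s)
\end{equation}
over a Banach space of small perturbations $\omega \in \mathcal{P}$, and the fibered asymptotic map $\tilde{\partial}_+^{univ}$ to the based version of $\chi_M(T^3)$. The key analytic input is that varying $\omega$ in a compactly supported neighborhood of the end already suffices to make the linearized operator of $\mathfrak{F}_\omega$ together with the asymptotic evaluation surjective onto $TC$ at every solution projecting into the finite subcomplex $C$ — this is a unique-continuation argument: if a cokernel element were $L^2$-orthogonal to $d\mathfrak{F}_\omega$ for all perturbations supported near the end, it would have to vanish there, hence (being in the kernel of a formally self-adjoint elliptic operator on the connected $Z$) vanish identically, contradicting nontriviality. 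I would phrase this as the universal space being a Banach manifold and the projection $\tilde{\mathcal{M}}^{univ} \to \{\text{perturbations}\}$ being Fredholm; Sard--Smale then gives a Baire-second-category set of $\omega$ for which the fiber is a manifold with $\tilde{\partial}_+$ restricted to it transverse to the based preimage of $C$. Quotienting the $U(1)$-action recovers transversality of $\bar{\partial}_+|_{\M^*(Z, \theta^c)}$ to $C$ downstairs; the reason for passing to based spaces is that the free $U(1)$-action on $\tilde{\M}^*$ makes the quotient smooth and lets the asymptotic map see the full normal directions in $\chi(T^3)$ rather than just $H^1_{\mathfrak{b}}$.

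The main obstacle, and the step deserving the most care, is the surjectivity/unique-continuation lemma underlying Sard--Smale in part (ii): one must check that \emph{compactly supported} interior perturbations (we are not allowed to perturb on the end, since the definition of $\lambda_{SW}$ forces $\supp \omega$ compact) are rich enough to kill the cokernel of the combined linearization-plus-asymptotic-evaluation map. This is plausible because $Z$ is connected and the relevant operators satisfy the strong unique continuation property, so a cokernel element vanishing on an open subset of $Z$ vanishes everywhere; but one has to be careful that the asymptotic evaluation contributes its cokernel only in the finite-dimensional factor $H^1_{\mathfrak{b}}$, and that the Morse--Bott degeneracy along the reducible torus $\mathbb{T}(\s)$ (which is why we restrict to a finite subcomplex $C$ and stay away from $\theta$) does not obstruct surjectivity. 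A secondary technical point is keeping the perturbations \emph{small} — the genericity set must be taken inside a fixed small ball so that Remark \ref{read}(i) and the structure results of Section \ref{2} continue to apply; this is handled by noting Baire-second-category subsets of a ball are still dense in the ball. Once these are in place, (i) and (ii) follow by assembling the cited center-manifold and Sard--Smale results.
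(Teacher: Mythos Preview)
Your approach is essentially the same as the paper's --- pass to based moduli spaces, invoke the Morgan--Mrowka--Ruberman center-manifold package to get smoothness and generic transversality of the refined asymptotic map $\tilde{\partial}_+$, then descend to the unbased map $\bar{\partial}_+$. The paper simply cites the Seiberg--Witten analogue of \cite[Theorem 9.0.1]{MMR} for the based statement and spends its effort only on the descent step, whereas you sketch the internal machinery (weighted Fredholm setup, unique continuation for Sard--Smale) that lies behind that citation.

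The one place where your write-up is slightly off is the justification for why the descent works. You say passing to based spaces ``lets the asymptotic map see the full normal directions in $\chi(T^3)$ rather than just $H^1_{\mathfrak{b}}$,'' but at a smooth point $[\mathfrak{b}]\neq\theta$ one has $H^1_{\mathfrak{b}}=\mathcal{H}^1(T^3;i\R)$, which \emph{is} the full tangent space to $\chi(T^3)$. The actual point, which the paper makes via a commutative diagram
\[
\begin{tikzcd}
\tilde{\M}^*(Z, U_{\mathfrak{b}}) \ar[r, "\tilde{\partial}_+"] \ar[d, "p_1"] &  H^1_{\mathfrak{b}} \times_{U(1)} \mathbb{S}(S|_{y_0}) \ar[d, "p_2"] \\
\M^*(Z, U_{\mathfrak{b}}) \ar[r, "{\bar{\partial}_+}"] & H^1_{\mathfrak{b}}
\end{tikzcd}
\]
is that the $U(1)$-action on $H^1_{\mathfrak{b}}$ is \emph{trivial} (there is no spinor component at a smooth reducible), so $p_2$ is a diffeomorphism; since $p_1$ is a submersion, smoothness and transversality of $\tilde{\partial}_+$ pass directly to $\bar{\partial}_+$. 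This is the clean reason the descent costs nothing, and it is worth stating in place of the ``full normal directions'' remark.
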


\begin{proof}
As pointed out in the paragraph above Theorem 2.8 in \cite{MMS}, the Seiberg-Witten analogue of \cite[Theorem 9.0.1]{MMR} assures that there is a smooth map 
\[
\tilde{\partial}_+ : \tilde{\M}^*(Z, U_{\mathfrak{b}}) \longrightarrow H^1_{\mathfrak{b}} \times_{U(1)} \mathbb{S}(S|_{y_0})
\]
transverse to any finite subcomplex for generic perturbation $\beta$. The map $\tilde{\partial}_+$ has the property that for any $[\Gamma, v] \in  \tilde{\M}^*(Z, U_{\mathfrak{b}})$, the corresponding path $\gamma(t)$ on $[T_0, \infty)$ is exponentially close to the flowline in $H^1_{\mathfrak{b}}$ starting at $\tilde{\partial}_+(\Gamma)$. In this way we can regard $\tilde{\partial}_+$ as a refinement of $\bar{\partial}_+$. When $[\mathfrak{b}] \neq \theta$, all gradient flowlines in $U_{\mathfrak{b}}$ are constant. Note that the $U(1)$ action on $H^1_{\mathfrak{b}}$ is trivial, thus we get the following commutative diagram:
\[
\begin{tikzcd}
\tilde{\M}^*(Z, U_{\mathfrak{b}}) \ar[r, "\tilde{\partial}_+"] \ar[d, "p_1"] &  H^1_{\mathfrak{b}} \times_{U(1)} \mathbb{S}(W^+|_{x_0}) \ar[d, "p_2"] \\
\M^*(Z, U_{\mathfrak{b}}) \ar[r, "{\bar{\partial}_+}"] & H^1_{\mathfrak{b}}
\end{tikzcd}
\]
Since the map $p_1$ is a submersion, $p_2$ is a diffeomorphism, smoothness and transversality of the map $\tilde{\partial}_+$ carries over to $\bar{\partial}_+$. 
\end{proof}

The transversality result gives us a little bit more understanding of the structure for the moduli space.

\begin{prop}\label{4.8}
After fixing a homology orientation, for generic perturbation $\beta$ the irreducible moduli space  $\M^*_{g, \beta}(Z, \s)$ is a (possibly noncompact) $1$-manifold with boundary of finite components. Moreover $\partial \M^*_{g, \beta}(Z, \s) \subset \bar{\partial}^{-1}_{\infty}(\theta)$. 
\end{prop}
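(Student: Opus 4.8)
The plan is to combine the dimension count $d(\s)=1$ from \eqref{Dimfor} with the structure and transversality results already established, and then control the two potential sources of bad behaviour: the singular asymptotic value $\theta$ and the reducible locus $\M^{\Red}_{g,\beta}(Z,\s)$. First I would fix a homology orientation and invoke the structure theorem \cite[Theorem 2.4]{MMS}, which gives that $\M^*_{g,\beta}(Z,\s)\setminus\bar\partial_+^{-1}(U_\theta)$ is a smooth oriented manifold of dimension $d(\s)=1$ for generic $\beta$; transversality of $\bar\partial_+$ away from $\theta$ is Proposition \ref{2.12}. So away from a neighbourhood of $\bar\partial_+^{-1}(\theta)$ we already have a smooth oriented $1$-manifold, possibly with boundary coming from where irreducibles limit onto the reducible locus $\M^{\Red}(Z)$. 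By Lemma \ref{2.5}, for generic small $\beta$ the image $\bar\partial_+(\M^{\Red}_{g,\beta}(Z,\s))$ misses $\theta$, so the two bad loci are disjoint and can be treated separately.

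Next I would handle the neighbourhood of $\bar\partial_+^{-1}(\theta)$. Here the relevant model is the stable set $W^{s}_\theta$ inside the center manifold $H^1_\theta\cong\Ima\mathbb{H}\oplus\mathbb{H}$ described above: $W^s_\theta\setminus(0,0)$ is a smooth $6$-manifold with boundary $W^{s,0}_\theta\setminus(0,0)$, and the cone point $(0,0)$ has codimension $6$. Using the refined asymptotic map $\tilde\partial_+$ of Proposition \ref{2.12} together with the gluing/center-manifold package of \cite[Chapter 9]{MMR} in the Seiberg-Witten setting (as in \cite[Theorem 2.8, Theorem 2.10]{MMS}), one gets that near $\bar\partial_+^{-1}(\theta)$ the moduli space $\M^*_{g,\beta}(Z,\s)$ is cut out transversally as a fibre product of $\bar\partial_+$ with $W^s_\theta\subset H^1_\theta$ (up to the $U(1)$-quotient), so its local dimension is again $d(\s)=1$; genericity of $\beta$ makes the fibre product avoid the codimension-$6$ cone point $(0,0)$ entirely. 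Hence no new boundary or singular behaviour is introduced by $\theta$, and in particular $\bar\partial_+^{-1}(\theta)\cap\M^*_{g,\beta}(Z,\s)$ is a smooth $1$-manifold whose ends limit toward $\theta$; this is the content of the claim $\partial\M^*_{g,\beta}(Z,\s)\subset\bar\partial_\infty^{-1}(\theta)$ after one checks that the only way an irreducible can fail to be interior is to limit onto a reducible, which — by the Kuranishi obstruction analysis promised in the section introduction — happens precisely over points whose asymptotic value is $\theta$.

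I would then identify the boundary of the $1$-manifold. A point of $\M^*_{g,\beta}(Z,\s)$ is a boundary point exactly when nearby irreducibles degenerate; by the decomposition $\M=\M^*\cup\M^{\Red}$ and compactness (Proposition \ref{2.1}), a sequence of irreducibles with no irreducible limit must converge to a reducible monopole, and the local Kuranishi model around such a reducible (to be analysed via the obstruction map, per the section overview) shows the irreducible part looks like a half-line ending on $\M^{\Red}(Z)$. Combining this with Lemma \ref{2.5} — the reducible locus maps away from $\theta$ — would at first sight suggest boundary points over $\theta^c$; the resolution is that such reducible limits are themselves \emph{interior} to the full moduli space and only bound the irreducible stratum, whereas the genuine topological boundary $\partial\M^*_{g,\beta}(Z,\s)$ (ends of the $1$-manifold) sits over $\theta$, where the stable-set model $W^{s,0}_\theta$ supplies the boundary face. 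I would make this precise by showing that over $\theta^c$ the local model is a manifold \emph{without} boundary (the reducible is a two-sided wall, matching $W^s_{\mathfrak b}=V_{\mathfrak b}$ for smooth $\mathfrak b$), so all true boundary lies in $\bar\partial_\infty^{-1}(\theta)$.

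Finiteness of the number of components follows from compactness of $\M_{g,\beta}(Z,\s)$ (Proposition \ref{2.1}): a compact $1$-manifold-with-boundary, here possibly non-compact only because the asymptotic map can approach $\theta$, has finitely many components, and the $\theta$-ends are finite in number because $\bar\partial_+^{-1}(\theta)$ is compact and the local $W^s_\theta$-model has finitely many rays. \textbf{The main obstacle} I expect is the delicate matching near $\theta$: one must show the fibre product of the smooth map $\bar\partial_+$ (equivalently $\tilde\partial_+$) with the singular stable set $W^s_\theta$ is, for generic $\beta$, a $1$-manifold with boundary that avoids the cone point and whose boundary is exactly the $W^{s,0}_\theta$-locus — this requires the full strength of the center-manifold exponential-decay estimates of \cite{MMR} adapted to Seiberg-Witten as in \cite{MMS}, and careful bookkeeping of the $U(1)$-action relating $\tilde\partial_+$ and $\bar\partial_+$, together with the Kuranishi obstruction analysis at the reducibles, which is the other technical input deferred to this section.
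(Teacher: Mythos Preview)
Your core ingredients are the paper's: work upstairs on the based moduli space $\tilde\M^*$ (dimension $2$), use transversality of $\tilde\partial_+$ so the image misses the cone point $(0,0)\in H^1_\theta$, read off the manifold-with-boundary structure from $W^s_\theta\setminus(0,0)$, then quotient by the free $U(1)$-action. But you have inverted the conclusion. The boundary of $\M^*$ comes precisely \emph{from} $\theta$: the boundary face $W^{s,0}_\theta\setminus(0,0)$ of $W^s_\theta\setminus(0,0)$ consists of center-manifold points whose downward flow converges to $\theta$, so its preimage under $\tilde\partial_+$ is exactly the set of monopoles asymptotic to $\theta$, and that is the whole boundary. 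Your sentence ``no new boundary or singular behaviour is introduced by $\theta$'' is backwards, and the claim that irreducibles limiting onto reducibles ``happens precisely over points whose asymptotic value is $\theta$'' directly contradicts Lemma~\ref{2.5}, which says the reducible locus maps \emph{away} from $\theta$.

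The underlying confusion is between the manifold-boundary $\partial\M^*$ (points of $\M^*$ with half-space neighborhoods) and open ends where $\M^*$ fails to be compact. Irreducibles limiting to reducibles produce the latter: the reducible limit lies in $\M^{\Red}$, not in $\M^*$, so this is non-compactness, not boundary --- hence the parenthetical ``(possibly noncompact)'' in the statement. The paper's proof does not invoke the Kuranishi picture at reducibles at all; that analysis (Proposition~\ref{5.7}) comes afterwards and describes those open ends, but it is irrelevant here. Once you separate the two notions the argument is short: away from $\theta$ the center manifold is trivial and $W^s_{\mathfrak b}=V_{\mathfrak b}$ has no boundary face, while near $\theta$ transversality of $\tilde\partial_+$ to $\{(0,0)\}$ and to the codimension-$1$ face $W^{s,0}_\theta$ makes $\tilde\M^*$ a $2$-manifold with boundary lying over $\theta$, and the $U(1)$-quotient gives $\partial\M^*\subset\bar\partial_+^{-1}(\theta)$.
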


\begin{proof}
The formal dimension of the based space $ \tilde{\M}^*_{g, \beta}(Z, \s)$ is $2$. Since $(0,0) \in H^1_{\theta}$ has codimension 7, transversality implies that $(0, 0) \notin \im \tilde{\partial}_{+}$. Note that $W^{s}_{\theta} \backslash (0,0)$ is a manifold with boundary $W^{s, 0}_{\theta} \backslash (0,0)$ of codimension $1$, and $\tilde{\M}^*_{g,\beta}(Z, \s)$ $\to \M^*_{g, \beta}(Z, \s)$ is a principal $U(1)$-bundle, then the conclusion follows.
\end{proof}

\subsection{Kuranishi Picture at Reducible Monopoles}
In this section we use the Kuranishi obstruction map to study the local picture at a reducible monopole over the cylindrical end $4$-manifold $(Z, \s, g)$ where a sequence of irreducibles converges to this reducible monopole. 

\subsubsection{Reformulation of the Moduli Space}
Since $Z$ is noncompact, in order to get a local deformation theory of the moduli space, we make use of the weighted Sobolev space. Let's extend the coordinate map $t: [0, \infty) \times T^3 \to [0, \infty)$ arbitrarily to a smooth map $\tau: Z \to [-1, \infty)$. Given any positive number $\delta >0$,  the weighted Sobolev space $L^2_{k,\delta}(W^+)$ is the completion of the space $C^{\infty}_c(W^+)$ via the norm 
\[
\|\Phi\|_{L^2_{k, \delta}} = \|e^{\tau \delta} \Phi\|_{L^2_k}. 
\]
In particular multiplication by $e^{\tau\delta}$ gives us an isometry 
\[
e^{\tau \delta} \cdot : L^2_{k, \delta}(W^+)  \longrightarrow L^2_k(W^+). 
\]
Differentiation on $C^{\infty}_c(W^+)$ is given by the connection $A_{\mathfrak{b}}$. Similarly one can define the weighted Sobolev spaces $L^2_k(\Lambda^* T^*Z \otimes i\R)$.

Fix $\epsilon_0 >0$. Let $U_{\theta}$ be an open neighborhood of $\theta \in\chi(T^3)$ so that 
\[
\M^{\Red}_{g, \beta}(Z, \s) \cap \bar{\partial}_+^{-1}(U_{\theta}) =\emptyset \text{ for all } \|\beta\|_{L^2_{k+1}} < \epsilon_0.
\]
Let $\mu_0=\min \{\mu_{\mathfrak{b}} : \mathfrak{b} \in \chi(T^3) \backslash U_{\theta} \}$. Choose $0< \delta < {\mu_0 \over 2}$. Denote by $U^c_{\theta}=\chi(T^3) \backslash U_{\theta}$ the complement of $U_{\theta}$. From \cite[Theorem 4.2.33]{N} we know that any finite energy monopole $\Gamma$ over $Z$ with $\bar{\partial}_{\infty}([\Gamma]) \in U^c_{\theta}$ has exponential decay over the end with exponent at least $\delta$:
\begin{equation}\label{expd}
|\Psi(t)| \leq C e^{-{ \delta} (t-T_0)},
\end{equation}
where $\Gamma=(A , \Phi), \Psi(t)=\Phi|_{T^3_t}$, $T_0$ is some fixed positive number. To each monopole $\mathfrak{b} \in \mathcal{C}_l(T^3)$, we associate the constant flowline $\gamma_{\mathfrak{b}}$ and the 4-dimensional monopole $\Gamma_{\mathfrak{b}}$ over $[T_0, \infty) \times T^3$. Extend it arbitrarily over $Z$ to an element $\Gamma_{\mathfrak{b}}=(A_{\mathfrak{b}}, 0) \in \mathcal{C}_k(Z)$. We define the $\mathfrak{b}$-asymptotic configuration space to be 
\begin{equation}
\mathcal{C}_{k, \delta}(Z, \mathfrak{b}):=\{ \Gamma \in \mathcal{C}_k(Z) : \Gamma- \Gamma_{\mathfrak{b}} \in L^2_{k, \delta}(T^*Z \otimes i\R \oplus W^+) \}.
\end{equation}
Denote by $\mathcal{G}_{k+1, \delta}(Z, \mathfrak{b}) \subset \mathcal{G}_{k+1}(Z)$ the subgroup consisting of elements preserving $\mathcal{C}_{k, \delta}(Z, \mathfrak{b})$. 
\begin{lem}\label{2.7}
$\mathcal{G}_{k+1, \delta}(Z, \mathfrak{b})$ is independent of $\mathfrak{b}$. Moreover 
\begin{align*}
\mathcal{G}_{k+1,\delta}(Z) = \{&  u \in \mathcal{G}_{k+1}(Z) : u|_{[T, \infty) \times T^3} =u_0 \cdot e^{\xi},  \text{ where } u_0 \in S^1, \\ 
&\xi \in L^2_{k+1, \delta}([T, \infty) \times T^3, i\R) \text{ for some } T >0\}.
\end{align*}
\end{lem}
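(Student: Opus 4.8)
The plan is to establish the two assertions in tandem: first show that the stabilizer condition defining $\mathcal{G}_{k+1,\delta}(Z,\mathfrak{b})$ does not actually depend on which reducible asymptotic configuration $\Gamma_{\mathfrak{b}}$ one uses, and second identify this common group with the explicit set described in the lemma. The key observation is that the gauge group acts on connections by translation through exact forms and on spinors by multiplication, so whether $u$ preserves $\mathcal{C}_{k,\delta}(Z,\mathfrak{b})$ is governed only by the behavior of $u^{-1}du$ and of $u$ itself on the cylindrical end, not by the base point $\mathfrak{b}$. Concretely, if $\Gamma=(A,\Phi)\in\mathcal{C}_{k,\delta}(Z,\mathfrak{b})$ then $u\cdot\Gamma=(A-u^{-1}du,u\Phi)$, and since $\Phi\in L^2_{k,\delta}(W^+)$ already, $u\Phi\in L^2_{k,\delta}(W^+)$ as soon as $u$ is bounded with bounded derivatives on the end (which every $L^2_{k+1,loc}$ gauge transformation with $k\ge 2$ is, near the end where things are cylindrical, once we know the structure below). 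So the only real constraint is that $(A-u^{-1}du)-A_{\mathfrak{b}} = (A-A_{\mathfrak{b}}) - u^{-1}du$ lie in $L^2_{k,\delta}(T^*Z\otimes i\R)$, i.e.\ that $u^{-1}du\in L^2_{k,\delta}(T^*Z\otimes i\R)$ on the end. This condition is manifestly independent of $\mathfrak{b}$, which gives the first claim.

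Next I would analyze the condition $u^{-1}du\in L^2_{k,\delta}$ on the end $[T_0,\infty)\times T^3$. Write $u=e^{i\phi}$ locally; globally $u:[T_0,\infty)\times T^3\to S^1$ has a degree/winding data recorded by $[u^*d\theta]\in H^1([T_0,\infty)\times T^3;\Z)\cong H^1(T^3;\Z)$. The form $u^{-1}du=-i\,du_{\mathbb{R}/\mathbb{Z}}$ is a closed imaginary $1$-form whose periods over $1$-cycles in $T^3$ are $2\pi i$ times integers; for $u^{-1}du$ to be in $L^2_{k,\delta}$ (exponential decay with exponent $\delta>0$) those periods must vanish, forcing $[u^*d\theta]=0\in H^1(T^3;\Z)$, hence $u|_{\{t\}\times T^3}$ is null-homotopic for $t$ large, and therefore $u=e^{\xi_0}$ on the end for a genuine (real-valued after multiplying by $i$) function, well-defined up to a constant in $S^1$. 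Splitting $\xi_0$ into its average over $T^3$-slices and the remainder, the average is a function of $t$ alone whose derivative must be $L^2_{k,\delta}$, so it converges to a constant $u_0\in S^1$ as $t\to\infty$, and after absorbing $u_0$ we are left with $u|_{[T,\infty)\times T^3}=u_0\cdot e^{\xi}$ with $\xi\in L^2_{k+1,\delta}$. Conversely any $u$ of this form obviously has $u^{-1}du = d\xi \in L^2_{k,\delta}$ on the end and preserves $\mathcal{C}_{k,\delta}(Z,\mathfrak{b})$, giving the reverse inclusion. One also checks such $u$ actually lie in $\mathcal{G}_{k+1}(Z)=\mathcal{G}_{k+1,loc}$: away from the end there is no constraint, and on the end $u_0 e^{\xi}$ with $\xi\in L^2_{k+1,\delta}\subset L^2_{k+1,loc}$ is fine.

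The main obstacle I anticipate is the bookkeeping at the interface between "local" regularity on the compact piece $M\subset Z$ and the weighted decay on the end: one must be careful that a gauge transformation which is merely $L^2_{k+1,loc}$ globally but has the decay structure above on the end genuinely preserves $\mathcal{C}_{k,\delta}$, which requires the multiplication estimate $u\cdot L^2_{k,\delta}\subset L^2_{k,\delta}$ and the product estimates for $u^{-1}du$ in $L^2_{k,\delta}$ under the hypothesis $k\ge 2$ (so that $L^2_{k+1}$ is a Banach algebra in dimension $4$ and acts on $L^2_k$). A secondary subtlety is handling the basepoint-freedom in writing $u=e^{\xi_0}$: the choice is unique only up to $2\pi i\Z$, but since we only claim the structure for $t$ large and allow the leading constant $u_0\in S^1$, this ambiguity is harmless. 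I expect the cohomological step --- that exponential decay of $u^{-1}du$ kills the winding class --- to be the conceptual heart, and everything else to be standard weighted-Sobolev manipulation that I would not spell out in full.
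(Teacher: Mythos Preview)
Your proposal is correct and follows essentially the same route as the paper: both arguments reduce membership in $\mathcal{G}_{k+1,\delta}(Z,\mathfrak{b})$ to the single condition $u^{-1}du\in L^2_{k,\delta}$ on the end (manifestly independent of $\mathfrak{b}$), use the smallness of $\|u^{-1}du\|_{L^2_k}$ on far-out blocks to kill the winding class in $H^1(T^3;\Z)$ and lift $u=e^{\xi}$, and then invoke a Poincar\'e-type estimate to upgrade $d\xi\in L^2_{k,\delta}$ to $\xi-\text{const}\in L^2_{k+1,\delta}$. The only cosmetic difference is that the paper works block-by-block on $[n-1,n]\times T^3$ while you split into the $T^3$-slice average plus a zero-average remainder; both lead to the same conclusion.
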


\begin{proof}
It's clear that the second statement implies the first one. Now suppose $u$ has the form as above. Let $\Gamma-\Gamma_{\mathfrak{b}} = (a , \phi) \in L^2_{k,\delta}(Z)$. Over the end $[T, \infty) \times T^3,$ we have $e^{\xi} \cdot (a, \phi) = (a - d\xi, e^{\xi} \phi).$ The Sobolev multiplication theorem implies that $\|e^{\delta \tau} \cdot e^{\xi} \phi\|_{L^2_k} < \infty$. By assumption $\|e^{\delta \tau}(a- d\xi) \|_{L^2_k} \leq \|a\|_{L^2_{k, \delta}} + \|d\xi \|_{L^2_{k, \delta}} < \infty$. Thus $u \in \mathcal{G}_{k+1, \delta}(Z)$. \\
The argument for the converse direction was inspired by that of \cite[Lemma 13.3.1]{KM1}. Suppose $u$ preserves $\mathcal{C}_{k, \delta}(Z, \mathfrak{b})$. Let $u_n=u|_{[n-1, n]\times T^3}$  which is regarded as a gauge transformation on $[0,1] \times T^3$. The component group of the gauge transformations over $[0,1] \times T^3$ is $\pi_0( \Map([0,1] \times T^3, S^1))$, which is identified as $H^1([0,1] \times T^3; \Z)$ via the assignment $[u] \mapsto (1/2\pi i) u^{-1}du$. Thus the component is determined by 
\[
\int_{[0, 1] \times T^3} \alpha_i \wedge {1 \over {2\pi i}} u^{-1}du,
\]
for a basis $\{\alpha_i\}$ of $H^3([0,1] \times T^3, \{0, 1\} \times T^3; \Z)$. Note that $\|u_n^{-1} du_n\|_{L^2_k} \to 0$ from the assumption. So when $n \geq T$ for some $T$ large enough , $u_n=e^{\xi_n}$ lies on the identity component, where $\xi_n \in L^2_{k+1}([0,1] \times T^3, i \R)$. Now we decompose $\xi_n=\xi_n^0+\xi_n^{\perp}$, where $\xi_n^0$ is constant, and $\int_{[0,1] \times T^3} \xi_n^{\perp} d \vol = 0$. Continuity of $\xi$ implies that $\xi_n^0$ are equal for all $n$. Thus we get a global decomposition $\xi = \xi^0+ \xi^{\perp}$ with $\xi^0$ constant and $\int_{[T, \infty) \times T^3} \xi^{\perp} d \vol =0$. Then we have 
\[
\| d\xi \|_{L^2_{k, \delta}} \leq \| A - A_{\mathfrak{b}} \|_{L^2_{k , \delta}} +\|A - d\xi -A_{\mathfrak{b}}\|_{L^2_{k, \delta}} < \infty.
\]
Moreover the Poincaré inequality implies 
\[
\| \xi_n^{\perp} \|^2_{L^2_{\delta}([0,1] \times T^3)} \leq C \| d\xi^{\perp}_n \|^2_{L^2_{\delta}([0,1] \times T^3)}.
\]
Thus 
\[
\| \xi^{\perp}\|^2_{L^2_{\delta}([T, \infty) \times T^3)}= \sum_n \|\xi^{\perp}_n \|^2_{L^2_{\delta}([0,1] \times T^3)} \leq C \| d\xi^{\perp} \|^2_{L^2_{\delta}( [T, \infty) \times T^3)} < \infty.\]
Thus $\xi^{\perp} \in L^2_{k+1, \delta}([T, \infty) \times T^3, i\R)$. Let $u_0=e^{\xi^0}$. This finishes the proof.
\end{proof}

We denote the quotient by $\mathcal{B}_{k, \delta}(Z, \mathfrak{b}) = \mathcal{C}_{k, \delta}(Z, \mathfrak{b}) / \mathcal{G}_{k+1, \delta}(Z).$ The $\mathfrak{b}$-asymptotic moduli space is defined to be 
\[
\mathcal{M}(Z, \mathfrak{b})=:\{ (A, \Phi) \in \mathcal{C}_{k, \delta}(Z, \mathfrak{b}) : \mathfrak{F}_{\beta}(A, \Phi)=0 \}/ \mathcal{G}_{k+1, \delta}(Z). 
\]

The following lemma identifies this moduli space with the preimage of $[\mathfrak{b}]$ under the asymptotic map $\bar{\partial}_+$. We write $[\tilde{\mathfrak{b}}] \in \chi_M(T^3)$ to represent the class. 
\begin{lem}\label{2.8}
Suppose $[\mathfrak{b}] \neq \theta \in \chi(T^3)$. Then there is a homeomorphism
\[ \mathcal{M}(Z, \mathfrak{b}) \longrightarrow \partial_+^{-1}([\tilde{\mathfrak{b}}]) \subset \M_{g, \beta}(Z, \s).\]
\end{lem}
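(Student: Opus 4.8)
The plan is to construct mutually inverse continuous maps between $\mathcal{M}(Z, \mathfrak{b})$ and $\partial_+^{-1}([\tilde{\mathfrak{b}}])$ by comparing two ways of describing the same gauge-equivalence classes of finite-energy monopoles with a fixed asymptotic limit. First I would produce the forward map. Given $[A, \Phi] \in \mathcal{M}(Z, \mathfrak{b})$, the defining property $\Gamma - \Gamma_{\mathfrak{b}} \in L^2_{k,\delta}$ forces $\Phi|_{T^3_t} \to 0$ and $A|_{T^3_t} \to \mathfrak{b}$, so after the usual conversion to temporal gauge over $[T_0, \infty) \times T^3$ the associated path $\gamma(t)$ converges to the constant flowline $\gamma_{\mathfrak{b}}$; hence $\partial_+([A,\Phi]) = [\tilde{\mathfrak{b}}]$, using that the $\mathfrak{b}$-asymptotic gauge group $\mathcal{G}_{k+1,\delta}(Z)$ (Lemma~\ref{2.7}) maps into $\mathcal{G}_M(\mathfrak{t})$ on the cover $\chi_M(T^3)$. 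This gives a well-defined map $\Psi_1 \colon \mathcal{M}(Z, \mathfrak{b}) \to \partial_+^{-1}([\tilde{\mathfrak{b}}])$, continuous because both the weighted-space topology and the asymptotic map are continuous.

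Next I would build the inverse. Take $[\Gamma] \in \partial_+^{-1}([\tilde{\mathfrak{b}}])$ with representative $\Gamma = (A, \Phi)$. Since $[\mathfrak{b}] \ne \theta$, the point $[\mathfrak{b}]$ is a smooth point of $\chi(T^3)$ lying in some $U^c_{\theta}$, so the restricted Hessian $\Hess \mathcal{L}|_{\mathcal{S}_{l,\mathfrak{b}}}$ is invertible with smallest positive eigenvalue $\ge \mu_0 > 2\delta$. By the exponential decay estimate (\ref{expd}) from \cite[Theorem 4.2.33]{N}, $\Psi(t)$ decays at rate at least $\delta$; a standard bootstrapping argument using the gradient-flow equation (\ref{CSW}) upgrades this to decay of the full configuration $\Gamma - \Gamma_{\mathfrak{b}}$ in every $L^2_{k,\delta}$-norm over the end, modulo a gauge transformation that is asymptotically constant. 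Choosing that gauge transformation — which exists since $\partial_+([\Gamma]) = [\tilde{\mathfrak{b}}]$ lifts to the $\mathfrak{b}$-asymptotic cover and therefore differs from $\Gamma_\mathfrak{b}$ over the end by an element of $\mathcal{G}_{k+1,\delta}(Z)$ composed with a constant — places $\Gamma$ inside $\mathcal{C}_{k,\delta}(Z, \mathfrak{b})$. This defines $\Psi_2 \colon \partial_+^{-1}([\tilde{\mathfrak{b}}]) \to \mathcal{M}(Z, \mathfrak{b})$, again continuous by the exponential decay being locally uniform over the compact moduli space (Proposition~\ref{2.1}).

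Finally I would check $\Psi_1 \circ \Psi_2 = \id$ and $\Psi_2 \circ \Psi_1 = \id$. The only subtlety is that the two moduli spaces are quotients by different gauge groups, $\mathcal{G}_{k+1}(Z) \supset \mathcal{G}_{k+1,\delta}(Z)$; one must verify that two elements of $\mathcal{C}_{k,\delta}(Z,\mathfrak{b})$ that are equivalent under the full gauge group $\mathcal{G}_{k+1}(Z)$ are already equivalent under $\mathcal{G}_{k+1,\delta}(Z)$. This follows from Lemma~\ref{2.7}: if $u \in \mathcal{G}_{k+1}(Z)$ carries one decaying configuration to another decaying configuration, then over the end $u^{-1}du$ must itself decay, forcing $u$ into the asymptotically-constant form characterizing $\mathcal{G}_{k+1,\delta}(Z)$.

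I expect the main obstacle to be the \emph{decay upgrade in the inverse map}: turning the bare pointwise decay of the spinor (\ref{expd}) into membership of the full configuration difference $\Gamma - \Gamma_{\mathfrak{b}}$ in the weighted space $L^2_{k,\delta}$, uniformly in the higher Sobolev derivatives, and simultaneously pinning down the correct gauge representative. This is where invertibility of the Hessian at the smooth point $\mathfrak{b}$ (and the choice $\delta < \mu_0/2$) does the real work, via the linearized decay estimate for the extended Hessian operator on the cylinder; once the linear estimate is in place, the nonlinear bootstrap and the gauge-fixing are routine but must be organized carefully because of the two-gauge-group bookkeeping.
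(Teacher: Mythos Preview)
Your proposal is correct and follows essentially the same route as the paper: define the forward map via the inclusion $\mathcal{G}_{k+1,\delta}(Z)\subset\mathcal{G}_{k+1}(Z)$, use Lemma~\ref{2.7} to show that full-gauge equivalence of decaying configurations forces $\mathcal{G}_{k+1,\delta}$-equivalence (injectivity), and use the exponential decay (\ref{expd}) at a smooth point $[\mathfrak{b}]\neq\theta$ to place any representative of $\partial_+^{-1}([\tilde{\mathfrak{b}}])$ into $\mathcal{C}_{k,\delta}(Z,\mathfrak{b})$ after a gauge transformation extending one on $T^3$ (surjectivity). The only cosmetic difference is that the paper argues bijectivity of a single map rather than building $\Psi_1,\Psi_2$ separately, and it treats your ``decay upgrade'' as already contained in the cited result \cite[Theorem~4.2.33]{N}, which gives exponential decay of the full configuration (not just the spinor) at a Morse--Bott critical point, so no separate bootstrap is needed.
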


\begin{proof}
Let $\tilde{\Gamma} \in \mathcal{C}_{k ,\delta}(Z, \mathfrak{b})$ be a lift of $[\Gamma] \in \M(Z, \mathfrak{b})$. Then we assign the class $[\tilde{\Gamma}] \in \partial_+^{-1}([\tilde{\mathfrak{b}}])$ to $[\Gamma]$. Since the gauge group $\mathcal{G}_{k+1, \delta}(Z) \subset \mathcal{G}_{k+1}(Z)$, it follows that the assignment is well-defined. 

To see injectivity, suppose we have $\tilde{\Gamma}_1, \tilde{\Gamma}_2$ of lifts $[\Gamma_1]$ and $[\Gamma_2]$ respectively satisfying $[\tilde{\Gamma}_1] =[\tilde{\Gamma}_2]$ in $\partial_+^{-1}([\tilde{\mathfrak{b}}])$. Then there is a gauge transformation $u \in \mathcal{G}_{k+1}(Z)$ such that $u \cdot \tilde{\Gamma}_1 = \tilde{\Gamma}_2$. By Lemma \ref{2.7} we know $u \in \mathcal{G}_{k+1,\delta}(Z)$. Thus $[\Gamma_1]=[\Gamma_2] \in \mathcal{M}(Z, \mathfrak{b})$. 

For surjectivity, let's take $\Gamma \in \M_{g, \beta}(Z, \s)$ such that $\partial_+(\Gamma) = \mathfrak{b}'$ and $[\tilde{\mathfrak{b}}']=[\tilde{\mathfrak{b}}]$. Thus $\exists u \in \mathcal{G}_{l+1}(T^3)$ that extends to $\tilde{u} \in \mathcal{G}_{k+1}(Z)$ with $u=\tilde{u}|_{[T_0, \infty) \times T^3}$ satisfying $u \cdot \mathfrak{b}'= \mathfrak{b}$. It follows from (\ref{expd}) that $\Gamma - \Gamma_{\mathfrak{b}'} \in L^2_{k, \delta}(Z)$. Then $\tilde{u} \cdot \Gamma - \Gamma_{\mathfrak{b}} \in L^2_{k, \delta}(Z)$. Thus $[\tilde{u} \cdot \Gamma] \in \M(Z, \mathfrak{b})$, which is mapped to $[\Gamma] \in \M_{g, \beta}(Z, \s)$. 

The continuity of the map and its converse is clear from the nature of these spaces. 
\end{proof}

From Lemma \ref{2.8} we conclude there is a homeomorphism
\begin{equation}\label{imu}
\M(Z, U_{\theta}^c):= \bigcup_{ [\tilde{\mathfrak{b}}] \in \chi_M(T^3) \backslash p^{-1}(U_{\theta})} \M(Z, [\tilde{\mathfrak{b}}]) \cong \M_{g, \beta}(Z, \s) \backslash \bar{\partial}_+^{-1}(U_{\theta}). 
\end{equation}
It's clear that this homeomorphism also identifies those two sets as smooth stratified spaces (see for example \cite[Chapter 8]{MMR}).

\subsubsection{Deformation Complex}
Having identified the moduli spaces, we are now ready to study the deformation theory at a reducible monopole $[\Gamma]=[A, 0] \in \M^{\Red}_{g, \beta}(Z, \s)$. The deformation complex at $(A, 0)$ is given by 
\begin{equation}\label{4cx}
\hat{L}^2_{k+1, \delta}(i \R) \xrightarrow{\delta_{1, \Gamma}} \hat{L}^2_{k, \delta}(T^*Z \otimes i \R) \oplus L^2_{k, \delta}(W^+) \xrightarrow{\delta_{2, \Gamma}} L^2_{k-1, \delta} (\Lambda^+  \otimes i\R \oplus W^-),
\end{equation}
where 
\[
\delta_{1, \Gamma}(\xi) =(-d\xi, 0), \; \delta_{2, \Gamma}(a, \phi)=(d^+a, D^+_A \phi).
\]
We write $\hat{L}^2_{k+1, \delta}(i \R)$ for the Lie algebra of $\mathcal{G}_{k+1, \delta}(Z)$ identified as 
\[
 \{\xi \in L^2_{k+1, loc}(Z, i\R) : d\xi \in L^2_{k, \delta}(Z, i\R) \}, 
\]
$\hat{L}^2_{k, \delta}(T^*Z \otimes i\R)$ for the tangent space of $\mathcal{A}_{k}(Z, U_{\theta}^c)$  at $A$ identified as 
\[
 \{a + \varphi b : a \in L^2_{k, \delta}(T^*Z \otimes i\R), b \in i^* \mathcal{H}^1(M; i\R) \},
\]
where $\varphi: Z \to [0,1]$ is a cut-off function such that $\varphi|_{M}=0, \varphi|_{[1, \infty) \times T^3}=1$, and  $i^*:\mathcal{H}^1(M; i\R) \to \mathcal{H}^1(T^3; i\R)$ is the restriction map on harmonic $1$-forms. Note that the stabilizer of $\Gamma$ is $U(1)$ consisting of constant gauge transformations, thus the complex (\ref{4cx}) is $U(1)$-equivariant. 
Sitting inside of the complex (\ref{4cx}) is a subcomplex 
\begin{equation}\label{sub4cx}
L^2_{k+1, \delta}(i \R) \xrightarrow{\delta_{1, \Gamma}} L^2_{k, \delta}(T^*Z \otimes i \R \oplus W^+) \xrightarrow{\delta_{2, \Gamma}} L^2_{k-1, \delta} (\Lambda^+ T^*Z \otimes i\R \oplus W^-),
\end{equation}
whose quotient is a finite dimensional complex 
\begin{equation}
i\R \to i^* \mathcal{H}^1(M; i\R)  \to 0,
\end{equation}
where $\partial_+(A, 0) = \mathfrak{b}=(b,0)$. The arguments in \cite{APS1} implies that the complex (\ref{sub4cx}) is Fredholm for $\delta$ small enough. Thus the total deformation complex (\ref{4cx}) is Fredholm as well. Since we are dealing with weighted spaces, the formal adjoints are slightly different: 
\[
\delta^*_{1, \Gamma}(a, \phi) = -e^{-\tau \delta} d^* e^{\tau \delta} a, \; \delta^*_{2, \Gamma}(\omega, \phi)=e^{-\tau \delta}(d^* e^{\tau \delta} \omega, D_A^- e^{\tau \delta} \phi). 
\]
The homology of the subcomplex (\ref{sub4cx}) was computed by Taubes in \cite[Proposition 5.1]{T1}, though his set-up is slightly different from ours. A similar computation for the homology of (\ref{4cx}) in the case of $3$-dimensional cylindrical-ended manifolds was carried out by Lim in \cite[Proposition 6.1]{Lim}.
\begin{prop}\label{2.9}
We denote the complex (\ref{4cx}) by $E_{\delta}(\Gamma)$. Its homology is given by: 
\begin{enumerate}
\item[\upshape (i)] $H^0(E_{\delta}(\Gamma)) = H^0(Z ; i\R)$,
\item[\upshape (ii)] $H^1(E_{\delta}(\Gamma))= H^1(Z; i\R) \oplus \ker D_A^+$, 
\item[\upshape (iii)] $H^2(E_{\delta}(\Gamma))= \hat{H}^+_c(Z; i\R) \oplus \ker D^{+, *}_{A, \delta} = \ker D^-_{A, \delta}$, where  $\hat{H}^+_c(Z; i\R)$ is the image of $H^+_c(Z; i\R)$ in $H^2(Z; i\R)$ under the inclusion map, and 
\[
D^{+, *}_{A, \delta} =e^{-\tau \delta} D_A^- e^{\tau \delta}
\]
is the $L^2_{\delta}$-adjoint of $D^+_{A, \delta}$.  
\end{enumerate}
\end{prop}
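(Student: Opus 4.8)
The plan is to compute the homology of the Fredholm complex $E_\delta(\Gamma)$ directly, exploiting the fact that at a reducible configuration $\Gamma = (A,0)$ the deformation complex splits into the sum of a $1$-form (abelian gauge-theoretic) piece and a Dirac piece, since all the quadratic cross-terms in $\delta_{2,\Gamma}$ involve the spinor component $\Phi$, which vanishes. First I would record this splitting: the operator $\delta_{2,\Gamma} \oplus \delta_{1,\Gamma}^*$ acting on $\hat{L}^2_{k,\delta}(T^*Z\otimes i\R)\oplus L^2_{k,\delta}(W^+)$ decomposes as $(d^+ \oplus d^*e^{\tau\delta}$-type operator on forms$)\,\oplus\, D^+_{A,\delta}$ on spinors, so the cohomology also splits accordingly. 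Then $H^i(E_\delta(\Gamma))$ is the direct sum of a de Rham–type contribution and a Dirac contribution, and I treat each separately.

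For the Dirac part, the contribution to $H^1$ is $\ker D^+_{A,\delta}$ and to $H^2$ is $\cok D^+_{A,\delta} \cong \ker D^{+,*}_{A,\delta} = \ker D^-_{A,\delta}$ (here using that $\delta$ is small and the $L^2_\delta$-adjoint is $e^{-\tau\delta}D^-_A e^{\tau\delta}$, together with Fredholmness from the APS-type analysis cited); since $\delta < \mu_0/2$ and $[\mathfrak b]\neq\theta$, $\ker D^+_{A,\delta} = \ker D^+_A$ as stated. For the abelian part, I would identify the cohomology of the weighted complex
\[
\hat{L}^2_{k+1,\delta}(i\R) \xrightarrow{\ -d\ } \hat{L}^2_{k,\delta}(T^*Z\otimes i\R) \xrightarrow{\ d^+\ } L^2_{k-1,\delta}(\Lambda^+\otimes i\R).
\]
Because the weight $\delta>0$ forces decay on the cylindrical end, the relevant groups are the compactly-supported-type cohomology of $Z$: $H^0 = H^0(Z;i\R)$ (constants, which on the end lie in the domain thanks to the enlarged $\hat L^2_{k+1,\delta}$ specification), $H^1 = H^1(Z;i\R)$ via the harmonic representatives $i^*\mathcal H^1(M;i\R)$ that were built into the definition of $\hat L^2_{k,\delta}(T^*Z\otimes i\R)$, and the $H^2$ term is the self-dual part, which by Hodge theory on the manifold with cylindrical end gives the image $\hat H^+_c(Z;i\R)$ of $H^+_c$ in $H^2$. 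This is essentially Taubes' computation in \cite[Proposition 5.1]{T1} for the subcomplex (\ref{sub4cx}), combined with the finite-dimensional quotient complex $i\R \to i^*\mathcal H^1(M;i\R)\to 0$ via the long exact sequence of the pair; chasing that sequence upgrades the $T^1(Z)$-type answer for the subcomplex to the stated answer for the full complex $E_\delta(\Gamma)$.

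I would then assemble: $H^0 = H^0(Z;i\R)$ (no Dirac contribution in degree $0$ as $\Gamma$ is reducible and $\ker\delta_{1,\Gamma}$ is just the constants), $H^1 = H^1(Z;i\R)\oplus\ker D^+_A$, and $H^2 = \hat H^+_c(Z;i\R)\oplus\ker D^{+,*}_{A,\delta} = \ker D^-_{A,\delta}$, the last identification being that the sum of the form- and spinor-cokernels is exactly the full kernel of the weighted adjoint $D^-_{A,\delta}$ of $\delta_{2,\Gamma}$. The main obstacle I anticipate is the abelian $H^2$ computation on the cylindrical-end manifold: one must be careful about which $L^2$ versus $L^2_\delta$ cohomology appears, and about the role of the end $T^3$ (whose $b^+=0$ and whose harmonic forms do \emph{not} decay), so that the answer is the image $\hat H^+_c(Z;i\R)$ rather than all of $H^+_c$ or all of $H^2$; pinning this down requires the mapping-cone / long-exact-sequence bookkeeping between (\ref{sub4cx}), the finite-dimensional quotient, and (\ref{4cx}), together with invoking Taubes' and Lim's computations for the comparison. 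Everything else is the standard elliptic package — Fredholmness for small $\delta$, Hodge-type decompositions in weighted spaces, and the identification of cokernels with kernels of weighted adjoints — which I would cite rather than redo.
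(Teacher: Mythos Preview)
Your overall strategy---splitting the complex into a form part and a Dirac part at a reducible $\Gamma=(A,0)$---is exactly what the paper does, and your treatment of the Dirac summand is correct. Where you diverge is in how you handle the form (abelian) part and in your reading of the final equality in (iii).

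For the form part, the paper does not invoke the long exact sequence of the pair (subcomplex, full complex) together with Taubes' computation; instead it computes each $H^i$ directly. For $H^1$, the key observation is that $d^+a=0$ together with $b^+(M)=b^-(M)=0$ forces $da=0$ (via $\int_Z da\wedge da = |d^+a|^2-|d^-a|^2$), so one can build an explicit isomorphism $H^1_1(E_\delta(\Gamma))\to H^1_{dR}(Z;i\R)$ and check injectivity and surjectivity by hand. For $H^2_1$, the paper conjugates by $e^{\tau\delta}$ to reduce to $L^2$ harmonic self-dual $2$-forms, and then cites \cite[Proposition 4.9]{APS1} to identify these with $\hat H^+_c(Z;i\R)$; a short elliptic bootstrap shows the regularity. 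Your long-exact-sequence route could be made to work, but it requires knowing precisely what Taubes' Proposition 5.1 gives in this weighted set-up (the paper itself remarks that Taubes' set-up is ``slightly different''), and then chasing connecting maps; the direct computation avoids this.

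More importantly, you misread the final equality in (iii). The identity $\hat H^+_c(Z;i\R)\oplus\ker D^{+,*}_{A,\delta} = \ker D^-_{A,\delta}$ is \emph{not} because the left side is the kernel of some ``full'' weighted adjoint of $\delta_{2,\Gamma}$. Rather, the paper observes that the intersection form on $\hat H^2_c(Z;i\R)$ is trivial (since $b^\pm(M)=0$), whence $\hat H^+_c(Z;i\R)=0$; combined with the notational identification $\ker D^{+,*}_{A,\delta}=\ker D^-_{A,\delta}$ (they are literally the same operator up to the conjugation defining the weighted adjoint), this collapses the direct sum. You should make this vanishing explicit, since it is the substantive topological input for $H^2$.
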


\begin{proof}
$(i)$ As in the proof of Lemma \ref{2.7}, for any $\xi \in \hat{L}^2_{k+1, \delta}(i \R)$ we have a decomposition $\xi =\xi^0 +\xi^{\perp}$ with $\xi^0$ constant. $d \xi =0$ implies that $d \xi^{\perp}=0$. Since $\xi^{\perp} \in L^2_{k+1, \delta}(i\R)$, we get $\xi^{\perp}=0$. Thus $\xi=\xi^0$, and $H^0(E_{\delta}(\Gamma)) = i\R.$

$(ii)$ The homology of $E_{\delta}(\Gamma)$ splits into the connection part $H^1_1(E_{\delta}(\Gamma))$ and the spinor part $H^1_2(E_{\delta}(\Gamma))$. Suppose $\delta_{2, \gamma}(a ,\phi)=(d^+a, D^+_A \phi) =0$. It's clear that $H^1_2(E_{\delta}(\Gamma))$ is identified as $\ker D_A^+$. To identify the connection part, note that $A$ is a flat connection, the argument in Lemma \ref{2.4} implies that $A+a$ is flat, thus $da=0$. Define the map $r: H^1_1(E_{\delta}(\Gamma)) \to H^1_{dR}(Z; i\R)$ sending $[a] \mapsto [a]$. We claim $r$ is an isomorphism. To see injectivity, suppose $a=d\xi$ for $\xi \in \Omega^0(Z; i\R)$. we write $a=a^0+\beta b$. Since $b$ is harmonic, it cannot be exact, thus $b=0$. Note that $a^0=d\xi \in L^2_{k,\delta}$. The argument in the proof of Lemma \ref{2.7} implies that $\xi \in L^2_{k+1, \delta}$. Thus $[a]=0 \in H^1_1(E_{\delta}(\Gamma))$. To see surjectivity, from the following portion of an exact sequence: 
\[
H^1_c(Z ; i\R) \longrightarrow H^1(Z; i\R) \longrightarrow H^1(T^3; i\R)
\]
we derive that given $[a] \in H^1(Z; i\R)$ with a representative $a$, one can choose $b \in H^1(T^3; i\R)$ such that $a- \varphi b \in H^1_c(Z ; i\R)$. In particular $a- b \in L^2_{k, \delta}(T^*Z; i\R)$, which proves the surjectivity. Lastly the topology of $Z$ tells us that $H^1(Z; i\R) = i\R \oplus i\R$. 

$(iii)$ Again we get a decomposition of the homology $H^2(E_{\delta}(\Gamma)) = H^2_1(E_{\delta}(\Gamma)) \oplus H^2_2(E_{\delta}(\Gamma))$ corresponding to the connection part and spinor part respectively. Since the complex is Fredholm, it's clear that the spinor part is identified as $\ker D^{+, *}_{A, \delta} = \ker D^-_{A, \delta}$. Now we identify the connection part $H^2_1(E_{\delta}(\Gamma))=\ker d^*_{\delta} \cap \ker d$, where $d^*_{\delta} = e^{-\tau \delta} d^* e^{\tau \delta}$. For any $\omega \in L^2_{k-1, \delta} (\Lambda^+ T^*Z \otimes i\R)$, $d^*_{\delta} \omega=0$ implies that $d e^{\tau \delta} \omega=0$. Let $e^{\tau \delta} \omega = \eta \in L^2_{k-1} (\Lambda^+ T^*Z \otimes i\R)$. Then $d \eta=0, d^* \eta=0.$ Proposition 4.9 in \cite{APS1} gives us an injection 
\begin{equation}
\begin{split}
\iota: H^2_1(E_{\delta}(\Gamma)) & \to \hat{H}^+_c(Z; i\R) \\
[\omega] & \mapsto [e^{\tau \delta} \omega]=[\eta].
\end{split}
\end{equation}
 To see $\iota$ is also surjective, note that Proposition 4.9 in \cite{APS1} only identifies $\hat{H}^+_c(Z; i\R)$ with harmonic forms $\eta \in L^2(\Lambda^+T^*Z \otimes i\R)$. To see $\eta \in L^2_{k-1}$, we write $\eta|_M=\eta_0, \eta|_{[n-1, n] \times T^3}=\eta_n, \forall n \geq 1$. Ellipticity of $d^*\oplus d$ implies that 
\[
\| \nabla \eta_n \|^2_{L^2_m} \leq C (\|(d^* \oplus d) \eta_n \|^2_{L^2_{m-1}} + \|\eta_n\|^2_{L^2}).
\]
Since $\eta$ is harmonic, we get
\[
\| \nabla \eta \|^2_{L^2_m(Z)} \leq \sum_{ m \geq 0}  \|  \eta_n \|^2_{L^2_m} \leq C \|\eta\|^2_{L^2(Z)} < \infty,
\] 
which implies that $\eta \in L^2_{k-1}$. Let $\omega=e^{-\tau\delta} \eta$. Then $\iota[\omega]=[\eta]$ gives us the surjectivity. Finally the intersection form on $\hat{H}^2_c(Z; i\R)$ is trivial, thus $\hat{H}^+_c(Z; i\R)=0$. We conclude that $H^2(E_{\delta}(\Gamma))=\ker D^-_{A, \delta}$. 
\end{proof}

\subsubsection{Approximation of Obstruction Map} Recall that our choice of perturbation is $d^+ \beta \in \Omega^+_c(Z; i\R)$. Fix a flat spin$^c$ connection $A_0$ on $W^+$. The reducible locus 
\[
\M^{\Red}_{g, \beta}(Z, \s)=\{ [A, 0] : F^+_{\A} = 2 d^+ \beta\} 
\]
is identified as $A_0 + \beta + H^1(Z; i\R) / H^1(Z; i\Z) =A_0+\beta+\mathbb{T}(\s)$. Thus for each perturbation $\beta$, we get a $T^2$-family of Dirac operators 
\[
D^+_{\alpha, \beta}:=D^+_{A_0}+\rho_Z(\alpha)+\rho_Z(\beta), \alpha \in \mathbb{T}(\s).
\]
The part still missing in the picture of the moduli space $\M_{g, \beta}(Z, \s)$ locates at the reducibles $[A, 0]$ to which there is a sequence of irreducibles converge. A necessary condition for the occurrence of such case is when $\ker D^+_A \neq 0$. 
\begin{lem}\label{4.4}
Suppose there is a sequence of irreducible monopoles $[A_i ,\Phi_i]$ in $\M^*_{g, \beta}(Z, \s)$ converging to a reducible monopole $[A, 0]$. Then $\ker D^+_A \neq 0$.
\end{lem}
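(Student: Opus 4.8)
The plan is the standard renormalization argument for a sequence of irreducible monopoles converging to a reducible one, adapted to the cylindrical end. First I would use the definition of convergence in $\M_{g,\beta}(Z,\s)$ to replace $(A_i,\Phi_i)$ by gauge-equivalent representatives with $A_i \to A$ and $\Phi_i \to 0$ in $L^2_{k,loc}$; this changes neither irreducibility nor the pointwise norms $|\Phi_i|$. Since $[A,0]$ is reducible, Lemma \ref{2.5} gives $\bar{\partial}_+([A,0]) \neq \theta$, and by continuity of $\bar{\partial}_+$ we may, after discarding finitely many $i$, assume every $\bar{\partial}_+([A_i,\Phi_i])$ lies in a fixed neighbourhood of $\bar{\partial}_+([A,0])$ inside $U_\theta^c$, so that the exponential decay estimate $(\ref{expd})$ applies to each $\Phi_i$. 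Because each $\Phi_i$ is irreducible, $\Phi_i \not\equiv 0$; fixing a large compact set $K_0 = M \cup ([0,T_0]\times T^3)$ I renormalize by $\hat{\Phi}_i = \Phi_i / \|\Phi_i\|_{L^2(K_0)}$, so that $D^+_{A_i}\hat{\Phi}_i = 0$ and $\|\hat{\Phi}_i\|_{L^2(K_0)} = 1$.

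Two estimates, uniform in $i$, drive the argument. First, interior elliptic estimates for $D^+_{A_i}$ on compact subsets — using $D^+_{A_i}\hat{\Phi}_i = 0$ together with the uniform $L^2_k$ bound on the $A_i$ over compacta coming from $A_i \to A$ — give uniform $L^2_{k+1}$ bounds for $\hat{\Phi}_i$ on every compact subset of $Z$. Second, I need a uniform exponential decay bound $\|\hat{\Phi}_i|_{T^3_t}\|_{L^2(T^3)} \le C e^{-\delta(t-T_0)}$ on the end for $t \ge T_0$, with $C$ and $T_0$ independent of $i$. For this I would observe that over the end $D^+_{A_i}$ is asymptotic to $\frac{d}{dt} + D_{B_i}$ with the flat connections $[B_i] = \partial_+([A_i,\Phi_i])$ converging to $[\mathfrak{b}] \neq \theta$, so the $3$-dimensional Dirac operators $D_{B_i}$ have a spectral gap bounded below uniformly by $\mu_0 > 2\delta$; this forces the $L^2$ solutions $\hat{\Phi}_i$ of $D^+_{A_i}\hat{\Phi}_i = 0$ to decay at the uniform rate $\delta$ (one may alternatively feed $(\ref{expd})$ through an interior estimate bounding $\|\hat{\Phi}_i|_{T^3_{T_0}}\|$ by $\|\hat{\Phi}_i\|_{L^2(K_0)} = 1$, but the linear spectral-gap version is cleaner). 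This second estimate yields a uniform bound $\|\hat{\Phi}_i\|_{L^2(Z)} \le C'$ and, crucially, no escape of mass to the end: $\|\hat{\Phi}_i\|^2_{L^2(Z\setminus K_T)} \to 0$ as $T \to \infty$, uniformly in $i$, where $K_T = M \cup ([0,T]\times T^3)$.

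With these in place, a Rellich plus diagonal-extraction argument produces a subsequence with $\hat{\Phi}_i \rightharpoonup \Phi_\infty$ weakly in $L^2_{k+1,loc}$ and strongly in $L^2_{k,loc}$; letting $i\to\infty$ in $D^+_{A_i}\hat{\Phi}_i = 0$, using $A_i \to A$ in $L^2_{k,loc}$ and Sobolev multiplication to handle the term $\rho_Z(A_i - A)\hat{\Phi}_i$, gives $D^+_A\Phi_\infty = 0$. Strong $L^2$ convergence on the compact set $K_0$ forces $\|\Phi_\infty\|_{L^2(K_0)} = 1$, so $\Phi_\infty \neq 0$, while passing the uniform slice bound of the second estimate to the limit (lower semicontinuity of norms) shows $\Phi_\infty$ decays at rate $\delta$, hence $\Phi_\infty \in L^2_{k,\delta}(W^+)$. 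Therefore $\ker D^+_A \neq 0$. I expect the one genuinely delicate step to be the uniformity in $i$ of the exponential decay constants — equivalently, ruling out concentration of the renormalized spinors near the cylindrical end — which is precisely why I would route that step through the uniform spectral gap of the asymptotic $3$-dimensional operators $D_{B_i}$ rather than through the a priori nonlinear estimate $(\ref{expd})$.
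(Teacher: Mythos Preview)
Your argument is correct and follows essentially the same renormalization scheme as the paper's proof: use Lemma~\ref{2.5} and continuity of $\bar{\partial}_+$ to keep the asymptotic values away from $\theta$, normalize the spinors, extract a limit via elliptic estimates, and pass the equation $D^+_{A_i}\hat{\Phi}_i=0$ to the limit using Sobolev multiplication. The paper normalizes by the global $L^2(Z)$ norm and asserts the limit still has unit $L^2$ norm; your choice to normalize on a fixed compact $K_0$ and then supply a \emph{uniform} exponential decay bound via the spectral gap of the asymptotic operators $D_{B_i}$ is a cleaner way to rule out mass escaping to the end, and in fact fills in the one step the paper leaves implicit.
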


\begin{proof}
Since $\theta \notin \bar{\partial}_+(\M^{\Red}_{g, \beta}(Z, \s))$, we may assume $\theta \neq \bar{\partial}_{\infty}([A_i, \Phi_i]), \forall i$. Then the pointwise norm of the spinor has exponential decay in the $t$-direction. Thus we can normalize them by 
\[
\hat{\Phi}_i = {\Phi_i \over {\|\Phi_i\|_{L^2}}},
\]
so that $\|\hat{\Phi}_i\|_{L^2(Z)}=1$, and $D^+_{A_i} \hat{\Phi}_i=0.$ Passing to a subsequence and use ellipticity of Dirac operator, we can find a spinor $\Phi$ of unit $L^2$-norm so that $\Phi_i \to \Phi$ in $C^{\infty}$-topology over any compact subset of $Z$. With the help of Sobolev multiplication $L^2_1 \times L^2_1 \hookrightarrow L^2$ and the convergence on the connection part, we conclude that $D^+_A \Phi=0$. 
\end{proof}

Now over the reducible locus we focus on connections $A$ whose Dirac operator $D^+_A$ has nontrivial kernel. It turns out for generic small perturbation the complex dimension of kernels of this $T^2$-family of Dirac operators $D^+_{\alpha, \beta}$ is at most $1$.

\begin{prop}\label{4.5}
For generic small perturbation $\beta$, There are only finitely many points $\alpha_i \in \mathbb{T}(\s)$ such that $\ker D^+_{\alpha_i, \beta} \neq 0$. Moreover $\ker D^+_{\alpha_i, \beta}= \C$ for each $i$. 
\end{prop}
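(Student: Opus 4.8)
The plan is to realize the family $\{D^+_{\alpha,\beta}\}_{\alpha \in \mathbb{T}(\s)}$ as a section of a Fredholm family over the parameter space $\mathbb{T}(\s) \times \mathcal{P}_{\epsilon_0}$, where $\mathcal{P}_{\epsilon_0}$ is the small perturbations, and apply a standard Sard--Smale transversality argument to the universal kernel. First I would recall that for each pair $(\alpha,\beta)$ the operator $D^+_{\alpha,\beta}: L^2_1(Z, W^+) \to L^2(Z, W^-)$ (in the appropriate weighted or unweighted completion matching the set-up of Lemma \ref{2.8}) is Fredholm of a fixed index, by Taubes' Fredholm criterion \cite[Lemma 4.3]{T1} together with the fact that $\theta \notin \bar{\partial}_+(\M^{\Red}_{g,\beta}(Z,\s))$ established in Lemma \ref{2.5}, so the boundary operators $D_{B}$ with $B \ne \Theta$ are invertible; here I would also note that the index is $0$ because complex linearity forces $\ind_{\C}$ to be computed from $c_1(\s)^2$, $e$, $\sigma$, and the $\rho$-invariant terms, all of which vanish in our situation exactly as in the dimension computation following \eqref{Dimfor}.

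\textbf{Key steps.} The argument proceeds as follows. (1) Form the universal Dirac operator $\mathbb{D}: (\mathbb{T}(\s) \times \mathcal{P}_{\epsilon_0}) \times L^2_1(W^+) \to L^2(W^-)$, $\mathbb{D}(\alpha, \beta, \phi) = D^+_{A_0}\phi + \rho_Z(\alpha)\phi + \rho_Z(\beta)\phi$; this is a smooth Fredholm-family map, and its fiberwise linearization in the $\phi$-direction is the Fredholm operator $D^+_{\alpha,\beta}$, while the derivative in the $\beta$-direction is $\phi \mapsto \rho_Z(\cdot)\phi$. (2) Show the full differential of $\mathbb{D}$ is surjective at any zero $(\alpha, \beta, \phi)$ with $\phi \ne 0$: given $\eta \in \coker D^+_{\alpha,\beta} = \ker D^-_{A,\delta}$ orthogonal to the image, if $\langle \rho_Z(b)\phi, \eta\rangle_{L^2} = 0$ for all compactly supported $b$, then the pointwise identity forces $\phi$ and $\eta$ to vanish on an open set, and a unique-continuation argument for the Dirac operator (valid on the cylindrical-end manifold since $\phi, \eta$ decay) gives $\phi \equiv 0$, a contradiction. (3) Conclude by Sard--Smale that for generic $\beta \in \mathcal{P}_{\epsilon_0}$ the ``bad locus'' $\{\alpha : \ker D^+_{\alpha,\beta} \ne 0\}$ is cut out transversally; since the expected dimension of the space of $(\alpha, [\phi])$ with $D^+_{\alpha,\beta}\phi = 0$ over a $2$-dimensional parameter space and index-$0$ operator is $2 + 0 - 2 = 0$ after projectivizing (the $2$ coming from $\dim_{\R}\mathbb{T}(\s)$, and quotienting the $\C^\times$-scaling removes $2$ more while $\ker$ and $\coker$ each generically jump together), this locus is a finite set of points $\alpha_i$, and at each the kernel is exactly $\C$. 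I would make the dimension bookkeeping precise by working with the projectivized universal zero set or, equivalently, by the standard argument that the stratum where $\dim_{\C}\ker D^+_{\alpha,\beta} \ge 2$ has codimension $\ge 2 \cdot 2 = 4 > 2$ in $\mathbb{T}(\s)$ for generic $\beta$.

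\textbf{Main obstacle.} The delicate point is step (2): establishing that the $\beta$-variation genuinely surjects onto the cokernel, which requires a unique continuation statement for solutions of the (twisted) Dirac equation on the non-compact manifold $Z$ with cylindrical end, ensuring that a spinor vanishing on an open set vanishes identically. On a compact manifold this is Aronszajn's theorem; here one needs to know that both $\phi \in \ker D^+_{\alpha,\beta}$ and $\eta \in \ker D^-_{A,\delta}$ are genuine $L^2$ (hence exponentially decaying) solutions so that the weak unique continuation principle applies globally, which follows from elliptic regularity and the exponential decay estimate \eqref{expd} together with the invertibility of the limiting operator $D_B$ for $B \ne \Theta$. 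The remaining subtlety is purely bookkeeping: one must be careful that the perturbations $\beta$ only perturb the self-dual curvature equation through $d^+\beta$ but enter the Dirac operator through $\rho_Z(\beta)$ itself, so the relevant ``generic $\beta$'' is the same parameter driving both parts of the Seiberg--Witten system, and I would note that enlarging the perturbation space if necessary (e.g.\ allowing an independent Dirac-type perturbation $\rho_Z(\mathfrak{d})$ supported in a compact set) causes no harm since it does not affect the reducible locus. With these in hand the conclusion of Proposition \ref{4.5} is immediate.
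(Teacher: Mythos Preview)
Your proposal is correct and rests on the same two ingredients the paper uses---unique continuation for the Dirac operator and the codimension stratification of index-$0$ Fredholm operators---but the packaging is genuinely different. The paper works locally: around each $(\alpha_0,0)$ it performs a Lyapunov--Schmidt reduction to a finite-dimensional map $\eta:U_0\times W_0\to\Hom_{\C}(\mathcal{H}^1,\mathcal{H}^2)$, proves only the weaker claim $\dim_{\C}\im d_2\eta|_{(\alpha_0,0)}\ge\dim_{\C}\mathcal{H}^1$ (via essentially the same Clifford-multiplication argument you give in step~(2)), restricts to a finite-dimensional slice $V_0\subset\mathcal{P}$, and then applies ordinary Sard to drop the kernel dimension by one; this is iterated until the kernel is at most one-dimensional. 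Your global universal-operator approach with Sard--Smale is more streamlined and avoids the induction, while the paper's version has the minor advantage of making the finite-dimensional perturbation space explicit at each stage. Your surjectivity claim in step~(2) is in fact slightly stronger than what the paper establishes, and your pointwise argument (that $\{\rho_Z(b)\phi(x):b\in T^*_xZ\otimes i\R\}$ already spans $W^-_x$ whenever $\phi(x)\ne 0$) is the right way to see it.
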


\begin{proof}
Note that the space of complex Fredholm operators with index $0$ is stratified by the dimension of the kernel (e.g. \cite{K}). The top stratum consists of operators with trivial kernel. The stratum consisting of operators with $1$-dimensional kernel has complex codimension $1$, and the rest strata have higher complex codimension. The proof is an application of the implicit function theorem and this observation. 

Take $(\alpha_0, 0) \in \mathbb{T}(\s) \times \mathcal{P}$. Denote by $\mathcal{H}^1 = \ker D^+_{\alpha_0, 0}, \mathcal{H}^2= \ker D^-_{\alpha_0, 0}$. Let $\Pi: L^2(W^-) \to \im D^+_{\alpha_0, 0} $ be the $L^2$-orthogonal projection. Consider the operator
\begin{align*}
F: \mathbb{T}(\s) \times \mathcal{P} \times L^2_1(W^+) & \longrightarrow \im D^+_{\alpha_0, 0} \\
(\alpha, \beta, \Phi) & \longmapsto \Pi  (D^+_{(\alpha, \beta)} \Phi).
\end{align*}
The differential of this operator on the third component is 
\[
dF|_{(\alpha_0, 0 ,\Phi)} (0, 0 ,\phi)=\Pi  (D^+_{\alpha_0, 0} \phi),
\]
which is surjective. From the implicit function theorem on Banach spaces there exists neighborhood $U_0  \times W_0  \subset \mathbb{T}(\s) \times \mathcal{P}$ of $(\alpha_0, 0)$ and a map $\varphi: U_0  \times W_0 \times \mathcal{H}^1 \to L^2_1(W^+)$ such that $\forall (\alpha, \beta) \in U_0 \times W_0$ one has 
\[
F(\alpha, \beta, \Phi+\varphi(\alpha, \beta, \Phi) )=0, \forall \Phi \in \mathcal{H}^1.
\]
In particular $D^+_{\alpha, \beta}(\Phi+\varphi_0(\alpha, \beta, \Phi)) \in \mathcal{H}^2$. Thus we get a map
\begin{align*}
\eta: U_0 \times W_0 & \longrightarrow \Hom_{\C}(\mathcal{H}^1, \mathcal{H}^2) \\
(\alpha, \beta) & \longmapsto \{ \Phi \mapsto D^+_{\alpha, \beta}(\Phi+\varphi_i(\alpha, \beta, \Phi) \}.
\end{align*}
Note that for $(\alpha, \beta) \in U_0 \times W_0$, $\ker D^+_{\alpha, \beta}\cong \ker \eta(\alpha, \beta)$.  We now turn to study $\eta$. The differential of this operator on the second component is given by 
\[
d_2\eta|_{(\alpha_0, 0)}(0, b) \longmapsto \{\phi \mapsto \Pi (\rho_Z(b) \phi)\}.
\] Denote by $\mathcal{K}_0=\im d_2 \eta|_{(\alpha_0, 0)} \subset \Hom_{\C}(\mathcal{H}^1, \mathcal{H}^2)$ the image of this map. We claim that $\dim_{\C} \mathcal{K}_0 \geq \dim_{\C} \mathcal{H}^1$. \\
To prove the claim, when $\dim_{\C} \mathcal{H}^1=0$, there is nothing to do. We may assume $\dim_{\C} \mathcal{H}^1\geq 1$. Note that $\ind D^+_{\alpha, \beta}=0$ for any $(\alpha, \beta) \in \mathbb{T}(\s) \times \mathcal{P}$, thus $\dim_{\C} \mathcal{H}^1 =\dim_{\C} \mathcal{H}^2=m$. Let $\{\phi_k\}_{k=1}^{m}, \{\psi_l \}_{l=1}^{m}$ be $L^2$-orthonormal basis of $\mathcal{H}^1$ and $\mathcal{H}^2$ respectively. Unique continuation for the Dirac operator $D^{\pm}_{\alpha_0, 0}$ implies that outside a nowhere dense subset of $Z$, $\phi_k$ and $\psi_l$ are nonvanishing. Note that $\langle \psi_l, \psi_k \rangle= \delta_{k}^l$, for small $\epsilon_0 >0$, one can then find $b_l \in \mathcal{P}, l=1, ..., m$ satisfying
\[
|\langle \rho_Z(b_l) \phi_1, \psi_l \rangle | > 1- \epsilon_0, \; |\langle \rho_Z(b_l) \phi_1, \psi_k \rangle | < \epsilon_0, k \neq l.
\]
By choosing $\epsilon_0$ small enough we can make  $d_2 \eta|_{(\alpha_0, 0)} \rho_Z(b_l) \phi_1, l=1, ..., m, $ linearly independent. This proves the claim. \\
Now we pick up a finite dimensional subspace $V_0=\Span \{b_l\}$ of $\mathcal{P}$ whose intersection with $W_0$ is denoted by $W'_0= V_0 \cap W_0$. Let $\Pi^0: \Hom_{\C}(\mathcal{H}^1, \mathcal{H}^2) \to \mathcal{K}_0$ be an orthogonal projection. We take a smaller neighborhood $U_0' \subset U_0$ such that the map 
\begin{align*}
\eta': U'_0\times W'_0 & \longrightarrow \mathcal{K}_i \\
(\alpha, \beta) & \longmapsto \Pi^0( \eta (\alpha, \beta)),
\end{align*}
has surjective differential at all $(\alpha, \beta) \in U_0' \times W_0'$. Then $\eta'^{-1}(0)$ is a smooth submanifold in $U_0' \times W_0'$. Let $\pi: \mathbb{T}(\s) \times \mathcal{P} \to \mathcal{P}$. Sard theorem tells us for generic $\beta \in W_0'$, $\pi^{-1}(\beta) \cap \eta'^{-1}(0)$ is a smooth submanifold of codimension at least $2 \dim_{\C} \mathcal{H}^1$. Since $\dim U'_0=2$, the set $\pi^{-1}(\beta) \cap \eta'^{-1}(0)$ is nonempty only if $\dim_{\C} \mathcal{H}^1 \leq 1$. Thus we conclude that if $\dim_{\C} \mathcal{H}^1 =m \geq 2$, after apply a generic perturbation $\beta \in W_0'$, one gets 
\[
\dim_{\C} D^+_{\alpha, \beta} \leq m-1, \forall \alpha \in U_0'.
\]\ \\
Note that $\mathbb{T}(\s)$ is compact, we can run the argument above for a finite open cover $\{U_i'\}$ of $\mathbb{T}(\s)$ with $\alpha_i \subset U_i'$ and enlarge $V_0$ to a larger finite dimensional subspace so that the property for $\eta'$ holds over all $\mathbb{T}(\s)$. Thus by induction on the dimension of $\ker D^+_{\alpha, \beta}$,  we have proved that for a generic small perturbation $\beta \in \mathcal{P}$ 
\[
\dim_{\C} \ker D^+_{\alpha, \beta} \leq 1, \forall \alpha \in \mathbb{T}(\s). 
\]
Finiteness of $\alpha_j$ with $\ker D^+_{\alpha, \beta}$ follows from the compactness of $\mathbb{T}(\s)$. 
\end{proof}

Finally we arrive at the stage to discuss the Kuranishi picture near a reducible monopole $[\Gamma]=[A,0]$ approached by a sequence of irreducibles. A standard reference for the Kuranishi picture is \cite[Chapter 4]{FU}. Here we shall follow the construction given in \cite[Chapter 12]{MMR}. As their result is stated in the context of instantons, we rephrase it in terms of monopoles. 

\begin{thm}(\cite[Theorem 12.1.1]{MMR}) 
Let $\Gamma=(A, 0) \in \mathcal{C}_{k}(Z)$ be a reducible monopole of finite energy, $E_{\delta}(\Gamma)$ the deformation complex at $\Gamma$ given by (\ref{4cx}). Then there is a $U(1)$-equivariant neighborhood $V_{\Gamma}$ of $0$ in $H^1(E_{\delta}(\Gamma))$ and a $\mathcal{G}_{k+1, \delta}$-invariant neighborhood $U_{\Gamma}$ of $\Gamma$ in $\mathcal{C}_{k}(Z)$ together with two $U(1)$-equivariant smooth maps:
\begin{enumerate}
\item[(i)] $f_{\Gamma}: V_{\Gamma} \longrightarrow \ker \delta^*_{1, \Gamma} \cap U_{\Gamma}$,
\item[(ii)] $\mathfrak{o}_{\Gamma}: V_{\Gamma} \longrightarrow H^2(E_{\delta}(\Gamma))$
\end{enumerate}
such that the differential $Df_{\Gamma}|_0$ is the inclusion $H^1(E_{\delta}(\Gamma)) \hookrightarrow \ker \delta^*_{1, \Gamma}$, and the $U(1)$-quotient of $f_{\Gamma} (\mathfrak{o}_{\Gamma}^{-1}(0)) \subset \delta^*_{1, \Gamma}$ is isomorphic to a neighborhood of $[\Gamma] \in \M_{g, \beta}(Z, \s)$ as a stratified space. 
\end{thm}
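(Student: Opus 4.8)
The plan is to adapt the Kuranishi construction of \cite[Chapter 12]{MMR} verbatim, checking at each stage that the Seiberg-Witten equation over the cylindrical-ended, weighted-Sobolev set-up plays the same structural role that the anti-self-dual equation does in the instanton case. The starting point is the slice theorem: near the reducible monopole $\Gamma=(A,0)$ the quotient $\mathcal{C}_k(Z)/\mathcal{G}_{k+1,\delta}(Z)$ is modeled, $U(1)$-equivariantly, on $(\ker\delta^*_{1,\Gamma})/U(1)$, where $\delta^*_{1,\Gamma}$ is the weighted Coulomb gauge-fixing operator appearing in the complex $E_\delta(\Gamma)$ of (\ref{4cx}). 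This requires knowing that $\delta_{1,\Gamma}$ has closed range and that $\mathcal{G}_{k+1,\delta}(Z)$ acts properly on a neighborhood of $\Gamma$ — both follow from the Fredholmness of the subcomplex (\ref{sub4cx}) (the \cite{APS1} argument quoted before Proposition \ref{2.9}), together with Lemma \ref{2.7} which pins down exactly which gauge transformations preserve the asymptotic configuration space.

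Next I would set up the deformation map. Write $\mathfrak{F}_\beta$ restricted to the slice $\Gamma+\ker\delta^*_{1,\Gamma}$ as $\delta_{2,\Gamma}+$ (quadratic terms), where $\delta_{2,\Gamma}(a,\phi)=(d^+a,D^+_A\phi)$ and the quadratic term is $(a,\phi)\mapsto(-\rho_M^{-1}(\phi\phi^*)_0,\rho(a)\phi)$ — note the connection-variation/spinor cross term $\rho(a)\phi$ is the only place the perturbation interacts nonlinearly, and it is a bounded bilinear map $L^2_{k,\delta}\times L^2_{k,\delta}\to L^2_{k-1,\delta}$ by the Sobolev multiplication theorem on the end (exactly as used in Lemma \ref{4.4}). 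Since the complex $E_\delta(\Gamma)$ is Fredholm, $\delta_{2,\Gamma}$ has closed range and finite-dimensional cokernel $H^2(E_\delta(\Gamma))$; let $\Pi$ be the $L^2_\delta$-orthogonal projection onto $\operatorname{im}\delta_{2,\Gamma}$. The implicit function theorem on Banach spaces, applied to $\Pi\circ\mathfrak{F}_\beta$ with the $H^1$-directions as parameters, produces the map $f_\Gamma:V_\Gamma\to\ker\delta^*_{1,\Gamma}\cap U_\Gamma$ solving the projected equation, with $Df_\Gamma|_0$ the inclusion $H^1(E_\delta(\Gamma))\hookrightarrow\ker\delta^*_{1,\Gamma}$; the leftover component $(1-\Pi)\mathfrak{F}_\beta(f_\Gamma(\cdot))$ is the obstruction map $\mathfrak{o}_\Gamma:V_\Gamma\to H^2(E_\delta(\Gamma))$. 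All maps are $U(1)$-equivariant because $\mathfrak{F}_\beta$, the slice, and the projections are built from $U(1)$-equivariant data (the stabilizer of $\Gamma$ being the constants). By construction $f_\Gamma(\mathfrak{o}_\Gamma^{-1}(0))$ is precisely the solution set of $\mathfrak{F}_\beta=0$ in the slice near $\Gamma$, so its $U(1)$-quotient is a neighborhood of $[\Gamma]$ in $\M_{g,\beta}(Z,\s)$.

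The one genuinely non-routine point — and the step I expect to be the main obstacle — is the identification as \emph{stratified} spaces rather than merely as topological neighborhoods, i.e.\ making sure the reducible/irreducible stratification and the stratification by $\ker D^+_{\alpha,\beta}$ match up with the natural strata of $f_\Gamma(\mathfrak{o}_\Gamma^{-1}(0))/U(1)$. This is where one must use the splitting of $H^1(E_\delta(\Gamma))$ into the connection part $H^1(Z;i\R)$ (which carries the reducibles, on which $U(1)$ acts trivially) and the spinor part $\ker D^+_A$ (Proposition \ref{2.9}(ii)), and argue that $f_\Gamma$ respects this decomposition to leading order so that the equation $\mathfrak{o}_\Gamma=0$ has the expected local normal form; the $U(1)$-action being free exactly off the connection part is what makes the quotient a stratified space in the sense of \cite[Chapter 8]{MMR}. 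Concretely I would invoke the abstract statement \cite[Theorem 12.1.1]{MMR} once the three hypotheses — reducibility of $\Gamma$, finite energy, Fredholmness of $E_\delta(\Gamma)$ — are verified, since that theorem already packages the stratified identification; the content of the present proof is therefore the verification of those hypotheses in our weighted set-up, which is supplied by Proposition \ref{2.9} (Fredholmness and the computation of the homology), Lemma \ref{2.7} (the correct gauge group), and the finite-energy constraint built into $\mathcal{C}_{k,\delta}(Z,\mathfrak{b})$ via the exponential decay estimate (\ref{expd}).
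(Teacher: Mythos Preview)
Your proposal is correct and matches the paper's treatment: the theorem is cited from \cite{MMR} without a self-contained proof, and the paragraphs immediately following the statement spell out exactly the construction you describe --- decompose $\ker\delta_{1,\Gamma}^*=H^1(E_\delta(\Gamma))\oplus\im\delta^*_{2,\Gamma}$, apply the implicit function theorem to $\Pi_2\mathfrak{F}_\beta$ to produce $g_\Gamma$ and hence $f_\Gamma(\mathfrak{r})=\Gamma+\mathfrak{r}+g_\Gamma(\mathfrak{r})$, and set $\mathfrak{o}_\Gamma=\Pi_1\mathfrak{F}_\beta\circ f_\Gamma$. The only difference is emphasis: you flag the stratified identification as the delicate step, whereas the paper simply invokes \cite[Theorem 12.1.1]{MMR} for that and moves directly to computing the quadratic term of $\mathfrak{o}_\Gamma$.
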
 

We call $\mathfrak{o}_{\Gamma}: V_{\Gamma} \to H^2(E_{\delta}(\Gamma))$ the Kuranishi obstruction map. By the construction, the constant term and linear term of $\mathfrak{o}_{\Gamma}$ all vanish. Thus the local structure of $[\Gamma] \in \M_{g, \beta}(Z, \s)$ is given by $(D^2\mathfrak{o}_{\Gamma})^{-1}(0)$.\\  
Now we take $\Gamma=(A, 0)$ to be one which is approached by a sequence of irreducibles $(A_i, \Phi_i)$. Lemma \ref{4.4} and Proposition \ref{4.5} tell us that $\ker D^+_{A} =\C$. From the description of the homology of the complex $E_{\delta}(\Gamma)$ in Proposition \ref{2.9}, we can choose orthonormal basis of $H^1(E_{\delta}(\Gamma))$ as $\{a_1, a_2, \phi_1\}$, of $H^2(E_{\delta}(\Gamma))$ as $\{\phi_2\}$, where $\{a_1, a_2\}$ is a real basis of $H^1(Z; i\R)$ and $\{\phi_1\}$, $\{\phi_2\}$ are complex bases of $\ker D_A^+$, $\ker D_{A, \delta}^-$ respectively. The stabilizer $U(1)$ acts trivially on the connection part, and as multiplication on the spinor part. Thus the quotient by the $U(1)$ action identifies each copy of $\C$ as $\R^+$. The construction of $\mathfrak{o}_{\Gamma}$ is given as follows. We decompose $\ker \delta_{1, \Gamma}= H^1(E_{\delta}(\Gamma)) \oplus \im \delta^*_{2, \Gamma}$. The implicit function theorem gives us a $U(1)$-equivariant map 
\[
g_{\Gamma}: V_{\Gamma} \longrightarrow \im \delta^*_{2, \Gamma}
\]
such that the constant and linear terms of $\g_{\Gamma}$ all vanish, and $\forall \mathfrak{r}=(a, \phi) \in V_{\Gamma}$
\[
\Pi_2 \mathfrak{F}_{\omega}(\Gamma + \mathfrak{r}+ g_{\Gamma}(\mathfrak{r})) =0,
\]
where $\Pi_2: L^2_{k-1}(\Lambda^+T^*Z \otimes i\R) \oplus L^2_{k-1}(S^-) \to \im \delta_{2, \delta}$ is the $L^2_{\delta}$-orthogonal projection. Then $f_{\Gamma}(\mathfrak{r})=\Gamma+ \mathfrak{r}+ g_{\Gamma}(\mathfrak{r})$, and 
\[
\mathfrak{o}_{\Gamma}(\mathfrak{r}) = \Pi_1 \mathfrak{F}_{\omega}(f_{\Gamma}(\mathfrak{r})),
\]
where $\Pi_1: L^2_{k-1}(\Lambda^+T^*Z \otimes i\R) \oplus L^2_{k-1}(S^-)  \to H^2(E_{\delta}(\Gamma))$ is the $L^2_{\delta}$-orthogonal projection. We write $\mathfrak{r}+g_{\Gamma}(\mathfrak{r}) =(a+g_1(\mathfrak{r}), \phi+g_2(\mathfrak{r})) = (a', \phi')$. Recall that 
\[
\mathfrak{F_{\omega}}(A+a', \phi') = (d^+a' -2d^+\beta- \rho^{-1}(\phi' \phi'^*)_0, D_A^+\phi'+ \rho(a)\phi'), 
\]
thus 
\[
\mathfrak{o}_{\Gamma}(\mathfrak{r})=\Pi_1 \mathfrak{F_{\omega}}(A+a', \phi') =(1 - \mathcal{D}(\mathcal{D}^*\mathcal{D})^{-1} \mathcal{D}^*)(D_A^+ \phi'+ \rho(a) \phi'),
\]
 where $\mathcal{D}=D_A^+$, $\mathcal{D}^*=D^-_{A, \delta}$. As mentioned above that $g_{\Gamma}(\mathfrak{r})$ vanishes at least to the second order, and Lemma 12.1.2 in \cite{MMR} asserts that the projection $\Pi_1$ in a small neighborhood is one-to-one. Thus after applying a suitable diffeomorphism of $V_{\Gamma}$, the quadratic term $D^2 \mathfrak{o}_{\Gamma}$ is given by the map 
\[
\tilde{\mathfrak{o}}_{\Gamma}(x_1 a_1, x_2 a_2, z_1 \phi_1)=D^+_A g_2(\mathfrak{r}) +x_1z_1 \rho(a_1)\phi_1+ x_2z_1 \rho(a_2) \phi_1, 
\]
where $\mathfrak{r}=(x_1 a_1, x_2 a_2, z_1 \phi_1), x_1, x_2 \in \R, z_1 \in \C.$ Recall that $\ker D^+_A=\C$, then the implicit function theorem gives rise to a smooth map $\nu: \R^2 \to \C$ invertible near $0$ such that 
\[
\tilde{\mathfrak{o}}_{\Gamma}(x_1, x_2, \nu(x_1, x_2)) =0. 
\]
Precompose with $\nu$, we can write $z_1= x_1 + ix_2$ with $\tilde{\mathfrak{o}}_{\Gamma}(x_1, x_2, z_1)=0$. Recall taking the quotient by $U(1)$ identifies $\C$ with $\R^+$. Thus the local structure $[\Gamma] \in \M_{g, \beta}(Z, \s)$ is given by the $U(1)$ quotient of the zero of the map 
\begin{equation}
\begin{split}
\tilde{\mathfrak{o}}_{\Gamma}: \R^2 \times \C & \longrightarrow \C \\
(x_1, x_2, z)  & \longmapsto (x_1 + ix_2) \cdot z. 
\end{split}
\end{equation}

We summarize the above discussion as follows.

\begin{prop}\label{5.7}
Let $[\Gamma] =[A, 0] \in \M^{\Red}_{g, \beta}(Z, \s)$ be a reducible monopole approached by a sequence of irreducible monopoles. For generic metric and perturbation pair $(g, \beta)$, an open neighborhood of $U_{[\Gamma]}$ of $[\Gamma]$ in the total moduli space $\M_{g, \beta}(Z, \s)$ is modeled on the zero set of the following map
\begin{align*}
\tilde{\mathfrak{o}}_{[\Gamma]}: \R^2 \times \R_+ & \longrightarrow \C \\
(x_1, x_2, z) & \longmapsto (x_1 + i x_2) \cdot z, 
\end{align*}
where the $\R_+$ component of $\tilde{\mathfrak{o}}_{[\Gamma]}^{-1}(0)$ represents the irreducibles $U_{[\Gamma]} \cap \M^*_{g, \beta}(Z, \s)$, and the $\R^2$ component represents the reducible part $U_{[\Gamma]} \cap M^{\Red}_{g, \beta}(Z, \s)$. 
\end{prop}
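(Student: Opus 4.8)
The plan is to assemble the pieces already set up — the Kuranishi model (Theorem 12.1.1 of \cite{MMR}, in its monopole formulation), the kernel count from Lemma \ref{4.4} and Proposition \ref{4.5}, and the homology computation of Proposition \ref{2.9} — and then pin down the quadratic term of the Kuranishi obstruction map.

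First I would use the hypothesis that $[\Gamma] = [A,0]$ is a limit of irreducibles: Lemma \ref{4.4} gives $\ker D^+_A \neq 0$, and for generic small $\beta$ Proposition \ref{4.5} upgrades this to $\ker D^+_A = \C$. Feeding this into Proposition \ref{2.9}, together with the vanishing $\hat H^+_c(Z; i\R) = 0$ established there, yields $H^1(E_\delta(\Gamma)) = H^1(Z; i\R) \oplus \ker D^+_A \cong \R^2 \oplus \C$ and $H^2(E_\delta(\Gamma)) = \ker D^-_{A,\delta} \cong \C$. Thus the Kuranishi model presents a neighborhood of $[\Gamma]$ in $\M_{g,\beta}(Z,\s)$ as the $U(1)$-quotient of $f_\Gamma(\mathfrak{o}_\Gamma^{-1}(0))$, where $\mathfrak{o}_\Gamma: V_\Gamma \subset \R^2 \oplus \C \to \C$ is $U(1)$-equivariant with vanishing constant and linear parts, $U(1)$ acting trivially on the $\R^2$ (connection) factor and by scalar multiplication on the $\C$ (spinor) factor.

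Next I would make $\mathfrak{o}_\Gamma$ explicit, following the construction recalled above. Decomposing $\ker\delta_{1,\Gamma} = H^1(E_\delta(\Gamma)) \oplus \im\delta^*_{2,\Gamma}$, the implicit function theorem supplies a $U(1)$-equivariant $g_\Gamma = (g_1, g_2): V_\Gamma \to \im\delta^*_{2,\Gamma}$ with vanishing constant and linear parts solving $\Pi_2\mathfrak{F}_\beta(\Gamma + \mathfrak{r} + g_\Gamma(\mathfrak{r})) = 0$, and then $\mathfrak{o}_\Gamma(\mathfrak{r}) = \Pi_1\mathfrak{F}_\beta(\Gamma + \mathfrak{r} + g_\Gamma(\mathfrak{r}))$. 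Writing $\mathfrak{r} = (x_1 a_1, x_2 a_2, z_1\phi_1)$ with $\{a_1, a_2\}$ a real basis of $H^1(Z; i\R)$ and $\phi_1$ spanning $\ker D^+_A$, the flatness of $A$ together with $F^+_{\A} = 2d^+\beta$ (exactly as in Lemma \ref{2.4}) and $\hat H^+_c(Z; i\R) = 0$ kills the self-dual curvature component of $\Pi_1\mathfrak{F}_\beta$, so only the Dirac component $D^+_A\phi' + \rho(a')\phi'$ contributes; since $\phi_1 \in \ker D^+_A$ and $g_\Gamma$ vanishes to second order, its quadratic part is $D^+_A g_2(\mathfrak{r}) + x_1 z_1\rho(a_1)\phi_1 + x_2 z_1\rho(a_2)\phi_1$. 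Invoking that $\Pi_1$ is one-to-one on the relevant image (Lemma 12.1.2 of \cite{MMR}), absorbing the $\im D^+_A$-part into the determination of $g_2$, and reparametrizing $V_\Gamma$ by a diffeomorphism that identifies the real-linear map $(x_1, x_2) \mapsto \Pi_1\bigl(x_1\rho(a_1)\phi_1 + x_2\rho(a_2)\phi_1\bigr)$ with the identity $\R^2 = \C$, the zero set of $\mathfrak{o}_\Gamma$ is carried to that of $(x_1, x_2, z) \mapsto (x_1 + ix_2)\cdot z$. Quotienting by $U(1)$, which turns the spinor factor $\C$ into $\R_+$, then identifies the $z = 0$ locus with $U_{[\Gamma]} \cap \M^{\Red}_{g,\beta}(Z,\s)$ and the $\{x_1 = x_2 = 0\}$ locus with $U_{[\Gamma]} \cap \M^*_{g,\beta}(Z,\s)$, which is the assertion.

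The main obstacle is the genericity used in the last step: one must show that for generic $(g,\beta)$ the real-linear map $(x_1, x_2) \mapsto \Pi_1\bigl(x_1\rho(a_1)\phi_1 + x_2\rho(a_2)\phi_1\bigr) \in \ker D^-_{A,\delta}$ is an isomorphism onto $\C$ at each of the finitely many reducibles that are approached by irreducibles; otherwise the quadratic form governing $\mathfrak{o}_\Gamma$ degenerates and the local model changes. I would establish this by a Sard--Smale argument modeled on the proof of Proposition \ref{4.5}, varying $\beta$ within $\mathcal{P}$ and using unique continuation for $D^{\pm}_A$ to produce enough independent first-order variations, while arranging compatibility with the finitely many open-dense conditions imposed earlier. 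The remaining points — tracking the higher-order terms $g_\Gamma$ and $D^+_A g_2$ through the coordinate changes and verifying the $U(1)$-equivariance is preserved — are routine, but they rely on the weighted Sobolev setup of Section \ref{2} to keep the Fredholm theory and the projections $\Pi_1, \Pi_2$ well-behaved.
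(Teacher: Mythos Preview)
Your approach is essentially the same as the paper's: both invoke the Kuranishi model from \cite{MMR}, feed in the cohomology computation of Proposition \ref{2.9} and the kernel bound from Proposition \ref{4.5}, extract the quadratic part of the obstruction map, and then change coordinates on the $\R^2$ factor to put it in the form $(x_1 + ix_2)\cdot z$. If anything you are more careful than the paper about the key step --- the paper simply asserts that ``the implicit function theorem gives rise to a smooth map $\nu:\R^2\to\C$ invertible near $0$'' without isolating the non-degeneracy condition on $(x_1,x_2)\mapsto\Pi_1(x_1\rho(a_1)\phi_1 + x_2\rho(a_2)\phi_1)$ that this requires, whereas you name it and sketch the Sard--Smale argument needed to achieve it generically.
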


Combining Lemma \ref{2.5}, Proposition \ref{4.8}, and Proposition \ref{5.7}, we get a complete description of the moduli space $\M_{g, \beta}(Z, \s)$.

\begin{thm}\label{KUR}
Let $(Z, g, \s)$ and $(T^3, \mathfrak{t}, h)$ be given as above. After fixing a homology orientation, for a generic choice of metric and small perturbation $(g, \beta)$ we have 
\begin{enumerate}
\item[\upshape (i)] The Seiberg-Witten moduli space $\M_{g, \beta}(Z, \s)$ is an oriented compact smooth stratified space.
\item[\upshape (ii)] The reducible locus $\M^{\Red}_{g, \beta}(Z, \s)$ is diffeomorphic to $T^2$.
\item[\upshape (iii)] The irreducibles $\M^*_{g, \beta}(Z, \s)$ is an oriented smooth $1$-manifold whose components are diffeomorphic to either a circle or an arc such that \\
{\noindent $\bullet$} The closed ends of the arcs are contained in $\bar{\partial}_+^{-1}(\theta)$. \\
{\noindent $\bullet$} The open ends of the arcs lie on $\M_{g, \beta}^{\Red}(Z,\s)$. Moreover near an open end $[\Gamma] \in \M^{\Red}_{g ,\beta}(Z, \s)$ the moduli space is modeled on the zero set of a map $\tilde{\mathfrak{o}}_{[\Gamma]}$ given by 
\begin{align*}
\tilde{\mathfrak{o}}_{[\Gamma]}: \R^2 \times \R^+ & \longrightarrow \C \\
(x_1, x_2, z) & \longmapsto (x_1 + i x_2) \cdot z, 
\end{align*}
where the $\R^+$ component of $\tilde{\mathfrak{o}}_{[\Gamma]}^{-1}(0)$ represents the irreducible part  ,and the $\R^2$ component represents the reducible part.
\item[\upshape (iv)]There is no reducible monopole in $\M_{g, \beta}(Z, \s)$ asymptotic to the singular point, i.e. $\theta \notin \bar{\partial}_+(\M^{\Red}_{g, \beta}(Z, \s))$. 
\end{enumerate}
\end{thm}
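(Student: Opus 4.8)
The proof is essentially a matter of collecting the pieces that have already been assembled in Sections~\ref{2}. The plan is to verify each of the four assertions (i)--(iv) in turn, citing the propositions just proved, and then to address the one genuinely new point: that all three structures---the reducible locus, the irreducible stratum, and the local model at a crossing---are compatible and produce a single coherent stratified space.

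First I would dispose of (iv) and (ii). Assertion (iv) is exactly the content of Lemma~\ref{2.5}, applied to our $(Z,\s,g)$; it holds for a generic \emph{small} perturbation $\beta$, and we simply intersect that generic set with the ones coming from Proposition~\ref{2.12}(ii), Proposition~\ref{4.8}, and Proposition~\ref{5.7}, all of which are also generic-small conditions, so a common generic choice exists. For (ii): by Lemma~\ref{2.4} the reducible locus $\M^{\Red}_{g,\beta}(Z,\s)$ is identified with the Picard torus $\mathbb{T}(\s)=H^1(Z;i\R)/H^1(Z;i\Z)$, and since $H_*(M;\Z)\cong H_*(D^2\times T^2;\Z)$ we have $H^1(Z;\Z)\cong\Z^2$, so $\mathbb{T}(\s)\cong T^2$. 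The smooth structure is the standard one on the torus and is unaffected by the perturbation since $d^+\beta$ only translates the affine family of flat connections. This also uses the computation $H^2(E_\delta(\Gamma))$ contains no connection part (the $\hat H^+_c$ term vanishes because the intersection form on $\hat H^2_c(Z;i\R)$ is trivial, as noted in the proof of Proposition~\ref{2.9}), so the reducible locus is unobstructed in its own directions and genuinely a smooth submanifold.

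Next I would handle (iii), which is the heart of the statement. By Proposition~\ref{4.8}, after fixing a homology orientation the deleted space $\M^*_{g,\beta}(Z,\s)$ is a $1$-manifold with boundary of finitely many components, with $\partial\M^*\subset\bar\partial_+^{-1}(\theta)$; hence each component is a circle or an arc, and the \emph{closed} ends of arcs map to $\theta$ under $\bar\partial_+$. The only remaining ends are the \emph{open} ends, i.e. sequences of irreducibles with no limit inside $\M^*$. By the compactness of the full moduli space $\M_{g,\beta}(Z,\s)$ (Proposition~\ref{2.1}) and the fact that $\bar\partial_+$ of the irreducibles stays away from $\theta$ along such a sequence (using (iv)), any such sequence subconverges to a reducible monopole $[\Gamma]=[A,0]$; by Lemma~\ref{4.4} we get $\ker D^+_A\neq0$, and Proposition~\ref{4.5} forces $\ker D^+_A=\C$ for generic small $\beta$. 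Then Proposition~\ref{5.7} gives the local model $\tilde{\mathfrak{o}}_{[\Gamma]}(x_1,x_2,z)=(x_1+ix_2)z$ on $\R^2\times\R_+$, whose zero set is exactly $\R^2\cup_{\{0\}}\R_+$: the $\R^2$ is the reducible locus near $[\Gamma]$ and the $\R_+$ is a half-open arc of irreducibles limiting onto it. This simultaneously shows that every open end of an arc lands on $\M^{\Red}$, that such crossings occur at the finitely many points where the Dirac operator in the reducible family degenerates, and that near such a point the total space has the asserted model.

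Finally, for (i), I would assemble the above into the statement that $\M_{g,\beta}(Z,\s)$ is a compact oriented smooth stratified space. Compactness is Proposition~\ref{2.1}. The stratification has top stratum $\M^*_{g,\beta}(Z,\s)$ (dimension $1$) and lower stratum $\M^{\Red}_{g,\beta}(Z,\s)\cong T^2$ (dimension $2$)---note the reducible locus has \emph{larger} dimension here, which is consistent with the $b^+=0$ picture and is precisely why the local model of Proposition~\ref{5.7} is a union of pieces of different dimensions meeting at a point rather than a manifold-with-boundary. Away from the finitely many crossing points and away from $\bar\partial_+^{-1}(\theta)$ each stratum is an honest smooth manifold by Proposition~\ref{2.12}; at the crossing points the transition is governed by $\tilde{\mathfrak{o}}_{[\Gamma]}$, giving the Whitney-type stratified structure (cf.\ \cite[Chapter 8]{MMR}); and orientations on the irreducible stratum are fixed by the homology orientation as in Proposition~\ref{4.8}, with the reducible $T^2$ carrying its natural orientation as a complex torus. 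I expect the only mild subtlety---the ``main obstacle''---to be bookkeeping: checking that a \emph{single} generic pair $(g,\beta)$ can be chosen so that the conclusions of Lemma~\ref{2.5}, Proposition~\ref{2.12}(ii) (applied to a finite subcomplex large enough to detect the ends), Proposition~\ref{4.5}, Proposition~\ref{4.8}, and Proposition~\ref{5.7} hold at once, and that the resulting local models glue to the claimed global stratified structure; since each condition is the complement of a meager set in $\Met(X)\times\mathcal{P}$ (or already holds for all small $\beta$), a Baire category argument finishes this.
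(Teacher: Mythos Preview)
Your proposal is correct and follows essentially the same approach as the paper, which simply states that the theorem follows by combining Lemma~\ref{2.5}, Proposition~\ref{4.8}, and Proposition~\ref{5.7}. Your write-up is considerably more detailed than the paper's one-line assembly, and your added remarks on the Baire category bookkeeping and on how the strata fit together are accurate; the only minor slip is the parenthetical ``(using (iv))'' when arguing that open ends land on reducibles---(iv) concerns reducibles, not the approaching irreducibles---but the conclusion is unaffected since compactness of the full moduli space (Proposition~\ref{2.1}) already forces any non-boundary limit of irreducibles to be reducible.
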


Lastly we fix our convention for orientation, and explain the relation among the orientations of the irreducible locus, reducible locus, and the moduli space $\chi(T^3)$.

As discussed in Lemma \ref{2.4} the reducible locus $\M^{\Red}_{g, \beta}(Z, \s)$ is an affine space of $H^1(M; i\R) / H^1(M; i \Z)$, and $\chi(T^3)$ is an affine space of $H^1(T^3; i\R)/ H^1(T^3; i\Z)$. Thus the they are oriented by an orientation of $H^1(M; \Z)$ and $H^1(T^3; \Z)$ respectively. Since the asymptotic map is given by the inclusion and a shifting $[\mathfrak{b}] + i^*$ which is an embedding of $T^2 \hookrightarrow T^3$, we get an orientation of $\im i^*$ from that of $\chi(M)$.  We follow the "fiber-first" convention to assign the orientations so that the orientation of $\im i^*$ followed by a unit vector perpendicular to it in $\chi(T^3)$ agrees with the orientation on $\chi(T^3)$. 

The orientation on the irreducible locus $\M^*_{g, \beta}(Z, \s)$ is well-known as early as the introduction of Seiberg-Witten invariants (e.g. see \cite{M}). Here since we are working with manifolds with cylindrical end, the local deformation theory only works on weighted Sobolev spaces. Thus we work on $\M^*(Z, U^c_{\theta})$. Given $[\Gamma] \in \M^*(Z, U^c_{\theta})$ with a representative $\Gamma=(A, \Phi)$, the deformation complex can be homotoped (via ignoring the $0$-th order terms) to the complex $E_{\delta}(\Gamma)$ as in (\ref{4cx}). Since $\ker D_A^+$ is complex, the spinor part is canonically oriented. $H^0(Z; i\R)$ is canonically oriented as $Z$ is oriented. Thus to orient the interior of the irreducible locus it suffices to fix a choice on $H^1(Z; i\R)$, which we choose to be the same as orienting the reducible locus. The boundary points $\bar{\partial}_+^{-1}(\theta)$ are oriented as the boundary orientation of the irreducible locus. 

\section{Periodic Spectral Flow}\label{Peri}

The notion of periodic spectral flow for a family of Dirac operators over a homology $S^1 \times S^3$ is originally introduced in \cite{MRS}. In this section we first generalize the notion to closed $4$-manifolds with $b_1 >0$, $\sigma=0$ and noncompact $4$-manifolds with appropriate cylindrical end. Then we derive a gluing formula for the periodic spectral flows on cylindrical-ended manifolds with compatible asymptotic behavior, which can be thought of as a generalization to the gluing formula for usual spectral flows as in \cite{CLM1} and \cite{CLM2}. 

\subsection{Periodic Spectral Flow over Closed Manifolds} Let $(X, \s, g)$ be a closed smooth Riemannian spin $4$-manifold with $b_1(X) >0$ and $\sigma(X)=0$. We write $\s=(W^{\pm}, \rho)$ for the spinor bundles and Clifford multiplication. Let $\alpha \in H^1(X; \Z)$ be a primitive class. We choose a smooth function $f: X \to S^1$ so that $[df]=\alpha$. We also regard $\s$ as a spin$^c$ structure and write $A_0$ for the spin connection induced by the Levi-Civita connection. Let $\beta \in \Omega^1(X; i\R)$ be a imaginary valued $1$-form, which we think of as a perturbation as before. For any spin$^c$ connection $A$ we consider the following family of twisted Dirac operators from $L^2_1(X, W^{\pm}) \to L^2(X, W^{\mp})$: 
\begin{equation}
D^{\pm}_{A, z}(X, \beta):= D_A^{\pm}(X)  + \rho(\beta - \ln z \cdot df), \; z \in \C^*.
\end{equation}
\begin{rem}
We write the spin Dirac operator $D^{\pm}_{A_0}=D^{\pm}$ for simplicity. When $A-A_0=a \otimes 1_{W}$, we have $D^{\pm}_{A}=D^{\pm}+\rho(a).$ We choose not to include the class $\alpha$ in the notation unless the dependence on the choice of $\alpha$  becomes relevant in the argument.
\end{rem}
The spectral set of the family of operators $D^+_{A, z}(X, \beta)$ is defined to be 
\begin{equation}
\Sigma(A, \beta) = \{ z \in \C^* : D^+_{A, z}(X, \beta) \text{ is not invertible } \}. 
\end{equation}
Note that for different choices of $f_1, f_2: X \to S^1$ representing $\alpha$, there is a real-valued function $u: X \to \R$ such that $du= df_1 - df_2$. Then one computes that 
\begin{equation}
D^+_{A, z}(X, \beta, f_1)= e^{-zu} D^+_{A, z}(X, \beta, f_2) e^{zu}.
\end{equation}
Thus the spectral set $\Sigma(A, \beta)$ is independent of the choice of $f$. A fundamental property of the spectral set is the following result (see \cite[Theorem 3.1]{T1} and \cite[Theorem 4.6]{MRS}).

\begin{dfn}\label{reg4.3}
We call a path of spin$^c$ connections $A_t$, $t \in [0,1]$, a regular path with respect to $(\s, \beta, \alpha)$ if 
\begin{enumerate}
\item[\upshape (i)] For $i=0,1$, the family of operators $D^+_{A_i, z}(X, \beta)$ has trivial kernel for all $z$ with unit length $|z|=1$. 
\item[\upshape (ii)] The spectral set 
\[
\Sigma_I(A_t, \beta): =\{ (t, z) \in [0,1] \times S^1 : D^+_{A_t, z}(X, \beta) \text{  is not invertible }\}
\]
is discrete and 
\[
\ker D^+_{A_t, z}(X, \beta) = \C, \; \forall (t, z) \in \Sigma_I(A_t, \beta).
\]
\end{enumerate}
\end{dfn}

Since $[0,1] \times S^1$ is a compact manifold of real dimension $2$, the argument in Proposition \ref{4.5} implies that any path $A_t$ is regular with respect to a generic small perturbation $\beta$. We write $\Sigma_I(A_t, \beta)=\{(t_j, z_j)\}_{j=1}^m$. The following result gives us a description of the spectrum of $D^+_{A_t, z}(X, \beta)$ in a neighborhood of $[0,1] \times S^1$ in $[0,1] \times \C^*$. 

\begin{prop}\label{4.3.1}(\cite[Theorem 4.8]{MRS})
Let $(A_t)$ be a regular path. Then there exist $\delta >0$ and neighborhoods $U_{z_j} \subset \C^*$ of $z_j$, $j=1, ..., m$, such that 
\[
\bigcup_{|t-t_j| <\delta} \{ t\} \times (\Sigma(A_t, \beta) \cap U_{z_j} ) \subset [0,1] \times \C^*, \; j=1, ..., m,
\]
is a smoothly embedded curve. 
\end{prop}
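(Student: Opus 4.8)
The plan is to carry out a finite-dimensional Lyapunov--Schmidt reduction near each crossing point $(t_j,z_j)$, producing a scalar equation that is holomorphic in $z$, and then to invoke the holomorphic implicit function theorem in the variable $z$. First I would work locally: for each $j$ choose a branch of $\ln z$ on a small disc $U_{z_j}\subset\C^{*}$ centred at $z_j$, so that on $\{|t-t_j|<\delta_0\}\times U_{z_j}$ the family
\[
D^{+}_{A_t,z}(X,\beta)=D^{+}_{A_t}(X)+\rho(\beta)-\ln z\cdot\rho(df)
\]
depends smoothly on $t$ and holomorphically on $z$. At the crossing point $\ind D^{+}_{A_{t_j},z_j}(X,\beta)=\ind D^{+}(X)=0$, and condition (ii) of Definition \ref{reg4.3} gives $\ker D^{+}_{A_{t_j},z_j}(X,\beta)=\C\langle\phi_j\rangle$, whence $\cok D^{+}_{A_{t_j},z_j}(X,\beta)=\C\langle\psi_j\rangle$ as well. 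Letting $Q_j$ be the $L^{2}$-orthogonal projection onto $\C\psi_j$, the standard reduction produces, on a smaller neighbourhood of $(t_j,z_j)$, a map $w(t,z)$ valued in $(\ker D^{+}_{A_{t_j},z_j})^{\perp}$, smooth in $t$ and holomorphic in $z$, with $w(t_j,z_j)=0$ and $(1-Q_j)D^{+}_{A_t,z}(\phi_j+w)=0$, together with the reduced function
\[
\mu_j(t,z)=\bigl\langle D^{+}_{A_t,z}(X,\beta)\,(\phi_j+w(t,z)),\,\psi_j\bigr\rangle,
\]
which is smooth in $t$, holomorphic in $z$, vanishes at $(t_j,z_j)$, and has the property that $D^{+}_{A_t,z}(X,\beta)$ is non-invertible, for $(t,z)$ near $(t_j,z_j)$, precisely when $\mu_j(t,z)=0$.

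The core of the argument is then to show $\partial_z\mu_j(t_j,z_j)\neq0$. Differentiating the reduced equation at $(t_j,z_j)$ and using that $D^{+}_{A_{t_j},z_j}(\partial_z w)$ lies in the image of $D^{+}_{A_{t_j},z_j}$ and is therefore $L^{2}$-orthogonal to $\psi_j$, one finds $\partial_z\mu_j(t_j,z_j)=-z_j^{-1}\langle\rho(df)\phi_j,\psi_j\rangle$. To see this pairing does not vanish I would appeal to genericity: running the Sard--Smale argument of Proposition \ref{4.5} over the two-dimensional parameter manifold $[0,1]\times S^{1}$ shows that for a generic small $\beta$ the map $(t,\theta)\mapsto\mu_j(t,e^{i\theta})$ is transverse to $0\in\C$ at each crossing; since $\mu_j$ is holomorphic in $z$ one has $\partial_\theta\mu_j(t,e^{i\theta})=ie^{i\theta}\,\partial_z\mu_j$, so if $\partial_z\mu_j(t_j,z_j)$ vanished the real Jacobian of $(t,\theta)\mapsto\mu_j$ would have a vanishing $\theta$-column, contradicting transversality. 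Hence $\partial_z\mu_j(t_j,z_j)\neq0$, equivalently $\langle\rho(df)\phi_j,\psi_j\rangle\neq0$.

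Granting the non-degeneracy, I would finish by applying the holomorphic implicit function theorem to $\mu_j$ in the variable $z$: there are $\delta>0$ and a smooth map $t\mapsto z_j(t)$ on $\{|t-t_j|<\delta\}$ with $z_j(t_j)=z_j$, $\mu_j(t,z_j(t))=0$, and, after shrinking $U_{z_j}$, $z_j(t)$ the unique zero of $\mu_j(t,\cdot)$ in $U_{z_j}$. Consequently
\[
\bigcup_{|t-t_j|<\delta}\{t\}\times\bigl(\Sigma(A_t,\beta)\cap U_{z_j}\bigr)=\{(t,z_j(t)):|t-t_j|<\delta\},
\]
the graph of a smooth map, hence a smoothly embedded curve, as claimed. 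I expect the main obstacle to be precisely the non-degeneracy $\partial_z\mu_j(t_j,z_j)\neq0$: it is not forced by one-dimensionality of the kernel alone — even a linear pencil with discrete spectrum can have a higher-order zero of its reduced determinant while keeping a one-dimensional kernel — so it must be extracted from the genericity of $\beta$, i.e.\ from strengthening condition (ii) of Definition \ref{reg4.3} to the statement that the crossings on $[0,1]\times S^{1}$ are cut out transversally.
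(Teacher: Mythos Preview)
Your approach is correct and amounts to the same argument the paper sketches in the remark following the proposition: the paper phrases it as transversality of the $3$-real-parameter family $\{D^{+}_{A_t,z}\}_{(t,z)}$ over the thickened cylinder to the codimension-$2$ stratum of index-$0$ Fredholm operators with one-dimensional kernel, obtained by a generic perturbation, whereas you make this concrete via Lyapunov--Schmidt and read transversality as $\partial_z\mu_j(t_j,z_j)\neq 0$. Your diagnosis at the end is exactly right and matches the paper's ``after a generic perturbation'' clause: condition (ii) of Definition~\ref{reg4.3} alone does not force the zero of $\mu_j(t_j,\cdot)$ at $z_j$ to be simple, so the smoothness of the spectral curve genuinely uses the Sard--Smale step rather than mere one-dimensionality of the kernel; the original proof in \cite{MRS} packages the same content through analytic Fredholm theory (a holomorphic ``determinant'' whose zeros are the spectral set), which is what the paper alludes to as the operator-algebra formulation.
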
 

\begin{rem}
The spectral curves are given by the intersection between the family of Dirac operators $D^+_{A_t, z}$ parametrized by the thickened cylinder $\{1- \epsilon <|z| < 1+\epsilon\}$ and the stratum consisting of operators with $1$-dimensional kernel in the space of complex Fredholm maps of index $0$. After a generic perturbation this intersection is transverse, thus gives us the smoothness of the spectral curves. The original proof in \cite{MRS} is formulated in terms of operator algebra. 
\end{rem}
Given $r >0$, $a <b$ we write 
\[
C^r_{[a, b]}:= \{ (t, z) \in [a, b] \times \C^* : |z|=r \}
\]
for the cylinder from $a$ to $b$ with radius $r$. Note that if $t$ satisfies $|t-t_j| \geq \delta$ for all $j=1, ..., m$, $D^+_{A_t, z}(X, \beta)$ is invertible. Thus we can choose $\epsilon_1 >0$ so that 
\begin{enumerate}
\item[\upshape (i)] $B_{\epsilon_1}(z_j) \subset U_{z_j}, \; j=1, ..., m.$ 
\item[\upshape (ii)] $\Sigma(A_t, \beta) \cap \{ z : 1- \epsilon_1 \leq |z| \leq 1+\epsilon_1 \} =\emptyset$ for all $t$ satisfying $|t-t_j| \geq \delta$, $j=1, ..., m$. 
\end{enumerate}
In other words, the $\epsilon_1$-neighborhood of $C^r_{[0,1]}$ only intersects the spectral set in the curves appearing in Proposition \ref{4.3.1}, which we denote by $S_{\epsilon_1}$. We orient $S_{\epsilon_1}$ by the orientation of $[0,1]$. Since $\ker D^+_{A_t, z}(X, \beta)=\C$, $\forall (t, z) \in \Sigma_I(A_t, \beta)$, the $t$-directional derivative of spectral curves has to be positive. Otherwise the vanishing point has kernel of dimension greater than $1$. 

\begin{dfn}
We say $\lambda >0$ is an excluded value for the operator $D^+_{A}(X, \beta)$ if 
\[
D^+_{A, z}(X, \beta) \text{ is invertible for all } |z| = \lambda.
\]
\end{dfn} 

From the assumption $1$ is an excluded value for $D^+_{A_i}(X, \beta)$, $i=0, 1$, of a regular path $(A_t)_{[0,1]}$. Intuitively the periodic spectral flow of the family $D^+_{A_t}(X, \beta)$ is meant to be the signed count of intersections between the spectral curves $S_{\epsilon_1}$ and the cylinder $C^{1}_{[0,1]}$ in such a way that at point in the intersection we assign $-1$ if the curve is entering the cylinder and $+1$ if the curve is leaving. 

\begin{dfn}\label{sf}
Given a regular path $(A_t)_{[0,1]}$, a system of excluded values for $D^+_{A_t}(X, \beta)$ over is a finite sequence of pairs $\{( t_l, \lambda_l)\}_{l=1}^n$ where
\begin{enumerate}
\item[\upshape (i)] $0=t_0 < t_1 < ... < t_n = 1$ is a partition of $[0, 1]$, 
\item[\upshape (ii)] $\lambda_l \in (1 - \epsilon_1, 1+\epsilon_1)$ is an excluded value of $D^+_{A_t}(X, \beta)$ for all $t \in [t_{l-1} , t_l]$. Moreover $\lambda_1=\lambda_{n}=1$. 
\end{enumerate}
\end{dfn}

The continuity of $S_{\epsilon_1}$ implies that if $\lambda_0$ is an excluded value for $D^+_{A_{t_0}}(X, \beta)$, then for all $t$ satisfying $|t-t_0| \leq \delta_0$ with some $\delta_0$ small enough $\lambda_0$ is also an excluded value for $D^+_{A_t}(X, \beta)$. Compactness of $[0,1]$ implies that a system of excluded values always exists. Now we are in a position to give a formal definition of periodic spectral flow, which is inspired by that of the usual spectral flow as in \cite{CLM2}. 

\begin{dfn}
Let $\{ (t_l, \lambda_l)\}_{l=1}^n$ be a system of excluded values for a regular path $(A_t)_{[0,1]}$. For each $l$ in the range $1 \leq l \leq n-1$ we define 
\begin{equation}\label{sfeq}
a_l=\left \{ 
\begin{array}{lll} 
1  & \mbox{if $ \lambda_l > \lambda_{l+1}$} \\
0 & \mbox{if $ \lambda_l=\lambda_{l+1}$} \\
-1  & \mbox{if $ \lambda_l < \lambda_{l+1}$} 
\end{array}
\right.
\end{equation}
and $b_l$ to be the number of spectral points in $\Sigma(A_{t_l}, \beta)$ of length between $\lambda_l$ and $\lambda_{l+1}$. The periodic spectral flow of the family of operators $D^+_{A_t}(X, \beta)$ along the regular path $(A_t)_{t \in [0,1]}$ is 
\begin{equation}\label{sfe}
\Sf (D^+_{A_t}(X, \beta)) := \sum_{l=1}^{n-1} a_lb_l. 
\end{equation}
\end{dfn}

\begin{rem}
Originally the periodic spectral flow is defined in the following way in \cite{MRS}. We write $\Sigma_I(A_t, \beta)=\{(t_j, z_j) \}$ for the set of spectral points on $[0, 1]\times S^1$. Around each point $(t_j, z_j)$ there is a unique smooth spectral curve $\gamma_j: (-\epsilon, \epsilon) \to \C^*$ such that $\gamma_j(0)=z_j$. We write $\ln \gamma_j(t)=u_j(t)+ i v_j(t)$ for real-valued functions $u_j, v_j$. The condition 
\[
\ker D^+_{A_{t_j}, z_j}(X, \beta)= \C
\]
implies that $\dot{u}_j(0) \neq 0$. To each point $(t_j, z_j)$ we assign $-1$ if $\dot{u}_j(0) < 0$, and $+1$ if $\dot{u}_j(0) >0$. Then the periodic spectral flow is defined to be the signed count of the set of spectral points. By drawing pictures locally one see that Definition \ref{sf} is equivalent to this one. 
\end{rem}

\begin{lem}
The periodic spectral flow $\Sf (D^+_{A_t}(X, \beta))$ does not depend on the choice of the system of excluded values $\{(t_l, \lambda_l)\}_{l=1}^n$. In particular the periodic spectral flow is well-defined. 
\end{lem}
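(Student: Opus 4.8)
The plan is to mimic the classical argument for well-definedness of spectral flow (compare Cappell--Lee--Miller \cite{CLM2}): reduce to a common partition and then change the chosen excluded values one subinterval at a time. Given two systems $\{(t_l,\lambda_l)\}_{l=1}^{n}$ and $\{(t'_m,\lambda'_m)\}_{m=1}^{n'}$ for the regular path $(A_t)_{[0,1]}$, let $0=s_0<\dots<s_N=1$ be the union of the two partitions. Each $[s_{i-1},s_i]$ lies in some $[t_{l-1},t_l]$ and some $[t'_{m-1},t'_m]$, so $\lambda_l$ and $\lambda'_m$ are still excluded on $[s_{i-1},s_i]$ and both systems refine to systems on the partition $\{s_i\}$. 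I would first check that this refinement does not change $\Sf$: a partition point $s_i$ inserted in the interior of an old subinterval carries the same value on both sides, so the coefficient $a$ of (\ref{sfeq}) at $s_i$ is $0$, the term it contributes to (\ref{sfe}) drops out, and no other term changes; the endpoint normalization $\lambda=1$ is preserved. Thus I may assume the two systems share the partition $\{s_i\}_{i=0}^{N}$ and the endpoint values $1$, and the whole statement reduces to: replacing a single interior value $\mu:=\mu_i$ (for some $2\le i\le N-1$) by another value $\mu'$ that is excluded on $[s_{i-1},s_i]$ does not change $\Sf$.

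For this reduced claim only the terms $a_{i-1}b_{i-1}$ and $a_ib_i$ of (\ref{sfe}) are affected. For a time $s$ and radii $\alpha,\beta$ not attained by $\Sigma(A_s,\beta)$, write $G_s(\alpha,\beta)=\sign(\beta-\alpha)\cdot\#\{z\in\Sigma(A_s,\beta):\min(\alpha,\beta)<|z|<\max(\alpha,\beta)\}$; this is finite by Proposition \ref{4.1.1}, and it is additive in its middle argument because the radii involved are excluded values at the partition times at which they appear. A short bookkeeping against Definition \ref{sf} (using $a_{i-1}b_{i-1}=G_{s_{i-1}}(\mu,\mu_{i-1})$, $a_ib_i=G_{s_i}(\mu_{i+1},\mu)$ and the analogous primed identities) then collapses the difference to
\begin{equation}
\Sf'-\Sf=G_{s_i}(\mu,\mu')-G_{s_{i-1}}(\mu,\mu'),
\end{equation}
that is, to the difference between the number of spectral points of $D^+_{A_t}(X,\beta)$ with radius strictly between $\mu$ and $\mu'$ on the fibre over $s_i$ and on the fibre over $s_{i-1}$. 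So the entire proof comes down to showing this difference is zero.

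The geometric heart of the argument --- and the step I expect to require the most care --- is precisely this equality of fibrewise spectral counts. I would prove it by examining the spectral locus inside the box $R=[s_{i-1},s_i]\times\{\,\min(\mu,\mu')\le|z|\le\max(\mu,\mu')\,\}$, which lies in the interior of the annular region where the spectral curves $S_{\epsilon_1}$ of Proposition \ref{4.3.1} are confined. After shrinking the $\delta$ of that proposition (so that $\dot u_j(0)\ne0$ forces each spectral curve to be a strictly monotone graph of radius in $t$ throughout $|t-t_j|<\delta$) and invoking the genericity of $\beta$ already used in Proposition \ref{4.5} to make $S_{\epsilon_1}$ a $1$-manifold transverse to the faces of $R$, the intersection $S_{\epsilon_1}\cap R$ is a compact $1$-manifold: it avoids the two radial walls $\{|z|=\mu\}$ and $\{|z|=\mu'\}$ (these sit at excluded values) and the rest of the boundary of the strip (because $R$ is interior), so its boundary lies entirely on the two vertical ends $\{s_{i-1}\}\times(\cdot)$ and $\{s_i\}\times(\cdot)$; and each of its components, being a connected piece of a single monotone-radius graph, is either a circle or an arc along which $t$ is strictly monotone, hence contributes one boundary point to each vertical end, or none. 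Consequently the two ends carry equally many boundary points, which is exactly the asserted equality. Alternatively and more quickly, one can identify $\Sf$ with the manifestly system-independent count of $\Sigma_I(A_t,\beta)$ weighted by $\sign(\dot u_j(0))$ from the Remark following Definition \ref{sf}; the reduction above is, in effect, the local picture-drawing that proves that identification.
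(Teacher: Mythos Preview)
Your proof is correct and follows essentially the same approach as the paper's: refine to a common partition, reduce to changing a single interior excluded value, and show the two affected terms in (\ref{sfe}) change by cancelling amounts because the number of spectral points in the annulus between $\mu$ and $\mu'$ is the same over both endpoints of the subinterval. Your treatment is in fact more explicit than the paper's --- the paper simply asserts that $q(t)=|\Sigma(A_t,\beta)\cap\{\mu<|z|<\mu'\}|$ is constant on the subinterval, whereas you spell out the underlying geometric reason via the $1$-manifold structure of $S_{\epsilon_1}$ inside the box $R$; your signed-count function $G_s(\alpha,\beta)$ is also a clean bookkeeping device for what the paper does by a short case check.
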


\begin{proof}
It's straightforward to see that the periodic $\lambda$-spectral flow is invariant under a refinement of the partition $0=t_0 < t_1 < ... < t_n =1$. Indeed let $0=t'_0< t'_1< ... < t'_{n'}=1$ be a refinement of partition. We let $\lambda_{l'}=\lambda_l$ if $t_{l'} \in  [t_{l-1}, t_l]$ Then by definition $a_lb_l=a_{l'}b_{l'}$ for $t_{l'} =t_l$, and $a_{l'}b_{l'}=0$ otherwise. Thus only the terms with $t_{l'}=t_l$ survive, which leads to the same sum in (\ref{sfe}). 

Given two systems of excluded values, after taking a subdivision of the partition of $[0,1]$, we may assume the partition parts are the same. For each $1 \leq l \leq n$, we have $\lambda_l$ and $\lambda'_l$ as excluded values of $D^+_{A_t}(X, \beta)$ for $t \in [t_{l-1}, t_l]$. By induction we can also assume that there is exactly one $l$ in the range $[1, n]$ such that $\lambda_l \neq \lambda'_{l}$. We may assume $\lambda_l < \lambda'_l$. Then one sees that the quantity 
\[
q(t)=|\Sigma(A_t, \beta) \cap \{ (t, z) : \lambda_l < |z| < \lambda'_l \} |
\]
is constant for all $t \in [t_{l-1}, t_l]$. Then $a'_{l-1}b'_{l-1}=a_{l-1}b_{l-1} \pm q(t)$ and $a'_lb'_l=a_lb_l \mp q(t)$ with cancelling signs depending the value of $\lambda_{l-1}$. Moreover $a'_jb'_j=a_jb_j$ for $j \notin \{ l, l-1\}$. This completes the proof. 
\end{proof}

Although the periodic spectral flow might depend on the perturbation $\beta$, it is still invariant under a small deformation of the perturbation we use to make the path $(A_t)$ regular. 

\begin{lem}\label{hi4.9}
Let $(A_t)_{[0,1]}$ be a regular path with respect to $\beta$. Then there exists a neighborhood $U_{\beta}$ of $\beta$ in $\Omega^1(X; i\R)$ so that $(A_t)$ is regular for a generic choice of perturbation $\beta' \in U_{\beta}$. Moreover 
\[
\Sf(D^+_{A_t}(X, \beta) )= \Sf(D^+_{A_t}(X, \beta')).
\]
\end{lem}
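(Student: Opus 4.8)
The plan is to show that regularity is an open condition along the fixed path $(A_t)_{[0,1]}$, and that the periodic spectral flow is locally constant in $\beta$ within the regular locus. First I would fix the regular path $(A_t)$ with respect to $\beta$. By Definition \ref{reg4.3}(i), for $i=0,1$ the operator $D^+_{A_i,z}(X,\beta)$ is invertible for all $|z|=1$; since invertibility is open in the operator norm and $\{|z|=1\}$ is compact, there is a uniform lower bound on the smallest singular value, so the same holds after replacing $\beta$ by any $\beta'$ in a sufficiently small $L^2$-neighborhood $U_\beta^{(0)}$. Hence condition (i) persists for all $\beta'\in U_\beta^{(0)}$. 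For condition (ii), I would invoke the transversality argument already used in the proof of Proposition \ref{4.5}: the spectral set $\Sigma_I(A_t,\beta)$ is the transverse intersection, inside the space of index-$0$ complex Fredholm operators, of the $2$-parameter family indexed by $(t,z)\in[0,1]\times S^1$ with the codimension-$2$ stratum of operators with $1$-dimensional kernel. Transversality is an open and generic condition, so there is a neighborhood $U_\beta\subset U_\beta^{(0)}$ such that a generic $\beta'\in U_\beta$ still makes $(A_t)$ regular, with all kernels along $\Sigma_I$ exactly $\C$.

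Next I would prove the invariance of the spectral flow. Choose a smooth homotopy $\beta_s$, $s\in[0,1]$, from $\beta$ to $\beta'$ lying in $U_\beta$, which we may take (after a further generic perturbation in the finite-dimensional sense of Proposition \ref{4.5}, applied now to the $3$-parameter family $(t,z,s)$) so that the total spectral set
\[
\Sigma_I = \{(t,z,s)\in[0,1]\times S^1\times[0,1] : D^+_{A_t,z}(X,\beta_s)\text{ not invertible}\}
\]
is a smoothly embedded $1$-manifold with all kernels $\C$, meeting the faces $s=0$ and $s=1$ transversally in the finite sets $\Sigma_I(A_t,\beta)$ and $\Sigma_I(A_t,\beta')$, and (by condition (i) above) disjoint from the faces $t=0$ and $t=1$ for every $s$. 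I would then apply a cobordism argument exactly parallel to the well-definedness lemma for the spectral flow proved just above in the excerpt: along each $s$-slice one chooses a system of excluded values, the count $\sum_l a_lb_l$ computes the signed intersection number of the spectral curves $S_{\epsilon_1}$ with the cylinder $C^1_{[0,1]}$ in the $s$-slice, and this intersection number is invariant under the bordism provided by $\Sigma_I$ because $\Sigma_I$ has no boundary on $t\in\{0,1\}$ and because the orientation of the spectral curves (fixed by the sign of $\dot u_j$, i.e.\ the positivity of the $t$-derivative forced by $\ker=\C$) varies continuously in $s$. Concretely one can stratify $[0,1]_s$ by the finitely many $s$-values where a component of $\Sigma_I$ is tangent to a slice $\{s=\text{const}\}$ or crosses the cylinder $|z|=1$ nontransversally, check that the count is constant on each open stratum by continuity of excluded values, and check that it does not jump across a critical $s$ because the birth/death or cylinder-crossing events occur in canceling pairs.

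I expect the main obstacle to be the bookkeeping at the codimension-one events in the $s$-direction: making precise that whenever a spectral curve develops a tangency to an $s$-slice (a ``birth/death'' of a pair of points in $\Sigma_I(A_t,\beta_s)$) the two new points carry opposite $a_l$-signs, and that whenever a component of $\Sigma_I$ moves across the cylinder $|z|=1$ the corresponding change in $b_l$ is exactly compensated by the change in the excluded-value data. This is the periodic analogue of the standard homotopy invariance of ordinary spectral flow, and the cleanest formulation is to phrase the whole count as the intersection number in $[0,1]_t\times\C^*_z\times[0,1]_s$ of the $2$-manifold $\Sigma_I$ with the $2$-manifold $\{|z|=1\}\times[0,1]_s$, relative to the boundary $t\in\{0,1\}$ where they are disjoint; then the equality $\Sf(D^+_{A_t}(X,\beta))=\Sf(D^+_{A_t}(X,\beta'))$ is literally the statement that the two boundary intersection numbers of a compact $1$-dimensional bordism agree. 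The remaining routine points are the openness estimate (a standard operator-norm perturbation bound using that $\rho(\beta-\beta')$ is a bounded, indeed compact relative to $D^+_A$, perturbation) and the genericity statement, which is verbatim the argument of Proposition \ref{4.5} applied with one extra parameter.
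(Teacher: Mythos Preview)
Your proposal is correct and follows essentially the same strategy as the paper: perturb $\beta$ along a path $\beta_s$ and argue that the resulting family of spectral sets forms a $1$-dimensional cobordism in $[0,1]_t\times S^1\times[0,1]_s$, so the signed count at the two ends agrees. The paper's version is terser---it simply observes that if a component of this cobordism connected two points of opposite sign there would be an $s_0$ where the spectral curve is simultaneously entering and leaving the unit circle, contradicting $\ker=\C$---whereas you recast the same cobordism as an intersection-number invariance in the thickened ambient space $[0,1]_t\times\C^*_z\times[0,1]_s$, which is a cleaner packaging; note only that in your final sentence the hypersurface $\{|z|=1\}$ should carry the $t$-factor as well (it is $3$-dimensional in the $4$-dimensional ambient space), so that its intersection with the $2$-dimensional spectral locus is the $1$-dimensional cobordism rather than a $0$-dimensional count.
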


\begin{proof}
Proposition \ref{4.5} says that a generic $\beta'$ in a small neighborhood $U_{\beta}$ of $\beta$ makes $(A_t)$ a regular path. Since the set of such perturbations is of second Baire category, it's locally path connected. Thus we may assume the set of perturbations making $(A_t)$ regular in the neighborhood $U_{\beta}$ is connected. Now take a such path $(\beta_s)_{s \in [0,1]}$ connecting $\beta$ and $\beta'$. This path gives a cobordism 
\[
W=\bigcup_{s \in [0,1]} \Sigma_I((A_t, \beta_s)
\]
from the spectral set $\Sigma_I(A_t, \beta)$ to $\Sigma_I(A_t, \beta')$. By choosing the neighborhood small enough, discreteness of the spectral sets implies that this cobordism embeds as a curve in $[0,1] \times S^1 \times [0,1]$. Suppose a component $z(s)$ of the $1$-dimensional cobordism $W$ connects two points of opposite sign. Then continuity of the spectral curves imply that there is a point $s_0 \in (0, 1)$ with $z(s_0) \in \Sigma(A_{t_0},\beta_{s_0})$ such that the spectral curve is both entering and leaving the unit circle $S^1$ at $z(s_0)$, which is impossible. Note that the periodic spectral flow is the signed count of the spectral points, thus they are equal. 
\end{proof}

\subsection{Periodic Spectral Flow over Cylindrical-Ended Manifolds} Now we generalize the notion of periodic spectral flow over manifolds with cylindrical ends. Let $(M, \s, g)$ be a smooth compact Riemannian spin $4$-manifold with compatible spin boundary $(Y, \mathfrak{t}, h)$. Form $(M_{\infty}, \s)$ and $(M_T, \s)$ as before. Then $g$ naturally extends to a metric $g$ on $M_{\infty}$ and $M_T$. The space of perturbations we are considering here have compact support $\beta \in \Omega^1_c(M_{\infty}, i\R)$. As before we also write $\s, \mathfrak{t}$ for the induced spin$^c$ structures. We impose two topological assumptions on $M$: 
\begin{hypo}\label{hypo4.11}
\hfill
\begin{enumerate}
\item[\upshape (i)] The signature $\sigma(M)=0$. The first Betti number $b_1(M) >0$. 
\item[\upshape (ii)] The restriction map $i^*: H^1(M; \Z) \to H^1(Y; \Z)$ is injective. 
\end{enumerate}
\end{hypo}

Let $\alpha \in H^1(M_{\infty}; \Z)$ be a primitive class with $\alpha|_{Y_t}=\alpha_Y$ independent of $t$. Choose smooth functions $f: M_{\infty} \to S^1$ to represent $[df]=\alpha$ and $f_Y: Y \to S^1$ to represent $[df_Y]=\alpha_Y$ so that $df|_{Y_t} = df_Y$ for all $t \in [0, \infty)$. We write $D(Y): L^2_1(Y, S) \to L^2(Y, S)$ for the spin Dirac operator over $(Y, \mathfrak{t})$ and $B_0$ for the spin connection on $\mathfrak{t}$. 

\begin{dfn}\label{dfn4.12}
We call a spin$^c$ connection $A$ on $(M_{\infty}, \s)$ regularly asymptotically flat (RAF) if the following holds.
\begin{enumerate}
\item[\upshape (i)] $A$ is exponentially asymptotic to a flat spin$^c$ connection $B_o$ on $(Y, \mathfrak{t})$, i.e. over the cylindrical end one writes $A|_{[0, \infty) \times Y}=B(t)+c(t)dt$, then there are constants $T, \delta >0$ such that for all $t >T$
\[
|B(t) - B_o| < C_1 e^{-\delta t} \text{ and } |c(t)| < C_2 e^{-\delta t}.
\]
\item[\upshape (ii)] The twisted Dirac operators 
\begin{equation*}
D_{B_o, z}(Y):=D_{B_o}(Y)- \ln z \rho(df_Y): L^2_1(S) \to L^2(S)
\end{equation*}
are invertible for all $z$ with unit length $|z|=1$.
\item[\upshape (iii)] There exists $\epsilon_Y >0$ such that the twisted Dirac operators 
\begin{equation*}
D^+_{A, z}(M_{\infty}):=D_A^+(M_{\infty}) - \ln z \rho(df) : L^2_1(W^+) \to L^2(W^+)
\end{equation*}
have index $0$ for all $z$ in the range $1-\epsilon_Y < |z| < 1+ \epsilon_Y$.
\end{enumerate}
\end{dfn}

Recall from (\ref{eq1.2}) that the Dirac operator $D^+_{A, z}(M_{\infty})$ restricted to the cylindrical end has the form 
\[
D^+_{A, z}(M_{\infty})|_{[0, \infty) \times Y} = {d \over {dt}} +D_{B, z}(Y)+ c(t)+\rho(B(t)-B).
\] 
Due to the weighted Sobolev embedding theorem (c.f. \cite{L}), exponential decay implies that the zeroth order operator
\[
c(t)+\rho(B(t)-B)): L^2_1(W^+|_{[0, \infty) \times Y}) \to L^2(W^-|_{[0,\infty) \times Y})
\]
is compact. Invoking the classical argument by Atiyah-Patodi-Singer in \cite{APS1}, the invertibility of $D_{B, z}(Y)$ implies the Fredholmness of $D^+_{A, z}(M_{\infty}, \beta)$ for $\beta \in \Omega^1_c(M_{\infty}, i\R)$. Since $B$ is flat, (iii) in Definition \ref{dfn4.12} can be achieved assuming certain topological restraints (e.g. vanishing of signature) on $(M, Y)$ based on the results of Atiyah-Patodi-Singer in \cite{APS1}, \cite{APS2}. Analogous to the case of closed manifolds we have the notion of regular path as well. 
\begin{dfn}
We call a path of RAF connections $(A_t)$, $t \in [0,1]$, a regular path with respect to $(\s, \alpha, \beta)$ if 
\begin{enumerate}
\item[\upshape (i)] For $i=0,1$, the family of operators $D^+_{A_i, z}(M_{\infty}, \beta)$ has trivial kernel for all $|z|=1$. 
\item[\upshape (ii)] For some $\epsilon_1 < \epsilon_Y$ the spectral set 
\[
S_{\epsilon_1}: =\{ (t, z) : |z| \in [1- \epsilon_1, 1+\epsilon_1], D^+_{A_t, z}(X, \beta) \text{  is not invertible }\}
\]
consists of finitely many smoothly embedded curves transverse to the cylinder $[0,1] \times S^1$, i.e. $S_{\epsilon_1}\cap [0,1] \times S^1$ consists of finitely many points, $S_{\epsilon_1} \cap \{0, 1\} \times S^1 = \emptyset$, and 
\[
\ker D^+_{A_t, z}(X, \beta) = \C, \; \forall (t, z) \in S_{\epsilon_1}. 
\]
\end{enumerate}
\end{dfn}

\begin{prop}\label{4.14}
Any path of RAF connections $(A_t)_{[0,1]}$ is regular with respect to a generic small perturbation $\beta$. 
\end{prop}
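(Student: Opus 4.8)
The plan is to run a parametrized transversality argument over a compact parameter space, exactly parallel to the proof of Proposition~\ref{4.5}, the only new feature being the non-compactness of $M_{\infty}$, which is absorbed by unique continuation for the Dirac operator. Fix $\epsilon<\epsilon_Y$ small and set $P=[0,1]\times\bar A_{\epsilon}$ with $\bar A_{\epsilon}=\{1-\epsilon\le |z|\le 1+\epsilon\}$, a compact manifold with $\dim_{\R}P=3$. Conditions (ii) and (iii) of Definition~\ref{dfn4.12}, together with the Atiyah--Patodi--Singer framework (shrinking $\epsilon$ so that the model operator $D_{B_o,z}(Y)$ is invertible for $|z|\in[1-\epsilon,1+\epsilon]$), show that $\{D^+_{A_t,z}(M_{\infty},\beta)\}_{(t,z)\in P}$ is a smooth family of Fredholm operators $L^2_1(W^+)\to L^2(W^-)$ of index $0$; since $\beta\in\Omega^1_c(M_{\infty};i\R)$ the term $\rho(\beta)$ is a compact (Rellich) perturbation, so Fredholmness and the vanishing of the index persist, and the spectral set is well defined (different local branches of $\ln z$ give conjugate operators). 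Recall that the space of index-$0$ complex Fredholm operators is stratified by the complex dimension $j$ of the kernel, the stratum $\mathcal F_j$ having real codimension $2j^2$.

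The main step is to show that the compactly supported perturbations are rich enough to push off the high strata. Suppose $p_0=(t_0,z_0)\in P$ has $\mathcal H^1:=\ker D^+_{A_{t_0},z_0}(M_{\infty},\beta_0)\ne 0$, with $\mathcal H^2$ the cokernel, for some $\beta_0$ near $0$. As in Proposition~\ref{4.5}, the implicit function theorem produces neighborhoods $U_0\ni p_0$ in $P$ and $W_0\ni\beta_0$ in $\Omega^1_c(M_{\infty};i\R)$ and a smooth map $\eta\colon U_0\times W_0\to\Hom_{\C}(\mathcal H^1,\mathcal H^2)$ with $\ker D^+_{A_t,z}(M_{\infty},\beta)\cong\ker\eta(p,\beta)$, whose derivative in the perturbation variable is $d_{\beta}\eta|_{(p_0,\beta_0)}(0,b)\colon\phi\mapsto\Pi(\rho(b)\phi)$. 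The key point is that unique continuation for the twisted Dirac operator holds over the non-compact manifold $M_{\infty}$: every nonzero element of $\mathcal H^1$ and of $\mathcal H^2$ is nonvanishing on an open dense set, and in particular is nonzero somewhere in $\Int M$, where compactly supported perturbations are allowed to live. Hence one can select $b_1,\dots,b_m\in\Omega^1_c(\Int M;i\R)$, $m=\dim_{\C}\mathcal H^1$, so that $d_{\beta}\eta|_{(p_0,\beta_0)}$ has image of complex dimension at least $m$; passing to $V_0=\Span\{b_1,\dots,b_m\}$ and projecting gives a map $\eta'$ with everywhere surjective differential, and Sard's theorem yields, for generic $\beta$ in $V_0\cap W_0$, a zero set whose slice $\pi^{-1}(\beta)\cap\eta'^{-1}(0)$ has codimension at least $2m$ in $U_0$. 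Since $\dim_{\R}U_0=3$, this slice is empty whenever $m\ge 2$, so after such a perturbation $\dim_{\C}\ker D^+_{A_t,z}(M_{\infty},\beta)\le 1$ on $U_0$. Covering the compact space $P$ by finitely many such neighborhoods and inducting on the kernel dimension (enlarging the finite-dimensional perturbation space at each stage), one concludes that for a residual set of small $\beta\in\Omega^1_c(M_{\infty};i\R)$ one has $\dim_{\C}\ker D^+_{A_t,z}(M_{\infty},\beta)\le 1$ for all $(t,z)\in P$; consequently $S_{\epsilon_1}$ (take $\epsilon_1=\epsilon$) meets the stratum $\mathcal F_1$ transversally and is a smooth embedded $1$-manifold along which $\ker=\C$.

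It remains to treat the endpoints and the cylinder and to record transversality to $[0,1]\times S^1$. Running the same Sard argument with parameter space $\{0,1\}\times S^1$ (real dimension $1<2$) forces $D^+_{A_i,z}(M_{\infty},\beta)$ to have trivial kernel for $i=0,1$ and $|z|=1$ for generic small $\beta$, which is condition (i) of the regular-path definition; carrying it out over $[0,1]\times S^1$ (real dimension $2$) makes $S_{\epsilon_1}\cap([0,1]\times S^1)$ a finite set disjoint from $\{0,1\}\times S^1$. Finally, transversality of $S_{\epsilon_1}$ to the cylinder $[0,1]\times S^1$ is automatic once $\ker=\C$ at the intersection points: by Proposition~\ref{4.1.1} (applicable here because $\rho(df)$ is injective on $\ker D^+$ by unique continuation) and the discussion following Proposition~\ref{4.3.1}, the $\ln|z|$-directional derivative of the spectral curve at such a point cannot vanish, as otherwise the kernel there would have dimension greater than one. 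All the listed conditions are open and dense in the relevant finite-dimensional slices, hence hold on a residual subset of a neighborhood of $0$ in $\Omega^1_c(M_{\infty};i\R)$, which is the asserted generic small perturbation. The main obstacle is the middle step --- arranging transversality using only compactly supported perturbations over the non-compact $M_{\infty}$ --- and the mechanism that makes it work is unique continuation, which keeps the vanishing loci of kernel and cokernel spinors nowhere dense and in particular prevents them from swallowing the compact core $M$.
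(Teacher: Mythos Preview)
Your proof is correct and follows essentially the same approach as the paper, which simply says to apply the argument of Proposition~\ref{4.5} to the compact $3$-dimensional parameter space $[0,1]\times\{1-\epsilon_1\le |z|\le 1+\epsilon_1\}$; you supply the details the paper omits, most importantly the observation that unique continuation on $M_{\infty}$ guarantees the kernel and cokernel spinors are nonvanishing on an open dense set of the compact core $M$, so that compactly supported perturbations suffice to achieve transversality. One minor remark: your final paragraph, deducing transversality of the spectral curve to the cylinder from $\ker=\C$ alone, echoes the paper's own terse claim following Proposition~\ref{4.3.1} but is not strictly needed here---the Sard argument you already ran over $[0,1]\times S^1$ yields the finite-intersection condition, which is precisely how the cylindrical-end definition of ``transverse'' is stated.
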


\begin{proof}
Fix $\epsilon_1 < \epsilon_Y$. Take an arbitrary path $(A_t)_{[0,1]}$. Then we get a family of Dirac operators $D^+_{A_t, z}(M_{\infty})$ of index $0$ parametrized by the compact set $\{ (t, z) : t \in [0,1], |z| \in [1-\epsilon_1, 1+\epsilon_1]\}$. Then we can apply the argument in Proposition \ref{4.5} to this case to conclude that for generic small $\beta \in \Omega^1_c(M_{\infty}, i\R)$ the family of Dirac operators $D^+_{A_t, z}(M_{\infty}, \beta)$ is transverse to the space of Fredholm operators of index $0$ stratified by the dimension of kernels. The stratum consisting of kernels with complex dimension $1$ has complex codimension $1$. The other lower strata have higher complex codimension. 
\end{proof}

Given a regular path $(A_t)_{[0,1]}$ we define the periodic spectral flow 
\[
\Sf(D^+_{A_t}(M_{\infty}, \beta))=\sum_l a_l b_l
\] as in the closed case (Definition \ref{sf}) with respect to a system of excluded values $\{(t_l ,\lambda_l)\}$. 

\subsection{A Splitting Formula} 
In this section we derive a splitting formula relating the periodic spectral flows over a closed manifold and manifolds with cylindrical end as we run the neck-stretching process. Let $(X, \s, g)$ be a closed Riemannian spin $4$-manifold decomposed as $X=M \cup N$ with $M \cap N=Y$, which enables us to invoke the neck-stretching set-up in Section \ref{nssu}. We assume both $M$ and $N$ satisfies Hypothesis \ref{hypo4.11}. Let $\alpha \in H^1(X; \Z)$ be a primitive class. We write the restrictions as $\alpha_M=i_M^* \alpha, \alpha_N=i_N^* \alpha$, which are both primitive. We choose representative $f: X \to S^1$ so that $df|_{Y_t}=df_Y$, $t \in (-1,1)$, for some function $f_Y: Y \to S^1$. Since $f$ is parallel along the $t$-direction, we get representatives $f_M: M_{\infty} \to S^1$ and $f_N: N_{\infty} \to S^1$ by extending the restriction of $f$ over the cylindrical end.

Now suppose for each $T \in [0, \infty)$ we have a path $(A_{T, s})_{s \in [0,1]}$ of spin$^c$ connections over $(X_T, \s)$. Choose $\beta_T \in \Omega^1(X_T; i\R)$ to be a perturbation so that $\supp \beta_T \subset M$ and $(A_{T, s})$ is regular with respect to $\beta_T$. We denote by $\epsilon_T$ the "$\epsilon_1$" that appeared in Definition \ref{sf}. Then we get the periodic spectral flow $\Sf(D^+_{A_{T, s}}(X_T, \beta_T))$ for each $T$. We make the following assumptions on the convergence of $(A_{T, s})$:

\begin{hypo}\label{hypo3}
There are regular paths of RAF spin$^c$ connections $A_{o, s} , A'_{o, s}$ on $(M_{\infty}, \s)$ and $(N_{\infty}, \s)$ respectively asymptotic to the same path of connections $B_{o, s}$ on $(Y, \mathfrak{t})$ such that the family of path of connections $(A_{T,s })$ converges uniformly to $(A_{o, s}, A'_{o, s})$ in the following sense 
\begin{enumerate}
\item[$\bullet$] For each $s \in [0,1]$
\[
A_{T, s}|_{M_T} \xrightarrow{L^2_{k, loc}} A_{o, s} \text{ and } A_{T, s}|_{N_T} \xrightarrow{L^2_{k, loc}} A'_{o, s}.
\]
\item[$\bullet$] Given any $\epsilon>0$, there exists $T_o$ such that for any $T >T_o$ one has 
\[
\|A_{T, s} |_{I_{T-T_o}} - B_{o, s} \|_{L^2_k(I_{T-T_o})} < \epsilon. 
\]
\end{enumerate}
\end{hypo}

The splitting formula we are going to prove states as follows. 

\begin{thm}\label{sf4.14}
Assume that Hypothesis \ref{hypo3} holds. Suppose $D^+_{A'_{o,s}, z}(N_{\infty})$ is invertible for all $s \in [0,1]$ and $z$ with unit length $|z|=1$. Then there exists $T_o >0$ such that for all $T >T_o$ one has 
\begin{equation}
\Sf(D^+_{A_{T, s}}(X_T, \beta_T)) = \Sf(D^+_{A_{o, s}}(M_{\infty}, \beta_o)).
\end{equation}
\end{thm}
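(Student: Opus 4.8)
The plan is to evaluate both sides through the signed-count description of the periodic spectral flow recalled in the remark following Definition \ref{sf}, and to produce, for $T$ large, a sign-preserving bijection between the spectral set $\Sigma_I(A_{T,s},\beta_T)$ of the glued family on $X_T$ and the spectral set $\Sigma_I(A_{o,s},\beta_o)$ of the limiting family on $M_\infty$, where $\beta_o$ is the $L^2_k$-limit of $\beta_T$ on $M_\infty$ (cf.\ Section \ref{nssu}). Since $D^+_{A'_{o,s},z}(N_\infty)$ is assumed invertible for every $s\in[0,1]$ and every $|z|=1$, the $N_\infty$-family carries no spectral points, so the asserted identity is in effect the additivity $\Sf(X_T)=\Sf(M_\infty)+\Sf(N_\infty)$ with the $N_\infty$-term equal to $0$.

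First I would upgrade the $N_\infty$-hypothesis to a uniform gap: each $D^+_{A'_{o,s},z}(N_\infty)$ is Fredholm of index $0$ (Definition \ref{dfn4.12}(iii)) and injective, hence invertible, and upper semicontinuity of the norm of the inverse together with compactness of $[0,1]\times S^1$ gives $\epsilon_N>0$ and $C_N$ with $D^+_{A'_{o,s},z}(N_\infty)$ invertible of inverse-norm $\le C_N$ for all $s$ and $1-\epsilon_N\le|z|\le1+\epsilon_N$. Fix $\epsilon_1<\min\{\,\epsilon_N,\epsilon_Y\,\}$ and set $Q=[0,1]\times\{\,1-\epsilon_1\le|z|\le1+\epsilon_1\,\}$. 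Next, at any $(s,z)\in Q$ where $D^+_{A_{o,s},z}(M_\infty,\beta_o)$ is invertible with controlled inverse-norm, I would splice its inverse with the inverse on $N_\infty$ using cutoff functions supported away from the two ends of the neck $I_T$, following the construction in the proof of Lemma \ref{3.8.8} and the periodic excision principle (Theorem \ref{EP}); Hypothesis \ref{hypo3} together with the exponential decay governed by the two gaps makes the error $D^+_{A_{T,s},z}(X_T,\beta_T)P_T-\mathrm{Id}$ supported on $I_T$ with operator norm tending to $0$ as $T\to\infty$, uniformly in $(s,z)\in Q$. Consequently there are $T_1>0$ and $\delta_0>0$ such that for $T>T_1$ the smallest singular value of $D^+_{A_{T,s},z}(X_T,\beta_T)$ is bounded away from $0$ wherever that of $D^+_{A_{o,s},z}(M_\infty,\beta_o)$ exceeds $\delta_0$, and the number of singular values below a fixed threshold agrees, near any point of $Q$, with that of the $M_\infty$-operator (the $N_\infty$-piece contributing none). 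Since the $M_\infty$-path is regular (part of Hypothesis \ref{hypo3}), its spectral points in $Q$ form a finite set $\{(s_j^o,z_j^o)\}_{j=1}^N$ with $1$-dimensional kernel; choosing disjoint neighborhoods $\mathcal N_j$ of these on which the kernel of the $M_\infty$-operator stays at most $1$-dimensional and off which its smallest singular value exceeds $\delta_0$, I conclude that for $T>T_1$ all spectral points of the glued family in $Q$ lie in $\bigcup_j\mathcal N_j$.

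To finish I would localize on each $\mathcal N_j$ by a Lyapunov--Schmidt reduction applied to both families. Near $(s_j^o,z_j^o)$ the glued operator has kernel and cokernel of complex dimension at most $1$ (its count of small singular values matching that of the $M_\infty$-operator), so the reduction produces an obstruction map $\lambda_{T,j}\colon\mathcal N_j\to\C$; this map converges in $C^1$, as $T\to\infty$, to the reduced map $\lambda_{o,j}$ of the $M_\infty$-operator, whose zero locus is the $M_\infty$-spectral curve through $(s_j^o,z_j^o)$, meeting $\{|z|=1\}$ transversally in the $|z|$-direction and hence carrying a definite crossing sign. By $C^1$-closeness $\lambda_{T,j}^{-1}(0)$ is then an embedded curve meeting $\{|z|=1\}$ in exactly one point, with the same crossing sign, which produces the sign-preserving bijection $\Sigma_I(A_{T,s},\beta_T)\leftrightarrow\Sigma_I(A_{o,s},\beta_o)$ (and, applied near $s=0,1$ where both the $M_\infty$- and $N_\infty$-operators are invertible at $|z|=1$, confirms that no spectral point of the glued family sits over the endpoints). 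Summing signs gives $\Sf(D^+_{A_{T,s}}(X_T,\beta_T))=\Sf(D^+_{A_{o,s}}(M_\infty,\beta_o))$ for all $T>T_o:=T_1$. The hard part will be the uniform-in-$(s,z)$ parametrix estimate on the stretched neck in the non-self-adjoint regime $|z|\neq 1$ and on the relevant weighted Sobolev spaces---precisely the input to be drawn from the gluing and excision machinery of Sections \ref{Peri} and \ref{Exci}---together with upgrading the convergence of the reduced obstruction maps from $C^0$ to $C^1$, so that crossing signs are genuinely preserved; this forces one to differentiate the Lyapunov--Schmidt equation and re-use the same decay estimates.
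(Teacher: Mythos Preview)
Your proposal is essentially correct but takes a different route from the paper. You work with the signed-crossing description of the periodic spectral flow and aim to produce a sign-preserving bijection between spectral points via parametrix gluing and $C^1$-convergence of Lyapunov--Schmidt obstruction maps. The paper instead exploits the equivalent ``system of excluded values'' definition (Definition \ref{sf}): it fixes a system $\{(s_l,\lambda_l)\}$ for the $M_\infty$-family, shows via a soft compactness-by-contradiction argument (Lemma \ref{4.15.1}) that the same $\lambda_l$ remain excluded values for the $X_T$-family when $T$ is large, and then shows the counts $b_l$ agree by a cut-and-paste plus implicit-function-theorem construction (Lemma \ref{4.16.1}) combined with the compactness lemma again as an upper bound. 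Since the numbers $a_l$ depend only on the ordering of the $\lambda_l$'s, which is fixed, no sign matching at individual crossings is ever needed; this is exactly what the excluded-values formalism buys. The paper also allows an auxiliary generic perturbation $\beta'_T$ in Lemma \ref{4.16.1} and then invokes Lemma \ref{hi4.9} to remove it, whereas you work with the given $\beta_T$ throughout. Your approach is more hands-on and would yield finer information (an actual $C^1$-close bijection of spectral curves), but the ``hard part'' you flag---uniform $C^1$-convergence of the reduced maps on weighted spaces for $|z|\neq 1$---is genuinely more work than what the paper needs, and the paper's route sidesteps it entirely.
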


The idea of the proof is to show that the spectral curves of $D^+_{A_{T,s}}(X_T, \beta_T)$ will approach those of $D^+_{A_{o, s}}(M_{\infty}, \beta_o)$ as $T \to \infty$ due to the absence of spectral curves of $D^+_{A'_{o,s}}(N_{\infty})$. 

\begin{lem}\label{4.15.1}
Suppose $D^+_{A_{o,s_0}, z_0}(M_{\infty}, \beta_o)$ is invertible for some $(s_0, z_0) \in [0, 1] \times \C^*$. Then there are constants $\delta_1 >0, T_1 >0$ such that for all $T >T_1$, $z$ in the range $|z-z_0| < \delta_1$, $D^+_{A_{T, s_0}, z}(X_T, \beta_T)$ is also invertible. 
\end{lem}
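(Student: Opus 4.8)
The plan is to argue by contradiction. First I would note that for every $T$ and every $z$ the twisted Dirac operator $D^+_{A_{T, s_0}, z}(X_T, \beta_T)$ has complex index $0$: $X_T$ is diffeomorphic to $X$, hence a homology $S^1 \times S^3$ with $\sigma = 0$, so $\ind_{\C} D^+(X_T) = 0$ by Atiyah--Singer, and the operator differs from $D^+(X_T)$ by a zeroth-order, hence compact, perturbation. Therefore invertibility is equivalent to injectivity, and it is enough to rule out a nontrivial kernel. Suppose the conclusion fails; then for every $n$ there exist $T_n > n$, a point $z_n$ with $|z_n - z_0| < 1/n$, and $\phi_n \in L^2_1(X_{T_n}, W^+)$ with $\|\phi_n\|_{L^2} = 1$ and $D^+_{A_{T_n, s_0}, z_n}(X_{T_n}, \beta_{T_n})\phi_n = 0$. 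Since the neck of $X_{T_n}$ carries a product metric, a G{\aa}rding estimate for this Dirac-type operator holds with a constant independent of $T_n$, so $\|\phi_n\|_{L^2_1(X_{T_n})}$ is uniformly bounded.

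The heart of the argument is to show that no $L^2$-mass of $\phi_n$ escapes into the interior of the stretching neck. By Hypothesis \ref{hypo3}, on the central cylinder $I_{T_n - T_o}$ the connection $A_{T_n, s_0}$ is uniformly $L^2_k$-close to the translation-invariant connection $B_{o, s_0}$; together with $z_n \to z_0$ this makes the restriction of $D^+_{A_{T_n, s_0}, z_n}$ to that cylinder a small perturbation of the translation-invariant operator $\tfrac{d}{dt} + D_{B_{o, s_0}, z_0}(Y)$. Since $D^+_{A_{o, s_0}, z_0}(M_{\infty}, \beta_o)$ is assumed invertible, it is in particular Fredholm, and the Fredholm criterion for manifolds with cylindrical end (cf.\ \cite{T1}, \cite{APS1}) forces its model operator $D_{B_{o, s_0}, z_0}(Y)$ to be invertible; let $\delta > 0$ be the resulting spectral gap. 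The standard exponential-decay estimate for solutions of $\big(\tfrac{d}{dt} + L\big)\phi = 0$ on a long cylinder, with $L$ a small perturbation of an invertible translation-invariant operator, then shows that $\|\phi_n\|_{L^2(\,[-T_n/2,\,T_n/2]\times Y\,)}$ is exponentially small in $T_n$, hence tends to $0$. Writing $X_{T_n} = M_{T_n} \cup_Y N_{T_n}$ for the splitting along $\{0\} \times Y$, we conclude, after passing to a subsequence, that $\|\phi_n\|^2_{L^2(M_{T_n})} \to m$ and $\|\phi_n\|^2_{L^2(N_{T_n})} \to 1 - m$ for some $m \in [0, 1]$.

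Finally I would run a cutoff argument on whichever side carries at least half of the mass. If $m \ge 1/2$, let $\chi_n$ be a cutoff on $X_{T_n}$, supported in $M_{T_n}$, equal to $1$ on $M_{T_n}$ outside a unit-length collar near the neck midpoint, with $|d\chi_n|$ bounded and $d\chi_n$ supported where $\phi_n$ is exponentially small. Viewing $\chi_n \phi_n \in L^2_1(M_{\infty}, W^+)$ by extension by zero, the Leibniz rule gives
\begin{equation*}
D^+_{A_{o, s_0}, z_0}(M_{\infty}, \beta_o)\big(\chi_n \phi_n\big) = \chi_n\, D^+_{A_{T_n, s_0}, z_n}(X_{T_n}, \beta_{T_n})\phi_n + \rho(d\chi_n)\phi_n + R_n ,
\end{equation*}
where the first term vanishes, the second is supported where $\phi_n$ is exponentially small, and $R_n$ collects the zeroth-order differences coming from $A_{T_n, s_0} \to A_{o, s_0}$, $z_n \to z_0$ and $\beta_{T_n} \to \beta_o$, so that $\|R_n\|_{L^2} \to 0$ by Hypothesis \ref{hypo3} and the Sobolev multiplication $L^2_1 \times L^2_1 \hookrightarrow L^2$. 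Hence $\|D^+_{A_{o, s_0}, z_0}(M_{\infty}, \beta_o)(\chi_n \phi_n)\|_{L^2} \to 0$ while $\|\chi_n \phi_n\|^2_{L^2} \to m \ge 1/2$, contradicting the lower bound $\|D^+_{A_{o, s_0}, z_0}(M_{\infty}, \beta_o)\psi\|_{L^2} \ge c\|\psi\|_{L^2}$ supplied by invertibility. If instead $1 - m \ge 1/2$, the same argument on the $N$-side yields a contradiction with the invertibility of $D^+_{A'_{o, s_0}, z_0}(N_{\infty})$; here one uses that the hypothesis of Theorem \ref{sf4.14} together with analytic Fredholm theory makes $D^+_{A'_{o, s}, z}(N_{\infty})$ invertible for all $s$ and all $z$ in a fixed annulus about $|z| = 1$, and that the relevant $z_0$ lies in that annulus. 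This contradiction proves the lemma.

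The step I expect to be the main obstacle is the exponential-decay estimate on the stretched neck, that is, ruling out the escape of $L^2$-mass into the interior of an arbitrarily long cylinder. Making this precise requires combining the spectral gap of the limiting three-dimensional operator $D_{B_{o, s_0}, z_0}(Y)$ with the uniform $L^2_k$-closeness of $A_{T_n, s_0}$ to $B_{o, s_0}$ on $I_{T_n - T_o}$, and keeping careful track of the small perturbation terms (typically via a differential inequality for $t \mapsto \|\phi_n(t)\|_{L^2(Y)}$ on the neck). The remaining ingredients---the $T$-uniform G{\aa}rding estimate, the cutoff comparison, and the passage from ``trivial kernel'' to ``invertible'' for an index-$0$ Fredholm operator---are routine.
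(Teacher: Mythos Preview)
Your proposal is correct and follows the same skeleton as the paper's proof: reduce to injectivity via $\ind = 0$, suppose a sequence of unit-norm kernel elements exists, and obtain a contradiction with the invertibility of the limiting operators on $M_\infty$ and $N_\infty$. The paper's proof is extremely terse---it simply asserts that ``due to the invertibility of $D^+_{A'_{o,s_0},z}(N_\infty)$, a convergent subsequence gives us a non-zero element in $\ker D^+_{A_{o,s_0},z_0}(M_\infty,\beta_o)$''---whereas you have unpacked exactly the mechanism behind that sentence: the spectral gap of $D_{B_{o,s_0},z_0}(Y)$ forces exponential decay on the neck so no $L^2$-mass is lost there, and a cutoff argument transports the remaining mass to whichever cylindrical-end piece carries at least half of it. Your explicit treatment of the $N$-side case, and your remark that the hypothesis of Theorem~\ref{sf4.14} extends by compactness to an annulus containing the relevant $z_0$, are both points the paper leaves implicit but which are genuinely needed. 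In short, you have written out carefully what the paper compresses into one line; the underlying argument is the same.
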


\begin{proof}
Note that $\ind D^+_{A_{T, s},z}(X_T, \beta_T)=0$. Thus being invertible is equivalent to having trivial kernel. Suppose there is not true. Then one can find sequences $T_n \to \infty, z_n \to z_0$ such that there exists a sequence of spinors 
\[
\Phi_n \in \ker D^+_{A_{T_n, s_0}, z_n}(X_{T_n}, \beta_{T_n}) \text{ with } \|\Phi_n\|_{L^2(X_T)}=1.
\]Due to the invertibility of $D^+_{A'_{o, s_0}, z}(N_{\infty})$, a convergent subsequence gives us a non-zero element in $\ker D^+_{A_{o, s_0}, z_0}(M_{\infty}, \beta_o)$. This is a contradiction. 
\end{proof}

The next lemma asserts that a spectral curve of $D^+_{A_{T, s}}(X_T, \beta_T)$ will indeed show up in a neighborhood of a spectral curve of $D^+_{A_{o,s}}(M_{\infty}, \beta_o)$. 

\begin{lem}\label{4.16.1}
Suppose $\ker D^+_{A_{o, s_0}, z_0}(M_{\infty}, \beta_o) = \C$ for some $(s_0, z_0) \in [0,1] \times \C^*$. Then for any $\epsilon_2 >0$ there exists $T_2 >0$ such that for any $T >T_2$ one can find a neighborhood $U_{\beta_T}$ of $\beta_T$ in $\Omega^1(X_T; i\R)$ so that for generic $\beta'_T \in U_{\beta_T}$ there exists $z_T$ in the range $|z_T - z_0| <\epsilon_2$ satisfying 
\begin{equation}\label{4.18.1}
\ker D^+_{A_{T, s_0}, z_T}(X_T, \beta_T+\beta'_T) \neq 0.
\end{equation}
Moreover for each $T >T_2$, one can choose $\epsilon_T < \epsilon_2$ so that for a generic $\beta'_T \in U_{\beta_T}$ there is a unique $z_T$ depending on $\beta'_T$ in the range $|z_T - z_0| < \epsilon_T$ satisfying (\ref{4.18.1}). 
\end{lem}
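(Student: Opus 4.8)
The plan is to compare $D^+_{A_{T,s_0},z}(X_T,\beta_T)$, for $z$ near $z_0$, with its geometric limit $D^+_{A_{o,s_0},z}(M_\infty,\beta_o)$ through one-dimensional Kuranishi obstruction maps. I will produce holomorphic functions $\mu_T(z)$ and $\mu_o(z)$ on a fixed small disc $B=B_{r_0}(z_0)$ such that each operator is non-invertible exactly at the zeros of the corresponding $\mu$, show $\mu_T\to\mu_o$ in $C^1(B)$, and then locate the spectral points of the $X_T$-family near $z_0$ from the (simple) zero of $\mu_o$ at $z_0$. Here $z_0$ is taken close enough to $S^1$ that $D_{B_{o,s_0},z_0}(Y)$ and $D^+_{A'_{o,s_0},z_0}(N_\infty)$ are still invertible, which holds by continuity from Definition \ref{dfn4.12}(ii) and the hypothesis of Theorem \ref{sf4.14}.

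\emph{Approximate kernel/cokernel and the uniform estimate.} Let $\phi_0$ span $\ker D^+_{A_{o,s_0},z_0}(M_\infty,\beta_o)$ and $\psi_0$ span its cokernel; both are one-dimensional since the index is $0$ by Definition \ref{dfn4.12}(iii). Since $D_{B_{o,s_0},z_0}(Y)$ is invertible, $\phi_0$ and $\psi_0$ decay exponentially on the cylindrical end (as in \cite[Theorem 4.2.33]{N}, \cite{APS1}); cutting them off inside $M_T\subset X_T$ and using Hypothesis \ref{hypo3} produces spinors $\tilde\phi_T,\tilde\psi_T$ on $X_T$ with $\|\tilde\phi_T\|_{L^2},\|\tilde\psi_T\|_{L^2}\to1$ and $\|D^+_{A_{T,s_0},z_0}(X_T,\beta_T)\tilde\phi_T\|_{L^2}\le c\,e^{-\delta T}$, together with the analogous estimate for the adjoint on $\tilde\psi_T$. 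The key analytic input is a uniform lower bound on the orthogonal complement: using the invertibility of $D^+_{A'_{o,s_0},z}(N_\infty)$ and of $D_{B_{o,s_0},z}(Y)$ for $z\in B$, a linear neck-stretching (splicing) argument gives $T_2>0$ and $c>0$ so that for $T>T_2$, $z\in B$,
\[
\|D^+_{A_{T,s_0},z}(X_T,\beta_T)\Phi\|_{L^2}\ge c\,\|\Phi\|_{L^2_1}\quad\text{for all }\Phi\perp\tilde\phi_T,
\]
and likewise for the adjoint on $(\C\tilde\psi_T)^\perp$. In other words, along the neck-stretching the only ``small'' directions of $D^+_{A_{T,s_0},z}(X_T,\beta_T)$ come from the one-dimensional kernel and cokernel of the $M_\infty$-operator, the $N_\infty$- and $Y$-pieces contributing nothing.

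\emph{Kuranishi reduction and location of zeros.} Granting the estimate, a finite-dimensional reduction shows that for $z\in B$ the operator $D^+_{A_{T,s_0},z}(X_T,\beta_T)$ is non-invertible exactly when $\mu_T(z):=\langle D^+_{A_{T,s_0},z}(X_T,\beta_T)(\tilde\phi_T+\xi_{T,z}),\tilde\psi_T\rangle=0$, where $\xi_{T,z}\perp\tilde\phi_T$ solves the complementary equation and is holomorphic in $z$; the same construction on $M_\infty$ gives $\mu_o$ with $\mu_o(z_0)=0$, and the exponential estimates above together with Hypothesis \ref{hypo3} give $\mu_T\to\mu_o$ in $C^1(B_{r_0/2}(z_0))$. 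The analog on $M_\infty$ of Proposition \ref{4.1.1} — in particular injectivity of $\rho(df_M)$ on $\ker D^+_{A_{o,s_0},z_0}(M_\infty,\beta_o)$ by unique continuation, $df_M\not\equiv0$ — forces $\mu_o$ to have a simple zero at $z_0$; by $C^1$-convergence, for $T$ large $\mu_T$ has a unique simple zero $z_T$ in a disc $B_{r_1}(z_0)$ with $r_1<\epsilon_2$, so $|z_T-z_0|<\epsilon_2$, which proves the first assertion. For the ``moreover'' part: passing to $\beta_T+\beta'_T$ with $\beta'_T$ a generic small element of a (now shrunk) neighborhood $U_{\beta_T}$ keeps the path $(A_{T,s})$ regular by the transversality argument of Proposition \ref{4.14}, replaces $\mu_T$ by a $C^0$-close holomorphic $\mu_T^{\beta'_T}$ with again a single, simple zero in $B_{r_1}(z_0)$, and taking $\epsilon_T=r_1$ isolates that unique zero $z_T=z_T(\beta'_T)$.

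\emph{Main obstacle.} The crux is the uniform (in $T$ and in $z\in B$) lower bound confining the near-kernel of $D^+_{A_{T,s_0},z}(X_T,\beta_T)$ to a single dimension; this is the linear-gluing heart of the neck-stretching and is exactly where the invertibility of the $N_\infty$- and $Y$-operators enters. Once it is in place, the Kuranishi reduction, the convergence $\mu_T\to\mu_o$, and the zero-counting (implicit function theorem / Rouch\'e) are routine.
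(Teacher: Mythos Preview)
Your approach is genuinely different from the paper's and is in principle sound. The paper does not set up a Kuranishi reduction or track a scalar obstruction $\mu_T$. It cuts off $\Phi_o$ to an approximate kernel element $\Phi_T$ on $X_T$ (as you do), but then directly solves $(D^+_{w,\beta'})(\Phi_T+\phi)=0$ for $(\phi,w,\beta')$ near $(0,0,0)$ by the implicit function theorem, using the extra perturbation direction $\beta'$ to force surjectivity of the linearization $(\varphi,\eta)\mapsto D^+\varphi+\rho(\eta)\Phi_T$ (the standard transversality trick, cf.\ \cite[Lemma~27.1.1]{KM1}). Genericity of $\beta'_T$ then comes from Sard--Smale on the projection to the $\beta'$-factor, and uniqueness from the discreteness of the spectral curves of $D^+_{A_{T,s}}(X_T,\beta_T)$. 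What the paper's route buys is that it sidesteps both your uniform linear-gluing bound on $(\C\tilde\phi_T)^\perp$ and your simple-zero claim, because surjectivity is purchased cheaply from the $\beta'$-direction rather than from control of $D^+$ itself. What your route buys is a cleaner geometric picture---spectral points as zeros of converging holomorphic functions---but at the cost of more analytic input.

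There is one step in your argument that does not hold as written. You assert that $\mu_o$ has a \emph{simple} zero at $z_0$ and justify this by ``injectivity of $\rho(df_M)$ on $\ker D^+_{A_{o,s_0},z_0}(M_\infty,\beta_o)$ by unique continuation''. Unique continuation only gives that $\rho(df_M)\phi_0$ is not identically zero as a section of $W^-$; it does \emph{not} show that its projection onto the one-dimensional cokernel $\C\psi_0$ is nonzero, which is exactly what $\mu_o'(z_0)\neq 0$ means. Nothing in the hypothesis $\ker=\C$ or in the regular-path assumption rules out $\langle\rho(df_M)\phi_0,\psi_0\rangle=0$. The fix is not hard: since the spectral set of $D^+_{A_{o,s_0},z}(M_\infty,\beta_o)$ is discrete, $\mu_o$ has an isolated zero of some finite order at $z_0$, and Hurwitz/Rouch\'e still gives existence of zeros of $\mu_T$ in $B_{\epsilon_2}(z_0)$ for $T$ large. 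For the uniqueness clause you should then argue as the paper does---after a generic $\beta'_T$ making the $X_T$-path regular, the spectral set at $s=s_0$ is finite, and you may shrink $\epsilon_T$ (which is allowed to depend on $T$) to isolate a single point---rather than relying on a simple zero of $\mu_o$ you have not established.
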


\begin{proof}
Let $\Phi_o \in \ker D^+_{A_{o, s_0}, z_0}(M_{\infty}, \beta_o)$ have $\| \Phi_o\|_{L^2}=1$. Take for each $T >0$, we take a cut-off function $\eta(T):M_{\infty} \to [0, 1]$ with $\supp \eta(T) \subset M_T$. After appropriate rescaling $\Phi_T:=C_T \eta(T) \Phi_o$ is a spinor over $X_T$ extending as $0$ on $N_T \subset X_T$ with unit $L^2$-norm $\|\Phi_T\|_{L^2(X_T)}=1$. The convergence of $A_{T, s_0} \to A_{o, s_0}$ and $\beta_T \to \beta_o$ implies that there is a function $\delta: [0, \infty) \to \R_+$ depending on $A_T, \beta_T, C_T, \eta(T)$ such that 
\begin{equation}
\| D^+_{A_{T, s_0}, z_0}(X_T, \beta_T) \Phi_T \|_{L^2_{k-1}(X_T)} \leq \delta(T). 
\end{equation}
Moreover $\delta(T) \to 0$ as $T \to \infty$. 

To simplify the notations, we write $w_T=\ln z_T - \ln z_0$ and 
\[
D^+_{w, T}= D^+_{A_{T, s_0}, z_0}(X_T, \beta_T) - w \rho(df_T) = D^+_{A_{T, s_0}, z}(X_T, \beta_T).
\]
Now for $T$ large enough we want to find $\phi_T \in L^2_k(X_T, W^+)$ and $z_T$ in the range $|z_T - z_0| <\epsilon_2$ such that for generic $\beta'_T$ close to $\beta_T$ one has 
\begin{equation}
(D^+_{w_T, T} +\rho(\beta'_T))(\Phi_T +\phi_T)=0.
\end{equation}
To further simplify the notation, we drop $T$'s in the above equation as well. Then the equation can be rearranged as 
\begin{equation}\label{eq4.7}
D^+_{w, \beta'} \phi + \rho(\beta' -wdf)\Phi = -D^+\Phi.
\end{equation}
Let's write $\mathcal{Q}(\phi, w, \beta')=D^+_{w, \beta'} \phi + \rho(\beta' -wdf)\Phi $ for the left hand side of (\ref{eq4.7}). Its differential at $(0, 0, 0)$ is given by 
\begin{equation}
Q_0(\varphi, v, \eta) := D\mathcal{Q}|_{(0, 0, 0)}(\varphi, v, \eta) =D^+\varphi +\rho(\eta- vdf) \Phi.
\end{equation}
A standard argument (c.f. \cite[Lemma 27.1.1]{KM1}) shows that the differential $Q_0(\phi, 0 ,\eta)$ is surjective. Thus the implicit function theorem tells us given $w$ in a small neighborhood of the origin and $T$ sufficiently large so that $\delta(T)$ is small, there exists $(\phi(w), w, \beta'(w))$ in a small neighborhood of $(0, 0, 0)$ solving (\ref{eq4.7}). 

Now we know $\mathcal{Q}^{-1}(-D^+\Phi) \neq \emptyset$. Let $(\phi, w, \beta')$ be a solution. The differential is given by 
\begin{equation}
D\mathcal{Q}|_{(\phi, w, \beta')}(\varphi, v, \eta) =D^+_{w, \beta'}+\varphi +\rho(\eta- vdf) (\phi+\Phi),
\end{equation}
which is surjective by the same argument. Thus $-D^+\Phi$ is a regular value for $\mathcal{Q}$. Now consider the projection of $\mathcal{Q}^{-1}(-D^+\Phi)$ to its third component and apply Sard-Smale theorem to get the genericity of existence of $\beta'$. 

Since all operators have index $0$, and the operators corresponding to $z_T$ have nontrivial kernal. Dimension counting gives us that the set of $z_T$ is discrete. Now the uniqueness of $z_T \in B_{\epsilon_T}(z_0)$ for some $\epsilon_T$ follows from the discreteness of the spectral curves of $D^+_{A_{T,s}}(X_T, \beta_T)$. 
\end{proof}

Now we are ready to prove the splitting formula. 
\begin{proof}[Proof of Theorem \ref{sf4.14}]
Let $\{(s_l, \lambda_l)\}$ be a system of excluded values for the path $(A_{o, s})_{s \in [0,1]}$. Then $\lambda_l$ is a excluded value for all $s \in [s_{l-1}, s_l]$. Together with compactness of the circle $\{ |z|=\lambda_l\}$, Lemma \ref{4.15.1} implies that for each $l$ there exists $T_l$ such that for all $T >T_l$, $\lambda_l$ is an excluded value for $D^+_{A_{T, s}}(X_T, \beta_T)$ for $s \in [s_{l-1}, s_l]$. Since there are finitely many $s_l$, we conclude that there is $T_1$ such that for all $T > T_1$, $\{(s_l, \lambda_l)\}$ is also a system of excluded values for the path $(A_{T, s})$. 

For each $T >T_1$, we get $(a_l(T), b_l(T))$ as well as $(a_l(\infty), b_l(\infty))$ in the definition of periodic spectral flow in (\ref{sfeq}). From the definition we know that $a_l(T)$ is independent of the family of operators, thus independent of $T$. Let $\{z_j \}$, $j=1, ..., b_l(\infty)$ be the spectral points of the operator $D^+_{A_{o, s_l}}(M_{\infty}, \beta_o)$ with length in between $\lambda_l$ and $\lambda_{l+1}$. By Lemma \ref{4.16.1} there exists $T'_l >0$ and $\epsilon_l$ small enough such that for all $T >T'_l$, there is exactly one spectral point  $z_{j, T} \in B_{\epsilon_l}(z_j)$ for $D^+_{A_{T, s_l}}(X_T, \beta_T+\beta'_T)$. Note that there are finitely many $s_l$, thus genericity of $\beta'_T$ ensures us $\beta'_T$ can be chosen uniformly for all $s_l$ with respect to a fixed $T$. This captures all spectral points of $D^+_{A_{T, s_l}}(X_T, \beta_T)$ with length in between $\lambda_l$ and $\lambda_{l+1}$, for otherwise one gets a sequence of spectral sequence converging to a invertible point $z$ for $D^+_{A_o, s_l}(M_{\infty, \beta_o})$. Again finiteness of $s_l$ enables us to take $T'_l$ to be uniform as $T_2$. Thus we conclude that for any $T>T_2$ $b_l(T)=b_l(\infty)$ is independent of $T$ as well. Now invoke Lemma \ref{hi4.9} to get rid of the dependence on the small perturbation $\beta'_T$ we have chosen. This completes the proof. 
\end{proof}

\section{Excision Principle over End-Periodic Manifolds}\label{Exci}

Originally the excision principle for Fredholm operators is only proved in the case when the excision part has compact closure. It will be useful for us to extend the excision principle to periodic Fredholm operators over end-periodic manifolds where the closure of the excision part is periodic but noncompact. For simplicity we work in dimension $4$ with Dirac operators. The generalization to other first order elliptic operators are not hard with corresponding assumptions imposed.The argument we will use in this section is to modify the proof of the version from Charboneau's thesis \cite[Appendix B]{C}. 

Let $Z$ be a smooth Riemannian $4$-manifold with periodic end modeled on $W_+=W_0 \cup W_1 \cup ...$, i.e. one can decompose $Z=M \cup W_+$, where $M$ is a manifold with boundary $Y$, each $W_i$ is a copy of a compact cobordism $W: Y \to Y$. Let's write $X$ for the manifold obtained by identifying the two boundary components of $W$ using the identity map. We refer $X$ as the furled manifold of the end of $Z$. We denote by $\pi: W_+ \to X$ the projection map. 
\begin{dfn}
For the sake of narration we fix our terminology for saying end-periodic.
\begin{enumerate}
\item[\upshape (i)] An end-periodic vector bundle $E \to Z$ is a vector bundle restricting to the end of the form $E|_{W_+}= \pi^*E'$ for some bundle $E'$ over $X$. 
\item[\upshape (ii)] An end-periodic differential operator between two end-periodic bundles $E, F$ over $Z$ is a differential operator
\[
D: C^{\infty}(Z, E) \to C^{\infty}(Z, F)
\]
restricting to the end $D|_{W_+}=\pi^* D'$ for some differential operator $D': C^{\infty}(X, E') \to C^{\infty}(X, F')$.

\item[\upshape (iii)] An end-periodic open partition of $Z$ is a partition $Z=U \cup V$ into open subsets so that 
$U \cap W_+$ and $V \cap W_+$ are both invariant under the covering transformation. In other words $U \cap W_+ = \pi^{-1}(U')$ and $V \cap W_+=\pi^{-1}(V')$ for a open cover $X=U' \cup V'$.  
\end{enumerate}
\end{dfn}

In order to fit with our further applications, we want to refine our set-up a little bit more . Let 
\[
Z=U \cup V, D: C^{\infty}(E) \to C^{\infty}(F),
\] 
be a set of end-periodic data as above. We say the set of data is spin$^c$, and has a codimension-$1$ overlap if it has the following form
\begin{enumerate}
\item[\upshape (i)]The overlap is given by 
\[
U \cap V = (-1, 1) \times N, 
\]
where $N \hookrightarrow Z$ is an embedded end-periodic $3$-manifold. Moreover the metric restricts on the overlap has the form 
\[
g|_{(-1, 1) \times N} =dt^2+ h.
\]

\item[\upshape (ii)] $Z$ admits an end-periodic spin$^c$ structure $\s=(\rho, W^{\pm})$ with $E=W^+, F=W^-$. The restriction of the spin$^c$ structure on the overlap $\s|_{(-1, 1) \times N}$ is the pullback of a spin$^c$ structure $\mathfrak{t}$ over $N$. 
\item[\upshape (iii)] The differential operator is a Dirac operator
\[
D=D^+_A: C^{\infty}(E) \longrightarrow C^{\infty}(F),
\]
where $A$ is a spin$^c$ connection on $\s$. Moreover the operator extends as a Fredholm operator 
\[
D: L^2_1(Z, E) \longrightarrow L^2(Z, F).
\]
where $h$ is a metric on $N$. 
\end{enumerate}
Since $N$ is end-periodic, it's given by the pull-back of an embedded $3$-manifold $N' \hookrightarrow X$ in the folded manifold. We write $Z_T$ for the manifold obtained from $Z$ by inserting a cylinder $[-T, T] \times N$ into the overlap $U \cap V$. Then $Z_T = U_T \cup V_T$ with $U_T \cap V_T = (-T-1, T+1)$. Equivalently we can think of $Z_T$ as obtained from $Z$ by deforming the metric $g$ on the overlap. Thus we identify $(Z, g_T)$ with $(Z_T, g)$. We denote the operator over $Z_T$ by $D_T$. The folded manifold of $Z_T$ is denoted by $X_T$. We also write 
\[
X_{\infty}: = X \backslash V' \cup (-1, \infty) \times N' \bigcup (-\infty, 1) \times N' \cup X \backslash U'.
\]
We write the operator on $X_{\infty}$ as
\[
D'_{\infty}: C^{\infty}(X_{\infty}, E') \longrightarrow C^{\infty}(X_{\infty}, F').
\]

let $\tau: Z \to \R$ be a real-valued function such that $\tau|_M \in [-1, 0]$, and over the periodic end $\tau(x+1) = \tau(x)+1$, where $x \in W_+$, and $+1: W_i \to W_{i+1}$ is the covering transformation on $W_+$.  Note that $\tau$ descends to a map $\tau': X \to S^1$. Given $\delta \geq 0$, the weighted Sobolev space $L^2_{k, \delta}(Z, E)$ over end-periodic manifold  is  defined as the completion of $C_0^{\infty}(Z, E)$ with respect to the norm
\[
\|f\|_{L^2_{k, \delta}} := \| e^{\delta \tau} f \|_{L^2_k}.
\]
The Fredholmness of the extension $D: L^2_{k, \delta}(E) \to L^2_{k-1, \delta} (F)$ is characterized by the following result:

\begin{lem}\label{fh5.3}(\cite[Lemma 4.3]{T1})
The extended operator $D: L^2_{k, \delta}(E) \to L^2_{k-1, \delta} (F)$  is Fredholm if and only if the operators
\[
D'_z:= D' - \ln z \nabla \tau' * : L^2_k(X, E') \longrightarrow L^2_{k-1}(X, F'),
\]
are invertible for all $z \in \C^*$ with length $|z| = e^\delta$, where $\nabla \tau' *$ is the algebraic operation given by the symbol of $D'$ coupled with $\tau'$.
\end{lem}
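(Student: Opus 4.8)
The plan is to follow Taubes' Fourier--Laplace transform argument, reducing the Fredholm property of $D$ over the noncompact $Z$ to an invertibility question for the family $D'_z$ over the closed furled manifold $X$. First I would localize at infinity. Since $M$ is compact, $D$ has an interior parametrix over $M$ with compactly supported (hence compact) error, so the Fredholm property of $D\colon L^2_{k,\delta}(Z,E)\to L^2_{k-1,\delta}(Z,F)$ depends only on the behaviour of $D$ over the periodic end $W_+$, where $D$ is \emph{exactly} periodic by hypothesis. The natural model there is the two-sided periodic manifold $\tilde W=\bigcup_{n\in\Z}W_n$ with induced operator $D_{\tilde W}$. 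On the one hand, if $D_{\tilde W}$ on $L^2_{k,\delta}(\tilde W)$ is invertible, patching its inverse (cut off near the inner boundary $Y=\partial M$, which costs only a compact error) with the interior parametrix over $M$ produces a parametrix for $D$ on $Z$, so $D$ is Fredholm. On the other hand, as the direct-integral analysis below shows, $D_{\tilde W}$ fails to be Fredholm whenever it is not invertible, and a Weyl sequence for $D_{\tilde W}$ can be truncated to finitely many periods and translated far out into $W_+\subset Z$ to yield a Weyl sequence for $D$. Thus the statement reduces to: $D_{\tilde W}$ on $L^2_{k,\delta}(\tilde W)$ is invertible if and only if $D'_z$ is invertible for all $|z|=e^{\delta}$.

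Second I would pass to the folded picture via the transform. Conjugating by $e^{\delta\tau}$ identifies $L^2_{k,\delta}(\tilde W)$ with $L^2_k(\tilde W)$; since $\tau$ is only $\R$-valued this conjugation does not descend to $X$ but replaces the furled operator by $D'$ coupled to the flat line bundle of holonomy $e^{\delta}$, so it suffices to treat $\delta=0$ with the unit circle in place of $\{|z|=e^{\delta}\}$. For $|z|=1$ set $\mathcal{T}_z f=\sum_{n\in\Z}z^{-n}\,t_n^{*}f$, where $t_n$ is the deck translation of the $\Z$-cover $\tilde W\to X$. One checks that $\mathcal{T}_z f$ is $z$-equivariant, hence descends to a section over $X$ of $E'$ twisted by the flat line bundle $L_z$, and that under this identification $D_{\tilde W}$ becomes $D'$ coupled to $L_z$, i.e. exactly $D'_z=D'-\ln z\,\nabla\tau' *$. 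By Parseval on the circle, $\mathcal{T}=\{\mathcal{T}_z\}$ extends to an isometric isomorphism $L^2_k(\tilde W,\pi^{*}E')\xrightarrow{\sim}\int^{\oplus}_{|z|=1}L^2_k(X,E'\otimes L_z)\,d\theta$ intertwining $D_{\tilde W}$ with the fibrewise family $z\mapsto D'_z$.

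Third I would run the direct-integral dichotomy. Each $D'_z$ is a first-order elliptic operator on the closed manifold $X$, hence Fredholm of index independent of $z$. If every $D'_z$, $|z|=1$, is invertible, then $z\mapsto(D'_z)^{-1}$ is operator-norm continuous on the compact circle and therefore uniformly bounded, so $\int^{\oplus}D'_z\,d\theta$ is invertible and, transporting back, $D_{\tilde W}$ is invertible. Conversely, if $D'_{z_0}$ is not invertible for some $|z_0|=1$, pick a unit element $\varphi\in\ker D'_{z_0}$ or $\varphi\in\ker(D'_{z_0})^{*}$; spreading $\varphi$ over the shrinking spectral windows $\{z:|\arg(z/z_0)|<1/N\}$ and inverse-transforming gives, for each $N$, a unit section $f_N$ on $\tilde W$ with $\|D_{\tilde W}f_N\|\to0$ whose frequencies concentrate at $z_0$; these are mutually $L^2$-almost-orthogonal, which rules out any a priori estimate $\|f\|\le C(\|D_{\tilde W}f\|+\|Kf\|)$ with $K$ compact, so $D_{\tilde W}$ is not Fredholm. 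Combined with the first step, this proves the lemma.

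The main obstacle is the converse (``only if'') direction: turning a single non-invertible fibre $D'_{z_0}$ into a genuine failure of the Fredholm property of $D$ on $Z$ itself. One must make the spectral-window construction produce an honest infinite almost-orthonormal system on $\tilde W$, control the truncation of these slowly varying sections to a long but finite window of periods, and translate them deep into $W_+\subset Z$ while keeping them in $L^2_{k,\delta}(Z)$ with $D$-image tending to zero; this is where the exact periodicity of the metric and connection on the end, and the compatibility $\tau(x+1)=\tau(x)+1$ of the weight with the deck action, are essential. By contrast the forward direction is soft: once the intertwining and Parseval identifications are in place, uniform invertibility of $(D'_z)^{-1}$ over the compact circle is automatic from continuity.
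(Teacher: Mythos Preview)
The paper does not supply its own proof of this lemma: it is quoted verbatim as \cite[Lemma 4.3]{T1} and used as a black box, with only the remark that for Dirac operators $\nabla\tau' * = \rho(d\tau')$. So there is no ``paper's proof'' to compare against.

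Your sketch is a faithful outline of Taubes' original argument. The three-step structure---localize to the periodic end via an interior parametrix on the compact piece $M$, pass to the two-sided cover $\tilde W$ and diagonalize $D_{\tilde W}$ by the Fourier--Laplace transform as a direct integral $\int^\oplus D'_z\,d\theta$, then run the dichotomy (uniform invertibility of the compact family versus a Weyl-sequence construction at a bad fibre)---is exactly how \cite{T1} proceeds. Your handling of the weight is also correct: conjugation by $e^{\delta\tau}$ shifts the circle of integration from $|z|=1$ to $|z|=e^\delta$, which is why the criterion reads $|z|=e^\delta$ rather than the unit circle.

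One small point worth tightening if you expand this beyond a sketch: in the ``only if'' direction you describe building a Weyl sequence on $\tilde W$ from a kernel element at a single bad $z_0$ and then truncating and translating into $W_+\subset Z$. This is morally right, but the actual mechanism in Taubes is slightly cleaner: one shows directly that failure of invertibility of some $D'_{z_0}$ on the circle $|z|=e^\delta$ obstructs the a~priori estimate $\|u\|_{L^2_{k,\delta}} \le C(\|Du\|_{L^2_{k-1,\delta}} + \|u\|_{L^2_{k-1,\delta}(K)})$ for any compact $K\subset Z$, by taking $u$ to be a cut-off of the $z_0$-equivariant extension of $\varphi\in\ker D'_{z_0}$ to $\tilde W$, supported on $[N,2N]$ copies of $W$ and pushed to infinity. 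The almost-orthogonality you mention is not strictly needed; what matters is that the ratio $\|Du\|/\|u\|$ goes to zero while $u$ escapes every compact set.
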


\begin{rem}
As we are dealing with Dirac operators, $\nabla \tau' * =\rho(d\tau')$ is given by the Clifford multiplication. 
\end{rem}

Given two sets of periodic spin$^c$ codimension-$1$ data: 
\begin{equation}\label{2pd}
\begin{split}
Z_1=U_1 \cup V_1, & \; \; D_1: C^{\infty}(E_1) \to C^{\infty}(F_1), \\
Z_2=U_2 \cup V_2, & \; \; D_2: C^{\infty}(E_2) \to C^{\infty}(F_2),
\end{split}
\end{equation}
where for $i=1, 2$, $Z_i=U_i \cup V_i$ is an end-periodic open partition of an end-periodic manifold $Z_i$, $D_i: C^{\infty}(E_i) \to C^{\infty}(F_i)$ is an end-periodic first order elliptic differential operator. Suppose the overlaps are identified as $(-1, 1) \times N \cong U_1 \cap V_1 \cong U_2 \cap V_2$. Moreover on the overlap the operators 
\[
D_1|_{(-1, 1) \times N} \cong D_2|_{(-1, 1) \times N}\]
 and 
\[
E_1|_{(-1, 1) \times N} \cong E_2|_{(-1, 1) \times N}, F_1|_{(-1, 1) \times N} \cong F_2|_{(-1, 1) \times N}
\]
are identified as end-periodic differential operators and vector bundles. All the identification maps are smooth, periodic, and have uniformly bounded differential and inverse. Now we form another two sets of data by interchanging the decompositions:
\begin{equation}
\begin{split}
\tilde{Z}_1= U_1 \cup V_2, & \; \; \tilde{D}_1: C^{\infty}(\tilde{M}_1, \tilde{E}_1) \to C^{\infty}(\tilde{M}_1, \tilde{F}_1), \\
\tilde{Z}_2=V_1 \cup U_2, & \; \; \tilde{D}_2: C^{\infty}(\tilde{M}_2, \tilde{E}_2) \to C^{\infty}(\tilde{M}_2, \tilde{F}_2),
\end{split}
\end{equation}
where the new operators are defined by 
\begin{equation}
\begin{split}
\tilde{D}_1|_{U_1}=D_1|_{U_1}, \; \; & \tilde{D}_1|_{V_2}=D_2|_{V_2} \\
\tilde{D}_2|_{U_2}=D_2|_{U_2}, \; \; & \tilde{D}_2|_{V_1}=D_1|_{V_1}
\end{split}
\end{equation}
We say such two sets of data excisable if $\tilde{D}_i$, $i=1, 2$, extends as Fredholm operators
\[
\tilde{D}_i: L^2_1(\tilde{Z}_i, \tilde{E}_i)  \to L^2(\tilde{Z}_i, \tilde{F}_i). 
\]

\begin{figure}[h]\label{PeriEx}
\centering
\begin{picture}(200,110)
\put(-120, -15){\includegraphics[width=1.15\textwidth]{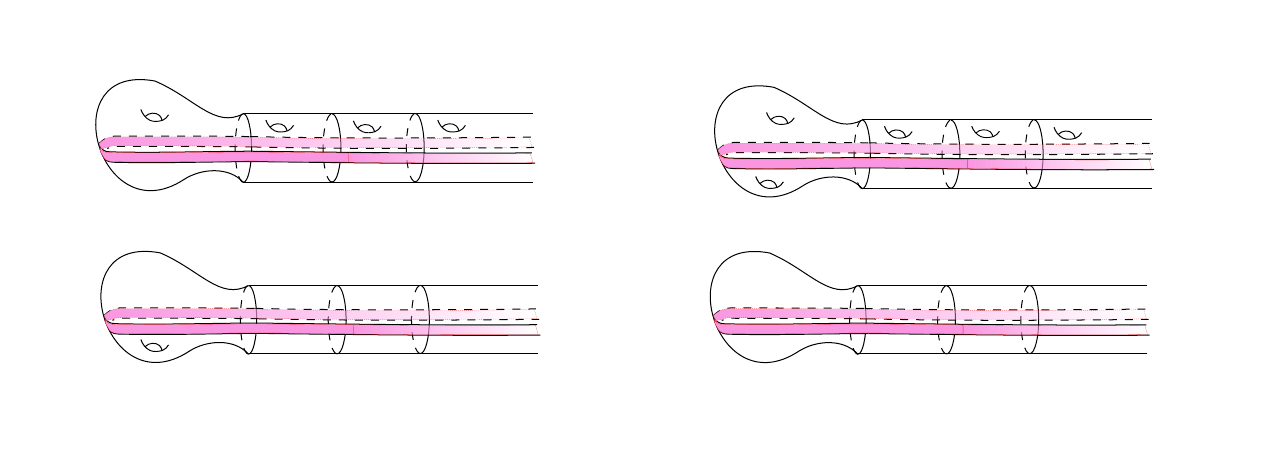}}
\put(-50, 60){$Z_1=U_1 \cup U_2$}
\put(90, 60){$\Longrightarrow$}
\put(165, 60){$\tilde{Z_1}=U_1 \cup V_2$}
\put(-50, 0){$Z_2=V_1 \cup V_2$}
\put(165, 0){$\tilde{Z}_2=V_1 \cup U_2$}
\end{picture}
	\caption{End-Periodic Excision}
\end{figure}

\begin{thm}\label{EP}(Excision Principle)
Given two sets of excisable spin$^c$ codimension-$1$ periodic data as above, suppose the operators
\begin{equation}\label{EPhypo}
\begin{split}
D'_{1, \infty}: L^2_1(X_{1, \infty}, E'_1) & \longrightarrow L^2(X_{1, \infty}, F'_1) \\
D'_{2, \infty}: L^2_1(X_{2, \infty}, E'_2) &\longrightarrow L^2(X_{2, \infty}, F'_2) 
\end{split}
\end{equation}
are both invertible. 
Then there exists $T_o >0$ such that for any $T > T_o$ one has 
\[
\ind D_{1, T} + \ind D_{2, T} = \ind \tilde{D}_{1, T} + \ind \tilde{D}_{2, T}. 
\]
\end{thm}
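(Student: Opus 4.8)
The plan is to reduce the end-periodic excision statement to a gluing/splitting argument in the spirit of the neck-stretching techniques already developed in this paper, using Taubes' Fredholm criterion (Lemma \ref{fh5.3}) as the replacement for the compactness one would have in the classical excision theorem. First I would set up the standard cut-and-paste configuration: choose a large parameter $T$, insert cylinders $[-T,T]\times N$ into each overlap $U_i\cap V_i$ and $\tilde U_i\cap\tilde V_i$, and note that on these four stretched manifolds the operators $D_{1,T},D_{2,T},\tilde D_{1,T},\tilde D_{2,T}$ all agree on the inserted cylinder with the model cylindrical operator $\partial_t + D_{\mathfrak t}$ coming from $(N,h,\mathfrak t)$. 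The identity $\ind D_{1,T}+\ind D_{2,T}=\ind\tilde D_{1,T}+\ind\tilde D_{2,T}$ is then a purely homological consequence of the additivity of the index under gluing along $N$, \emph{provided} each of the four operators decomposes as a Fredholm operator on the two pieces with matching cylindrical ends. Concretely, I would introduce the cylindrical-end pieces: $M_{i,\infty}=U_i'\cup(0,\infty)\times N'$-type halves on the $X$-side (i.e.\ the furled versions), and show that stretching $T\to\infty$ produces, on each of $Z_{i,T}$ and $\tilde Z_{i,T}$, a decomposition into two cylindrical-end manifolds glued along a long neck.

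The key technical step is the \emph{excess vanishing / injectivity estimate}: I must show that for $T$ large, each of the four operators is Fredholm on the corresponding cylindrical-end pieces and that the glued index equals the sum of the pieces' indices with a fixed correction depending only on $N$, not on which "half" is glued to which. This is where the hypothesis that $D'_{1,\infty}$ and $D'_{2,\infty}$ are invertible enters: invertibility on the infinite periodic ends $X_{i,\infty}$ means the operators on the long neck have no small eigenvalues — more precisely, following the argument pattern of Lemma \ref{3.8.8} and Lemma \ref{4.15.1}, a neck-stretching/contradiction argument shows there exist $T_o>0$ and $\epsilon>0$ so that for $T>T_o$ the glued operators $D_{i,T}D_{i,T}^*$ and $\tilde D_{i,T}\tilde D_{i,T}^*$ have no spectrum in $[0,\epsilon^2)$ on the neck region. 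Combined with Taubes' criterion Lemma \ref{fh5.3} applied to the furled operators $D'_{i,z}$ for $|z|$ near $1$ — which the invertibility of $D'_{i,\infty}$ forces to be invertible on an annulus $1-\epsilon_Y<|z|<1+\epsilon_Y$ by the Atiyah–Patodi–Singer-type spectral continuity argument — this yields uniform Fredholmness of all the stretched operators and control of their parametrices supported near the two halves.

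With uniform Fredholmness and neck-spectral-gap estimates in hand, the excision identity follows by the usual parametrix-patching bookkeeping: build approximate inverses $P_{i,T}$ for $D_{i,T}$ and $\tilde P_{i,T}$ for $\tilde D_{i,T}$ out of cylindrical-end parametrices on the pieces $U_i, V_i, U_2$ glued with cutoff functions; because the four manifolds are assembled from the \emph{same} four pieces $U_1,V_1,U_2,V_2$ with the \emph{same} cylindrical model on $N$, the kernels and cokernels, modulo exponentially small $O(e^{-cT})$ error terms, are built from the same data, so $\ker D_{1,T}\oplus\ker D_{2,T}$ and $\ker\tilde D_{1,T}\oplus\ker\tilde D_{2,T}$ (and likewise the cokernels) are related by an explicit isomorphism up to terms that contribute $0$ to the Euler characteristic. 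Taking alternating sums of dimensions gives the stated equality of indices. I would follow Charboneau's thesis \cite[Appendix B]{C} for the structure of this final step, simply replacing "compactly supported excision region" by "periodic excision region stretched to infinity" and citing Lemma \ref{fh5.3} wherever the classical proof uses that the excision region is compact.

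\textbf{Main obstacle.} The hard part is not the homological bookkeeping but establishing the \emph{uniform} Fredholm property and the neck spectral gap for the stretched end-periodic operators: over a genuinely noncompact (periodic) excision region one cannot quote the standard compact-support excision lemma, so one must verify by hand that stretching the codimension-$1$ overlap does not create small eigenvalues escaping down the periodic end. This requires combining a Rellich-type compactness argument \emph{along the neck only} (where the geometry is cylindrical and hence well-controlled) with Taubes' criterion to rule out spectrum leaking into the periodic directions, and checking that the constant $T_o$ can be chosen uniformly across all four reshuffled manifolds simultaneously — which works precisely because they share the overlap model and the two invertibility hypotheses in \eqref{EPhypo} are symmetric in the relevant halves.
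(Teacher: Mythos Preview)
Your approach is genuinely different from the paper's, and considerably more complicated than necessary. The paper does \emph{not} stretch the neck to split each manifold into cylindrical-end pieces and then invoke index additivity under gluing. Instead it runs the classical ``rotation trick'' directly on the end-periodic manifolds: choose square-root partitions of unity $(\phi_i,\psi_i)$ subordinate to $(U_i,V_i)$ with $\|\nabla\phi_i\|_{L^\infty},\|\nabla\psi_i\|_{L^\infty}<\epsilon(T)\to 0$, form the $2\times 2$ multiplication-matrix maps $\Phi,\Psi$ (which are mutually inverse and bounded on $L^2$ and $L^2_1$), and compute $\Psi\tilde D\Phi=D+K$ where $K$ is a zeroth-order end-periodic operator supported on the overlaps with $\|K\|\le C\epsilon(T)$. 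There is no parametrix patching, no kernel/cokernel comparison, no spectral-gap estimate on the neck. The only analytic step is to check that the path $Q_s=D+sK$, $s\in[0,1]$, consists of Fredholm operators; by Taubes' criterion (Lemma~\ref{fh5.3}) this reduces to invertibility of $D'_z+sK'$ on the furled manifolds for all $|z|=1$. The hypothesis~\eqref{EPhypo} together with \cite[Proposition~7.3]{LRS} gives invertibility of $D'_z$ for $T$ large, and then smallness of $\|K'\|$ handles all $s$. Once the path is Fredholm, $\ind D=\ind Q_0=\ind Q_1=\ind\tilde D$ and you are done.

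Your proposed route also has a real obstacle, not just extra work: the hypersurface $N$ along which you want to cut and glue is itself \emph{noncompact} (it is end-periodic, being the preimage of $N'\subset X$ under the covering of the periodic end). The APS-type gluing theory you invoke---decompose into pieces with cylindrical ends modeled on $[0,\infty)\times N$, match indices via boundary contributions---requires a closed cross-section to get Fredholmness of the half-operators and a discrete boundary operator. With $N$ noncompact you would need an index theory for manifolds carrying simultaneously a cylindrical end over a noncompact base \emph{and} a periodic end, which the paper does not develop and which is not a small modification. The matrix trick bypasses this entirely because it never actually separates the pieces; the stretching parameter $T$ is used only to make the cutoff gradients, hence $\|K\|$, small.
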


\begin{proof}
Let's omit the stretching parameter $T$ in the notation unless it becomes relevant. In the definition of the weighted Sobolev spaces we choose the weight functions $\tau_1: Z_1 \to [-1, \infty)$ and $\tau_2: Z_2 \to [-1, \infty)$ to satisfy 
\[
\tau_1|_{(-1,1) \times N}=\tau_2|_{(-1, 1) \times N}. 
\]
Thus we can form the obvious another two weight functions $\tilde{\tau}_1: \tilde{Z}_1 \to [-1, \infty)$ and $\tilde{\tau}_2: \tilde{Z}_2 \to [-1, \infty)$. We choose square roots of partitions of unity $(\phi_1, \psi_1)$ and $(\phi_2, \psi_2)$ subordinate to $(U_1, V_1)$ and $(U_2, V_2)$ respectively to satisfy 
\begin{enumerate}
\item[$\bullet$] $\phi_1^2+\psi_1^2=1$, and $\phi_2^2+\psi_2^2=1$,
\item[$\bullet$] $\phi_1|_{(-1, 1) \times N}=\phi_2|_{(-1, 1) \times N}$, and $\psi_1|_{(-1, 1) \times N}=\psi_2|_{(-1, 1) \times N}$,
\item[$\bullet$] $\| \nabla \phi_i\|_{L^{\infty}} < \epsilon(T)$, and $\|\nabla \psi_i\|_{L^{\infty}} < \epsilon(T)$, $i=1, 2$, where $\epsilon(T) \to 0$ as $T \to \infty$. 
\end{enumerate}
Now we consider maps on the space of smooth functions
\begin{equation}
\begin{split}
\Phi: C^{\infty}(Z_1) \oplus C^{\infty} (Z_2) & \to C^{\infty}(\tilde{Z}_1) \oplus C^{\infty}(\tilde{Z}_2) \\
\Psi:  C^{\infty}(\tilde{Z}_1) \oplus C^{\infty}(\tilde{Z}_2) & \to C^{\infty}(Z_1) \oplus C^{\infty} (Z_2) 
\end{split}
\end{equation}
represented by multiplication of matrices 
\[
\Phi=
\begin{pmatrix}
\phi_1 & \psi_2 \\
-\psi_1 & \phi_2
\end{pmatrix}, \;\;
\Psi=
\begin{pmatrix}
\phi_1 & - \psi_1 \\
\psi_2 & \phi_2
\end{pmatrix}.
\]
From the assumptions of $(\phi_1, \psi_1)$ and $(\phi_2, \psi_2)$, the maps are well-defined. Moreover it's straightforward to compute that 
\[
\Phi \circ \Psi=
\begin{pmatrix}
\phi_1^2 + \psi_2^2 & -\phi_1\psi_1 + \psi_2\phi_2 \\
-\psi_1\phi_1 + \phi_2\psi_2 & \psi_1^2+\phi_2^2
\end{pmatrix}
=1.
\]
Similarly $\Psi \circ \Phi=1$. Thus $\Phi$ and $\Psi$ are inverse to each other. 

Now we claim that both $\Phi$ and $\Psi$ extend as bounded linear maps in $L^2$- and $L^2_1$-norms. To see we get bounded maps between $L^2$ spaces, one computes
\begin{equation}
\begin{split}
\|\Psi (f_1, f_2)\|^2_{L^2} & =\int_{Z_1} \phi_1^2|f_1|^2+\psi_1^2|f_2|^2 + \int_{Z_2}\psi_2^2|f_1|^2 + \phi_2^2 |f_2|^2 \\
& = \int_{Z_1} |f_1|^2 + \int_{Z_2} |f_2|^2\\
& = \|f_1\|^2_{L^2} + \|f_2\|^2_{L^2}. 
\end{split}
\end{equation}
Similarly we conclude $\| \Phi(f_1, f_2)\|^2_{L^2} = \|f_1\|^2_{L^2} + \|f_2\|^2_{L^2}.$
As for boundedness between $L^2_1$ spaces, one has 
\begin{equation*}
\begin{split}
\|\Psi (f_1, f_2)\|^2_{L^2_1} 
 = & \|\phi_1f_1 -\psi_1f_2\|^2_{L^2_1} + \|\psi_2 f_1 + \phi_2 f_2\|^2_{L^2_1} \\
 = & \|f_1\|^2_{L^2} +  \|f_2\|^2_{L^2}  + \| \nabla (\phi_1f_1 - \psi_1 f_2)\| ^2_{L^2}\\
  +& \|\nabla (\psi_2f_1+ \phi_2f_2)\|^2_{L^2} 
\end{split}
\end{equation*}
Using the uniform bound on $\nabla \phi_i, \nabla \psi_i,$ $ \phi_i,$ and $\psi_i$, $i=1, 2$, we get 
\begin{align*}
\| \nabla (\phi_1f_1 - \psi_1 f_2)\|^2_{L^2} & \leq C_1 (\|f_1 \|^2_{L^2_1} + \|f_2\|^2_{L^2_1}) \\
 \|\nabla (\psi_2f_1+ \phi_2f_2)\|^2_{L^2} & \leq C_2(\|f_1 \|^2_{L^2_1} + \|f_2\|^2_{L^2_1}).
\end{align*}
Thus 
\[
\|\Psi (f_1, f_2)\|_{L^2} \leq C_3 (\|f_1 \|_{L^2_1} + \|f_2\|_{L^2_1}).
\]
A similar computation gives rise to 
\[
\| \Phi(f_1, f_2) \|_{L^2_{1}} \leq C_4  (\|f_1\|_{L^2_{1}} + \|f_2\|_{L^2_{1}}). 
\]

Now we let 
\begin{align*}
D:=D_1 \oplus D_2 : & L^2_{1}(E_1) \oplus L^2_{1}(E_2) \to L^2(F_1) \oplus L^2(F_2) \\
\tilde{D}:=\tilde{D}_1 \oplus \tilde{D}_2 : & L^2_{1}(E_1) \oplus L^2_{1}(E_2) \to L^2(F_1) \oplus L^2(F_2)
\end{align*}
so that 
\[
\ind D= \ind D_1 + \ind D_2, \; \; \ind \tilde{D}= \ind \tilde{D}_1 + \ind \tilde{D}_2.
\]
Now one computes that 
\begin{equation}
\begin{split}
\Psi \tilde{D} \Phi &=
\begin{pmatrix}
\phi_1 & -\psi_1 \\
\psi_2 & \phi_2 
\end{pmatrix}
\begin{pmatrix}
\tilde{D}_1 & \\
& \tilde{D}_2
\end{pmatrix}
\begin{pmatrix}
\phi_1 & \psi_2 \\
-\psi_1 & \phi_2
\end{pmatrix}
\\
&=
\begin{pmatrix}
\phi_1 D_1 \phi_1 + \psi_1 D_1 \psi_1 & \phi_1 D_2 \psi_2 - \psi_1 D_2 \phi_2 \\
\psi_2 D_1 \phi_1 -\phi_2 D_1 \psi_1 & \psi_2 D_2 \psi_2 + \phi_2 D_2 \phi_2 
\end{pmatrix}
\\
&= D+K, 
\end{split}
\end{equation}
where $K: L^2_1(E_1) \oplus L^2_1(E_2) \to L^2(F_1)\oplus L^2(F_2)$ is a bounded zeroth order periodic operator supported on $U_1 \cap V_1 \times U_2 \cap V_2$. Since $\tilde{D}$ is Fredholm, $\Phi$ and $\Psi$ are bounded invertible maps on both $L^2$ and $L^2_1$ spaces, we know that $\Psi \tilde{D} \Phi$ is Fredholm with 
\[
\ind D+K = \ind \tilde{D}.
\]
Let's write $K=(K_{ij})$. One computes that 
\begin{equation}
\begin{split}
K_{11} f_1 & = \phi_1 D \phi_1 f_1 +\psi_1 D_1 \psi_1 f_1 - D_1 (\phi_1^2 f_1 - \psi_1^2 f_1) \\
             & = -(\phi_1 \rho(d\phi_1) + \psi_1 \rho(\psi_1)) f_1
\end{split}
\end{equation}
Since Clifford multiplication is norm-preserving and $\phi_1, \psi_1, |\nabla \phi_1|, |\nabla \psi_1| \in [0, 1]$, we conclude 
\[
\| K_{11} f_1\|_{L^2} \leq C_{11} \epsilon(T) \|f_1\|_{L^2_1}
\]
with $C_{11}$ independent of $T$. A similar calculation holds for other entries of $K$. 

Since $K$ is end-periodic, we write $K'$ for the corresponding operator on the folded manifold $X$. Consider the path of operators $Q_s=D+sK$, $s \in [0, 1]$. From Lemma \ref{fh5.3} we know that $Q_s$ is Fredholm if and only if the family of operators 
\[
Q'_{s, z}:=D' +sK'- \ln z \rho(\tau') :L^2_1(E'_1) \oplus L^2_1(E'_2)  \to  L^2(F'_1)\oplus L^2(F'_2), 
\]
is invertible for all $z$ of unit length, where 
\[
\rho(\tau') = 
\begin{pmatrix}
\rho(\tau'_1) & 0 \\
0 & \rho(\tau'_2)
\end{pmatrix}
\]
From \cite[Proposition 7.3]{LRS} we know that there exists $T_1$ such that for all $T>T_1$ one has $D' - \ln z \tau$ is invertible for all $z$ of unit length $|z|=1$. Note that the operator norm $\| K'\| \leq C_5 \epsilon(T)$ for some constant $C_5$ independent of $T$. Compactness of the unit circle guarantees us that there is $T_o >0$ such that for all $T > T_o$ the family of operators $Q'_{s, z}$ are all invertible for $s \in [0, 1]$, and $|z|=1$. Thus $Q_s$ is a path of Fredholm operators. In particular 
\[
\ind D= \ind Q_0 =\ind Q_1 = \ind \tilde{D}.
\]
\end{proof}

\begin{rem}
One can see that the proof goes through for other elliptic operators under the assumption (\ref{EPhypo}). The proof of Proposition 7.3 in \cite{LRS} is completely analytical, makes no use of anything special about Dirac operators. 
\end{rem}

\section{Surgery Formula}\label{SurFor}  

\subsection{Surgery along Torus}\label{SAT}
In this section we will prove a surgery formula for the Casson-Seiberg-Witten invariant for a homology $S^1 \times S^3$ with respect to surgeries along an embedded torus using the techniques developed in previous sections. We recall the set-up from Section \ref{setup}.

Let $(X, \s, g)$ be a smooth oriented Riemannian spin$^c$ $4$-manifold satisfying 
\[
H_*(X; \Z) \cong H_*(S^1 \times S^3; \Z),
\]
$\iota: T^2 \hookrightarrow X$ be an embedded torus with $\im \iota=\mathcal{T}$. Let $1_X \in H^1(X; \Z)$ be a fixed generator whose Poincaré dual is represented by an embedded hypersurface $Y_X \subset X$. We also require that $\mathcal
{T}$ intersects $Y_X$ transversely in a knot $K = \mathcal{T} \cap Y_X$. Then we decompose 
\[
X= M \cup N,
\]
where $N$ is a regular neighborhood of $\mathcal{T}$ identified with $D^2 \times T^2$, $M$ the closure of the complement of $N$. We write $\mathcal{Y}_{\mathcal{T}}:=\partial M  = -\partial N$ which is a copy of $T^3$. A collar neighborhood of $\Y$ is identified with $(-1, 1) \times \Y$. We choose the metric $g$ on $X$ so that it restricts as 
\[
g|_{(-1, 1) \times Y} = dt^2 + h,
\]
where $h$ is a fixed flat metric on $\Y$. Since $N=D^2 \times T^2$, we can choose the metric $g$ so that $g|_N$ has nonnegative and somewhere nonvanishing scalar curvature. As what we have been doing for the former sections, we can stretch the collar of $\Y$ to get $(X_T, g)$ for any $T >0$. This is equivalent to deforming the metric to get $(X, g_T)$. Recall that we write 
\[
M_{\infty} = M \cup [0, \infty) \times \Y, N_{\infty}=(-\infty, 0] \times \Y, \text{ and } X_{\infty}=M_{\infty} \cup N_{\infty}.
\]
We fix $T_o>0$ so that, for any $T \geq T_o$, the results of  Theorem \ref{sf4.14} and Theorem \ref{EP} hold. Note that we only assumed the metric is of product form in a neighborhood of $\Y \subset X$, but not a neighborhood of the generating hypersurface $Y_X \subset X$. Moreover the entire stretching process is only applied on $\Y$. Once we keep this in mind, we can fix $T$, and take the metric $g$ at the beginning to be $g_T$.  Recall that one can perform $(p, q)$-surgery along $\mathcal{T}$ to get $X_{p, q}$. We write $X_q=X_{1, q}$ for $q \neq 0$, and $X_{0, 1} = X_0$. Their homology is given by 
\[
H_*(X_q; \Z) =\left 
\{ \begin{array}{ll}
H_*(S^1 \times S^3; \Z) & \mbox{if $q \neq 0$} \\
H_*(T^2 \times S^2; \Z) & \mbox{if $q=0$}
\end{array}
\right.
\]

Recall that
\begin{equation}
\mathcal{SW}(X_0)=\sum_{\s_0 \in \Spinc(X_0)} \SW(X_0, \s_0),
\end{equation}
where for each $\s_0 \in \Spinc(X_0)$ $\SW(X_0, \s_0)$ is computed using the chamber specified by small perturbations.

\begin{thm}\label{SurT}
After fixing appropriate homology orientations, one has 
\[
\lambda_{SW}(X_1) - \lambda_{SW}(X) = \mathcal{SW}(X_0).
\]
\end{thm}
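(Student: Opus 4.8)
The plan is to reduce both sides to an intersection count inside the character variety of $\mathcal{Y}_\mathcal{T}\cong T^3$ after stretching the neck along $\mathcal{Y}_\mathcal{T}$, and to track how this count and the index‑theoretic correction term $\omega$ change as the surgery coefficient increases by one, so that the assertion (with $X=X_{1,0}$ and $X_1=X_{1,1}$) is the first instance of $\lambda_{SW}(X_{1,q+1})-\lambda_{SW}(X_{1,q})=\mathcal{SW}(X_0)$. Throughout I fix $T\ge T_o$ so that Theorem~\ref{sf4.14}, Theorem~\ref{EP} and the compactness statement Theorem~\ref{G1} apply, and I give $N=D^2\times T^2$ a metric of nonnegative, somewhere positive scalar curvature, which kills all irreducible monopoles over $N_\infty$.

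First I would set up the gluing picture. For each $q$ the gluing theorem of the appendix, combined with the structural result Theorem~\ref{KUR} on the $M$‑side and positive scalar curvature on the $N$‑side, identifies (with signs) the irreducible locus $\M^*_{g,\beta}(X_{1,q},\s)$ with the transverse fiber product over $\chi(\mathcal{Y}_\mathcal{T})=T^3$ of the compact oriented $1$‑manifold‑with‑boundary $\bar\partial_+(\M^*_{g,\beta}(M_\infty,\s_M))$ with the $2$‑torus $\mathcal R_N(q):=\bar\partial_+(\M^{\Red}_{g,\beta}(N_\infty,\s_N))$, the latter being the Picard torus of $N$ inserted into $\chi(T^3)$ through $\varphi_{1,q}$ and therefore varying affinely in $q$. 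Compactness of the $M$‑side moduli space is Theorem~\ref{G1}; Lemma~\ref{2.5}, applied to both $M$ and $N$, keeps the singular point $\theta$ off both reducible loci, so that the only degenerations of the gluing picture occur at the open ends of the arcs of $\bar\partial_+(\M^*(M_\infty))$, which lie on $\mathcal R_M:=\bar\partial_+(\M^{\Red}(M_\infty))$; there the obstructed‑gluing model $\tilde{\mathfrak o}_{[\Gamma]}$ of Theorem~\ref{KUR}(iii) pins down the local count. Thus $\#\M^*_{g,\beta}(X_{1,q},\s)$ is the signed intersection number of $\bar\partial_+(\M^*(M_\infty))$ with $\mathcal R_N(q)$.

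Next I would compute the jump. Homotoping $\mathcal R_N(q)$ to $\mathcal R_N(q+1)$ through a family of $2$‑tori in $T^3$ and tracking the signed intersection with the fixed $1$‑manifold $\bar\partial_+(\M^*(M_\infty))$, the intersection number changes only where the moving torus crosses an open end of an arc, i.e. crosses a point of $\mathcal R_M$, and this boundary contribution is exactly the fiber‑product count of $\mathcal R_M$ against the reducibles of $N$ with the $(0,1)$‑gluing. Since $b^+(X_{0,1})=1$, the Seiberg--Witten invariant $\SW(X_{0,1},\s_0)$ in the small‑perturbation chamber is computed precisely by this reducible‑wall intersection, and summing over $\s_0\in\Spinc(X_{0,1})$ assembles the entire boundary contribution, so that $\#\M^*_{g,\beta}(X_{1,q+1},\s)-\#\M^*_{g,\beta}(X_{1,q},\s)=\mathcal{SW}(X_0)$. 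It then remains to show that $\omega(X_{1,q})$, computed for the stretched metric, is independent of $q$: choosing the spin filling so that $\sigma(Z)$ is unchanged, I would express the difference $\ind_\C D^+_\beta(Z_+(X_{1,q+1}))-\ind_\C D^+_\beta(Z_+(X_{1,q}))$ as the periodic spectral flow of a path of twisted Dirac operators whose deformation is supported near the surgery region, and use the splitting formula Theorem~\ref{sf4.14} together with the end‑periodic excision principle Theorem~\ref{EP} to localize this periodic spectral flow to the $N$‑side, where positive scalar curvature makes the relevant family of Dirac operators uniformly invertible, forcing the periodic spectral flow, hence the index difference, to vanish. Combining the two steps gives $\lambda_{SW}(X_{1,q+1})-\lambda_{SW}(X_{1,q})=\mathcal{SW}(X_0)$, and $q=0$ is the stated identity.

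The main obstacle will be assembling the first two steps with consistent orientations and in the correct chamber: making the obstructed gluing at the open ends of the arcs of $\M^*(M_\infty)$ fully rigorous through the local model $\tilde{\mathfrak o}_{[\Gamma]}$, reconciling the perturbation and chamber conventions used on $X_{1,q}$, on $X_{0,1}$, and in the neck‑stretched fiber product so that the wall‑crossing count is literally $\mathcal{SW}(X_0)$ and not that invariant in some other chamber, and fixing the homology orientations so that the wall‑crossing in the count and the index bookkeeping for $\omega$ carry compatible signs.
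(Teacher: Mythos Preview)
Your overall strategy—stretch the neck, read everything off inside $\chi(T^3)$, and split the argument into a monopole-count half and an $\omega$-half—matches the paper's. But both of your individual claims are wrong, and the errors are not merely cosmetic.

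For the monopole count: the tori $\mathcal R_N(q)$ and $\mathcal R_N(q+1)$ are \emph{not} homologous in $\chi(T^3)$, so the homotopy you invoke does not exist. In the cube coordinates of Section~\ref{SAT} one has $P=\{x=0\}$, $P_1=\{x+y=\pm1\}$, $P_0=\{y=0\}$, and $[P_1]=[P]+[P_0]$ in $H_2(T^3)$. What the paper does (Proposition~\ref{IM}) is deform $P_1$ to $P\cup P_0$; the resulting identity is
\[
\#\M^*(X_1)-\#\M^*(X)=\mathcal{SW}(X_0)+\Sf D^+_{A_s}(M_\infty,\beta),
\]
where the extra term counts the open ends of the arcs of $\bar\partial_+(\M^*(M_\infty))$ swept during the deformation. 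These open ends are exactly the reducibles $[A,0]$ with $\ker D^+_A=\C$ along a path in the $M$-side Picard torus, and their signed count is the periodic spectral flow, not $\mathcal{SW}(X_0)$. Your identification of the ``boundary contribution'' with $\mathcal{SW}(X_0)$ conflates two different things: $\mathcal{SW}(X_0)$ is the transverse count $\#R\cap P_0$ of \emph{irreducible} monopoles on $X_0$ via gluing, not a reducible wall-crossing count.

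For the correction term: the relevant deformation is \emph{not} supported in $N$. The spin structures $\s$ on $X$ and $\s_1$ on $X_1$ restrict to the same bundle on $M$, but their spin connections on $\Y$ sit at $[0,0,0]$ and $[1,0,0]$ in the cube; the path between them lives in the $M$-side reducible torus. In Proposition~\ref{IC} periodic excision reduces $\omega(X_1)-\omega(X)$ to a spectral flow over $X_0$, and Theorem~\ref{sf4.14} then localizes it to $M_\infty$ (positive scalar curvature on $N_\infty$ kills the $N$-side contribution, but the $M$-side term survives), giving $\omega(X_1)-\omega(X)=\Sf D^+_{A_s}(M_\infty,\beta)$, generally nonzero. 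The theorem follows because this \emph{same} spectral flow appears in both halves and cancels in $\lambda_{SW}=\#\M^*-\omega$, not because either half is individually clean as you assert.
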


As a corollary of Theorem \ref{SurT} we can deduce the main result Theorem \ref{Main1}.

\begin{proof}[Proof of Theorem \ref{Main1}]
When $q =0$, this is the result in Theorem \ref{SurT}. For any $q \in \Z$, $X_{1, q}=M \cup_{\phi_{1, q}} D^2 \times T^2$. Let $T:= \subset \{0 \} \times T^2$ be the core of $D^2 \times T^2$. We denote by $\mathcal{T}_q \subset X_{1, q}$ the image of $T$ in $X_{1, q}$. Then with respect to the preferred framing of $\mathcal{T}_q$, the $(1, 1)$-surgery along $\mathcal{T}_q \subset X_q$ is $X_{q+1}$, the $(0, 1)$-surgery along $\mathcal{T}_q$ is $X_0$. Thus replacing $X$ by $X_q$ and $\mathcal{T}$ by $\mathcal{T}_q$ in Theorem \ref{SurT} gives us the result. 
\end{proof}

The proof of Theorem \ref{SurT} is divided into two parts: Proposition \ref{IM} and Proposition \ref{IC}. The first part is to use the neck-stretching and gluing argument to compare the counting of irreducible monopoles. The second part is to use the techniques of periodic spectral flow and periodic excision principle to compare the index correction terms from $\lambda_{SW}(X_1)$ and $\lambda_{SW}(X)$. There are some extra terms coming out from those two steps. Those terms will eventually cancel, thus giving us the desired result. The same kind of cancellantion principle takes place in Lim's proof for the surgery formula of Casson invariant defined using Seiberg-Witten theory in \cite{Lim1}.

Before diving into the proof, we would like to comment on the admissibility of the metric $g$ on $X$. As it turns out, when we apply Theorem \ref{sf4.14} and Theorem \ref{EP} in the proof the corresponding assumptions will concretize as the admissibility of the metric $g$. Following Remark \ref{read}, it suffices to assure that the $4$-manifold is spin cobordant to the empty set, and the spin Dirac operator $D(Y, h)$ is invertible. In the proof below we will encounter several different $Y$'s, which are $T^3, S^3$, and $S^1 \times S^2$. We put metrics on $S^3$ and $S^2 \times S^1$ with positive scalar curvature so that the invertibility of $D_B(Y, h)$ is satisfied. In this case the $4$-manifolds are taken to be products $S^1 \times Y$, which are clearly spin cobordant to the empty set due to vanishing of signature. When $Y=T^3$, Lemma \ref{2.5} tells us the spin Dirac operators $D_B(Y, h)$ have trivial kernel. In this case $X$ is an integral homology $S^1 \times S^3$, which is also spin cobordant to zero. Once the metric $g$ is chosen, the regularity of the pair $(g, \beta)$ in (\ref{1.6.1}) can be achieved by choosing a generic choice of the perturbation solely. Therefore we won't bother to repeat this point in the proof when applying our theorems.

\subsubsection{Counting Irreducible Monopoles}

Recall that in Section \ref{setup} we have chosen an oriented basis for $H_1(\Y; \Z)$ as $(\mu, \lambda, \gamma)$ called the meridian, longitude, and latitude respectively. Now we form
\[
X_1=M \cup_{\varphi_1} N \text{ and } X_0=M \cup_{\varphi_0} N, 
\]
where under the choice of basis above the diffeomorphisms $\phi_1$ and $\phi_0$ are given by the matrices 
\[
\varphi_1=
\begin{pmatrix}
1 & 0 & 0 \\
1 & 1 & 0 \\
0 & 0 & 1
\end{pmatrix} \text{ and }
\varphi_0=
\begin{pmatrix}
0 & 1 & 0 \\
-1 & 0 & 0 \\
0 & 0 & 1
\end{pmatrix}. 
\]
We put a metric $g_1$ on $X_1$ so that $g_1|_M=g|_M$, $g_1|_N$ is extended from $\varphi_1^* h$ with nonnegative and somewhere positive scalar curvature. We can similarly get a metric $g_0$ on $X_0$. Note that $h$, $h_1:=\phi_1^*h$, and $h_0:=\phi_0^*h$ are all flat on $\Y \cong T^3$. We choose a generic small perturbation $\beta$ with $\supp \beta \subset M$ so that all three pairs $(g, \beta)$, $(g_1, \beta)$ and $(g_0, \beta)$ are regular in the sense of Definition \ref{R1}. Now we deal with the spin$^c$ structure on each piece. As before we write $\s$ for the spin$^c$ structure on $X$ induced from a fixed spin structure. We write $\s_M=\s|_M$, $\s_N=\s|_N$. Let's write 
\[
\M_1:=\M^*_{g, \beta}(M_{\infty}, \s_M) \text{ and } \M_2:=\M^{\Red}_g(N_{\infty}, \s_N).
\]
Since we already assumed the stretching parameter $T$ is large, the standard gluing theorem  (see, for example, \cite{MM}, \cite{N} etc. and Appendix \ref{TGT} for the modification in the case of homology $S^1 \times S^3$) implies that  that the counting of irreducible monopoles on $X$ and $X_1$ are given by  
\begin{equation}\label{count1}
\begin{split}
& \# \M^*_{g, \beta}(X, \s) =\# \bar{\partial}_+ (\M_1) \cap \bar{\partial}_-(\M_2)\\
& \# \M^*_{g_1, \beta}(X_1, \s_1) =\# \bar{\partial}_+ (\M_1) \cap \bar{\partial}_{-, 1}(\M_2)\\
\end{split}
\end{equation}
The core of $N$ embeds in $X_0$ as a torus of self-intersection $0$, the adjunction inequality implies that the spin$^c$ structures $\s_0 \in \Spinc(X_0)$ consist of those restricting to $M$ and $N$ as $\s_M$ and $\s_N$ respectively. The standard gluing theorem implies that the counting of irreducibles over $X_0$ with respect to all spin$^c$ structures are given by 
\begin{equation}\label{count2}
\sum_{\s_0 \in \Spinc(X_0)} \# \M^*_{g_0, \beta}(X_0, \s_0) =\# \bar{\partial}_+ (\M_1) \cap  \bar{\partial}_{-, 0}(\M_2).
\end{equation}
Note that the intersections above are transverse due to Proposition \ref{2.12}. Now we proceed to put  coordinates on $\chi(\Y)$ and represent the above pieces as oriented manifolds inside.

Let's fix the spin structures $\s$, $\s_1$ on $X$, $X_1$ respectively. We write $A_{\s}$ and $A_{\s_1}$ for the corresponding spin connections. Now think of a spin structure as a homotopy class of a trivialization of tangent bundle over $1$-skeleton that extends to $2$-skeleton. Since the gluing maps preserve the images of the generator $1_X \in H^1(X; \Z)$ under the restriction maps to $M$ and $N$, the spin structures $\s$ and $\s_1$ agree on the $1$-cell in the $1$-skeleton dual to the images of $1_X$. Moreover $\s|_N$ differs from $\s_1|_N$ by a half twist on the other $1$-cell in $N$. We identify the image of $M$ in all surgered manifolds together with its boundary $\Y=\partial M$. Let $A_{\s}|_{\Y}=B_{\s}$ be the spin connection given by that on $X$. The coordinates on $\chi(\Y)$ for any spin$^c$ flat connection $\B$ is given by 
\[
\B \longmapsto ({1 \over 2\pi i} \int_{\mu} 2b, {1 \over 2\pi i} \int_{\lambda} 2b, {1 \over 2\pi i} \int_{\gamma} 2b),
\]
where $2b=\B - \B_{\s} \in \Omega^1(\Y; i\R)$. We write $(x, y, z)$ for the coordinate map. Note that the gauge action is even: $u \cdot \B=\B- 2u^{-1}du$ and $[u^{-1}du] \in 2\pi i H^1(\Y;\Z)$, thus a fundamental domain of $\chi(\Y)$ is identified with the cube
\begin{equation}
C(\Y):=\{ (x, y, z) : x, y, z \in [-1, 1]\}. 
\end{equation}
Thus $\chi(\Y)=C(\Y) / \sim$ with opposite faces of the cube identified. We write $[x, y, z] \in \chi(\Y)$ for the class represented by $(x, y, z) \in C(\Y)$. Note that the singular connection $\Theta$ is characterized as the spin connection with respect to the product spin structure on $S^1$ which does not extend over $D^2$, thus 
\[
[\Theta^{\text{t}}]=[1, 1, 1].
\]
Since $N=D^2 \times T^2$, we conclude that the restriction of any flat connection on $N$ on $\Y$ has first coordinate $x=0$. Thus the image $\bar{\partial}_{-}(\M_2) \subset \chi(\Y)$ is given by the plane
\[
P=\{ x=0\} \cap C(\Y)
\]
From the description for spin structures above, we know that 
\[
[\B_{\s_1}]=[1, 0, 0]=[-1, 0, 0].
\]
With respect to the dual basis of $(\mu, \lambda, \gamma)$ the maps $\varphi^*_1$ and $\varphi^*_0$ on $H^*(\Y; \R)$ are given by 
\[
\varphi^*_1=
\begin{pmatrix}
1 & 1 & 0 \\
0 & 1 & 0 \\
0 & 0 & 1
\end{pmatrix} \text{ and }
\varphi^*_0=
\begin{pmatrix}
0 & -1 & 0 \\
1 & 0 & 0 \\
0 & 0 & 1
\end{pmatrix}.
\]
Thus the images $\bar{\partial}_{-, 1}(\M_2)$ and $\bar{\partial}_{-, 0}(\M_2)$ are given respectively as 
\[
P_1=\{x+y=\pm 1\} \cap C(\Y)  \text{ and } P_0=\{ y=0\} \cap C(\Y). 
\]
Let $A_{\beta}$ be a spin$^c$ connection on $M_{\infty}$ with $F^+_{\A_{\beta}}=2d^+\beta$. Let $[B_{\beta}]=\bar{\partial}^+([A_{\beta}])$ with coordinate $c_{\beta}=(x_{\beta}, y_{\beta}, z_{\beta})$. Then from Lemma \ref{2.4} the image of the reducible locus $\M^{\Red}_{g, \beta}(M_{\infty}, \s)$ is 
\[
P_M= \{ [c_{\beta}]+[x, 0, z] \}=\{ [x, y_{\beta}, z] : x, z \in [- 1, 1]\}. 
\]
Note that when $\beta=0$, $P_M=P_0$. We write the closure of $\bar{\partial}^{+}(\M_1)$ as $R$ consisting of a finite number of curves transverse to $P \cup P_0 \cup P_1$ with end points in $P_M$ or $\{ \theta\}$. Without loss of generality we may assume $y_{\beta} >0$. We let $\B_s$ be the path of connections on $P_M$ given by 
\[
(x(\B_s), y(\B_s), z(\B_s)) = (s, y_{\beta}, 0), \; \; s \in [0, 1].
\]
Let $\A_s$ be the path in $\M^{\Red}_{g, \beta}(M_{\infty}, \s)$ specified by $\bar{\partial}_+([\A_s])=[\B_s]$. 
\begin{prop}\label{IM}
Under the notations as above, we have
\begin{equation}
\# \M^*_{g_1, \beta}(X_1, \s_1) -\# \M^*_{g, \beta}(X, \s) =\sum_{\s_0} \# \M^*_{g_0, \beta}(X_0, \s_0)+ \Sf D^+_{A_s} (M_{\infty}, \beta).
\end{equation}
\end{prop}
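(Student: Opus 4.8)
The plan is to compare the three counts of irreducible monopoles in (\ref{count1}) and (\ref{count2}) as signed intersection numbers inside the character variety $\chi(\Y)$, and to absorb the discrepancy into the periodic spectral flow along the path $(\A_s)$. First I would set up the intersection picture precisely: by Proposition \ref{2.12} the asymptotic map $\bar\partial_+$ is smooth and transverse to finite subcomplexes after a generic perturbation, so $R=\cl(\bar\partial_+(\M_1))$ is a $1$-manifold with boundary on $P_M\cup\{\theta\}$, and the three surgered counts equal the signed intersection numbers $R\cdot P$, $R\cdot P_1$, $R\cdot P_0$ in the fundamental cube $C(\Y)$, computed with the fiber-first orientation convention fixed at the end of Section \ref{2}. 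The key geometric observation is that $P$, $P_1$ and $P_0$ are the three affine $2$-planes $\{x=0\}$, $\{x+y=\pm1\}$ and $\{y=0\}$ intersected with $C(\Y)$, and that $P$ is isotopic to $P_1$ through the family of planes $\{x+ (1-\sigma) y = \text{const}\}$ (equivalently by the linear isotopy of $SL(3,\Z)$-matrices interpolating $\varphi_1^*$ and the identity), while $P_0=\{y=0\}$ is the ``wall'' that this isotopy must cross. So the natural identity I would aim for is
\begin{equation}
R\cdot P_1 - R\cdot P = R\cdot P_0 + (\text{correction from endpoints on } P_M),
\end{equation}
where the endpoint correction counts how the open ends of $R$ sitting on the reducible locus move across the planes as one performs the isotopy.

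The heart of the argument is identifying this endpoint correction with $\Sf D^+_{A_s}(M_\infty,\beta)$. Here I would use Proposition \ref{5.7} and Theorem \ref{KUR}: near each open end $[\Gamma]=[A,0]\in\M^{\Red}_{g,\beta}(M_\infty,\s)$, which occurs precisely where $\ker D^+_A\neq0$, the moduli space is modeled on $\{(x_1+ix_2)z=0\}\subset\R^2\times\R_+$, so an arc of irreducibles emanates from $[\Gamma]$ in a way controlled by the local structure of the $T^2$-family of Dirac operators $D^+_{\alpha,\beta}$. By Proposition \ref{4.5} the points $\alpha_i\in\mathbb{T}(\s)$ where $\ker D^+_{\alpha_i,\beta}=\C$ are isolated, and the arcs of irreducibles attach exactly at the corresponding reducibles on $P_M$. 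The path $\B_s$ in $P_M$ was chosen to run along $\{(s,y_\beta,0):s\in[0,1]\}$, i.e.\ from the plane $P=\{x=0\}$ to (a translate of) $P_1$; its lift $\A_s$ to $\M^{\Red}_{g,\beta}(M_\infty,\s)$ meets the reducibles with $\ker D^+\neq0$ exactly at the spectral points counted by $\Sf D^+_{A_s}(M_\infty,\beta)$, with signs given by the $t$-directional derivative of the spectral curves (Definition \ref{sf} and the following remark), which match the local orientation signs coming from the Kuranishi model $(x_1+ix_2)z$. Thus, as the plane sweeps from $P$ across $P_0$ to $P_1$, each time it crosses a reducible where a Dirac kernel appears an arc of irreducibles is born or dies, contributing exactly one signed unit to both $\Sf D^+_{A_s}$ and the change in intersection number; and the crossings with $P_0$ itself account for $\sum_{\s_0}\#\M^*_{g_0,\beta}(X_0,\s_0)$ via (\ref{count2}).

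Assembling these: $\#\M^*_{g_1,\beta}(X_1,\s_1)=R\cdot P_1$ and $\#\M^*_{g,\beta}(X,\s)=R\cdot P$ differ by the number of walls the isotopy crosses, which decomposes as the crossings with $P_0$ (giving $\sum_{\s_0}\#\M^*_{g_0,\beta}(X_0,\s_0)$) plus the reducible-endpoint crossings (giving $\Sf D^+_{A_s}(M_\infty,\beta)$), yielding the stated formula. The main obstacle I anticipate is the careful bookkeeping of orientations: one must check that the fiber-first convention on $\chi(\Y)$, the boundary orientation on the arcs of $\M^*$ inherited through Theorem \ref{KUR}(iii), the sign rule in the Kuranishi model $(x_1+ix_2)z$, and the sign convention in Definition \ref{sf} for the periodic spectral flow are mutually consistent, so that the three contributions add with the correct signs rather than cancelling or doubling. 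A secondary technical point is ensuring the gluing theorem of Appendix \ref{TGT} applies uniformly for the fixed large $T$ across all of $X$, $X_1$, $X_0$ simultaneously and that the same generic $\beta$ works for all three, which was arranged in the neck-stretching set-up but should be invoked explicitly. I would handle the orientation bookkeeping by reducing everything to a single local model near a reducible endpoint and computing one sign there, then propagating by the global orientation of $R$.
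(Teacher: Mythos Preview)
Your overall strategy matches the paper's: express the three counts as signed intersections $R\cdot P$, $R\cdot P_0$, $R\cdot P_1$ in $\chi(\Y)$, relate them by a geometric argument in the fundamental cube, and identify the discrepancy with the periodic spectral flow via the Kuranishi model at reducible endpoints. However, the geometric step contains a conceptual error, and one case is missing.

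You claim that $P$ is isotopic to $P_1$ through the family $\{x+(1-\sigma)y=\text{const}\}$, with $P_0$ playing the role of a ``wall'' this isotopy crosses. This is not correct: $P=\{x=0\}$ and $P_1=\{x+y\equiv 1\}$ represent \emph{different} classes in $H_2(T^3;\Z)$ (indeed $[P_1]=[P]+[P_0]$), so no isotopy exists, and the linear interpolation in $SL(3,\R)$ does not produce sub-tori for intermediate $\sigma$. What the paper actually uses is that $P_1$ is \emph{cobordant} to $P\cup P_0$: in the cube $C(\Y)$ the two pieces $P_{1,\pm}$ lie in the first and third quadrants $Q_1,Q_3$ cut out by $P\cup P_0$, and together with $P_\pm\cup P_{0,\pm}$ they bound triangular prisms. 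One then deforms $P_{1,\pm}(s)$ into $P_\pm\cup P_{0,\pm}$ inside those prisms, so that the change in $R\cdot P_{1}(s)$ records exactly the endpoints of $R$ swept over. Your target identity $R\cdot P_1 - R\cdot P = R\cdot P_0 + (\text{endpoint correction})$ is right, but its justification must go through this cobordism picture, not an isotopy.

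A second gap: you only track endpoints of $R$ on $P_M$, but by Theorem \ref{KUR}(iii) the closed ends of arcs in $R$ lie at the singular point $\theta$, which sits at the corner of $C(\Y)$ on the boundary of the $Q_3$-prism. The paper disposes of this by observing that since $y_\beta>0$ the reducible locus $P_M$ lies entirely in $Q_1\cup Q_2$, so the $Q_3$-prism contains no $P_M$-endpoints and only possibly $\theta$; and the deformation $P_{1,-}(s)$ is arranged to avoid $\theta$, so $\#R\cap P_{1,-}=\#R\cap P_-+\#R\cap P_{0,-}$ exactly. Only then does the entire correction localize to $\Sigma_+=R\cap P_{M,+}\subset Q_1$. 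Without this step you cannot exclude spurious contributions from the $\theta$-endpoints.

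Your identification of $\#\Sigma_+$ with $\Sf D^+_{A_s}(M_\infty,\beta)$ via the Kuranishi model and the spectral-curve sign is correct and is exactly what the paper does; the paper invokes \cite[Corollary 8.5]{MRS} for the sign comparison you flag as the main obstacle.
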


\begin{proof}
From the description above we know the count of irreducible monopoles on $X_1, X_0$, and $X$ are given respectively by the signed count of the intersections
\[
R \cap P_1, \; \; R \cap P_0, \text{ and } R \cap P. 
\]
The union of two planes $P_0 \cup P$ divides $C(\Y)$ into four quadrants $Q_i$ as the four quadrants in the $(x, y)$-plane multiplied by the $z$-axis. The two pieces of $P_1$ lie in $Q_1$ and $Q_3$ respectively as shown below, which we denoted by $P_{1, +}$ and $P_{1, -}$. We can similarly decompose 
\[
P=P_{-} \cup P_{+}, \; \; P_0=P_{0, -} \cup P_{0, +}, \;\; P_M=P_{M, -} \cup P_{M, +}.
\]
Here we write the part of $P_0$ and $P$ lying on the boundary of $Q_1$ as "$+$". The Kuranishi picture in Theorem \ref{KUR} implies that $R$ is transverse to $P_M$ as well. In particular the triple intersections $R \cap P_i \cap P_j = \emptyset$. Denote by $\Sigma=R \cap P_M \subset Q_1 \cup Q_2$ the ends of curves, which splits into $\Sigma=\Sigma_- \cup \Sigma_+$ with $\Sigma_- \subset Q_2$, $\Sigma_+ \subset Q_1$. Since $P_0 \cap Q_1$ lies below $P_{1, +}$, by choosing $\beta$ small one can assure that $\Sigma_+$ lies below $P_{1, +}$ in the first quadrant. One can deform the planes $P_{1, \pm}$ into the union $P \cup P_0$ continuously avoiding the singular point. The deforming process is easily seen in the picture, so we don't bother to write down a formula. The time-$s$ deformed plane is denoted by $P_1(s)$.  We need to keep track of the intersection 
\[
P_{1, +}(s) \cap P_M = \{ (1- \max(y_{\beta}, s), y_{\beta}, z) : z \in [-1, 1]\}, \;\; s \in [0, 1]. 
\]
Since away from $\theta$ $R \cap Q_3$ is a regular curve, i.e. any of its components is either an oriented circle or any oriented arc ends on $\partial Q_3$, and $\theta \notin P_{1, -}(s)$, thus in the third quadrant 
\[
\# R \cap P_{1, -} = \#R \cap P_{-} +  \# R \cap P_{0, -}.
\]
For the intersection in the first quadrant we write
\begin{equation}
I(s)= \# R \cap P_{1,+}(s) - \# R \cap P_+ - \# R \cap P_{0, +}. 
\end{equation}
Everytime $P_{1, +}(s)$ passes through a point in $\Sigma_+$, $I(s)$ will change by either $-1$ or $1$ according to whether the intersection $R \cap P_{1, +}(s-\epsilon)$ is positive or negative for small $\epsilon >0$. This assigns a sign to each point in $\Sigma_+$. Thus 
\[
I(0) - I(1) = I(0) = \# \Sigma_+. 
\]
Recall that the function $f: X \to S^1$ is chosen to represent the dual of the latitude $\gamma$, which restricts to $C(\Y)$ as the $z$-coordinate, thus the preimage $\bar{\partial}_+^{-1}(P_{M, +}) \subset \M_1$ is parametrized by the cylinder 
\begin{equation}
\{ A_s - \ln w \cdot df : s \in [0, 1], |w|=1\}.
\end{equation}
Note that from Theorem \ref{KUR} the points $A \in \bar{\partial}_{+}^{ -1}(\Sigma_+)$ are characterized by the property 
\begin{equation}
F^+_{\A}=2d^+\beta \text{ and } \ker D^+_A = \C, 
\end{equation}
which are exactly the spectral points for the path $D^+_{A_s}(M_{\infty}, \beta)$. Thus once the signs of each point in $\bar{\partial}_{+}^{ -1}(\Sigma_+)$ are identified, we can conclude that 
\[
\# \Sigma_+ = \Sf D^+_{A_s}(M_{\infty}, \beta). 
\]

\begin{figure}[h]
\centering
\begin{picture}(200,200)
\put(-100, -150){\includegraphics[width=1.00\textwidth]{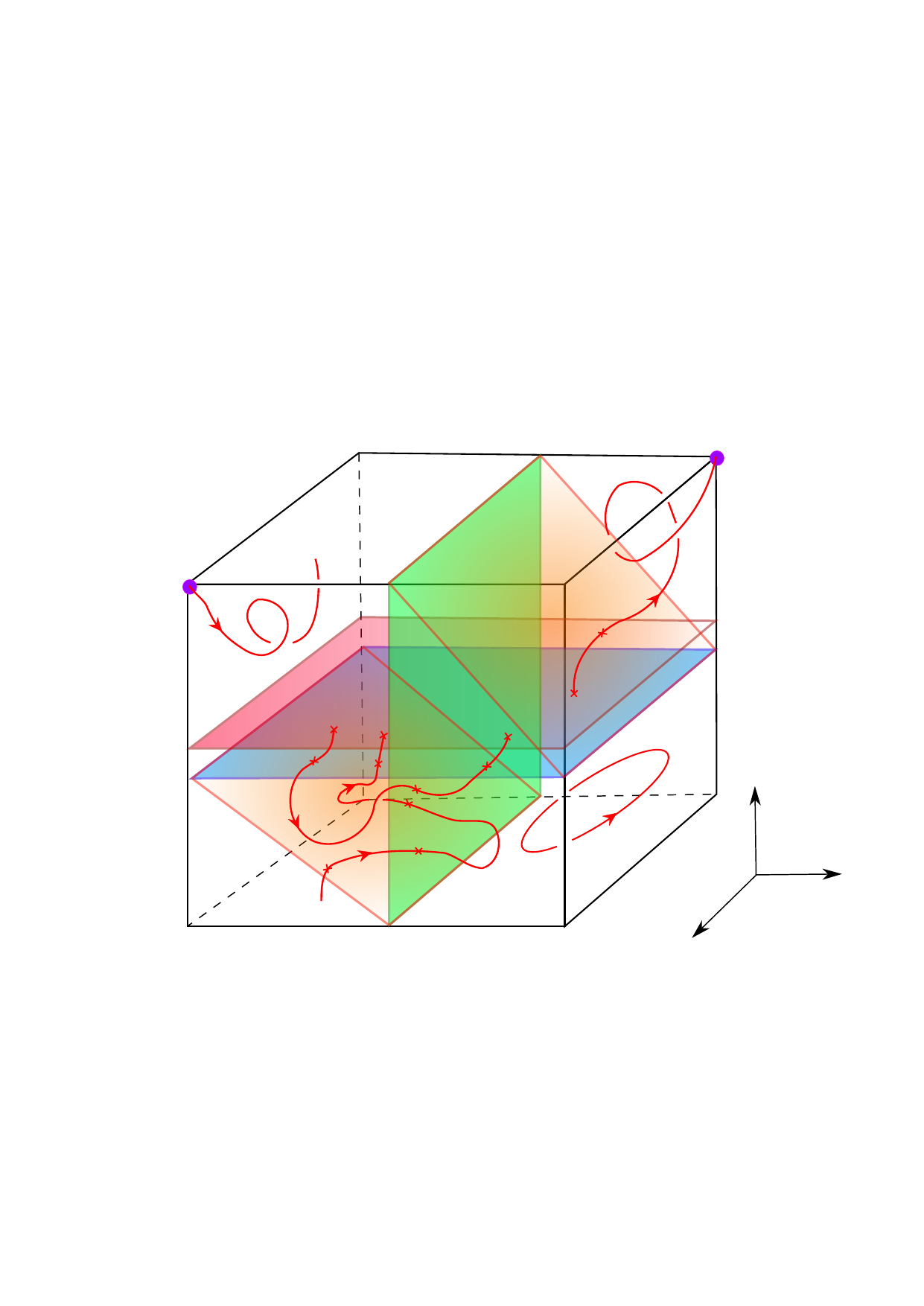}}
\put(-35,150){$\theta$}
\put(205,200){$\theta$}
\put(-40,85){$P_M$}
\put(-40,70){$P_0$}
\put(10, 20){$P_{1, -}$}
\put(130, 170){$P_{1,+}$}
\put(220, 60){$y$}
\put(240, 20){$x$}
\put(200, 0){$z$}
\put(60, -5){$P$}
\end{picture}
	\caption{Surgery Fundamental Cube}
\end{figure}

Let $(s_t, w_t)$ be a local spectral curve through a spectral point $(s_0, w_0)$ with $t \in (-\epsilon, \epsilon)$. We write $\ln w_t = u(t) + iv(t) \in \C$. Then the sign for counting periodic spectral flow at $A(0)=A_{s_0}- \ln w_0 \cdot df$ is the sign of $\dot{u}(0)$. On the other hand, inside $C(\Y)$ the plane $P$ is oriented by the ordered basis $(\partial_y, \partial_z)$. By requiring $\phi_1^*$ and $\phi_0^*$ be orientation-preserving, we conclude that $P_M$ is oriented by the ordered basis $(-\partial_x, \partial_z)$ and $P_{1, +}$ oriented by $(-\partial_x+ \partial y, \partial_z)$. Let $\bar{\partial}_+(A(0))=p(0)=(x(0), y_{\beta}, z(0)) \in \Sigma_+$ oriented by the boundary orientation of $R$, which is $-1$ if $R$ leaves $p(0)$ and $+1$ if $R$ enters $p(0)$. Both the orientations of $p(0)$ and $R$ are obtained from their preimage in $\M_1$ under the asymptotic map $\bar{\partial}_+$. The argument in \cite[Corollary 8.5]{MRS} asserts that the sign coincides with the sign of $\dot{u}(0)$. Thus as $P_{1,+}(s)$ passes through $p(0)$, the counting 
\[
\# R \cap P_{1, +}(s_0-\epsilon) - \# R \cap P_{1, +}(s_0+ \epsilon) =\sign \dot{u}(0).
\]
Thus we conclude that 
\begin{equation}
\begin{split}
\# \M^*_{g_1, \beta}(X_1, \s_1) & - \sum_{\s_0} \# \M^*_{g_0, \beta}(X_0, \s_0) - \# \M^*_{g, \beta}(X, \s) \\
&=\# R \cap P_1 -\# R \cap P_0 - \# R \cap P \\
&=(\# R \cap P_{1,-} -\# R \cap P_{0, -} - \# R \cap P_-) \\
&+(\# R \cap P_{1,+} -\# R \cap P_{0, +} - \# R \cap P_+) \\
&=I(0) = \#\Sigma_+=\Sf D^+_{A_s}(M_{\infty}, \beta). 
\end{split}
\end{equation}
\end{proof}

\subsubsection{Comparing Index Correction}
Now we complete the second half of the argument, which consists of a series of applications of the excision principle. Since $X$ and $X_1$ have the same homology as $S^1 \times S^3$, we need to compare the difference of their index correction terms. 

\begin{prop}\label{IC}
Let $D^+_{A_s}(M_{\infty}, \beta)$ be the path of Dirac operators as in Proposition \ref{IM}. Then 
\begin{equation}
\omega(X_1, g_1, \beta_1) - \omega(X, g, \beta) = \Sf D^+_{A_s}(M_{\infty}, \beta).
\end{equation}
\end{prop}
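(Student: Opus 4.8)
The plan is to express both correction terms $\omega(X_1,g_1,\beta_1)$ and $\omega(X,g,\beta)$ as indices of twisted Dirac operators over end-periodic manifolds built from the same building blocks, and then to extract their difference as a periodic spectral flow. Recall that $\omega(X,g,\beta)=\ind_{\C}D^+_\beta(Z_+(X),g)+\sigma(Z)/8$ where $Z_+(X)=Z\cup W_0\cup W_1\cup\cdots$ is built from a spin $4$-manifold $Z$ with $\partial Z=Y$ and copies of the cobordism $W:Y\to Y$ obtained by cutting $X$ along $Y$. The essential point is that cutting $X$ and cutting $X_1$ along (isotopic pushoffs of) the \emph{same} hypersurface $Y$ produces cobordisms $W$ and $W_1$ that agree outside a neighborhood of the knot $K=\mathcal{T}\cap Y$; more precisely, the whole difference between $X$ and $X_1$ is localized in $N=D^2\times T^2$, so one can arrange $Z$ to be common to both and the periodic ends $W_+$ and $W_{1,+}$ to differ only along the periodic submanifold obtained from $N$. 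Since $\sigma(Z)$ is the same in both cases, the $\sigma(Z)/8$ contributions cancel, and we are reduced to computing $\ind_{\C}D^+_{\beta_1}(Z_+(X_1),g_1)-\ind_{\C}D^+_{\beta}(Z_+(X),g)$.

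First I would set up the neck-stretch of Section \ref{nssu} along $\mathcal{Y}_{\mathcal{T}}=\partial N\cong T^3$ inside each of $Z_+(X)$ and $Z_+(X_1)$, inserting long cylinders $[-T,T]\times T^3$ in every period. This exhibits the two end-periodic operators as glued from (i) a common piece — the part of $Z_+$ lying over $M$, with its periodic end — and (ii) the periodic family of copies of $N_\infty$, glued in via $\varphi_1$ versus the trivial gluing. To isolate the effect of the different gluing I would apply the end-periodic excision principle (Theorem \ref{EP}): the hypotheses there require invertibility of the relevant operators over the "furled" limit manifolds $X_{i,\infty}$, which here follows from the admissibility discussion preceding this proposition (the metric on $N=D^2\times T^2$ has nonnegative, somewhere positive scalar curvature, forcing the spin Dirac operator over $N_\infty$ to be invertible, and $D_B(T^3,h)$ is invertible by Lemma \ref{2.5}). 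Excision then lets me replace each $D^+_\beta(Z_+(X_i),g_i)$, up to an index-preserving operation, by an operator on a periodic manifold whose end is furled to a \emph{closed} $4$-manifold $X_i$ itself — i.e., I recover that $\ind_{\C}D^+_\beta(Z_+(X_i),g_i)$ is governed, modulo the common $Z$-contribution, by the spectral data of the periodic operator $D^+_{A,z}(X_i)$ of Section \ref{Peri}.

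The decisive step is then to connect the two end-periodic operators by a path. On $M_\infty$ we have the path of reducible connections $A_s$ from Proposition \ref{IM}, with $\bar\partial_+([A_s])=[\mathfrak{B}_s]$ tracing the segment $(s,y_\beta,0)$ in $\chi(\mathcal{Y}_{\mathcal{T}})$; the two endpoints $A_0$ and $A_1$ are precisely the reducible connections whose $T^3$-restrictions are identified, under $\varphi_1$ versus the identity, with the spin connections pulled back from $X$ and from $X_1$ (using $[\mathfrak{B}_{\mathfrak{s}}]=[0,0,0]$ and $[\mathfrak{B}_{\mathfrak{s}_1}]=[1,0,0]$, compare the coordinate computation with $\varphi_1^*$ in Section \ref{SAT}). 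Lifting $A_s$ to the periodic end $W_+$ and extending over $Z$ gives a path of end-periodic Dirac operators interpolating between (operators index-equivalent to) $D^+_\beta(Z_+(X),g)$ and $D^+_{\beta_1}(Z_+(X_1),g_1)$. The change of the index of an end-periodic Dirac operator along a path of connections is, by Taubes' Fredholm criterion (Lemma \ref{fh5.3}) together with the definition of periodic spectral flow in Section \ref{Peri}, exactly $\Sf D^+_{A_s}(M_\infty,\beta)$ — the periodic spectral flow counts, with sign, the $z$ on the unit circle where the furled operator $D^+_{A_s,z}$ degenerates, which is precisely where Fredholmness (hence a jump of index) occurs. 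Carefully matching orientations here — so that the sign of $\dot u(0)$ governing the periodic spectral flow agrees with the sign by which the index jumps, via \cite[Corollary 8.5]{MRS} — yields the claimed identity.

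The main obstacle I anticipate is bookkeeping the excision reductions and the compatibility of the periodic-end stretching with Taubes' Fredholm criterion: one must check that the cut-and-reglue used to pass between $Z_+(X)$, $Z_+(X_1)$ and the model periodic manifolds over $M$ genuinely satisfies the "excisable, codimension-$1$, spin$^c$" hypotheses of Theorem \ref{EP} uniformly in the period, and that the invertibility assumptions on the furled limit operators $X_{i,\infty}$ hold for \emph{all} connections $A_s$ along the path, not just the endpoints — this is where the positive-scalar-curvature choice on $N$ and the admissibility of $g$ do the real work. Verifying the orientation/sign compatibility (that the periodic spectral flow and the index jump carry the same sign) is the other delicate point, but it is a direct adaptation of the argument already cited from \cite{MRS}.
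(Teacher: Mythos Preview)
Your proposal has a genuine gap at its starting point. You assert that ``one can arrange $Z$ to be common to both'' $X$ and $X_1$, and that the periodic ends $W_+$ and $W_{1,+}$ differ only along a periodic copy of $N$. But the generating hypersurfaces are \emph{different} $3$-manifolds: the hypersurface $Y_1\subset X_1$ dual to $1_{X_1}$ is obtained from $Y$ by performing $1$-surgery along $K=\mathcal{T}\cap Y$ (with the framing induced from $\mathcal{T}$). Consequently the cobordisms $W:Y\to Y$ and $W_1:Y_1\to Y_1$ have non-diffeomorphic boundary, and no single spin $4$-manifold $Z$ can be attached to both periodic ends. This invalidates the claimed cancellation of the $\sigma(Z)/8$ terms and, more seriously, leaves you with no common end-periodic manifold on which to run a path of connections and read off an index jump as periodic spectral flow.

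The paper resolves this by routing through $X_0$. A sequence of excisions (removing a $D^4$ meeting the bounding disk of $K$, excising the periodic $N$-region via $\tilde{\varphi}_0$, and filling the resulting $S^1\times S^2$ end) shows $\ind D^+_{A_{\s},\beta}(Z_+) = \ind D^+_{A_{\s_0},\beta}(Z_{0,+})$, where $Z_{0,+}$ has periodic end modeled on $X_0$; an identical chain from the $X_1$ side gives $\ind D^+_{A_{\s_1},\beta}(Z^1_+) = \ind D^+_{A_{\s_{1,0}},\beta}(Z^1_{0,+})$. Now both $Z_{0,+}$ and $Z^1_{0,+}$ have the \emph{same} furled manifold $X_0$, so a further excision swaps the compact pieces $Z_0$, $Z^1_0$ at the cost of $(\sigma(Z)-\sigma(Z^1))/8$, and \cite[Theorem 7.3]{MRS} identifies the remaining index difference as a periodic spectral flow $\Sf D^+_{A^0_s,\beta}(X_0)$ along a path of flat connections on $X_0$ joining the two spin connections. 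Finally---and this is a step your outline also misses---one must invoke the splitting formula (Theorem \ref{sf4.14}) with the positive-scalar-curvature metric on $N$ to reduce $\Sf D^+_{A^0_s,\beta}(X_0)$ to $\Sf D^+_{A_s}(M_\infty,\beta)$. Your sketch jumps directly to a spectral flow on $M_\infty$ without a closed furled manifold to mediate, and without the splitting theorem that justifies dropping the $N$-side.
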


\begin{proof}
Recall we have chosen a hypersurface $Y_X \subset X$ representing Poincaré dual of the generator $1_X \in H^1(X; \Z)$ so that $Y_X \cap \mathcal{T}=K$ is an embedded knot in $Y_X$. The preferred framing of $\mathcal{T}$ induces a framing on $K \subset Y$. We take a spin $4$-manifold $(Z, \s)$ with boundary $(Y_X, \mathfrak{t})$ compatible with the spin structure on the periodic end $(W_+, \s)$. We can require that $K$ bounds a disk $D_K \subset Z$. For example one first attaches a $2$-handle $H_K$ with $0$-framing along $K$ on $I \times Y_X$, then close the resulting spin boundary with a spin $4$-manifold.

Let's form the spin manifold $(Z_+, \s)$, and denote by $A_{\s}$ the spin connection of the spin structure $\s$ on $Z_+$. Now we remove a small $4$-ball $D^4 \subset Z$ centered at the center of $D_K$. Let's consider two sets of excisable data
\[
(Z_+=D^4 \cup Z_+ \backslash D^4, D^+_{A_{\s}}) \text{ and } (\bar{D}^4_-=(-\infty, 0] \times S^3 \cup \bar{D}^4, D^+_{A_{\mathfrak{d}}}),
\]
where $\bar{D}^4$ is the oriention-reversed $4$-ball, and $\mathfrak{d}$ is the spin connection on $\bar{D}^4$. Writing $Z^c_+=(-\infty, 0] \times S^3 \cup Z_+ \backslash D^4$ with induced spin structure $\s^c$ we get 
\begin{equation}
\ind D^+_{A_{\s},\beta}(Z_+)+ \ind D^+_{A_{\mathfrak{d}}}(\overline{D}^4_-) = \ind D^+_{A_{\s^c}, \beta}(Z^c_+) + \ind D^+ (S^4).
\end{equation}
Due to positivity of scalar curvature on $S^4$ and $\bar{D}^4_-$ we conclude 
\begin{equation}\label{71}
\ind D^+_{A_{\s},\beta}(Z_+)= \ind D^+_{A_{\s^c}, \beta}(Z^c_+).
\end{equation}

Let $\pi: W_+ \to X$ be the covering projection. We write $D^c_K=D_K \backslash D^4$ for the punctured disk. Let $N_K$ be a tubular neighborhood of $(-\infty, 0] \times S^1 \cup D^c_K$, and $V_K=N_K \cup \pi^{-1}(N) \subset Z^c_+$. In this way $Z^c_+$ is decomposed into two pieces
\[
Z^c_+ := U_K \cup V_K.
\]
Note that the diffeomorphism $\phi_0 : \partial N \to \partial M$ lifts and extends to a diffeomorphism $\tilde{\phi}_0: \partial V_K \to \partial U_K$. Then we can form another spin$^c$ manifold 
\[
Z^c_{0, +}= U_K \cup_{\tilde{\phi}_0} V_K
\]
with one periodic end $W_{0, +}$ lifting $X_0$ and one cylindrical end $S'=(-\infty, 0] \times S^1 \times S^2$. We denote the spin$^c$ structure on $Z^c_{0, +}$ by $\s^c_0$ that extends the one induced from $\s_0$ on the periodic end $W_{0, +} \to X_0$. 
Now form another end-periodic manifold 
\[
\tilde{V}_K = -V_K \cup_{\tilde{\phi}_0} V_K,
\]
which inherits a spin$^c$ structure $\tilde{\s}$ induced from the spin structure on $V_K$. We denote by $\tilde{A}_{\s}$ the spin connection $(\tilde{V}_K, \tilde{\s})$. Then we apply the excision principle Theorem \ref{EP} over the two sets of periodic excisable data 
\[
(Z^c_+= U_K \cup V_K, D^+_{A_{\s}}) \text{ and } (\tilde{V}_K=-V_K \cup_{\tilde{\phi}_0} V_K, D^+_{\tilde{A}_{\s}})
\]
to get 
\begin{equation}
\ind D^+_{A_{\s^c}, \beta}(Z^c_+) + \ind D^+_{\tilde{A}_{\s}}(\tilde{V}_K) = \ind D^+_{A_{\s^c_0}, \beta}(Z^c_{0, +}) + \ind D^+_{\tilde{A}'_{\s}}(\tilde{V}'_K),
\end{equation}
where $\tilde{A}'_{\s}$ the induced connection on $\tilde{V}'_K= -V_{K} \cup_{\id} V_K$. Note that both $\tilde{V}_K$ and $\tilde{V}'_K$ admit orientation-reversing diffeomorphisms, thus the indices of corresponding Dirac operators vanish. Thus we get 
\begin{equation}\label{72}
\ind D^+_{A_{\s^c}, \beta}(Z^c_+) = \ind D^+_{A_{\s^c_0}, \beta}(Z^c_{0, +}). 
\end{equation}

Let's write  
\[
S=S^1 \times S^3=S^1 \times D^3 \cup \overline{S^1 \times D^3}
\]
with the unique spin$^c$ structure induced from either of the two spin structures. Let $Z^c_{0,+}$ be the Applying the excision principle on two sets of excisable data
\[
(Z^c_{0, +}=S' \cup Z^c_{0, +} \backslash S', D^+_{A_{\s^c_0}} ) \text{ and } (S= S^1 \times D^3 \cup \overline{S^1 \times D^3}, D^+_{A_{\mathfrak{d}_S}})
\]
gives us 
\begin{equation}
\ind D^+_{A_{\s^c_0}, \beta}(Z^c_{0, +}) +\ind D^+(S) = \ind D^+_{A_{\s_0}, \beta}(Z_{0, +})+ \ind D^+_{A_{\mathfrak{d}_S}}(S_-),
\end{equation}
where $Z_{0, +}=S^1 \times D^3 \cup Z^c_{0, +}\backslash S'$, $\s_0$ is induced from the spin structure that extends $\s^c_0|_{Z^c_{0, +}\backslash S'}$, and $S_-=(-\infty, 0] \times S^1 \times S^2 \cup \overline{S^1 \times D^3}$. Again the positivity of scalar curvature on $S$ and $S_-$ implies that 
\begin{equation}\label{73}
\ind D^+_{A_{\s^c_0}, \beta}(Z^c_{0, +})= \ind D^+_{A_{\s_0}, \beta}(Z_{0, +}). 
\end{equation}
It follows from (\ref{71}), (\ref{72}), and (\ref{73}) that 
\begin{equation}\label{74}
\ind D^+_{A_{\s}, \beta}(Z_+)= \ind D^+_{A_{\s_0}, \beta}(Z_{0, +}).
\end{equation}

Note that performing $1$-surgery on $Y_X$ along $K$ gives us the hypersurface representing Poincaré dual of the generator $1_{X_1} \in H^1(X_1 ;\Z)$. Let $K_1 \subset Y_{X, 1}$ be the image of a linking circle of $K$ under the gluing map $\phi_1$. To distinguish with $N$, we denote by $N_1$ the image of $N$ in $X_1$ after the gluing process. The complements of $N \subset X$ and $N_1 \subset X_1$ are still identified as $M$. Then we run the argument in the paragraphs above again to $(X_1, \s_1)$ instead of $(X, \s)$, i.e. take a spin manifold $Z^1$ with boundary $Y_{X, 1}$ inside which there exists a disk bounded by $K_1$, then we form $Z^1_+=Z^1 \cup W_{1, +}$, $Z^1_{0, +}=Z^1_0 \cup W_{0, +}$ etc. together with the spin structures to apply the excision argument to get 
\begin{equation}
\ind D^+_{A_{\s_1}, \beta}(Z^1_+) = \ind D^+_{A_{\s_1, 0}, \beta} (Z^1_{0, +}). 
\end{equation}

Now we apply the usual excision principle to the two sets of excisable date 
\[
(Z_{0, +} =  Z_0 \cup W_{0, +}, D^+_{A_{\s_0}, \beta}(Z_{0, +})) \text{ and } (\tilde{Z}^1_0 = Z^1_0 \cup -Z^1_0, D^+_{\tilde{A}_{\s, 1}, \beta}(\tilde{Z}^1_0)), 
\]
where $\tilde{A}_{\s, 1}$ is a connection on $\tilde{Z}^1_0$ obtained by doubling the connection on $Z^1_0$. The resulting formula is 
\begin{equation}
 \ind D^+_{A_{\s_0}, \beta}(Z_{0, +}) + \ind D^+_{\tilde{A}_{\s, 1}, \beta}(\tilde{Z}^1_0) = \ind D^+_{\tilde{A}'_{\s,1}, \beta}(\tilde{Z}_0^{01})+ \ind D^+_{A_{\s, 1}, \beta}(Z^1_{0, +}).
\end{equation}
where $\tilde{Z}^{01}_0=Z_0 \cup -Z^1_0$, and $\tilde{A}'_{\s,1}$ is the corresponding glued connection. Since $\tilde{Z}^1_0$ admits an orientation reversing diffeomorphism and $\beta$ is small, we conclude that 
\begin{equation}
\ind D^+_{\tilde{A}_{\s, 1}, \beta}(\tilde{Z}^{1}_0) =0.
\end{equation}
By the Atiyah-Singer index theorem and another application of the excision principle (for example \cite[Proposition 3.2]{MRS}) we know that 
\begin{equation}
\ind D^+_{\tilde{A}’_{\s, 1}, \beta}(\tilde{Z}^{01}_0)) = {1 \over 8}(\sigma(Z) - \sigma(Z^1)).
\end{equation}
\cite[Theorem 7.3]{MRS} implies that 
\begin{equation}
\ind D^+_{A_{\s_1, 0}, \beta} (Z^1_{0, +}) - \ind D^+_{A_{\s, 1}, \beta}(Z^1_{0, +})= \Sf D^+_{A^0_s, \beta}(X_0),
\end{equation}
where $A^0_s$ is a path of flat connections from $A_{\s, 1}|_{X_0}$ to $A_{\s_1, 0}|_{X_0}$. By construction $A_{\s, 1}|_M$ comes from the spin connection on $X$, $A_{\s_1, 0}|_M$ comes from the spin connection on $X_1$. Since they are both flat and cylindrical in the neighborhood of $\Y=\partial M \subset X_0$, their restrictions extend to flat connections over $M_{\infty}$. Moreover they are the two end points of the path $A_s$ as in Proposition \ref{IM}. Thus we may take $A^0_s|_M=A_s$. Since $N = D^2 \times T^2$ admits metric of nonnegative and somewhere positive scalar curvature, Theorem \ref{4.14} tells us that 
\begin{equation}\label{75}
\Sf D^+_{A^0_s, \beta}(X_0) = \Sf D^+_{A_s}(M_{\infty}, \beta). 
\end{equation}
Combining (\ref{74}) and (\ref{75}) we conclude that 
\begin{equation}
(\ind D^+_{A_{\s_1}, \beta}(Z^1_+) + {1 \over 8}\sigma(Z^1)) - (\ind D^+_{A_{\s}, \beta}(Z_+) +{1 \over 8} \sigma(Z)) = \Sf D^+_{A_s}(M_{\infty}, \beta).
\end{equation}
\end{proof}

\section{Applications}\label{app}
In this section we prove Proposition \ref{lmp} and Corollary \ref{6bc} in the introduction. Recall that $K \subset Y$ is a knot in an integral homology sphere, and $Y^n$ is the $n$-fold branched cover of $Y$ along $K$. 

\begin{lem}\label{letk}
Let $K^n$ be the preimage of $K$ in $Y^n$ under the covering projection $Y^n \to Y$. Then the restricted covering transformation $\tau_n: Y^n \backslash \nu(K^n) \to Y^n \backslash \nu(K^n)$ extends to a free element in the mapping class group of order $n$
\[
\tau'_n: Y^n_1(K^n) \longrightarrow Y^n_1(K^n),
\]
where $Y^n_1(K^n)$ is obtained by performing $1$-surgery along $K^n$. Moreover the quotient of $Y^n_1(K^n)$ under $\tau'_n$ is identified with $Y_n(K)$ the result of performing $n$-sugery of $Y$ along $K$. 
\end{lem}

\begin{proof}
Let's parametrize the boundary $-\partial Y^n \backslash \nu(K^n)=\partial \nu(K^n)$ with $(e^{i \theta}, e^{i\varphi})$, $\theta, \varphi \in [0, 2\pi)$, so that the covering transformation $\tau$ restricted on the boundary is given by 
\[
\tau_n: (e^{i \theta}, e^{i\varphi}) \longmapsto (e^{i (\theta +{ 2\pi \over n})}, e^{i\varphi})
\]
Denote by $\phi_1: \partial D^2 \times S^1 \to \partial Y^n \backslash \nu(K^n)$ for the map defining $1$-surgery along $K^n$. Then 
\[
\phi^{-1}_1 \circ \tau_n  \circ \phi_1: (e^{i \theta}, e^{i\varphi}) \longmapsto (e^{i (\theta +{ 2\pi \over n})}, e^{i(\varphi-{2\pi \over n})}),
\]
which extends as a map $\tau'_n$ on $Y^n_1(K^n)=Y^n \backslash \nu(K^n) \cup_{\phi_1} D^2 \times S^1$ so that 
\[
\tau'_n|_{D^2 \times S^1}: (re^{i \theta}, e^{i\varphi}) \longmapsto (re^{i (\theta +{ 2\pi \over n})}, e^{i(\varphi-{2\pi \over n})}), \; \; r\in [0, 1].
\]
Thus $\tau'_n$ is free of order $n$. 

To prove the second statement, we identify the quotient of $D^2 \times S^1$ under $\tau_n'$ with $D^2 \times S^1$ by 
\[
[re^{i\theta}, e^{i \varphi}] \longmapsto (re^{i(\theta + \varphi)}, e^{in\varphi}),
\]
and the quotient of $\partial Y^n \backslash \nu(K^n)$ under $\tau_n$ with $S^1 \times S^1$ by 
\[
[e^{i\theta}, e^{i \varphi}] \longmapsto (e^{in\theta}, e^{i\varphi}). 
\]
Then $\partial D^2 \times S^1 / \sim_{\tau'_n}$ is attached to $\partial Y^n\backslash \nu(K^n)$ by the map
\[
(e^{i\theta}, e^{i \varphi}) \longmapsto (e^{in \theta}, e^{i (\varphi+\theta)}),
\]
which is the gluing map for performing $n$-surgery. 
\end{proof}

\begin{proof}[Proof of Proposition \ref{lmp}]
Let $X^n$ be the mapping torus of $Y^n$ under the map $\tau_n$. Denote by $\mathcal{T}$ the torus embedded in $X^n$ given by the knot $K^n$ fixed by $\tau_n$. Then performing $(1,1)$-surgery of $X^n$ along $\mathcal{T}$ gives us the mapping torus $X'$ of $Y^n_1(K^n)$ under the free self-diffeomorphism $\tau'_n$. By the computation in \cite[Section 7]{RS1}, we know that 
\begin{equation}\label{fmt}
\lambda_{SW}(X')=-n \lambda(Y) - {1 \over 8}\sum_{m=0}^{n-1} \sign^{m/n}(K)- {1 \over 2}\Delta''_{K \subset Y}(1). 
\end{equation}
We remark that in \cite{RS1}, (\ref{fmt}) is originally derived under the assumption that $Y^n_1(K^n)$ is a rational homology sphere. However in the derivation, they only used the fact that the quotient $Y_n(K)$ of $Y^n_1(K^n)$ under $\tau_n'$ is a homology lens space. Thus the surgery formula (\ref{LAT}) tells us that 
\[
\lambda(X^n)+\mathcal{SW}(X^n_{0,1}) =- n \lambda(Y) - {1 \over 8}\sum_{m=0}^{n-1} \sign^{m/n}(K)- {1 \over 2}\Delta''_{K \subset Y}(1),
\]
where $X^n_{0,1}$ is obtained by performing $(0, 1)$-surgery of $X^n$ along $\mathcal{T}$. The same argument as in Lemma \ref{letk} shows that $X^n_{0,1}$ is the mapping torus 
\[
[0, 1] \times Y^n_0(K) / (0, \tau^0_n(y) ) \sim (1, y)
\]
of $Y^n_0(K^n)$ under a free self-diffeomorphism $\tau^0_n$ of order $n$ induced by $\tau_n$. Moreover there is a free circle action on $X^n_{0, 1}$ given by 
\[
e^{i2\pi s} \cdot [t, y] = [t+ns, y], \; \; s \in [0, 1],
\]
with the quotient identified with $Y_0(K)$. We claim the circle bundle $X^n_{0, 1} \to Y_0(K)$ is trivial. Indeed consider the Gysin sequence associated this bundle:
\[
H^0(Y_0(K);\Z) \xrightarrow{\smile e} H^2(Y_0(K); \Z) \xrightarrow{\pi^*} H^2(X^n_{0, 1}; \Z) \rightarrow H^1(Y_0(K); \Z),
\]
where $e \in H^2(Y_0(K) ;\Z)$ is the Euler class, $\pi: X^n_{0, 1} \to Y_0(K)$ is the projection map. This sequences reads as 
\[
\Z \longrightarrow \Z \longrightarrow \Z \oplus \Z \longrightarrow \Z. 
\]
Exactness forces the first map to be zero, thus the Euler class to be zero. Now $X^n_{0,1}$ is the product $S^1 \times Y_0(K)$. According the result of Meng-Taubes \cite{MT} and Baldridge \cite{B} that 
\[
\mathcal{SW}(S^1 \times Y_0(K)) = - \mathcal{SW}(Y_0(K)) = -{1 \over 2}\Delta''_{K \subset Y}(1),
\]
we conclude that 
\[
\lambda(X^n) = -n \lambda(Y) - {1 \over 8}\sum_{m=0}^{n-1} \sign^{m/n}(K).
\]
\end{proof}

\begin{proof}[Proof of Corollary \ref{6bc}]
Let $K$ be either of the trefoil or the figure-eight knot. 
(\ref{lmp1}) tells us that 
\begin{equation}\label{6bc2}
\lambda_{SW}(X^6(K)) = -{1 \over 8} \sum_{m=0}^5 \sign^{m/6}(K).
\end{equation}
The Tristram-Levine signature $\sign^{m/6}(K)$ is computed as follows. Let $V$ be the Seifert matrix of $K$, $B(t)=(1-t)V +(1-t^{-1})V^t$. Then 
\[
\sign^{m/6}(K) = \sigma (B(e^{i\pi m \over 3})), 
\]
i.e. the signature of the matrix $B(e^{i\pi m \over 3})$. The Seifert matrices of the right-handed trefoil, left-handed trefoil, and figure-eight knot are given respectively by 
\[
V_1=
\begin{pmatrix}
-1 & 1 \\
0 & -1 
\end{pmatrix}, \;
V_2=
\begin{pmatrix}
1 & -1 \\
0 & 1 
\end{pmatrix}, \;
V_3=
\begin{pmatrix}
-1 & 0 \\
1 & 1 
\end{pmatrix}.
\]
Then the corresponding Tristram-Levine signature is given by 
\[
\sigma (B_1(e^{i\pi m \over 3})) =\left \{ \begin{array}{ll}
0 & \mbox{$m=0$} \\
-1 & \mbox{$m=1, 5$} \\
-2 & \mbox{$m=2, 3, 4$}
  	\end{array}
\right.
\]
\[
\sigma (B_2(e^{i\pi m \over 3})) =\left \{ \begin{array}{ll}
0 & \mbox{$m=0$} \\
1 & \mbox{$m=1,5$} \\
2 & \mbox{$m=2,3,4$}
  	\end{array}
\right.
\]
\[
\sigma (B_3(e^{i\pi m \over 3})) = 0, \; m=0, 1, 2, 3, 4, 5.
\]
Then we get the desired result from (\ref{6bc2}).
\end{proof}

\appendix
\section{The Gluing Theorem}\label{TGT}

In the proof of Proposition \ref{IM} we made use of the gluing theorem to identify the moduli space $\# \M^*_{g, \beta}(X, \s)$ of irreducible monopoles on $(X, \s)$ with the fiber product of the  moduli spaces $\M^*_{g, \beta}(M_{\infty}, \s_M)$ and $\M^{\Red}_g(N_{\infty}, \s_N)$, which gives us the count (\ref{count1}) and (\ref{count2}). Two issues might arise in our set-up, which are absent in the standard gluing results (e.g. \cite{KM1}, \cite{MM}, \cite{T2} etc.). One is that as we are running the neck stretching argument over an integral homology $S^1 \times S^3$ there might exist a sequence of irreducible monopoles converging to a reducible one in the limit. The other issue is that over $T^3$ there is a singular point $\theta$ in the critical sets of the Chern-Simons-Dirac functional. Despite the intersection of images of the asymptotic maps on both sides $M_{\infty}$ and $N_{\infty}$ misses $\theta$, we still need to make sure there are no broken flowlines which could potentially flow to the singular point with no energy lost so that we cannnot exlude simply by an energy argument. Since the local gluing at a Morse-Bott critical points is well documented in the literature (for instance see \cite{MM} for the Yang-Mills case and \cite[Chapter 2.5]{Lin} for the Seiberg-Witten case), the identification of $\# \M^*_{g, \beta}(X, \s)$ with the fiber product is standard once those two issues are resolved. The purpose of this appendix is to supply such an argument. 

Let $X=M \cup N$ be a decomposition of an integral homology $S^1 \times S^3$  with $\partial M=-\partial N=Y$. We assume $N=D^2 \times T^2$ is given by a tubular neighborhood of an embedded torus as in the surgery construction, thus $Y=T^3$. Let $h$ be a flat metric on $Y$, and $g|_N$ a metric with nonnegative (positive somewhere in its interior) scalar curvature. Recall the neck-stretching set-up in Section \ref{nssu}, we consider the following notion of convergence for monopoles.
\begin{dfn} \label{3.1}
Let $\{T_n\}$ be an increasing sequence of positive numbers with $T_n \to \infty$. We say a sequence of monopoles $\{[\Gamma_n]\} $ in $\M(X_{T_n})$ converges to $([\Gamma_o], [\Gamma'_o]) \in \M(M_{\infty}, [\tilde{\mathfrak{b}}]) \times \M([\tilde{\mathfrak{b}}], N_{\infty})$ if the following two conditions holds.
\begin{enumerate}
\item[\upshape (i)] There exist gauge transformations $u_n: X_{T_n} \to S^1$ such that 
\[
u_n \cdot \Gamma_n|_{M_{T_n}} \xrightarrow{L^2_{k, loc}} \Gamma_o, \; u_n \cdot \Gamma_n|_{N_{T_n}} \xrightarrow{L^2_{k, loc}} \Gamma'_o. 
\]
\item[\upshape (ii)] For any $\epsilon >0$, there exist $T_o>0$ and $N>0$ such that for all $n>N$, one has $T_n >T_o$. Moreover one can find gauge transformations $u_n: I_{T_n-T_o}  \to S^1$ satisfying 
\[
\| u_n \cdot \Gamma_n|_{I_{T_n-T_o}} -\Gamma_{\mathfrak{b}} \|_{L^2_{k}(I_{T_n-T_o})} < \epsilon, 
\]
where $\Gamma_{\mathfrak{b}}$ is the constant trajectory on $I_{T_n-T_o}$. 
\end{enumerate}
\end{dfn}

The neck-stretching theorem we are going to prove in this section states as follows.

\begin{thm} \label{G1}
With the notations above and given the following two assumptions: 
\begin{enumerate}
\item[\upshape (i)] The metric $g$ on $X$ is admissible in the sense of Definition \ref{dfad}. 
\item[\upshape (ii)] The singular points $\theta \notin \bar{\partial}_+(\M^*(M_{\infty})) \cap \bar{\partial}_-(\M^{\Red}(N_{\infty}))$. 
\end{enumerate}
Then for any sequence of irreducible monopoles $[\Gamma_n] \in \M^*(X_{T_n})$ there exists a monopole $\mathfrak{b}$ with $[\mathfrak{b}] \neq \theta$, and $([\Gamma_o], [\Gamma'_o]) \in \M^*(M_{\infty}, [\tilde{\mathfrak{b}}]) \times \M^{\Red}([\tilde{\mathfrak{b}}], N_{\infty})$ such that, after possibly passing to a subsequence, $[\Gamma_n]$ converges to $([\Gamma_o], [\Gamma'_o])$ in the sense of Definition \ref{3.1}.
\end{thm}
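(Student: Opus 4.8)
The plan is to establish the neck-stretching compactness theorem \ref{G1} in three stages: first a uniform energy bound, then extraction of geometric limits over $M_{\infty}$ and $N_{\infty}$, and finally control of the neck region to rule out energy escaping to the singular point $\theta$. For the first stage, I would note that since $X$ is a homology $S^1 \times S^3$ and the perturbation $\beta$ is compactly supported in $M$ (away from the stretching region near $\mathcal{Y}_{\mathcal{T}}$), the Chern-Simons-Dirac functional drops along any monopole restricted to the neck by a definite amount proportional to the energy; but globally on $X_{T_n}$ the total energy is controlled by a topological quantity (the relevant analog of $\int_X F_{\A} \wedge F_{\A}$ together with the perturbation term), uniformly in $T_n$. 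This gives $\mathcal{E}_{\beta}(\Gamma_n) \leq C$ independent of $n$. Standard elliptic bootstrapping (as in \cite[Chapter 4]{MMR} or \cite{N}) then produces, after gauge fixing and passing to a subsequence, $L^2_{k, loc}$-limits $\Gamma_o$ on $M_{\infty}$ and $\Gamma'_o$ on $N_{\infty}$, each of finite energy, hence each a genuine finite-energy monopole with a well-defined asymptotic limit in $\chi(\mathcal{Y}_{\mathcal{T}})$.

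The second stage is to show the two asymptotic limits agree, i.e. $\bar{\partial}_+([\Gamma_o]) = \bar{\partial}_-([\Gamma'_o]) = [\mathfrak{b}]$ for some $[\mathfrak{b}] \in \chi(\mathcal{Y}_{\mathcal{T}})$, and that this common value is not $\theta$. The matching follows from condition (ii) in the convergence definition, which itself is a consequence of the uniform energy bound: if the restriction of $\Gamma_n$ to a long middle cylinder $I_{T_n - T_o}$ failed to be $L^2_k$-close to a single constant trajectory, there would be a definite drop of $\mathcal{L}$ over that cylinder, violating the energy bound once $T_n$ is large; so the middle necks converge to a constant flowline at some critical point $\mathfrak{b}$, which is then forced to be the common asymptotic limit of both sides. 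To exclude $\theta$: by assumption (ii) we have $\theta \notin \bar{\partial}_+(\M^*(M_{\infty})) \cap \bar{\partial}_-(\M^{\Red}(N_{\infty}))$, and by Lemma \ref{2.5} (applied on both $M_{\infty}$ and $N_{\infty}$, using that $N = D^2 \times T^2$ has a metric of nonnegative scalar curvature so its only monopoles are reducibles avoiding $\theta$) the singular point is genuinely missed. The subtle point is that a priori $\Gamma_o$ might be reducible even though the $\Gamma_n$ are irreducible, and the limit on $N_{\infty}$ is forced reducible by the positive-scalar-curvature condition on $g|_N$ via the Weitzenböck formula; one must check the limiting pair lands in $\M^*(M_{\infty}, [\tilde{\mathfrak{b}}]) \times \M^{\Red}([\tilde{\mathfrak{b}}], N_{\infty})$, using admissibility (condition (i)) to prevent the $M_{\infty}$-limit from being the trivial reducible — if $\Gamma_o$ were reducible then the neck would carry an approximate element of $\ker D^+$ of the relevant twisted Dirac operator, contradicting Lemma \ref{3.8.8}/Definition \ref{dfad}.

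The hard part, and the main obstacle, is the third stage: ruling out \emph{broken} trajectories, i.e. the possibility that energy concentrates not at $\theta$ itself but along a broken gradient flowline passing through $\theta$ with \emph{zero net energy drop}. Since $\theta$ is a degenerate critical point (the Hessian has a two-dimensional kernel coming from $\ker D_{\Theta} = \C^2$), an energy argument alone does not suffice — a flowline could limit onto $\theta$ and break without losing energy, which is precisely why the center-manifold analysis of Section \ref{2} (in particular the explicit description of $W^s_{\theta}$ and $W^{s,0}_{\theta}$ via equation (\ref{dgr})) is needed. The strategy here is: if a subsequence of the neck restrictions failed to converge to a single constant trajectory, the limiting configuration would contain a nonconstant finite-energy gradient trajectory of $\grad \mathcal{L}$ asymptotic to $\theta$ at one or both ends; but such a trajectory lives (after the center-manifold reduction of \cite[Chapter 5]{MMR}) in $W^s_{\theta}$ or its unstable analog, and its existence would force $\theta \in \bar{\partial}_+(\M^*(M_{\infty}))$ or $\theta \in \bar{\partial}_-(\M^{\Red}(N_{\infty}))$ after concatenating with the outer limits — contradicting assumption (ii) together with Lemma \ref{2.5}. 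Making this last concatenation argument rigorous requires care about the framed/based moduli spaces (as in Proposition \ref{2.12}) to keep track of the $U(1)$-rotation in the neck, and about the fact that the singular point has codimension $6$ in $W^s_{\theta}$ (Proposition \ref{4.8}), so that generic data avoid it; I expect this dimension-counting-plus-center-manifold argument to be where most of the work lies.
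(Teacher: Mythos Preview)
Your Stages~1 and~2 are essentially correct and match the paper's approach: the uniform energy bound from the topological formula on $X_{T_n}$, local compactness yielding limits $\Gamma_o,\Gamma'_o$, reducibility of $\Gamma'_o$ from nonnegative scalar curvature on $N$, and irreducibility of $\Gamma_o$ via admissibility and Lemma~\ref{3.8.8}. But your Stage~3 misidentifies the difficulty, and the center-manifold/dimension-counting machinery you propose is unnecessary for this theorem.

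The observation you are missing is that since $c_1(\mathfrak{t})$ is torsion, $\mathcal{L}$ descends to $\mathcal{B}(T^3)$, and since the critical manifold $\chi(T^3)$ is \emph{connected}, $\mathcal{L}$ takes a single value (normalized to $0$) on \emph{every} critical point, $\theta$ included. Hence any finite-energy monopole on $(-\infty,\infty)\times T^3$ has energy $\mathcal{L}(\mathfrak{b}_-)-\mathcal{L}(\mathfrak{b}_+)=0$ and, using $s\ge 0$ on the flat cylinder, is a constant flowline (Lemma~\ref{3.5}). There is simply no nonconstant trajectory available to break along, whether or not the putative break point is degenerate: each piece of a would-be broken trajectory is a zero-energy monopole on $\R\times T^3$, hence constant. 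Your worry that ``a flowline could limit onto $\theta$ and break without losing energy'' therefore dissolves by a pure energy argument. The degeneracy of $\theta$ is irrelevant for ruling out breaking; it matters only for the exponential-decay step (Lemma~\ref{3.6.6}) needed to upgrade $L^2_{k,loc}$-convergence on the neck to the uniform estimate in Definition~\ref{3.1}(ii), and by that point $[\mathfrak{b}]\neq\theta$ is already in hand from the $N_\infty$ side via Lemma~\ref{2.5} applied to $N=D^2\times T^2$.

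The paper's actual argument for unbroken convergence is a short energy pigeonhole: mark the finitely many unit subintervals of the neck on which $\Gamma_n$ exits a fixed neighborhood $U_{\chi(Y)}$ of the constant trajectories (at most $C/\epsilon_o$ of them, by Lemma~\ref{3.4}), group them into clusters of bounded mutual distance, and observe that any cluster surviving in the limit would force a strictly positive drop of $\mathcal{L}$ between two adjacent limiting critical points that both have value $0$ --- a contradiction. The center-manifold description of $W^s_\theta$, $W^{s,0}_\theta$ from Section~\ref{2} is indeed used in the paper, but for the structure of $\M^*(M_\infty)$ (Theorem~\ref{KUR} and Proposition~\ref{4.8}), not for this compactness theorem. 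Your Stage~2 sketch already contained the essential mechanism; Stage~3 as written is a detour.
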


We recall some well-known compactness results in Seiberg-Witten theory (see \cite[Theorem 5.1.1]{KM1}).

\begin{lem} \label{3.3}
Let $(M, \s, g)$ be a Riemannian spin$^c$ manifold with compatible boundary $(Y, \mathfrak{t}, h)$. Then the following compactness properties hold. 
\begin{enumerate}
\item[\upshape (i)] Let $\Gamma_n \in \mathcal{C}_k(M, \s)$ be a sequence of monopoles satisfying $\mathfrak{F}_{\beta}(\Gamma_n)=0$ with a uniform bound on energy $\mathcal{E}_{\beta}(\Gamma_n) \leq C$. Then there exist gauge transformations $u_n: M \to S^1$ and a smooth monopole $\Gamma_o \in \mathcal{C}(M, \s)$ such that, after passing to a subsequence, $u_n \cdot \Gamma_n$ converges to $\Gamma_o$ in $L^2_k(M')$ for any interior domain $M' \Subset M$. Moreover $\mathcal{E}_{\beta}(\Gamma_o) \leq C$. 
\item[\upshape (ii)] Let $\{T_n\}$ be an increasing sequence of positive numbers with $T_n \to \infty$, and $\Gamma_n \in \mathcal{C}_k(M_{T_n}, \s)$ a sequence of monopoles satisfying $\mathfrak{F}_{\beta_n}(\Gamma_n)=0$, where $\supp \beta_n \in M$, $\beta_n \to \beta$ in $L^2_k(M)$. Suppose there is a uniform bound $\mathcal{E}_{\beta_n}(\Gamma_n) \leq C$. Then there exist gauge transformations $u_n: M_{T_n} \to S^1$ and a smooth monopole $\Gamma_o \in \mathcal{C}(M_{\infty}, \s)$ such that, after passing to a subsequence, $u_n \cdot \Gamma_n$ converges to $\Gamma_o$ in $L^2_{k, loc}(M_{\infty})$. Moreover $\mathcal{E}_{\beta}(\Gamma_o) \leq C$.
\end{enumerate}
\end{lem}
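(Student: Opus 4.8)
The statement is the standard $C^\infty$ (Uhlenbeck-type) compactness theorem for Seiberg--Witten monopoles under an energy bound, in the two flavours needed here: over a fixed compact manifold with boundary, and along a neck-stretching sequence. The plan is to follow the classical argument (as in \cite[Chapter 5]{KM1}), specialized to the $U(1)$ gauge group where the gauge-fixing is particularly clean, in four steps: (1) an a priori pointwise bound on the spinor; (2) an a priori $L^2$ bound on the curvature; (3) global Coulomb gauge-fixing followed by elliptic bootstrapping to uniform $L^2_k$ control on interior domains, and extraction of a convergent subsequence; (4) for part (ii), a diagonal argument over an exhaustion of $M_\infty$.

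For step (1), combine the Dirac equation $D^+_A\Phi=0$ with the Weitzenb\"ock formula $D^-_AD^+_A=\nabla_A^*\nabla_A+\tfrac{s}{4}+\tfrac12\rho_M(F^+_{\A})$ and the curvature equation $F^+_{\A}=4d^+\beta+2\rho_M^{-1}(\Phi\Phi^*)_0$, together with the pointwise identity $(\Phi\Phi^*)_0\Phi=\tfrac12|\Phi|^2\Phi$, to obtain a differential inequality of the schematic form $\Delta|\Phi|^2\ge |\Phi|^2\bigl(|\Phi|^2-C\bigr)$ (for the geometer's Laplacian), where $C$ depends only on $\|s\|_{L^\infty}$ and $\|\beta\|_{L^2_k}$ (on $\|\beta_n\|_{L^2_k}$ in case (ii), which is uniformly bounded since $\beta_n\to\beta$ in $L^2_k$). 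A mean-value estimate for the resulting subsolution $\max(|\Phi|^2-C,0)$, applied on shrunk interior domains with a cutoff, gives a uniform pointwise bound $|\Phi_n|\le C'$ on any $M'\Subset M$ (resp.\ on any fixed compact $K\subset M_\infty$, using that $K$ embeds isometrically in $M_{T_n}$ once $n$ is large). For step (2), the energy bound $\tfrac14\int|F_{\A_n}-4d\beta_n|^2\le\mathcal{E}_{\beta_n}(\Gamma_n)\le C$ together with the $L^2_k$ bound on $\beta_n$ yields a uniform $L^2$ bound on $F_{\A_n}$ over $M'$ (resp.\ $K$); alternatively one feeds the pointwise spinor bound back into the curvature equation for a uniform $L^\infty$ bound on $F^+_{\A_n}$.

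For step (3), fix a smooth reference spin$^c$ connection $A_0$ and write $A_n=A_0+a_n$. Using that the $U(1)$ gauge group is abelian and $d^*d$ is invertible modulo constants, choose $u_n$ putting $a_n$ into Coulomb gauge relative to $A_0$, and normalize the finitely many periods of the harmonic component of $u_n\cdot a_n$ to lie in a bounded fundamental domain for the lattice of periods of integral gauge transformations; the elliptic estimate for $d^*\oplus d^+$ then bounds $u_n\cdot a_n$ in $L^2_1$ on interior domains in terms of the curvature and spinor bounds above. The pair $(u_n\cdot a_n,\,u_n\cdot\Phi_n)$ now solves an elliptic system with quadratic nonlinearity ($(d^*\oplus d^+)(u_n\cdot a_n)$ quadratic in the spinor, $D^+_{A_0}(u_n\cdot\Phi_n)$ the spinor times $u_n\cdot a_n$), so standard bootstrapping promotes the uniform $L^2_1\cap L^\infty$ bounds to uniform $L^2_{k+1}$ bounds on interior domains; Rellich then yields a subsequence with $u_n\cdot\Gamma_n\to\Gamma_o$ strongly in $L^2_k(M'')$ for every $M''\Subset M$, and since $k$ is arbitrary $\Gamma_o$ is smooth and solves $\mathfrak{F}_\beta(\Gamma_o)=0$; the bound $\mathcal{E}_\beta(\Gamma_o)\le C$ follows from lower semicontinuity of the energy under this convergence. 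For part (ii), take an exhaustion $K_1\subset K_2\subset\cdots$ of $M_\infty$ by compacta, run the above on each $K_m$ (for $n$ large $K_m\subset M_{T_n}$), and diagonalize, composing the successive gauge transformations so that the one used on $K_{m+1}$ restricts on $K_m$ to the one already chosen; this produces global $u_n:M_{T_n}\to S^1$ with $u_n\cdot\Gamma_n\to\Gamma_o$ in $L^2_{k,loc}(M_\infty)$.

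The one genuinely analytic input --- and the only step where the boundary plays a role --- is the a priori $L^\infty$ bound on the spinor in step (1). On a closed manifold one simply evaluates the Weitzenb\"ock inequality at an interior maximum of $|\Phi|^2$; here $M$ carries a boundary along which the metric is a product, and the equation over the attached cylindrical end is unperturbed, so one must work on interior domains with a cutoff (equivalently, apply a mean-value inequality to the subharmonic excess $\max(|\Phi|^2-C,0)$) and verify that the constant is uniform over the sequence and, in case (ii), uniform in the stretching parameter $T_n$. Once this bound is established, steps (2)--(4) are routine.
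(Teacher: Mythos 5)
The paper does not actually prove this lemma---it is quoted as a known result with the citation \cite[Theorem 5.1.1]{KM1}---so there is no in-paper argument to compare against. Your outline is precisely the standard proof from that reference (Weitzenb\"ock pointwise bound on the spinor, energy bound on the curvature, Coulomb gauge fixing with period normalization followed by elliptic bootstrapping, and a diagonal argument over an exhaustion with compatible gauges for the neck-stretching case), and it is correct in its essentials, including the two points that genuinely need care in this setting: working on interior domains because of the boundary and the unperturbed cylindrical piece, and checking that the constants are uniform in $n$ and in the stretching parameter $T_n$.
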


Since the spin$^c$ structure  $\mathfrak{t}_0$ is torsion, the Chern-Simons-Dirac functional descends to a real valued function on the quotient configuration space $\mathcal{L}: \mathcal{B}(Y) \to \R$ with a connected critical submanifold $\chi(Y)$. By making a good choice of the reference flat connection $B_0$, we normalize the functional so that $\mathcal{L}([\mathfrak{b}])=0$ for all $\mathfrak{b} \in \chi(Y)$. For each constant flowline $\Gamma_{\mathfrak{b}} \in \mathcal{C}_k([-1, 1] \times Y)$ we take a gauge invariant neighborhood  $U_{\Gamma_{\mathfrak{b}}}$ of $\Gamma_{\mathfrak{b}}$ such that one can find a contractible open neighborhood $V_{[\mathfrak{b}]}$ of $[\mathfrak{b}]$ in $\mathcal{B}_l(Y)$ for each $[\mathfrak{b}] \in \chi(Y)$ so that for any configuration $\Gamma \in U_{\Gamma_{\mathfrak{b}}}$ one has $[\gamma(t)] \in V_{[\mathfrak{b}]}$, where $\gamma(t)=\Gamma|_{\{t \} \times Y}$. Note that $U_{\Gamma_{\mathfrak{b}}}$ is gauge invariant, thus only depends on the class $[\mathfrak{b}]$. We write $U_{\chi(Y)} =\bigcup U_{\Gamma_{\mathfrak{b}}}$ for a neighborhood of the critical manifold. We note a useful result for later use:

\begin{lem}\label{3.4} (\cite[Lemma 16.2.2]{KM1})
Let $U_{\Gamma_{\mathfrak{b}}}$ be a family of neighborhoods as above, $C$ any fixed constant. Then there exists $\epsilon_o >0$ such that for any monopole $\Gamma \in \mathcal{C}_k([-1,1] \times Y)$ satisfying 
\[
\mathcal{E}(\Gamma) \leq C \text{  and  } \mathcal{E}(\Gamma|_{[-{1 \over 2}, {1 \over 2}] \times Y} )\leq \epsilon_o,
\]
one has $\Gamma \in U_{\Gamma_{\mathfrak{b}}}$ for some $[\mathfrak{b}] \in \chi(Y)$.
\end{lem}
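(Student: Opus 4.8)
The plan is to argue by contradiction, combining the compactness result Lemma~\ref{3.3} with the cylinder energy identity $\mathcal{L}(\gamma(t_0))-\mathcal{L}(\gamma(t_1))=\tfrac12\mathcal{E}$ and the monotonicity of the Chern--Simons--Dirac functional along downward gradient flowlines. Suppose the assertion fails for some constant $C$. Then there is a sequence of monopoles $\Gamma_n\in\mathcal{C}_k([-1,1]\times Y)$ — each a solution of the unperturbed four-dimensional Seiberg--Witten equation on the cylinder, hence a downward gradient flowline of $\mathcal{L}$ — with $\mathcal{E}(\Gamma_n)\le C$ and $\mathcal{E}\big(\Gamma_n|_{[-1/2,1/2]\times Y}\big)\to0$, yet $\Gamma_n\notin U_{\Gamma_{\mathfrak{b}}}$ for every $[\mathfrak{b}]\in\chi(Y)$. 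Applying Lemma~\ref{3.3}(i) with $M=[-1,1]\times Y$ and trivial perturbation, I obtain gauge transformations $u_n$ and a smooth monopole $\Gamma_o$ on $(-1,1)\times Y$ such that, after passing to a subsequence, $u_n\cdot\Gamma_n\to\Gamma_o$ in $L^2_k$ over every compact subcylinder of $(-1,1)\times Y$, in particular over $[-1/2,1/2]\times Y$. Since $\mathcal{E}$ is gauge invariant and continuous in the $L^2_k$ topology (Sobolev multiplication, $k\ge2$), this gives $\mathcal{E}\big(\Gamma_o|_{[-1/2,1/2]\times Y}\big)=\lim_n\mathcal{E}\big(\Gamma_n|_{[-1/2,1/2]\times Y}\big)=0$.

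Next I would read off the geometric content of the vanishing energy. In temporal gauge on $[-1/2,1/2]\times Y$ the limit $\Gamma_o$ corresponds to a path $\gamma(t)$ with $\dot\gamma=-\grad\mathcal{L}(\gamma)$, so $\tfrac{d}{dt}\mathcal{L}(\gamma(t))=-\|\grad\mathcal{L}(\gamma(t))\|^2\le0$; combined with $\mathcal{L}(\gamma(-1/2))-\mathcal{L}(\gamma(1/2))=\tfrac12\mathcal{E}\big(\Gamma_o|_{[-1/2,1/2]\times Y}\big)=0$, this forces $\grad\mathcal{L}(\gamma(t))=0$, hence $\dot\gamma(t)=0$, for all $t\in[-1/2,1/2]$. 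Thus $\gamma$ is the constant flowline at a critical point $[\mathfrak{b}_0]\in\chi(Y)$, i.e. $\Gamma_o|_{[-1/2,1/2]\times Y}$ is gauge equivalent to $\Gamma_{\mathfrak{b}_0}$. Because $U_{\Gamma_{\mathfrak{b}_0}}$ is a gauge-invariant $L^2_k$-open neighborhood of $\Gamma_{\mathfrak{b}_0}$ and $u_n\cdot\Gamma_n\to\Gamma_o$ in $L^2_k([-1/2,1/2]\times Y)$ with $[\Gamma_o]=[\Gamma_{\mathfrak{b}_0}]$ there, it follows that $\Gamma_n\in U_{\Gamma_{\mathfrak{b}_0}}$ for all large $n$, a contradiction; this yields the desired $\epsilon_o$.

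The main obstacle I anticipate is the interplay between the length-$2$ cylinder on which $\Gamma_n$ is defined and the middle cylinder on which energy is controlled: Lemma~\ref{3.3}(i) supplies convergence only over compact subsets of the open cylinder, so $\Gamma_o$ and its temporal-gauge representative exist a priori only on $(-1,1)\times Y$, and one must have chosen the neighborhoods $U_{\Gamma_{\mathfrak{b}}}$ — as in the construction preceding the lemma — so that membership is governed by $L^2_k$-proximity to $\Gamma_{\mathfrak{b}}$ on a fixed interior subcylinder such as $[-1/2,1/2]\times Y$; with that normalization the final contradiction step is immediate. A secondary point needing care is that one uses \emph{continuity} (not merely lower semicontinuity) of $\mathcal{E}$ under the strong $L^2_k$ convergence to conclude the limiting energy vanishes exactly, together with the fact that a zero-energy gradient flowline is forced to be genuinely constant rather than only $\mathcal{L}$-stationary — which is precisely where temporal gauge and the flow equation $\dot\gamma=-\grad\mathcal{L}(\gamma)$ enter.
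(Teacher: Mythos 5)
The paper does not actually prove this statement: it is quoted from \cite[Lemma 16.2.2]{KM1} without argument, so the relevant comparison is with the standard compactness proof, which is indeed the contradiction argument you outline. Up to the identification of the limit, your reasoning is sound: Lemma \ref{3.3}(i) applies to the unperturbed equations on $[-1,1]\times Y$ with the uniform energy bound $C$, the energy $\mathcal{E}$ is gauge invariant and continuous under $L^2_k$-convergence on the compact middle cylinder, and a solution with vanishing energy there is forced (by nonnegativity of the energy density, or by your CSD-monotonicity argument, as in Lemma \ref{3.5}) to be gauge equivalent to a constant trajectory $\Gamma_{\mathfrak{b}_0}$ on $[-1/2,1/2]\times Y$.

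The genuine gap is exactly at the step you flag, and your proposed repair does not close it but rather changes the statement. The convergence you obtain holds only in $L^2_k$ on interior subcylinders, and the gauge transformations $u_n$ (and the one identifying $\Gamma_o$ with $\Gamma_{\mathfrak{b}_0}$) are likewise only controlled there, whereas $U_{\Gamma_{\mathfrak{b}_0}}$ is a neighborhood in $\mathcal{C}_k([-1,1]\times Y)$ whose defining feature is that every slice $[\gamma(t)]$ of a configuration in it lies in $V_{[\mathfrak{b}_0]}$. Declaring that membership in $U_{\Gamma_{\mathfrak{b}}}$ is ``governed by $L^2_k$-proximity on $[-1/2,1/2]\times Y$'' is incompatible with that slicewise property near $t=\pm1$: under the hypotheses of the lemma the energy on $([-1,-1/2]\cup[1/2,1])\times Y$ is only bounded by $C$, so it may concentrate near the ends of the cylinder, and then the slices $\gamma(t)$ for $t$ close to $\pm1$ need not be anywhere near the critical set, no matter how small $\epsilon_o$ is. In other words, with the full-cylinder reading the assertion itself fails; what your argument genuinely proves, and what \cite[Lemma 16.2.2]{KM1} asserts, is the interior statement: the restriction $\Gamma|_{[-1/2,1/2]\times Y}$ is, after a gauge transformation, $L^2_k$-close to a constant trajectory, so in particular its slices lie in $V_{[\mathfrak{b}]}$ for $t\in[-1/2,1/2]$. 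That weaker conclusion is also all the paper needs: in the proof of Theorem \ref{G1} the windows $[p,p+2]$ are integer translates, so their middle halves already cover the neck, and the later application feeding into Lemma \ref{3.6.6} only requires slicewise control on a slightly shrunken interval (absorb the loss into $T_o$). So your route is the right one, but to be correct you must restate the conclusion as control of the middle subcylinder rather than membership in a neighborhood in the topology of all of $[-1,1]\times Y$; as written, the final implication ``$\Gamma_n\in U_{\Gamma_{\mathfrak{b}_0}}$ for large $n$'' does not follow.
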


\begin{lem} \label{3.5}
The unperturbed finite energy moduli space $\M_{g}( (-\infty, \infty) \times Y, \s)$ consists of constant flowlines $[\Gamma_{\mathfrak{b}}]$, $[\mathfrak{b}] \in \chi(Y)$. 
\end{lem}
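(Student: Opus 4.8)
The plan is to show that any finite-energy monopole on the infinite cylinder $(-\infty,\infty)\times Y$ with $Y=T^3$ and $\mathfrak{t}$ the torsion $\Spinc$ structure must be a critical point of the Chern-Simons-Dirac functional, i.e. a gradient flowline that is constant in $t$, and then to identify the constant flowlines with $\chi(Y)$. The key mechanism is the energy--action identity: over a finite piece $[t_0,t_1]\times Y$ one has $\mathcal{L}(\gamma(t_0))-\mathcal{L}(\gamma(t_1))=\tfrac12\mathcal{E}(\Gamma|_{[t_0,t_1]\times Y})\ge 0$, so $\mathcal{L}\circ\gamma$ is monotone non-increasing. A finite-energy monopole on the full line has $\mathcal{E}(\Gamma)<\infty$, hence $\mathcal{L}(\gamma(t))$ has finite total variation and limits $\ell_{\pm}=\lim_{t\to\mp\infty}\mathcal{L}(\gamma(t))$ exist. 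The total drop $\ell_+-\ell_-=\tfrac12\mathcal{E}(\Gamma)$.

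First I would invoke the compactness results (Lemma \ref{3.3}(i) applied on translates, together with the normalization $\mathcal{L}\equiv 0$ on $\chi(Y)$) to control the flowline at the two ends: since the energy on $[T,T+1]\times Y$ tends to $0$ as $T\to\pm\infty$, Lemma \ref{3.4} forces $\Gamma|_{[T-\frac12,T+\frac12]\times Y}\in U_{\Gamma_{\mathfrak{b}}}$ for large $|T|$, so $\gamma(t)$ is $\mathcal{B}(Y)$-close to the critical set at the ends and the limiting values $\ell_{\pm}$ both equal $0$. Therefore $\mathcal{E}(\Gamma)=2(\ell_+-\ell_-)=0$. Once the energy vanishes, the energy formula
\[
\mathcal{E}(\Gamma)=\tfrac14\int|F_{\A}-4d\beta|^2+\int|\nabla_A\Phi|^2+\tfrac14\int(|\Phi|^4+s^2|\Phi|^2)
\]
with $\beta=0$ forces each summand to vanish (they are all non-negative), so in particular $\nabla_A\Phi\equiv 0$, $|\Phi|^4\equiv 0$ hence $\Phi\equiv 0$, and $F_{\A}\equiv 0$; thus $\Gamma$ is a reducible flat configuration that is $t$-independent in temporal gauge, i.e. a constant flowline $\Gamma_{\mathfrak{b}}$.

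Then I would finish by identifying the set of such $[\Gamma_{\mathfrak{b}}]$: a $t$-independent reducible solution of the four-dimensional equations restricts on each slice to a solution of the three-dimensional Seiberg-Witten equations $\mathcal{F}(B,0)=0$, i.e. $F_{B^{\mathrm t}}=0$, so $[\mathfrak{b}]$ lies in the reducible moduli space $\M^{\Red}_h(Y,\mathfrak{t})=\chi(Y)$; conversely every point of $\chi(Y)$ gives a genuine constant flowline of finite (in fact zero) energy. This establishes the claimed description $\M_g((-\infty,\infty)\times Y,\s)=\{[\Gamma_{\mathfrak{b}}]:[\mathfrak{b}]\in\chi(Y)\}$.

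The main obstacle I anticipate is the end behavior step: one must rule out that $\mathcal{L}\circ\gamma$ drifts off to $-\infty$ or stabilizes at a nonzero value, which a priori could happen if the flowline escaped to infinity in $\mathcal{B}(Y)$. This is where the positive scalar curvature and torsion hypotheses, the normalization $\mathcal{L}|_{\chi(Y)}=0$, and Lemma \ref{3.4} are essential: finite total energy forces the energy on unit-length sub-cylinders near $\pm\infty$ to go to zero, which by Lemma \ref{3.4} traps the flowline in a fixed neighborhood $U_{\chi(Y)}$ of the critical set, on which $\mathcal{L}$ is uniformly close to $0$. Making this trapping argument precise — in particular extracting a genuine limit of $\gamma(t)$ rather than merely proximity to $\chi(Y)$ — is the only delicate point; everything else is a direct consequence of the energy identity and non-negativity of the terms in $\mathcal{E}$.
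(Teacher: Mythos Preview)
Your proposal is correct and follows essentially the same strategy as the paper: show the total energy vanishes using the energy--action identity and the fact that $\mathcal{L}$ takes a single value on the connected critical set $\chi(Y)$, then use non-negativity of the scalar curvature to force $\Phi\equiv 0$ and $A$ flat. The only difference is cosmetic: the paper invokes the asymptotic maps $\partial_{\pm}$ directly (so $\mathcal{L}(\mathfrak{b}_-)=\mathcal{L}(\mathfrak{b}_+)$ by connectedness of $\chi(Y)$), while you reprove the existence of the endpoint values via Lemma~\ref{3.4} and the normalization $\mathcal{L}|_{\chi(Y)}=0$; both routes yield $\mathcal{E}(\Gamma)=0$ for the same reason.
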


\begin{proof}
Let $[\Gamma] \in \M_g((-\infty, \infty) \times Y, \s)$ be a monopole. There are asymptotic maps $\partial_+ :\M_g((-\infty, \infty) \times Y, \s) \to \chi(Y)$ and $\partial_-: \M_g((-\infty, \infty) \times Y, \s) \to \chi(Y)$ corresponding to the positive and negative ends respectively (in this case $\chi_M(Y)=\chi(Y)$ since every gauge transformation on $Y$ extends to one on $(-\infty , \infty) \times Y$). We write $[\mathfrak{b}_{\pm}]=\partial_{\pm} [\Gamma]$. Since the critical manifold $\chi(Y)$ is connected, we know $\mathcal{L}([\mathfrak{b}_-])=\mathcal{L}([\mathfrak{b}_+])$. Thus 
\[
\mathcal{E}(\Gamma)=2(\mathcal{L}(\mathfrak{b}_-)-\mathcal{L}(\mathfrak{b}_+))=0.
\]
Since the scalar curvature $s$ over $(-\infty, \infty) \times Y$ is nonnegative, $\mathcal{E}(\Gamma)=0$ implies that $\Gamma=(B, 0)$ with $B$ a flat spin$^c$ connection on $S$. Thus $\mathfrak{b}_-=\mathfrak{b}_+$ and $\Gamma$ is the constant flowline.  
\end{proof}

In order to prove the Convergence (ii) in the Definition \ref{3.1} we recall a result concerning the drop of the Chern-Simons-Dirac functional near a non-singular point $[\mathfrak{b}] \in \chi(Y)$. The notion being non-singular we are using here is also referred to as being Morse-Bott in the literature, i.e. the Hessian $\Hess \mathcal{L}|_{[\mathfrak{b}]}$ is nondegenerate in the normal direction of $T_{[\mathfrak{b}]}\chi(Y)$ in $T_{[\mathfrak{b}]}\mathcal{B}_l(Y)$. Putting $[\mathfrak{b}]$ into Coulomb gauge, from (\ref{2.4}) we see that $[\mathfrak{b}]$ is Morse-Bott if and only if $\ker D_B=0$ with $\mathfrak{b}=(B, 0)$. 

\begin{lem}\label{3.6.6}
(\cite[Lemma 13.5.2]{KM1}) Let $[\mathfrak{b}] \neq \theta \in \chi(Y)$ be a non-singular critical point. There exists a neighborhood $V_{[\mathfrak{b}]}$ of $[\mathfrak{b}] $ in $\chi(Y)$ and $\delta_{\mathfrak{b}} >0$ such that for any monopole $\Gamma \in \M([t_1, t_2] \times Y)$ with corresponding flowline $\gamma(t) \in V_{[\mathfrak{b}]}$ for all $t \in [t_1, t_2]$, one has  
\[
-|\mathcal{L}(\gamma(t_1))| \cdot e^{\delta_{\mathfrak{b}}(t-t_2)} \leq \mathcal{L}(\gamma(t)) \leq |\mathcal{L}(\gamma(t_2))| \cdot e^{-\delta_{\mathfrak{b}} (t-t_1)}.
\]
\end{lem}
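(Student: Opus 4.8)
\textbf{Proof proposal for Lemma \ref{3.6.6}.} The statement is a standard Morse--Bott type estimate for the Chern--Simons--Dirac functional near a nondegenerate critical point, and the plan is to reduce it to an ordinary differential inequality for the quantity $\mathcal{L}(\gamma(t))$ along a flowline. First I would restrict attention to a small neighborhood $V_{[\mathfrak{b}]}$ of $[\mathfrak{b}]$ in which, after putting everything in Coulomb gauge with respect to $\mathfrak{b}=(B,0)$, the hypothesis $\ker D_B=0$ guarantees that the Hessian $\Hess\mathcal{L}$ is uniformly invertible in the directions normal to $\chi(Y)$; since $[\mathfrak{b}]\neq\theta$ this is precisely the Morse--Bott condition recorded after (\ref{eq2.4}), and the smallest positive eigenvalue $\mu_{\mathfrak{b}}$ gives a uniform lower bound on the normal Hessian over a slightly shrunken neighborhood. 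Let $\delta_{\mathfrak{b}}$ be any positive number with $\delta_{\mathfrak{b}}<2\mu_{\mathfrak{b}}$ (or $\mu_{\mathfrak{b}}$, depending on the normalization one carries through the computation).

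The key computational step is the differential inequality. Writing $\gamma(t)=(B(t),\Psi(t))$ for the Coulomb-gauge flowline, one has $\frac{d}{dt}\mathcal{L}(\gamma(t))=-\langle\grad\mathcal{L}(\gamma(t)),\dot\gamma(t)\rangle=-\|\grad\mathcal{L}(\gamma(t))\|_{L^2}^2\le 0$ since $\dot\gamma=-\grad\mathcal{L}(\gamma)$. The Morse--Bott estimate then amounts to the Łojasiewicz-type bound $\|\grad\mathcal{L}(\gamma(t))\|_{L^2}^2\ge c\,|\mathcal{L}(\gamma(t))|$ valid inside $V_{[\mathfrak{b}]}$, which follows from the uniform invertibility of the normal Hessian together with the fact that $\mathcal{L}$ vanishes on $\chi(Y)$ (after the normalization $\mathcal{L}([\mathfrak{b}])=0$ adopted just before Lemma \ref{3.5}). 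Combining these gives $\frac{d}{dt}\mathcal{L}(\gamma(t))\le -c\,|\mathcal{L}(\gamma(t))|$, and integrating this scalar inequality — separately on the regions where $\mathcal{L}(\gamma(t))\ge 0$ and $\mathcal{L}(\gamma(t))\le 0$, using monotonicity of $\mathcal{L}$ along the flow — yields the two-sided exponential bound with constant $\delta_{\mathfrak{b}}=c$: $\mathcal{L}(\gamma(t))\le |\mathcal{L}(\gamma(t_2))|e^{-\delta_{\mathfrak{b}}(t-t_1)}$ from the upper end and $-|\mathcal{L}(\gamma(t_1))|e^{\delta_{\mathfrak{b}}(t-t_2)}\le\mathcal{L}(\gamma(t))$ from the lower end. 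Since this is \cite[Lemma 13.5.2]{KM1}, I would in fact just cite that reference, after verifying that our hypotheses (torsion $\mathfrak{t}$, $[\mathfrak{b}]\neq\theta$, the normalization of $\mathcal{L}$, and the identification of the blown-down moduli space with $\chi(Y)$) match the setting there; the only subtlety is that Kronheimer--Mrowka work in the blown-up configuration space, but at an irreducible-free neighborhood of a reducible non-singular critical point the two pictures agree in the relevant directions.

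The main obstacle, and the only point requiring genuine care rather than citation, is establishing the Łojasiewicz inequality $\|\grad\mathcal{L}\|^2\gtrsim|\mathcal{L}|$ uniformly on $V_{[\mathfrak{b}]}$ with a constant independent of the flowline: one must control the tangential-to-$\chi(Y)$ directions (along which $\mathcal{L}$ is identically zero, so they contribute nothing and cause no trouble) and show that the normal directions dominate, using that $\Hess\mathcal{L}$ restricted to $H^\perp_{\mathfrak{b}}$ has operator norm bounded below by $\mu_{\mathfrak{b}}>0$ on a neighborhood (a consequence of continuity of the spectrum of the Hessian and compactness of $\chi(Y)\setminus U_\theta$, exactly as in the definition of $\mu_0$ preceding (\ref{expd})). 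Given that, the Taylor expansion of $\mathcal{L}$ and of $\grad\mathcal{L}$ at points of $\chi(Y)$ closes the argument. Everything else is the routine ODE integration sketched above.
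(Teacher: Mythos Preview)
Your proposal is essentially what the paper does: the paper gives no proof of this lemma at all, simply citing \cite[Lemma 13.5.2]{KM1} in the statement and, in the following remark, pointing to \cite[Lemma 2.13]{Lin} for the Morse--Bott adaptation (since the cited result in \cite{KM1} is for isolated Morse critical points). Your sketch via the Łojasiewicz-type inequality $\|\grad\mathcal{L}\|^2\gtrsim|\mathcal{L}|$ and the resulting scalar differential inequality is exactly the mechanism underlying those references, and your identification of the Morse--Bott normal Hessian bound as the only genuine analytic input matches the paper's remark that the extension to the Morse--Bott case (and the uniformity of $\delta_{\mathfrak{b}}$ away from $\theta$) is handled by \cite{Lin}.
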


\begin{rem}
The result is originally stated for $[\mathfrak{b}]$ being a Morse critical point. The exponential decay for Morse-Bott critical points is deduced in \cite[Lemma 2.13]{Lin}. Since we have normalized $\mathcal{L}([\mathfrak{b}])$ to be $0$, there is no such term involved. If we choose a neighborhood $V_{\theta}$ of $\theta$ in $\chi(Y)$, the exponent $\delta_{\mathfrak{b}}$ can be chosen to be uniform for all $[\mathfrak{b}] \in \chi(Y) \backslash V_{\theta}$.
\end{rem}

\begin{proof}[Proof of Theorem \ref{G1}]
Let $\Gamma_n=(A_n, \Phi_n)$ be a sequence of irreducible monopoles over $X_{T_n}$, $T_n$ an increasing sequence of real numbers with $T_n \to \infty$. We may assume $T_n \in \Z$ for the sake of simplifying narration. Each $\Gamma_n$ decomposes into 3 parts:
\[
\Gamma^1_n:=\Gamma_n|_{M}, \Gamma^2_n:=\Gamma_n|_{I_{T_n}}, \Gamma^3_n:=\Gamma_n|_N.
\]
We also write $\gamma_n(t)=\Gamma_n|_{\{t \} \times Y}$ for $t \in [-T_n, T_n]$ the associated flowline. Note that over a closed $4$-manifold the energy for a monopole can be written as 
\[
\mathcal{E}_{\beta_n}(\Gamma_n)={1 \over 4} \int_{X_{T_n}} (F_{\A} -4d\beta_n) \wedge (F_{\A}- 4d\beta_n) = -\pi^2 c_1^2(\s)[X]=:C. 
\] 
Thus the energy of $\Gamma_n^i, i=1, 2, 3$, are uniformly bounded by the constant $C$. Take families of neighborhoods $U_{\Gamma_{\mathfrak{b}}}$ and $V_{[\mathfrak{b}]}$ for $\Gamma_{\mathfrak{b}}$ and $[\mathfrak{b}]$ respectively as above. Let $\epsilon_o$ be the constant as in Lemma \ref{3.4}. Given $t \in \R$ write $\tau_t \Gamma^2_n (s, y)=\Gamma^2_n (s+t, y)$ the translation with $s \in [-T_n-t, T_n-t]$. Let 
\[
S_n=\{ p \in [-T_n, T_n-1] \cap \Z : \tau_{p+1}\Gamma_n^2|_{[p, p+2]} \notin U_{\chi(Y)} \}.
\]
From Lemma \ref{3.4} we know that each $S_n$ contains at most ${C \over \epsilon_o}$ elements. After passing to a subsequence we may assume $S_n$ has exactly $m$ elements, $p^n_1 < p^n_2 < ... <p^n_m$, for each $n$. Since $\Gamma_n$ is irreducible, unique continuation of Dirac operators implies that $\Gamma_n|_{Y_{-T_n}} \notin \chi(Y)$ and $\Gamma_n|_{Y_{T_n}} \notin \chi(Y)$ for all $n$. For each $j$ in the rang $1 \leq j \leq m-1$, the set of differences $\{p^n_{j+1} - p^n_j\}_{n \geq 1}$ is either bounded or unbounded. In the first case since it's integer valued we can pass to a subsequence so that $p^n_{j+1} - p^n_{j}$ is independent of $n$. In the second case we can pass to a subsequence so that $p^n_{j+1}-p^n_j$ increases to $\infty$. Thus we can define an equivalence relation "$\sim$" on the finite set $\{1, 2, ..., m\}$ by 
\[
j \sim j' \Longleftrightarrow \lim_{n \to \infty} p^n_j - p^n_{j'} < \infty. 
\]
For each equivalence class we pick a representative $j_i$, $i=1, ..., d$, ordered so that $j_i < j_{i+1}$. We let 
\[
a_i^n=\min \{p^n_j : j \sim j_i\} \text{ and } b_i^n=\max \{p^n_j : j \sim j_i\}. 
\]
Then the length of the intervals $I_i^n=[a_i^n, b_i^n+2]$ is independent of $n$ (possibly 0), and the length of the intervals $J_i^n=[b_i^n+2, a^n_{i+1}]$ approaches $\infty$ as $n \to \infty$. The choice of $J_i^n$ is made so that $\Gamma_n|_{J_i^n} \in U_{\chi(Y)}$. We also write $J_0^n=[-T_n, b^n_1+2]$ and $J^n_{d}=[a^n_d, T_n]$. From the construction of $a_1^n$, after passing to a subsequence, either $a_n^1$ is bounded or $a_n^1 \to \infty$. In the first case we take $J_0^n=[-T_n, b^n_2+2]$. Thus we may assume the length of $J_0^n$ goes to $\infty$ as well. The same remark applies to that of $J_d^n$. 
For $1 \leq i \leq d-1$ let $c_i^n={a^n_{i+1}+ b^n_i+2 \over 2}$ be the middle point of the interval $J_i^n$, $l_i^n=a^n_{i+1}-b^n_i-2$ be the length of the interval $J_i^n$. Then $\tau_{c_i^n} \Gamma_n^2|_{J_i^n}$ is a sequence of monopoles over $[-l^n_i, l^n_i ] \times Y$ with uniformly bounded energy and the length $l^n_i \to \infty$. 

Combining Lemma \ref{3.3} and Lemma \ref{3.5} we conclude that there exists $[\mathfrak{b}_i] \in \chi(Y)$ for each $i$ in the range $0 \leq i \leq d$ and $[\Gamma_o] \in \M(M_{\infty}, [\tilde{\mathfrak{b}}_0]), [\Gamma_o'] \in \M([\tilde{\mathfrak{b}}_d], N_{\infty})$ such that up to gauge
\begin{enumerate}
\item[\upshape (i)] $\Gamma_n|_{M_{b_1^n+2+T_n}} \to \Gamma_o$, $\Gamma_n|_{N_{T_n-a^n_d}} \to \Gamma'_o$ in $L^2_{k, loc}$-topology as $n \to \infty$, 
\item[\upshape (ii)]
$ \tau_{c_{i-1}^n} \Gamma_n^2|_{J_i^n} \longrightarrow \Gamma_{\mathfrak{b}_i} \text{ in $L^2_{k, loc}$-topology}, 1 \leq i \leq d-1,$ as $n \to \infty$. 
\end{enumerate}

Now we claim that $[\mathfrak{b}_i]=[\mathfrak{b}_{i+1}]$ for all $i$ in the range $0 \leq i \leq d-1$. Suppose this is not true. There exists some $i$ with $0 \leq i \leq d-1$ such that $[\mathfrak{b}_i] \neq [\mathfrak{b}_{i+1}]$. Note that $\Gamma_n|_{I_i^n \times Y} \notin U_{\chi(Y)}$, from Lemma \ref{3.4} we know that 
\[
\mathcal{L}([\gamma_n(t)]) - \mathcal{L}([\gamma_n(s)])={1 \over 2} \mathcal{E}(\Gamma_n|_{I_i^n\times Y}) \geq {l_i  \over 2} \epsilon_0, \forall t \in [-T_n, a_i^n] \text{ and } s \in [b_i^n+2, T_n],
\]
where $l_i= b_i^n+2-a_i^n$ is the length of $I_i^n$. Since $L^2_k \hookrightarrow C^0$ is continuous, we conclude that $\gamma(c_{i-1}^n) \to \mathfrak{b}_i$ and $\gamma(c_i^n) \to \mathfrak{b}_{i+1}$ in $C^0$-topology. The continuity of the Chern-Simons-Dirac functional implies that 
\[
\mathcal{L}(\gamma_n(c_{i-1}^n)) - \mathcal{L}(\gamma_n(c_i^n)) \to 0 \text{ as } n \to \infty.
\]
However $\mathcal{L}(\gamma_n(c_{i-1}^n) )- \mathcal{L}(\gamma_n(c_i^n))  \geq {l_i  \over 2}  \epsilon_0>0$ for all $n$, which is a contradiction. From the argument we also conclude that $l_i=0$ for all $i$, which is impossible unless $d=1$. Thus we conclude 
\[
\Gamma_n|_{M_{T_n}} \to \Gamma_o, \Gamma_n|_{N_{T_n}} \to \Gamma'_o \text{ in $L^2_{k, loc}$-topology as $n \to \infty$}, 
\]
which is the convergence condition (i) in Definition \ref{3.1}. 

Write $\mathfrak{b}_0=\mathfrak{b}$. Now we know that $[\Gamma_o] \in \M(M_{\infty}, [\tilde{\mathfrak{b}}])$, $[\Gamma'_o] \in \M([\tilde{\mathfrak{b}}], N_{\infty})$. From the second assumption in the statement we know that $[\mathfrak{b}] \neq \theta$. Write $\Gamma_{\mathfrak{b}}=(A_{\mathfrak{b}}, 0)$ for the constant monopole given by $\mathfrak{b}$ and $\gamma_{\mathfrak{b}}$ be the corresponding flowline. We want to show the convergence condition (ii) in Definition \ref{3.1} holds for $\{\Gamma_n\}$ after possibly choosing gauge transformations and passing to a subsequence. 

Now for any neighborhood $U_{\Gamma_{\mathfrak{b}}}$ and $V_{[\mathfrak{b}]}$, apply Lemma \ref{3.4} and the argument above, we get $T_o>0$ independent of $n$ such that, after passing to a subsequence, 
\[
[\Gamma_n|_{I_{T_n-T_o}}] \in U_{\Gamma_{\mathfrak{b}}} \text{ for all $T_n > T_o$ }.
\]
In particular we choose $V_{[\mathfrak{b}]}$ to satisfy the requirement of Lemma \ref{3.6.6}. \cite[Equation (2.10)]{Lin} shows that after fixing some gauge,
\begin{equation}
\| \Gamma_n|_{[i-1, i+1] \times Y} -\Gamma_{\mathfrak{b}} \|^2_{L^2_k([i-1, i+1] \times Y)}  \leq K_1 (\mathcal{L}(-T_o+T_n)+\mathcal{L}(T_n-T_o) )
\end{equation}
Invoke Lemma \ref{3.6.6} and sum over all integers $i$ in the range $[T_o-T_n+1, T_n-T-1]$ to get 
\begin{equation}\label{3.1.1}
\|\Gamma_n|_{I_{T_n-T_o}} - \Gamma_{\mathfrak{b}}\|^2_{L^2_k(I_{T_n-T_o})} \leq K_2 (T_n-T_o)e^{-\delta_{\mathfrak{b}} ({T_n -T_o \over2})}.
\end{equation}
Thus for any $\epsilon >0$, one can choose $n$ large enough so that the left hand side of (\ref{3.1.1}) is bounded by $\epsilon^2$. This completes the proof of convergence.
 
Lastly we show that $[\Gamma_o]=[(A_o, \Phi_o)] \in \M(M_{\infty}, [\tilde{\mathfrak{b}}])$ is irreducible. We note that since by assumption $N_{\infty}$ admits nonnegative scalar curvature, $[\Gamma'_o]=[(A'_o, 0)]$ has to be reducible. Suppose $[\Gamma_o]$ is reducible as well, i.e. $\Phi_o \equiv 0$. Then after possibly some gauge transformation the convergence result proved above gives us a sequence of non-zero spinors $\Phi_n \in \ker D^+_{A_n}$ such that for any given $\epsilon>0$ for all sufficiently large $n$ and $T_n >T_o$
\[
\|\Phi_n\|_{L^2_k(X_{T_n})} =\|\Phi_n|_{M_{T_o}}\|_{L^2_k} +\|\Phi_n|_{I_{T_n-{T_o}}}\|_{L^2_k} +\|\Phi_n|_{N_{T_o}}\|_{L^2_k} < \epsilon. 
\]
Note that $H^1(X_T; \Z) \to H^1(M_T;\Z)$ is injective, we denote the image of $1_{X_T}$ in $H^1(M_T;\Z)$ by $1_{M_T}$. We choose smooth functions $f_T: X_T \to S^1$ with $[df_T] = 1_{X_T}$ such that $f_T|_{M_T}$ converges in $C^{\infty}_{loc}$-topology to a smooth function $f_{M_{\infty}}: M_{\infty} \to S^1$ with $[df_{M_{\infty}}]$ representing the element $1_{M_{\infty}}$ which restricts to $1_{M_T}$ for every $T$. Note that $A_o$ solves the equation 
\[
F^+_{\A_o} = 2d^+ \beta_M.
\]
Since $A_o$ is the limit of connections on $X_{T_n}$, it has the form 
\begin{equation}
A_o= A_M + \beta_M- \ln z_M \cdot df_{M_{\infty}}, 
\end{equation}
where $A_M$ is the flat spin connection on $(M_{\infty}, \s)$. Similarly on $(N_{\infty}, \s)$ we get 
\begin{equation}
A_o'=A_N+\beta_N- \ln z_N \cdot df_{M_{\infty}}.
\end{equation}
Since $\bar{\partial}_{+}[A_o] = \bar{\partial}_{-}[A'_o] = [B]$, and $\beta_M$, $\beta_N$ are all compactly supported, we conclude that $z_N=z_M$, which we write as $z_o$. The convergence of $[\Gamma_n]$ enables us to write 
\[
A_n=A_{X_T} +\beta_T -\ln z_o \cdot df_{T_n} + a_n.
\]
The equation that $A_n$ satisfies 
\[
{1 \over 2}F^+_{\A_n} - 2d^+ \beta = \rho^{-1}(\Phi_n \Phi_n^*)_0
\]
gives us $\|a_n\|_{L^2_k} \leq K_3 \epsilon$. This shows that $D^+_{A_n}=D^+_{z_o, \beta}(X_{T_n}) + \rho(a_n)$ has nontrivial kernel given by $\Phi_n$. By choosing $\epsilon$ small, this would violate Lemma \ref{3.8.8}. Thus $\Gamma_o$ is irreducible and the theorem is proved. 
\end{proof}



\bibliographystyle{plain}
\bibliography{References}

\end{document}